\newtheorem{theorem}{Theorem}[section]
\newtheorem{corollary}[theorem]{Corollary}
\newtheorem{lemma}[theorem]{Lemma}
\newtheorem{proposition}[theorem]{Proposition}
\newtheorem{assumption}[theorem]{Assumption}
\theoremstyle{definition}
\newtheorem{definition}[theorem]{Definition}
\newtheorem{remark}[theorem]{Remark}
\newcommand{\R}{\mathbb{R}}
\newcommand{\N}{\mathbb{N}}
\newcommand{\C}{\mathbb{C}}
\newcommand{\T}{\mathbb{T}}
\newcommand{\Z}{\mathbb{Z}}
\begin{document}

\title[Wave packet decompositions and bilinear estimates]{Wave packet decompositions and sharp bilinear estimates for rough Hamiltonian flows}

\author[Robert Schippa]{Robert Schippa*}
\address{University of Bonn, Mathematical Institute, Endenicher Allee 60, D-53115 Bonn, Germany}
\email{rschippa@uni-bonn.de}

\author{Daniel Tataru}
\address{UC Berkeley, Department of Mathematics, Evans Hall
Berkeley, CA 94720-3840, United States}
\email{tataru@berkeley.edu}

\subjclass[2020]{42B37}
\keywords{Fourier restriction, Hamiltonian flow, rough coefficients}

\begin{abstract}
The goal of this paper is to prove bilinear $L^p$ estimates for rough dispersive evolutions satisfying non-degeneracy and transversality assumptions. The estimates generalize the sharp Fourier extension estimates for the cone and the paraboloid. To this end, we require a wave packet decomposition with localization properties in space-time and space-time frequencies. 

Secondly, we construct a refined wave packet parametrix for dispersive equations with $C^{1,1}$-coefficients by using the FBI transform. As a consequence, we obtain bilinear estimates for solutions to dispersive equations with $C^{1,1}$ coefficients provided that the solutions interact transversely.
\end{abstract}
\thanks{*Corresponding author}

\maketitle

\setcounter{tocdepth}{1}
\tableofcontents

\section{Introduction}

\subsection{Bilinear Fourier extension estimates}
The purpose of this article is to explore bilinear estimates for solutions to dispersive PDE's with rough coefficients under transversality assumptions. These generalize the classical bilinear adjoint Fourier restriction estimates, which correspond to the constant coefficient case, and which we briefly describe now.

The first (sharp up to endpoints) results are due to Wolff \cite{Wolff01} (see \cite{Bourgain1995,TaoVargasVega1998} for earlier results). Formulated for the cone extension operator,
\begin{equation*}
\mathcal{E}_c f(x,t) = \int_{\xi \in \R^d: |\xi| \sim 1} e^{i(x \cdot \xi + t |\xi|)} f(\xi) d\xi, \quad (x,t) \in \R^d \times \R,
\end{equation*}
Wolff's bilinear estimate reads
\begin{equation}
\label{eq:BilinearEstimateWave}
\| \mathcal{E}_c f_1 \mathcal{E}_c f_2 \|_{L^p_{t,x}(B_{d+1}(0,R))} \lesssim_\varepsilon R^\varepsilon \| f_1 \|_2 \| f_2 \|_2 
\end{equation}
for $p = \frac{d+3}{d+1}$ with the angular separation condition
\begin{equation}
\label{eq:AngularSeparationIntro}
\angle ( \text{supp}(f_1), \text{supp}(f_2)) \sim 1.
\end{equation}
The advent of sharp bilinear estimates had a transformative effect on Fourier restriction theory, whose study was initiated by Stein in the 60s.  In particular, the bilinear estimates solve the linear cone restriction problem for $d=3$ (\cite{Wolff01}). For $d=4$ Ou--Wang \cite{OuWang2022} proved the sharp estimates via polynomial partitioning.

\smallskip

 The next results were due to Tao \cite{Tao03}, who  showed the corresponding sharp estimates for the paraboloid, for which the extension operator is given by
\begin{equation*}
\mathcal{E}_{\Delta} f(x,t) = \int_{\xi \in \R^d: |\xi| \leq 1} e^{i(x \cdot \xi + t |\xi|^2)} f(\xi) d\xi.
\end{equation*}
The corresponding bilinear estimate reads
\begin{equation*}
\| \mathcal{E}_{\Delta} f_1 \mathcal{E}_{\Delta} f_2 \|_{L^p_{t,x}(B_{d+1}(0,R))} \lesssim_\varepsilon R^{\varepsilon} \| f_1 \|_2 \| f_2 \|_2
\end{equation*}
for $p=\frac{d+3}{d+1}$. Here, due to non-vanishing curvature of the paraboloid, the transversality condition reads
\begin{equation*}
\text{dist}( \text{supp}(f_1), \text{supp}(f_2)) \sim 1.
\end{equation*}
Henceforth, the bilinear estimates have been widely generalized. We cite works due to Tao \cite{Tao2001} who proved the estimate \eqref{eq:BilinearEstimateWave} at the endpoint $p=\frac{d+3}{d+1}$ with $\varepsilon = 0$, Vargas \cite{Vargas2005} who proved a bilinear restriction estimate for the hyperboloid, and S. Lee \cite{Lee2006DifferentSignsCurvature,Lee2006}, who generalized the bilinear estimate to possibly non-elliptic surfaces and oscillatory integral operators. Possibly degenerate surfaces were considered by Lee--Vargas in \cite{LeeVargas2010}; Buschenhenke--M\"uller--Vargas considered surfaces of finite type \cite{BuschenhenkeMuellerVargas2017}. Bejenaru \cite{Bejenaru2017} provided a unified description in terms of the shape operator.

\smallskip

Since they are $L^2$-based, the Fourier extension estimates  can be interpreted as bilinear estimates for solutions to PDE's. Variants and extensions with precise dependence on the transversality and regularity of the phase have been worked out more recently by Candy \cite{Candy2019};
see \cite{CandyHerr2018,CandyHerr2018II,CandyHerr2018III} for more applications.

\smallskip

Regarding the cone extension operator, the bilinear Fourier extension estimates can be formulated for
solutions to the half-wave equation:
\begin{equation}
\label{eq:BilinearEstimateEuclideanHalfwaves}
\| e^{it \sqrt{-\Delta}} f_1 e^{it \sqrt{-\Delta}} f_2 \|_{L^p_{t,x}(B_{d+1}(0,R))} \lesssim_\varepsilon R^\varepsilon \| f_1 \|_2 \| f_2 \|_2
\end{equation}
for $p=\frac{d+3}{d+1}$ and $\text{supp}(\hat{f}_i)$ contained in the unit annulus, satisfying the angular separation condition
\begin{equation*}
\angle (\text{supp}(\hat{f}_1),\text{supp}(\hat{f}_2)) \sim 1.
\end{equation*} 

In view of applications, we can also consider solutions to distinct wave equations with quantitatively separated group velocity $|\nu_1 - \nu_2 | \gtrsim 1$, $\nu_i \in (1,2)$; see below. This can replace the assumption on the angular support separation. 

\subsection{Wave packet decompositions}

A fundamental tool in all modern developments of Fourier restriction theory is the \emph{wave packet decomposition}. This provides a decomposition of the Fourier extension $\mathcal{E}_{\Delta} f$ into non-oscillating components, the \emph{wave packets}. This goes back to pioneering work of Fefferman \cite{Fefferman1973} and C\'ordoba \cite{Cordoba1979} with subsequent important contributions by Bourgain \cite{Bourgain1991I,Bourgain1991II,Bourgain1995}. Regarding applications to PDE's we want to refer to another pioneering work of Fefferman \cite{Fefferman1983}.

\smallskip

The construction becomes significantly more involved for variable rough coefficients. H. Smith \cite{Smith1998} constructed a wave packet decomposition (or \emph{frame}) for wave equations with $C^{1,1}$-coefficients, using a paradifferential decomposition and propagating the wave packets along the Hamiltonian flow. Smith's wave packets are anisotropic and of maximal size in frequency, taking advantage of the flatness in the radial direction of the characteristic surface of the wave equation. Similar anisotropic wave packets were used by Smith--Tataru \cite{SmithTataru2005} to prove sharp local well-posedness of quasilinear wave equations. Geba--Tataru \cite{GebaTataru2007} refined Smith's parametrix construction to metrics with minimal averaged space-time regularity $\| \partial^2 g \|_{L_t^1 L_{x}^{\infty}} < \infty$ without paradifferential decomposition. 

\smallskip

For non-degenerate characteristic surfaces, which is the case e.g. for Schr\"odinger equations, isotropic wave packets are distinguished. 
Ralston \cite{Ralston1982} constructed isotro\-pic wave packets referred to as \emph{Gaussian beams}. The beams are obtained from propagating Gaussians along the Hamiltonian flow. Waters \cite{Waters2011} constructed a parametrix for wave equations with $C^{1,1}$-coefficients without paradifferential decomposition and obtained the corresponding space and frequency localization of the wave packets; see also \cite{Waters2019}.
Here we carry out a unified approach and construct wave packet parametrices for symbols with size and regularity assumptions, which naturally arise from paradifferential decompositions for low regularity coefficients. In the case of wave equations the regularity corresponds to $C^{1,1}$-coefficients. The symbol regularity additionally allows us to prove time-frequency localization on short times, which is crucial for the proof of the bilinear estimates.

\smallskip

The wave packets are obtained from a maximal simultaneous decomposition in position and frequency space. On a cube $|(x,t)| \lesssim R$, this amounts to an initial spatial decomposition on the scale $\Delta x \sim R^{\frac{1}{2}}$ and the frequency scale $\Delta \xi \sim R^{-\frac{1}{2}}$. This decomposition works just as well for $\mathcal{E}_{c} f$, for which we note that due to the flatness of the cone in the radial direction, there are many wave packets with frequency support in different small balls pointing in the same direction. As noted above, this overlapping can be avoided by 
unbalancing the localization scales \cite{Smith1998,Wolff01,SmithTataru2005,GebaTataru2007}. We do not pursue this idea here as the ultimate aim is to show $L^2$-based estimates for which we can sum up the wave packets traveling into the same direction by almost orthogonality.

For the Fourier extension operator of a graph surface $S = \{(\xi,\phi(\xi)) : \xi \in A \subseteq B_d(0,2) \}$ the frequencies remain fixed under time evolution; the position of the wave packet, which is initially centered at $x_0$, changes according to the group velocity: $x(t) = x_0 - t \nabla \phi(\xi_0)$. Presently, the third and crucial ingredient for the combinatorial argument to prove the bilinear estimates is to take into account the time-frequency localization $\tau = \phi(\xi_0)$.

It is the transverse intersection, which significantly reduces the overlap of wave packets. This allows one to go beyond the range of the sharp linear Tomas--Stein estimate, which reads $\| \mathcal{E}_{\Delta} f \|_{L_{t,x}^\frac{2(d+2)}{d}(\R^{d+1})} \lesssim \| f \|_2$.

\subsection{Linear and bilinear estimates for solutions to PDEs with rough coefficients}

In this paper we look into bilinear estimates for solutions to wave and Schr\"odinger equations governed by rough coefficients. The question for bilinear estimates for solutions to wave equations with rough coefficients generalizing Wolff's estimates was raised by Klainerman--Rodnianski--Tao \cite[Section~8]{KlainermanRodnianskiTao2002}. Null form estimates for rough wave equations were previously explored by Tataru~\cite{Tataru03} (see also \cite{AiIfrimTataru2024} for recent developments concerning null forms for quasilinear wave equations). The question for generalizations of Wolff's and Tao's estimates is presently answered via wave packet decompositions and bilinear estimates for rough Hamiltonian flows. We begin with formulating an example of our bilinear estimates. 

\medskip 

Consider two frequency-localized solutions to the system of wave equations:
\begin{equation}
\label{eq:TransverseSystem}
\left\{ \begin{array}{cl}
\partial_t^2 u_{1} &= \partial_i g^{ij}_{1, \ll N} \partial_j u_{1} + f_1, \\
\partial_t^2 u_{2} &= \partial_i g^{ij}_{2, \ll N} \partial_j u_{2} + f_2, \quad 
\end{array} \right.
\end{equation}
for $(t,x) \in [0,1] \times \R^d$,
with $f_i \in L_t^1([0,1];L_x^2)$. We assume that $g^{ij}_k \in C^{1,s}$, $k=1,2$, $s \in [0,1]$ and that the metrics satisfy the uniform ellipticity condition
\begin{equation}
\label{eq:UniformEllipticity}
\exists \lambda, \Lambda > 0: \forall (t,x) \in \R^{1+d}: \forall \xi \in \R^d \backslash 0: \, \lambda |\xi|^2 \leq g_k^{ij}(t,x) \xi_i \xi_j \leq \Lambda |\xi|^2, \quad k \in \{1,2\}.
\end{equation}
Further, in order to guarantee the tranverse propagation of the two waves we require the speed separation condition
\begin{equation*}
| g^{ij}_k - \nu_k \delta_{ij}| \leq \varepsilon^* \ll 1,
\end{equation*}
with $\nu_1 \sim \nu_2 \sim 1$, and $|\nu_1 - \nu_2| \sim 1$.  Above $N \gg 1$ denotes the dyadic frequency localization scale. Precisely, we assume that
\begin{equation}
\label{eq:FrequencyDirection}
\text{supp}(\hat{u}_i) \subseteq \{ \xi \in \R^d : |\xi| \sim N, \big| \xi / |\xi| - \xi_{*} \big| \leq \varepsilon^* \},
\end{equation}
for some $\xi_{*} \in \mathbb{S}^{d-1}$.
We assume from the outset that the metric coefficients are frequency-truncated at frequency $\ll N$, and the admissible error is subsumed in $f_i$. This can typically be achieved by commutator arguments, and we refer to the proof of Theorem \ref{thm:EndpointStrichartzWave} for details.
In summary, here we consider two approximate solutions to wave equations with quantitatively transverse propagation and show the following:

\begin{theorem}[Bilinear~estimates~for~rough~wave~equations]
\label{thm:RoughWaveEquationsIntro}
Let $u_i$ be solutions to \eqref{eq:TransverseSystem} as above. Then the following bilinear estimate holds with $p = \frac{d+3}{d+1}$
\begin{equation*}
N^2 \| u_{1} u_{2} \|_{L^p_{t,x}([0,1] \times \R^d)} \lesssim_\varepsilon N^{\frac{4}{3+s} \frac{d-1}{d+3} + \varepsilon} \prod_{i=1}^2 ( \| \nabla_{t,x} u_{i}(0) \|_{L^2} + N \| f_i \|_{L^1 L^2}).
\end{equation*}
\end{theorem}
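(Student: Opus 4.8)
The plan is to deduce Theorem~\ref{thm:RoughWaveEquationsIntro} from a clean bilinear estimate for rough Hamiltonian flows (the abstract result whose proof is the technical heart of the paper) via the $C^{1,1}$-parametrix construction built from the FBI transform. First I would rescale: after the change of variables $(t,x) \mapsto (Nt, Nx)$ the frequency-localized solutions $u_i$ become solutions (modulo the $f_i$-type errors) to wave equations at frequency $\sim 1$ on a ball of radius $R \sim N$, with metric coefficients that are $C^{1,s}$ on the unit scale but, after rescaling, have $C^{1,1}$-norm $\lesssim N^{-(1+s)}\cdot N = N^{-s}$ on the ball of size $N$; in particular on the relevant scale the coefficients are $C^{1,1}$ with small constant. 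The ellipticity and the speed-separation hypothesis $|g_k^{ij}-\nu_k\delta_{ij}|\le\varepsilon$, $|\nu_1-\nu_2|\sim 1$, guarantee that the two Hamiltonian flows (null bicharacteristics) are quantitatively transverse: the group velocities $\nabla_\xi p_k$ differ by $\gtrsim 1$ uniformly, which is exactly the transversality input needed for the bilinear estimate.

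The second step is to invoke the refined wave packet parametrix for $C^{1,1}$-coefficient dispersive equations (constructed via the FBI transform, as promised in the abstract). This writes each $u_i$, up to an error controllable by $\|f_i\|_{L^1L^2}$ and a lower-order term, as a superposition of wave packets that are transported along the Hamiltonian flow of $p_k$ and that enjoy the required space-time and space-time-frequency localization: at the rescaled frequency-$1$ scale each packet is essentially supported on a tube of dimensions $R^{1/2}\times\cdots\times R^{1/2}\times R$ with $R^{-1/2}$ frequency localization and $R^{-1}$ time-frequency (modulation) localization. This is the point at which the $C^{1,1}$ hypothesis enters quantitatively: the FBI phase can only be controlled to the accuracy $R^{-1}$ needed for the time-frequency localization because the Hamilton--Jacobi flow for $C^{1,1}$ symbols closes with exactly that error; a merely $C^{1,s}$ symbol with $s<1$ would degrade the modulation width and hence the exponent.

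The third step is to feed these two wave packet families into the abstract bilinear theorem for transverse rough Hamiltonian flows. That theorem outputs an estimate of the shape $\|u_1 u_2\|_{L^p(B_R)}\lesssim_\varepsilon R^\varepsilon \,(\text{loss factor})\,\|f_1\|_{\mathrm{data}}\|f_2\|_{\mathrm{data}}$ with $p=\frac{d+3}{d+1}$, where the loss factor compared to the constant-coefficient Wolff/Tao estimate comes precisely from the imperfect frequency and modulation localization of the rough wave packets, i.e.\ from the discrepancy between the true flow and its osculating linear approximation over a tube of length $R$. Tracking this: over a tube of length $R\sim N$ the $C^{1,1}$ error accumulates as $R^{-s}\cdot R = R^{1-s}$ in the relevant geometric quantity, which after the combinatorial/induction-on-scales argument contributes a power $R^{\frac{4}{3+s}\frac{d-1}{d+3}}$; equivalently one runs the bilinear argument at an effective "good" scale $R_0$ with $R_0^{1+s}\sim 1$-type balancing, losing $R^{\,(1-\theta)}$-worth of the full gain where $\theta = \tfrac{4}{3+s}\cdot\tfrac{d-1}{d+3}$ emerges from optimizing the Córdoba/Wolff square-function estimate against the modulation loss. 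Undoing the rescaling converts $R\sim N$ back to the factor $N^{\frac{4}{3+s}\frac{d-1}{d+3}+\varepsilon}$ and the two factors $N^{-1}$ (one per solution, from $\nabla_{t,x}$ versus the frequency-$N$ normalization) reproduce the $N^2$ on the left-hand side and the $N\|f_i\|_{L^1L^2}$ on the right.

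The main obstacle, and where essentially all the work lies, is the second and third steps together: constructing a wave packet parametrix for $C^{1,1}$ coefficients that simultaneously has honest spatial-tube localization \emph{and} the sharp $R^{-1}$ time-frequency (characteristic-surface) localization — standard $C^{1,1}$ parametrices (e.g.\ Smith's, Smith--Tataru) give the spatial localization but are not a priori adapted to the $\tau=p(x,\xi)$ concentration that the bilinear combinatorics requires — and then proving that the abstract bilinear estimate is robust enough to tolerate the resulting fuzziness in $\tau$, with the loss quantified exactly as above rather than merely qualitatively. Controlling the interaction of many tubes whose directions and curvatures vary (because the metric is non-constant) in the transverse-intersection counting lemma, uniformly in the $C^{1,1}$-small perturbation, is the crux; the speed-separation hypothesis is what keeps this counting at the constant-coefficient level up to the stated $N^\varepsilon$ and the explicit $s$-dependent power.
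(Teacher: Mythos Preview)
Your proposal has the right architecture at the coarsest level—rescale, build wave packets via the FBI transform, verify transversality, invoke the abstract bilinear theorem—but it misidentifies the mechanism that produces the $s$-dependent exponent, and this is a genuine gap.

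You attribute the loss $N^{\frac{4}{3+s}\frac{d-1}{d+3}}$ to degraded time-frequency localization of wave packets built directly from a $C^{1,s}$ symbol, with a heuristic about error accumulation ``$R^{-s}\cdot R = R^{1-s}$'' along tubes and an unspecified optimization. The paper does \emph{not} attempt a wave packet parametrix for $C^{1,s}$ coefficients with $s<1$. Instead it performs a paradifferential truncation of the coefficients at spatial frequency $N^{\sigma}$ for a parameter $\sigma\in[\tfrac12,1]$ to be chosen, writing
\[
\partial_t^2 u_k = \partial_i g^{ij}_{k,\le N^{\sigma}}\partial_j u_k + \big(\partial_i g^{ij}_{k,N^{\sigma}\lesssim\cdot\ll N}\partial_j u_k + f_k\big).
\]
The truncated symbol then satisfies the regularity hypotheses of Assumption~\ref{ass:HamiltonianFlowRegularity} with $R=N^{2(1-\sigma)}$ after rescaling, so Theorem~\ref{thm:WavePacketDecompositions} and then Theorem~\ref{thm:GeneralizationBilinearCone} apply with \emph{no} loss (only $R^{\varepsilon}$) on time intervals $I$ of length $|I|=N^{1-2\sigma}$. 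The truncation remainder sits in the Duhamel term and costs $\|g^{ij}_{\gtrsim N^{\sigma}}\|_{L^\infty}\lesssim N^{-\sigma(1+s)}\|g\|_{C^{1,s}}$, hence on $I$ contributes $N^{2-\sigma(3+s)}$ after H\"older in time. Balancing this against $1$ forces $\sigma=\tfrac{2}{3+s}$; the final loss then comes entirely from the $\ell^{p}_I$-summation over the $N^{2\sigma-1}$ subintervals of $[0,1]$, combined with the scaling factor $N^{\frac{d-1}{d+3}}$ that appears on each $I$. Your ``effective good scale $R_0$ with $R_0^{1+s}\sim 1$'' and the claimed emergence of the exponent from a C\'ordoba/Wolff square-function trade-off do not correspond to any step that actually occurs; the bilinear combinatorics are run at full strength on each short interval, and the $s$-dependence is purely an artifact of the paradifferential splitting and interval counting.
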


For comparison we record the following estimate for Euclidean wave equations with transversely intersecting cones: Let $u_1, u_2 : \R^{1+d} \to \C$ denote solutions to constant-coefficient wave equations
\begin{equation*}
\left\{ \begin{array}{cl}
\partial_t^2 u_1 & = \Delta_{g_1} u_1, \\
\partial_t^2 u_2 &= \Delta_{g_2} u_2, 
\end{array} \right.
\end{equation*}
with $g^{ij}_k(x,t) \equiv g^{ij}_k$ with $|g^{ij}_k - \nu_k \delta_{ij}| \leq \varepsilon^* \ll 1$, $\nu_i \sim 1$ and $|\nu_1 - \nu_2| \gtrsim 1$. Here $\Delta_{g}: =  \partial_i g^{ij} \partial_j$.
Then for functions $u_i$  frequency-localized like in \eqref{eq:FrequencyDirection} 
we have the bilinear estimate 
\begin{equation*}
\| \nabla_x u_1 \cdot \nabla_x u_2 \|_{L^{\frac{d+3}{d+1}}_{t,x}([0,1] \times \R^d)} \lesssim_{\varepsilon} N^{\frac{d-1}{d+3}+\varepsilon} \| \nabla_{t,x} u_{1}(0) \|_{L^2_x} \| \nabla_{t,x} u_{2}(0) \|_{L^2_x}.
\end{equation*}
This follows in the Euclidean case from writing
\begin{equation*}
u_i(t,x) = \cos(t \sqrt{- \Delta_{g_i}}) u_{i}(0) + \frac{\sin(t \sqrt{- \Delta_{g_i}})}{\sqrt{- \Delta_{g_i}}} \dot{u}_{i}(0),
\end{equation*}
multiplying out the solutions to the half-wave equation, rescaling and applying Wolff's result \cite{Wolff01}, which is known to be sharp up to endpoints. 

\smallskip

The estimate in the above theorem goes beyond the range of linear Strichartz estimates (after applying H\"older's inequality), which can be applied without transversality assumptions. Linear Strichartz estimates for wave equations with rough coefficients were proved by the second author \cite{Tataru2000,Tataru2002,Tataru03} using the FBI transform. The FBI transform decomposes a function into \emph{coherent states}, i.e., Gaussians, which likewise exhaust the uncertainty relation. It goes back to this series of papers that the coherent states propagate along the Hamiltonian flow in phase space:
\begin{equation*}
\left\{ \begin{array}{cl}
\dot{x}_t &= \frac{\partial a_N}{\partial \xi}, \\
\dot{\xi}_t &= - \frac{\partial a_N}{\partial x},
\end{array} \right.
\end{equation*}
where $a_N$ denotes the Hamiltonian after normalization of frequency to unit scale
(which changes the space-time scale to $R = N$).
But for $C^{1,1}$-coefficients (after a paradifferential decomposition), the above Hamiltonian flow is only smooth for times $|t| \lesssim R^{\frac{1}{2}}$. For larger times, we can use the localization properties of the Gaussians.

\smallskip

We now provide a brief overview of our results and of our strategy. Our general bilinear estimates are formulated in Theorems \ref{thm:GeneralizationBilinearParaboloid} and \ref{thm:GeneralizationBilinearCone}, for Hamiltonian flows under low regularity assumptions and conditional upon the existence of a wave packet decomposition. We require the wave packet decomposition to satisfy localization in space, frequency localization, and time-frequency localization,
as described in Assumption \ref{ass:WP}.
This turns out to be more robust than simply imposing specific regularity assumptions on the coefficients.

Rather than simply considering two flows
which are uniformly transverse, we introduce a transversality parameter $\nu$, which should be thought of as the angle between the two propagation directions. Then we study the dependence 
of the implicit constant in the bilinear
estimates on $\nu$.
In the general case we are able to  obtain the sharp dependence on the transversality parameter, but there 
is a price to pay, namely that we  require a flatness assumption on the coefficients matching the transversality, in order to control interactions with very small transversality. Notably, in the case of self-interactions we can drop this additional assumption; see Section \ref{subsection:SelfInteractions}.

\smallskip

In the second part of the paper we use a modified FBI transform in order to establish a wave packet decomposition  for solutions to dispersive PDE's in the space-time region $|(x,t)| \leq R$ governed by symbols $a \in C^{\infty}(\R^{d+1} \times \R^d)$, which satisfy the following regularity estimates (see Assumption \ref{ass:HamiltonianFlowRegularity} for details):
\begin{equation*}
        |\partial_z^{\alpha} \partial_{\xi}^{\beta} a(x,t;\xi)| \lesssim_{\alpha,\beta} \begin{cases}
            R^{-|\alpha|}, &\quad 0 \leq |\alpha| \leq 2, \\
            R^{- \frac{(|\alpha|-2)_+}{2}-2}, &\quad |\alpha| \geq 2.
        \end{cases}
\end{equation*}
 Our wave packet decomposition will be adapted to a lattice $\Lambda_R$ in phase space with the position and frequency scales $\Delta x \sim R^{\frac{1}{2}}$ and $\Delta \xi \sim R^{-\frac{1}{2}}$. Such a decomposition can be constructed globally, but for our purposes we need to 
 consider data which is phase space localized to a set $\mathcal{Y}$, which has frequencies localized to small balls of size $\nu$, and for an intermediate scale $r$ use wave packets associated to an enlargement $\overline{\mathcal{Y}}_r$ as in Assumption \ref{ass:RegularitySet} and Definition \ref{def:Thickening}.
 The following  provides a wave packet decomposition for such phase space localized functions: 
\begin{theorem}
\label{thm:WavePacketDecompositions}
Let $R \gg 1$, and $\nu \in [R^{-\frac{1}{2}+\delta_0},1]$. Suppose that $a$ is a Hamiltonian, which satisfies Assumption \ref{ass:HamiltonianFlowRegularity}, with the set $\mathcal{Y}$ satisfying Assumption \ref{ass:RegularitySet}. Let $r \in [\nu^{-2-\delta_0},R]$ and $\mathcal{Y}_r, \, \overline{\mathcal{Y}}_r$ be defined like in Definition \ref{def:Thickening}. 
There is a pseudo-differential operator $\chi_{\overline{\mathcal{Y}}_r}(x,D): L^2(\R^d) \to L^2(\R^d)$ such that for solutions to
\begin{equation*}
\left\{ \begin{array}{cl}
(D_t + a^w) u &= 0, \quad (t,x) \in \R^{d+1}, \\
u(0) &= \chi_{\overline{\mathcal{Y}}_r}(x,D) u_0 \in L^2(\R^d)
\end{array} \right.
\end{equation*}
we have the decomposition in $L^2$ for $t \in [-R,R]$ with a rapidly decaying error term
\begin{equation*}
u = \sum_{(x_0,\xi_0) \in \Lambda_r \cap \overline{\mathcal{Y}}_r} \alpha_{(x_0,\xi_0)} \phi_{(x_0,\xi_0)}(t) + g(t)
\end{equation*}
with
\begin{equation*}
    \| g(t) \|_{L^p} \lesssim \nu^{d \big( \frac12 - \frac1p \big)} \text{RapDec}(r \nu^2) \| u_0 \|_{L^2} \text{ for } 2 \leq p \leq \infty
\end{equation*} 
and $(\phi_{(x_0,\xi_0)})_{(x_0,\xi_0) \in \Lambda_r \cap \overline{\mathcal{Y}}_r}$ are normalized wave packets satisfying Assumptions \ref{ass:WP} and \ref{ass:Frames}.
\end{theorem}

In the bilinear estimates $\nu$ will play the role of the transversality parameter. The lower bound $\nu^{-2}$ for the time scale is the minimal one for which the wave packet scales are compatible with the $\nu$ scale for the frequency localization. 

The bounds on $a$ imposed above arise naturally as follows. Consider for definiteness a $1$-homogeneous symbol $a(x,t,\xi) = \sqrt{g^{ij}_{\leq N^{\frac{1}{2}}}(x,t) \xi_i \xi_j}$ with coefficients in $C^{1,1}$, and a solution $u$ to $(D_t + a^w) P_N u = f$. At frequencies $N$ the paradifferential decomposition suggests to truncate the coefficients of the metric at frequencies $\leq N^{\frac{1}{2}}$ because the incurred error can be estimated perturbatively. After rescaling to unit frequencies we find
\begin{equation*}
a_N(x,t;\xi) = \sqrt{g^{ij}_{\leq N^{\frac{1}{2}}}(N^{-1} x,N^{-1} t) \xi_i \xi_j},
\end{equation*}
and we see that the regularity estimates from Theorem \ref{thm:WavePacketDecompositions} are satisfied with $R= N$.

\smallskip

$C^{1,1}$-regularity of the symbols is natural to assume as the flow ceases to be well-defined in general for rougher regularities. We show that basic wave packet properties (see Section \ref{section:WavePacketDecomposition}) are satisfied when defining packets through conjugation to phase space via the FBI transform and phase space localization.
Similar in spirit, we point out how for wave or Schr\"odinger equations with $C^{1,s}$-coefficients the Euclidean wave packet decompositions can be recovered on a frequency-dependent time scale.

It is instructive to observe that wave packet decompositions can be regarded as more fundamental than dispersive decay properties. With a wave packet decomposition  at hand, which is governed by a $C^{1,1}$-Hamiltonian flow, it is essentially well-known how to infer the same dispersive properties like in Euclidean space provided that the flow satisfies non-degeneracy assumptions like is the case for the Schr\"odinger or wave evolution. And with the dispersive estimate at hand, Keel--Tao \cite{KeelTao1998} pointed out how to obtain the endpoint Strichartz estimates via bilinear interpolation arguments; see \cite[Theorem~4]{Tataru2000} for an extension to suitable parametrices. 

\smallskip

\subsection{Linear Fourier extension and oscillatory integral estimates}

The following remarks on linear (adjoint) restriction estimates are in order. 
For the Fourier extension operator these read
\begin{equation}
\label{eq:LinearFourierRestrictionIntro}
\| \mathcal{E}_{x} f \|_{L^p_{t,x}(B_{d+1}(0,R))} \lesssim_\varepsilon R^{\varepsilon} \| f \|_{L^\infty}
\end{equation}
and are conjectured to hold for 
\begin{equation*}
p >
\begin{cases}
\frac{2(d+1)}{d}, \quad &x = \Delta, \\
\frac{2d}{d-1}, \quad &x = c.
\end{cases}
\end{equation*}
The conjectural range of $p$ hinges on the curvature properties of the surface.

There is a large body of literature\footnote{This is by no means exhaustive. We refer also to the references within.}  \cite{Bourgain1991II,Wisewell2005,Lee2006,BennettCarberyTao2006,BourgainGuth2011, GuthHickmanIliopoulou2019,Schippa2024Oscillatory,GuoWangZhang2024} on $L^p$-boundedness properties for oscillatory integral operators with smooth phase functions and amplitude functions
\begin{equation*}
T^\lambda f(z) = \int e^{i \phi^\lambda(z,\xi)} a^\lambda(z,\xi) f(\xi) d\xi.
\end{equation*}
This clearly generalizes \eqref{eq:LinearFourierRestrictionIntro}. Above $\phi^\lambda(z,\xi) = \lambda \phi(z/\lambda,\xi)$ and $a^\lambda(z,\xi) = a(z/\lambda,\xi)$; $a, \phi \in C^\infty$, and we suppose that the phase function can be regarded as smooth perturbation to the cone or paraboloid. A natural example arises from parametrices of solutions to equations governed by $p \in S^k_{1,0}$:
\begin{equation*}
\left\{ \begin{array}{cl}
D_t u + p(x,t,D) u &= 0, \quad (x,t) \in \R^d \times \R, \\
u(0) &= u_0.
\end{array} \right.
\end{equation*}
The solution can be represented by a Fourier Integral Operator
\begin{equation*}
u(t,x) = \int e^{i \phi(t,x;\xi)} a(x,t;\xi) \hat{u}(\xi) d\xi + R_t u
\end{equation*}
with an $L^p$-bounded error term $R_t(x,D)$. The phase function $\phi$ solves the eikonal equation
\begin{equation*}
\left\{ \begin{array}{cl}
\partial_t \phi(x,t;\xi) + p(x,\nabla_x \phi) &= 0, \quad (x,t,\xi) \in \R^d \times \R \times \R^d, \\
\phi(x,0;\xi) &= \langle x, \xi \rangle.
\end{array} \right.
\end{equation*}
For instance when $p(x,\xi) = \sqrt{g^{ij}(x) \xi_i \xi_j}$ as is the case for solutions to (half-)wave equations on compact Riemannian manifolds, we find that $\phi(x,t;\xi) = \langle x, \xi \rangle - t \sqrt{g^{ij}(x) \xi_i \xi_j} + \ldots$ can be regarded as smooth perturbation of the phase function for a cone extension operator. Moreover, in the translation-invariant case $p(x,t,\xi) = p_0(\xi)$ the eikonal equation is solved by $\phi(x,t,\xi) = \langle x, \xi \rangle - t p_0(\xi)$, which recovers the Fourier extension case.

\medskip

One may compare the $L^p$-estimates for oscillatory integral operators with the similar bounds for the Fourier extension. Oversimplifying matters a bit, it transpired that the multilinear estimates hinging on transversality remained valid as pointed out for bilinear estimates by S. Lee \cite{Lee2006} and for the $d+1$-multilinear estimate by Bennett--Carbery--Tao \cite{BennettCarberyTao2006}. 

So did some of the techniques to recover linear from multilinear estimates. In the bilinear case this is accomplished by Whitney decomposition and rescaling; see \cite{TaoVargasVega1998}. The Bourgain--Guth \cite{BourgainGuth2011} argument to infer linear estimates from higher multilinear estimates transpires likewise. But due to the phenomenon of \emph{Kakeya compression} (see \cite{Wisewell2005,GuthHickmanIliopoulou2019}), that is wave packets clustering in lower-dimensional manifolds, the range of linear estimates for oscillatory integral operators is strictly smaller than for the linear Fourier extension operators.

Indeed, following the multilinear approach to linear estimates, sharp linear estimates for oscillatory integral operators could be proved although the restriction conjecture remains open (see \cite{Lee2006,BourgainGuth2011,GuthHickmanIliopoulou2019,Schippa2024Oscillatory}).

\medskip

In summary, our contribution can be regarded as first step to examine the ramifications of the more recent developments in multilinear Fourier restriction estimates for dispersive PDEs with rough coefficients. And taking into consideration that the linear estimates for smooth phase functions are strictly worse than in the constant-coefficient case, it is remarkable that the bilinear estimates are retained for $C^{1,1}$-coefficients.

\bigskip

\textbf{Basic notations:} 
\begin{itemize}
\item Spatial variables are denoted by $x \in \R^d$. Time is denoted by $t \in \R$. We let $z=(x,t) \in \R^{d+1}$ denote space-time. The dual frequency variables are denoted by $(\xi,\tau) \in \R^d \times \R$. We let $D_t = -i \partial_t$ and $D_x = - i \nabla_x$.  
\item For $x \in \R^d$ we denote the Euclidean norm by $|x|^2 = \sum_{i=1}^d x_i^2$ and 
\begin{equation*}
    B_d(y,R)= \{ x \in \R^d : |x-y| < R \}.
\end{equation*}
\item For $A, B > 0$ we write $A \lesssim B$ provided that there is a harmless constant (depending on external parameters like dimension, etc) $C \geq 1$ such that $A \leq C B$. Similarly, we let $A \sim B$ provided that there is $C \geq 1$ like above such that $A \leq C B$ and $B \leq C A$. More involved dependencies of the constant are indicated by subindices, e.g., $A \lesssim_{\alpha} B$ means that $A \leq C(\alpha) B$ for a constant depending on $\alpha$.
\item For $A, B > 0$ we write $A \ll B$ to indicate that $A/B$ is a small constant fixed at the outset of the argument.
\item For $R \geq 1$ and $F: \R \to \R_{>0}$ we write $F(R) = \text{RapDec}(R)$
provided that for any $N \in \N$ there is $C_N \geq 1$ such that for $R \geq 1$ we have
\begin{equation*}
F(R) \leq C_N (1 + R)^{-N}.
\end{equation*}
\item Let $A \subseteq \R^d$. By $\chi_A$ we denote the indicator function
\begin{equation*}
   \chi_A(x) = \begin{cases}
        1, &\quad x \in A, \\
        0, &\quad x \notin A,
   \end{cases}
\end{equation*}
and for $r > 0$ by $\mathcal{N}_r(A) = \{ x \in \R^d : \text{dist}(x,A) < r \}.$
\item For $x \in \R$ we denote by $x_+ = \max(x,0)$.
\end{itemize}

\medskip

\emph{Outline of the paper.} In Section \ref{section:BilinearRoughEstimates} we formulate the bilinear estimates for transversely intersecting curved tubes governed by rough Hamiltonian flows. The bilinear estimates are established in Section \ref{section:ProofBilinear} through induction-on-scales and combinatorial arguments. In Section \ref{section:WavePacketDecomposition} we show wave packet decompositions for symbols satisfying decay and regularity estimates relying on the FBI transform. Finally, in Section \ref{section:Applications} we apply the wave packet decomposition to recover sharp Strichartz estimates and bilinear estimates for PDE's with rough coefficients. Moreover, we show sharp bilinear estimates for self-interactions of Schr\"odinger and wave equations with small transversality without additional flatness assumptions on the metrics in Section \ref{subsection:SelfInteractions}.

\section{Bilinear estimates for rough Hamiltonian flows}
\label{section:BilinearRoughEstimates}

We formulate the validity of the bilinear estimate in terms of the properties of the Hamiltonian flow. Examples of symbols we consider are
\begin{equation*}
p_1(x,t,\xi) = |\xi|_{g} = (g^{ij}(x,t) \xi_i \xi_j)^{\frac{1}{2}}, \quad p_2(x,t,\xi) = g^{ij}(x,t) \xi_i \xi_j.
\end{equation*}
We always require non-degeneracy of the metric and in the first case, where the PDE describes a half-wave evolution,  ellipticity of the metric. In the second case the evolution describes a (possibly non-elliptic) Schr\"odinger equation.

\subsection{Basic regularity assumptions}
With applications to quasilinear PDE's in mind, we want to consider rough coefficients. More precisely, after a standard paradifferential decomposition, we can assume the coefficients to be smooth, but our bounds shall depend only on as few $x$-derivatives as possible.

\begin{assumption}[Regularity~assumptions~on~the~Hamiltonian~at~scale~$R$]
\label{ass:RegularityFlow}
Let $R \gg 1$ be a large spatial parameter, and $\nu \in (R^{-\frac{1}{2}+\delta_0},1]$ denote a transversality parameter. We let $\epsilon_0 \ll \nu$.
Let $p \in C_{z}^{1,1} C_{\xi}^3(\R^d \times \R \times \R^d)$. The mixed regularity is quantified for $1 \leq |\alpha| \geq 2$, $0 \leq |\beta| \leq 3$ by
\begin{equation}
\label{eq:MixedRegularity}
|\partial_z^\alpha \partial_\xi^{\beta} p(z,\xi)| \leq \epsilon_0 R^{-|\alpha|}, \quad (z,\xi) \in \R^d \times \R \times \R^d.
\end{equation}

For the regularity in frequency variables we suppose there is $C_2 \geq 1$ such that for $0 \leq |\beta| \leq 3$:
\begin{equation}
\label{eq:XiRegularity}
|\partial_{\xi}^{\beta} p(z,\xi)| \leq C_2, \quad (z,\xi) \in \R^d \times \R \times \R^d.
\end{equation}
\end{assumption}

We remark that the flatness assumption \eqref{eq:MixedRegularity} seems fairly strong for $\nu \ll 1$. For self-interactions we show in Section \ref{subsection:SelfInteractions} how rescaling allows for reduced regularity assumptions $\epsilon_0 \sim 1$ in \eqref{eq:MixedRegularity} although we can allow for a transversality parameter $\nu \ll 1$.

We let $\overline{C}_2 = e^{4 C_2}$ and record the following bi-Lipschitz consequence for the bicharacteristics. Introduce the following metric on phase space:
\begin{equation*}
    d_R((x,\xi),(y,\eta)) = R^{-\frac{1}{2}} |x-y| + R^{\frac{1}{2}} |\xi - \eta|.
\end{equation*}

\begin{lemma}[Bi-Lipschitz~property~of~bicharacteristics]
\label{lem:GeometryWavepackets}
Let $p \in C_{z,\xi}^{1,1}$ satisfy Assumption \ref{ass:RegularityFlow}. Let $(x_i^t,\xi_i^t)$, $i=1,2$ be two bicharacteristics with $(x_i^0,\xi_i^0) \in \overline{\mathcal{Y}}_i$.
Then it holds for $|t| \leq R$:
\begin{equation*}
d_R((x_1^t,\xi_1^t),(x_2^t,\xi_2^t)) \leq d_R((x_1^0,\xi_1^0),(x_2^0,\xi_2^0)) e^{2 C_2 R^{-1} |t|}.
\end{equation*}
\end{lemma}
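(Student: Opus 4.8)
The plan is to derive a Grönwall-type differential inequality for the quantity $t \mapsto d_R((x_1^t,\xi_1^t),(x_2^t,\xi_2^t))$. Write $y^t = x_1^t - x_2^t$ and $\eta^t = \xi_1^t - \xi_2^t$. Since each $(x_i^t,\xi_i^t)$ solves the Hamiltonian system $\dot x_i^t = \partial_\xi p(x_i^t,t,\xi_i^t)$, $\dot \xi_i^t = -\partial_x p(x_i^t,t,\xi_i^t)$, subtracting the two systems gives
\begin{equation*}
\dot y^t = \partial_\xi p(x_1^t,t,\xi_1^t) - \partial_\xi p(x_2^t,t,\xi_2^t), \qquad \dot \eta^t = -\partial_x p(x_1^t,t,\xi_1^t) + \partial_x p(x_2^t,t,\xi_2^t).
\end{equation*}
The idea is to estimate each difference by the fundamental theorem of calculus along the segment joining $(x_1^t,\xi_1^t)$ to $(x_2^t,\xi_2^t)$, so that the increments are controlled by second derivatives of $p$ times $(|y^t|,|\eta^t|)$. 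Here the two halves of Assumption \ref{ass:RegularityFlow} enter with different weights: the $\partial_\xi^2 p$ term (bounded by $C_2$) multiplies $|\eta^t|$, the mixed $\partial_x\partial_\xi p$ and $\partial_x^2 p$ terms (bounded by $\epsilon R^{-1}$, resp. $\epsilon R^{-2}$) multiply $|y^t|$, and similarly for $\dot\eta^t$.

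The crucial point is that the metric $d_R$ is weighted precisely so that these cross-terms balance. Computing $\frac{d}{dt} d_R = R^{-1/2} \frac{y^t \cdot \dot y^t}{|y^t|} + R^{1/2} \frac{\eta^t \cdot \dot \eta^t}{|\eta^t|}$ and inserting the bounds, one finds that the $\partial_\xi^2 p$ contribution to $R^{-1/2}|\dot y^t|$ is $\lesssim R^{-1/2} C_2 |\eta^t| = C_2 R^{-1} \cdot (R^{1/2}|\eta^t|)$, i.e. a multiple of $R^{-1} d_R$; the $\partial_x^2 p$ contribution to $R^{1/2}|\dot\eta^t|$ is $\lesssim R^{1/2} \epsilon R^{-2} |y^t| = \epsilon R^{-1} \cdot (R^{-1/2}|y^t|) R^{-1/2}$... wait, one checks it is $\epsilon R^{-1} d_R$ as well; and the mixed terms $\partial_x \partial_\xi p$ contribute $R^{-1/2}\epsilon R^{-1}|y^t|$ to $R^{-1/2}|\dot y^t|$, which is $\epsilon R^{-2}\cdot(R^{-1/2}|y^t|)$... again a multiple of $R^{-1}d_R$ up to the small factor $\epsilon/\nu \ll 1$ or outright smaller powers of $R$. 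Collecting, one obtains $\frac{d}{dt} d_R \leq c\, C_2 R^{-1} d_R$ for an absolute constant $c$; choosing the harmless constants so that $c \leq 2$ and applying Grönwall yields $d_R((x_1^t,\xi_1^t),(x_2^t,\xi_2^t)) \leq d_R((x_1^0,\xi_1^0),(x_2^0,\xi_2^0)) e^{2C_2 R^{-1}|t|}$ for $|t|\le R$, as claimed.

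**Main obstacle.** The only genuinely delicate point is the bookkeeping of the $R$-powers: one must verify that every term produced by the second derivatives of $p$, after dividing by $|y^t|$ or $|\eta^t|$ and reweighting by the $R^{\pm 1/2}$ factors in $d_R$, is bounded by $R^{-1} d_R$ with the constant coming only from the $C_2$ bound \eqref{eq:XiRegularity} (the frequency regularity), while all terms involving $x$-derivatives carry an extra $\epsilon$ and can be absorbed since $\epsilon \ll \nu \leq 1$. A minor technical nuisance is that $t \mapsto d_R(\cdots)$ is only Lipschitz, not $C^1$, where $y^t$ or $\eta^t$ vanishes; this is handled in the standard way, either by replacing $|y^t|$ with $\sqrt{|y^t|^2 + \delta^2}$ and letting $\delta \to 0$, or by working with the upper Dini derivative, neither of which affects the Grönwall conclusion. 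Note also that one needs the bicharacteristics to exist and stay $C^1$ on $[-R,R]$; this is where the hypothesis $p \in C^{1,1}_{z,\xi}$ is used, guaranteeing a Lipschitz Hamiltonian vector field and hence global existence and uniqueness of the flow on the relevant time interval.
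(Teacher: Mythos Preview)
Your proposal is correct and follows essentially the same approach as the paper: both expand the differences $\dot y^t,\dot\eta^t$ via the mean value theorem in $(x,\xi)$, track the $R$-weights so that every term is bounded by $C_2 R^{-1} d_R$ (with the $x$-derivative contributions carrying the extra factor $\epsilon$), and close by Gr\"onwall. The only cosmetic difference is that the paper uses the integral form---writing $x_1^t-x_2^t$ and $\xi_1^t-\xi_2^t$ as time integrals before taking absolute values---which automatically avoids the Lipschitz-but-not-$C^1$ issue you flag for $d_R$ at points where $y^t$ or $\eta^t$ vanishes.
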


\begin{proof}
This will be a consequence of Gr\o nwall's lemma. In the following we suppress a possible explicit time-dependence of the Hamiltonian for brevity. First, we write by the fundamental theorem:
\begin{equation*}
    x_1^t-x_2^t = x_1^0 - x_2^0 + \int_0^t \big[ \frac{\partial p}{\partial \xi}(x_1^s,\xi_1^s) - \frac{\partial p}{\partial \xi}(x_2^s,\xi_2^s) \big] ds .
\end{equation*}
Next, we use the mean value theorem:
\begin{equation*}
\begin{split}
    \frac{\partial p}{\partial \xi}(x_1^s,\xi_1^s) - \frac{\partial p}{\partial \xi}(x_2^s,\xi_2^s) &= \int_0^1 \frac{\partial^2 p}{\partial x \partial \xi}(x^s_1 + \theta(x_2^s - x_1^s), \xi_1^s + \theta (\xi_2^s - \xi_1^s)) (x_2^s - x_1^s) \\
    &\quad + \frac{\partial^2 p}{(\partial \xi)^2}(x_1^s+\theta(x_2^s-x_1^s),\xi_1^s + \theta(\xi_2^s - \xi_1^s)) (\xi_2^s - \xi_1^s) ds.
\end{split}
\end{equation*}
We obtain by Minkowski's inequality:
\begin{equation*}
    \begin{split}
    R^{-\frac{1}{2}}|x_1^t-x_2^t| &\leq R^{-\frac{1}{2}} |x_1^0 - x_2^0| + \epsilon R^{-1} \int_0^t R^{-\frac{1}{2}} |x_2^s - x_1^s| ds + C_2 R^{-1} \int_0^t R^{\frac{1}{2}} |\xi_1^s - \xi_2^s| ds \\
    &\leq R^{-\frac{1}{2}} |x_1^0 - x_2^0| + C_2 R^{-1} \int_0^t d_R((x_1^s,\xi_1^s),(x_2^s,\xi_2^s)) ds.
    \end{split}
\end{equation*}

For the frequency component we find by the fundamental theorem:
\begin{equation*}
\xi_1^t - \xi_2^t = \xi_1^0 - \xi_2^0 - \int_0^t \big( \frac{\partial p}{\partial x}(x_1^s,\xi_1^s) - \frac{\partial p}{\partial x}(x_2^s,\xi_2^s) \big) ds.
\end{equation*}
A further application of the mean-value theorem yields
\begin{equation*}
\frac{\partial p}{\partial x}(x_1^s,\xi_1^s) - \frac{\partial p}{\partial x}(x_2^s,\xi_2^s) = \frac{\partial^2 p}{(\partial x)^2}(x^*,\xi^*) (x_1^s - x_2^s) + \frac{\partial^2 p}{(\partial x \partial \xi)}(x^*,\xi^*) (\xi_1^s - \xi_2^s).
\end{equation*}
By the regularity estimates for $p$ and an application of Minkowski's inequality we find
\begin{equation*}
\begin{split}
R^{\frac{1}{2}} |\xi_1^t - \xi_2^t| &\leq R^{\frac{1}{2}} |\xi_1^0 - \xi_2^0|+ \epsilon R^{-1} \big( \int_0^t R^{-\frac{1}{2}} |x_1^s - x_2^s| + R^{\frac{1}{2}} |\xi_1^s - \xi_2^s| ds \big) \\
&\leq R^{\frac{1}{2}} |\xi_1^0 - \xi_2^0|+ \epsilon R^{-1} \int_0^t d_R((x_1^s,\xi_1^s),(x_2^s,\xi_2^s)) ds.
\end{split}
\end{equation*}

Taking the estimates together we find
\begin{equation*}
    d_R((x_1^t,\xi_1^t),(x_2^t,\xi_2^t)) \leq d_R((x_1^0,\xi_1^0),(x_2^0,\xi_2^0)) +2  C_2 R^{-1} \int_0^t d_R((x_1^s,\xi_1^s),(x_2^s,\xi_2^s)) ds,
\end{equation*}
and the conclusion follows from applying Gr\o nwall's inequality in integral form.
\end{proof}

\smallskip

We shall consider initial data essentially localized to subsets $\mathcal{Y} \subseteq T^* \R^d$ in the phase space, which will be realized via application of a pseudo-differential operator (PDO). $\mathcal{Y}$ is supported on a cube of size $R$ and in frequencies on a scale $\nu \in (R^{-\frac{1}{2}+\delta_0},1]$. Later, $\nu$ will quantify the transversality of intersecting bicharacteristics. We thicken $\mathcal{Y}$ with a margin of size $R \nu$ in space and with a margin of $\nu$ in frequency; on the scale $r = \nu^{-2}$ the symbol of the PDO will be supported on the $(2 R \nu,2 \nu)$-thickening. At larger scales we can allow for smaller margin, which leads us to sets $\overline{\mathcal{Y}}_r$. Then, when applying the phase space transform on a scale $r \in [\nu^{-2},R]$, we obtain for the phase space portion on $\overline{\mathcal{Y}}_r^c$ an error of $\nu^{d \big( \frac{1}{2}- \frac{1}{2}\big)} \text{RapDec}(r \nu^2) \| f \|_2$. In the following we denote by $\pi_a: T^* \R^d \to \R^d$, $(x,\xi) \mapsto a$ for $a \in \{x,\xi\}$ the coordinate projections from phase space. 

\begin{assumption}[Size~assumptions~on~$\mathcal{Y}$]
\label{ass:RegularitySet}
Let $R \gg 1$, $\emptyset \neq \mathcal{Y} \subseteq B_d(0,10 R) \times \R^d$ be an open set, and $c \ll 1 \ll C$. We require that $\mathcal{Y}$ satisfies the following size 
conditions:
\begin{equation*}
    \pi_{\xi}(\mathcal{Y}) \subseteq B_d(\xi^*, C \nu).
\end{equation*}
We consider a thickening $\overline{\mathcal{Y}}$ which is defined as follows:
\begin{equation*}
    \overline{\mathcal{Y}} = \{ (y,\eta) \in T^* \R^d : \; \exists (x,\xi) \in \mathcal{Y} : |y-x| < C R \nu \wedge |\eta - \xi | < C \nu \}.
\end{equation*}

Suppose that the flow-out of the second thickening $\overline{\overline{\mathcal{Y}}}$ under the Hamiltonian flow governed by $p$ for $t \in [-10R,10R]$
\begin{equation}
\left\{ \begin{array}{cl}
\label{eq:RescaledHamiltonianFlowAssumptions}
\dot{x}_t &= \frac{\partial p}{\partial \xi}, \\
\dot{\xi}_t &= - \frac{\partial p}{\partial x}
\end{array} \right.
\end{equation}
is contained in the open set
\begin{equation*}
\overline{\mathcal{Y}}_R \supseteq \{(x_t,t,\xi_t) \in \R^{d} \times \R \times \R^d : (x_0,\xi_0) \in \overline{\overline{\mathcal{Y}}}, \quad t \in [-10R,10R] \}.
\end{equation*}
In case $p$ is $1$-homogeneous in $\xi$, we can replace the frequency balls with sectors. For the first condition the modified assumption reads for some $\xi^* \in \mathbb{S}^{d-1}$
\begin{equation*}
    \{ \xi / |\xi| : \xi \in \pi_{\xi}(\mathcal{Y}) \} \subseteq B_d(\xi^*, C \nu).
\end{equation*}
\end{assumption}

That $\mathcal{Y}$ is spatially contained in an $R$-ball centered at the origin is an assumption to ease notation. Practically, by finite speed of propagation, we can suppose that the solution is localized to balls of size $R$. Since the pointwise regularity assumptions are translation-invariant, the centering at the origin is no loss of generality.

The thickening is motivated by constructing wave packets on the smallest spatial scale $\nu^{-2}$, which corresponds to the largest frequency scale $\nu$. The resolution allows us to observe single wave packets. On the other hand, the spatial thickening $R \nu$ is motivated by the metric $d_R$ on phase space, which balances the spatial and frequency margin; the distance between $\mathcal{Y}$ and $\overline{\mathcal{Y}}^c$ of $R^{\frac{1}{2}} \nu$ is propagated up to times $|t| \lesssim R$.
\begin{definition}[Thickenings~of~sets~in~phase~space]
\label{def:Thickening}
Let $\mathcal{Y}_{\nu^{-2}}$ denote the $(R \nu,\nu)$-thickening, that is a thickening of $R \nu$ spatially and $\nu$ in frequency, of $\mathcal{Y}$. For a scale $r \in [\nu^{-2-\delta_0},R]$ we consider the $(R / r^{\frac{1}{2}-\delta_0}, r^{-\frac{1}{2}+\delta_0})$ thickening of $\mathcal{Y}_{\nu^{-2}}$ denoted by $\mathcal{Y}_r$ and its thickening $(C R / r^{\frac{1}{2}-\delta_0},C r^{-\frac{1}{2}+\delta_0})$ by $\overline{\mathcal{Y}}_r$.
\end{definition}
Note that for $\overline{\mathcal{Y}}_r$ to be contained in $\overline{\mathcal{Y}}$ we require  $\nu^{-2-C\delta_0} \leq r \leq R$. Note again that the phase space distance from $\mathcal{Y}_{\nu^{-2}}$ to $\overline{\mathcal{Y}}_r^c$ is likewise propagated. The sharp localization corresponds to $\delta_0 = 0$. Allowing for $\delta_0 > 0$ gives us rapid decay off the localization. Furthermore, when carrying out the induction on scales, we shall also carry out the induction on cubes $Q_r$ not centered at the origin. This leads us to sets $\mathcal{Y}_{r,t}$, which are obtained from evolving $\mathcal{Y}_r$ under the Hamiltonian flow, the set $\overline{\mathcal{Y}}_{r,t}$ is obtained from taking its $C (R r^{-\frac{1}{2}+\delta_0}, r^{-\frac{1}{2}+\delta_0})$ thickening. Note that the margin conditions are preserved.

The thickenings come into the picture as follows: we consider a pseudo-differential operator which is identically one on $\mathcal{Y}_{\nu^{-2}}$ and supported on $\mathcal{Y}_{C \nu^{-2}}$ which is well-contained in $\overline{\mathcal{Y}}_r$ for $r \geq \nu^{-2-\delta_0'}$. Subtracting the wave packets in the whole of $\overline{\mathcal{Y}}_r$ allows us to prove the rapid decay $\text{RapDec}(r \nu^2)$ for the remainder. 
When constructing the wave packets in $\overline{\mathcal{Y}}_r \subseteq \overline{\mathcal{Y}}$, we consider bicharacteristics emanating from a lattice set. However, for technical reasons we want to control bicharacteristics in an even larger set $\overline{\overline{\mathcal{Y}}}$, which is obtained from thickening $\overline{\mathcal{Y}}$ once more.

\smallskip

As it turns out, once a flow is defined, most important for the bilinear estimates
will be geometric transversality assumptions for which it will suffice to consider regularity bounds obtained from integrating along the bicharacteristics.

\subsection{Wave packet decompositions}

To match the literature on bilinear extension estimates\footnote{Another rescaling would lead us to consider the flow on the unit time scale and in the unit cube. Then the wave packets would have a different shape compared to the literature on multilinear estimates, matching instead the body of literature on nonlinear dispersive equations.}, we consider the flow governed by $p(x,t,\xi)$ for $|(x,t)| \leq R$. We consider wave packets $\phi_{T_i}$ on scale $\Delta x \sim R^{\frac{1}{2}}$, $\Delta \xi \sim R^{-\frac{1}{2}}$. More precisely, we introduce a lattice in phase space $\Lambda_R =  R^{\frac{1}{2}} \Z^d \times R^{-\frac{1}{2}} \Z^d $ - the initial data will be decomposed according to this lattice in phase space with $\mathcal{Y}$ indicating its \emph{essential} support.

Lattice points, and by extension along the flow, \emph{wave packets},  are labeled by $T \in \Lambda_R$. We abuse notation and when the context permits, we also denote the curved cylinder obtained from enlarging the projection of the bicharacteristic also by $T_R =
\{ R^{\frac{1}{2}} \pi_x(x_T(t),\xi_T(t)) : t \in [-R,R] \} \subseteq \R^{d+1} $. We formulate the bilinear estimates in terms of wave packets, which satisfy the axioms below:
\begin{assumption}[Wave~packet~axioms]
\label{ass:WP}
Let $1 \ll r \leq R$ and $(x^T_t,\xi^T_t)$ denote a bicharacteristic of the Hamiltonian flow \eqref{eq:RescaledHamiltonianFlowAssumptions} with $(x_0^T,\xi_0^T) = T$. We require that for any $T \in \Lambda_r \cap \overline{\mathcal{Y}}$ there is a Schwartz function $\phi_T \in \mathcal{S}(\R^{1+d})$ such that for any $t \in [-r,r]$ it holds $|\phi_T(z_1)| \sim |\phi_T(z_2)|$ for $|z_1 - z_2| \lesssim r^{\frac{1}{2}}$ and $\| \phi_T \|_2 \lesssim 1$.
The functions $\phi_T$ are required to satisfy the following for any $\delta > 0$ uniformly in $t \in [-r,r]$:
\begin{itemize}
\item[1.] \emph{Localization in position:} 
\begin{equation*}
\| \chi_{\mathcal{N}^c_{r^{\frac{1}{2}+\delta}}(x^t)} \phi_T(t) \|_{L^2_x} \lesssim_{\delta} \text{RapDec}(r),
\end{equation*}
\item[2.] \emph{Localization in frequency:}
\begin{equation*}
 \| \chi_{\mathcal{N}^c_{r^{-\frac{1}{2}+\delta}}(\xi^t)} \widehat{\phi_T}(t) \|_{L^2_{\xi}} \lesssim_{\delta} \text{RapDec}(r),
\end{equation*}
\item[3.] \emph{Localization in time-frequency:} 
\begin{equation*}
\| \chi_{\mathcal{N}^c_{r^{\frac{1}{2}+\delta}}(-p(x^{t_0},t_0,\xi^{t_0}))} \mathcal{F}_t [ \chi(r^{-\frac{1}{2}} (t-t_0)) \phi_T ] \|_{L^2_{\tau}} \lesssim_{\delta} \text{RapDec}(r).
\end{equation*}
\end{itemize}
\end{assumption}

\begin{remark}
From the essentially constant property it is immediate that in the $r^{\frac{1}{2}+\delta}$-neighborhood of the central bicharacteristic we have the estimate for the amplitude
\begin{equation}
\label{eq:AmplitudeSize}
|\phi_T(x,t)| \lesssim r^{-\frac{d}{4}+ C \delta}, \quad x \in r^{\delta} T.
\end{equation}
\end{remark}

The following almost orthogonality property is immediate:
\begin{lemma}
\label{lem:AlmostOrthogonalityWavePackets}
Let $(\phi_T)_{T \in \Lambda_r \cap \overline{\mathcal{Y}}}$ be a family of wave packets, which satisfy Assumption \ref{ass:WP}. Then for any subcollection $\T' \subseteq \Lambda_r \cap \overline{\mathcal{Y}}$ and $t \in [-r,r]$ we have:
\begin{equation*}
\big\| \sum_{T' \in \T'} \phi_{T'}(t) \big\|_{L_x^2} \lesssim_{\delta'} r^{\delta'} (\# \T')^{\frac{1}{2}}
\end{equation*}
for any $\delta' > 0$.
\end{lemma}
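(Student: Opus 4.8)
The plan is a standard almost-orthogonality (Schur/$TT^\ast$) argument, with the wave packet axioms supplying the off-diagonal decay and the bi-Lipschitz property of the Hamiltonian flow supplying the combinatorics. Fix $t \in [-r,r]$. Expanding the square,
\begin{equation*}
\Big\| \sum_{T' \in \T'} \phi_{T'}(t) \Big\|_{L^2_x}^2 \leq \sum_{T' \in \T'} \sum_{T'' \in \T'} \big| \langle \phi_{T'}(t), \phi_{T''}(t) \rangle \big|,
\end{equation*}
so it suffices to prove that for each fixed $T' \in \T'$ and each $\delta > 0$ one has $\sum_{T'' \in \T'} | \langle \phi_{T'}(t), \phi_{T''}(t) \rangle | \lesssim_\delta r^{C d \delta}$, and then to choose $\delta$ small in terms of $\delta'$.

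To bound a single off-diagonal inner product I would use the dichotomy dictated by Assumption \ref{ass:WP}. Denote by $(x^t_S,\xi^t_S)$ the time-$t$ value of the bicharacteristic issued from $S \in \Lambda_r$. If the position centers are separated, $|x^t_{T'} - x^t_{T''}| > 2 r^{\frac{1}{2}+\delta}$, the $r^{\frac{1}{2}+\delta}$-neighborhoods of $x^t_{T'}$ and $x^t_{T''}$ are disjoint; splitting $\int |\phi_{T'}(t)| |\phi_{T''}(t)|$ over one of these neighborhoods and its complement and applying Cauchy--Schwarz together with $\|\phi_S(t)\|_{L^2_x} \lesssim 1$ and the position localization in Assumption \ref{ass:WP} (to whichever factor is cut off) gives $|\langle \phi_{T'}(t), \phi_{T''}(t) \rangle| \lesssim_\delta \text{RapDec}(r)$. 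If instead the frequency centers are separated, $|\xi^t_{T'} - \xi^t_{T''}| > 2 r^{-\frac{1}{2}+\delta}$, the same argument on the Fourier side, via Plancherel and the frequency localization in Assumption \ref{ass:WP}, yields the identical bound. In the remaining ``close'' case, $|x^t_{T'} - x^t_{T''}| \leq 2 r^{\frac{1}{2}+\delta}$ and $|\xi^t_{T'} - \xi^t_{T''}| \leq 2 r^{-\frac{1}{2}+\delta}$, I would only use the trivial bound $|\langle \phi_{T'}(t), \phi_{T''}(t) \rangle| \leq \|\phi_{T'}(t)\|_{L^2_x} \|\phi_{T''}(t)\|_{L^2_x} \lesssim 1$.

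The substantive step is to count, for fixed $T'$, the number of $T'' \in \Lambda_r$ falling into the close regime. In the rescaled phase-space metric $d_r((x,\xi),(y,\eta)) = r^{-\frac{1}{2}}|x-y| + r^{\frac{1}{2}}|\xi-\eta|$ the close condition is precisely $d_r\big((x^t_{T'},\xi^t_{T'}),(x^t_{T''},\xi^t_{T''})\big) \lesssim r^\delta$. Since the scale-$R$ bounds of Assumption \ref{ass:RegularityFlow} imply the corresponding scale-$r$ bounds for $r \leq R$ (for $|\alpha| \geq 1$ one has $\epsilon R^{-|\alpha|} \leq \epsilon r^{-|\alpha|}$, and the $\xi$-derivative bounds are scale-free), the Gr\o nwall argument in the proof of Lemma \ref{lem:GeometryWavepackets}, run with $R$ replaced by $r$, shows that the Hamiltonian flow is $O(1)$-bi-Lipschitz on $(T^\ast\R^d, d_r)$ for times $|t| \leq r$. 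Hence the close condition forces $d_r(T',T'') \lesssim r^\delta$, i.e.\ $T''$ lies in a box of size $\lesssim r^{\frac{1}{2}+\delta}$ in position and $\lesssim r^{-\frac{1}{2}+\delta}$ in frequency around $T'$; as $\Lambda_r = r^{\frac{1}{2}}\Z^d \times r^{-\frac{1}{2}}\Z^d$, this box contains $\lesssim r^{2d\delta}$ lattice points. Collecting the estimates, for fixed $T'$,
\begin{equation*}
\sum_{T'' \in \T'} \big| \langle \phi_{T'}(t), \phi_{T''}(t) \rangle \big| \lesssim r^{2d\delta} + (\# \T')\,\text{RapDec}(r) \lesssim r^{2d\delta},
\end{equation*}
the far contribution being negligible since $\# \T' \leq \#(\Lambda_r \cap \overline{\mathcal{Y}}) \lesssim (R\nu)^d$ is bounded by a fixed power of $R$, against which $\text{RapDec}(r)$ wins (recall $r \geq \nu^{-2-\delta_0}$). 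Summing over $T' \in \T'$ gives $\big\| \sum_{T' \in \T'} \phi_{T'}(t) \big\|_{L^2_x}^2 \lesssim (\# \T')\, r^{2d\delta}$, and taking $\delta = \delta'/d$ yields the claim.

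I expect the bi-Lipschitz counting to be the only step requiring genuine care: it is essential to work in the $d_r$-metric (equivalently, over the short window $|t| \leq r$), since applying Lemma \ref{lem:GeometryWavepackets} as literally stated — which controls $d_R$ over $|t| \leq R$ — would cost a factor $(R/r)^{d/2}$ in the count, because a $d_R$-ball enclosing the relevant position--frequency box is wasteful in the position direction. The rest, namely checking that the localization axioms of Assumption \ref{ass:WP} do force rapid off-diagonal decay once the bicharacteristics separate in position or in frequency, is routine.
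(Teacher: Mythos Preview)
Your approach is correct and is exactly the almost-orthogonality argument the paper has in mind when it declares the lemma ``immediate'' (no proof is given there): expand the square, use the position and frequency localization of Assumption~\ref{ass:WP} for off-diagonal decay, and count the near-diagonal pairs via the bi-Lipschitz property of the flow at scale $r$.

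One small correction: your claim that $\text{RapDec}(r)$ beats $(R\nu)^d$ is not justified as written, since $r$ can be much smaller than $R$ (take $\nu$ close to $1$, so $r \geq \nu^{-2-\delta_0}$ allows $r$ bounded while $R$ is arbitrary; the recall of $r \geq \nu^{-2-\delta_0}$ gives no useful lower bound on $r$ in terms of $R$). The clean fix is to invoke the polynomial bound $\#\T' \leq \#\T \lesssim r^{100d}$ from Assumption~\ref{ass:Frames}, which is precisely how the lemma is applied throughout Section~\ref{section:ProofBilinear}; then $(\#\T')\,\text{RapDec}(r)$ is again $\text{RapDec}(r)$ and the far contribution is harmless. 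With that adjustment your argument is complete.
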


Practically, the wave packets will constitute solutions to a PDE evolution for initial data localized in phase space. 
We make the following assumption, which relates the two-parameter family $(S(t,s))_{t,s}$ governing the evolution with the wave packet decomposition. In the following let $Q_R$ denote a space-time cube of size $R$ centered at the origin.

\begin{assumption}[Frame~property~for~approximate~solutions]
\label{ass:Frames}
    Let $p$ be a Hamiltonian flow satisfying Assumptions \ref{ass:RegularityFlow} and \ref{ass:RegularitySet}. Let $(S(t,s))_{t,s \in [-R,R]}$ denote the associated two-parameter family of flow maps $S(t,s):L^2(\R^d) \to L^2(\R^d)$ which satisfies uniform bounds $\| S(t,s) \|_{L^2} \lesssim 1$ and
\begin{itemize}
    \item $S(t,t) = {Id}_{L^2}$, 
    \item $S(t_3,t_2) S(t_2,t_1) =S(t_3,t_1)$.
\end{itemize}
We assume that for any $t \in [-R,R]$ there is a family of sets $\big( \overline{\mathcal{Y}}_{r,t} \big)_{r \in [\nu^{-2-\delta_0},R]}$, $\overline{\mathcal{Y}}_{R,t} \subseteq \overline{\mathcal{Y}}_{r_2,t} \subseteq \overline{\mathcal{Y}}_{r_1,t} \subseteq \overline{\mathcal{Y}} \subseteq T^* \R^d$ with $R \geq r_2 \geq r_1 \geq \nu^{-2-\delta_0}$, $r \in [\nu^{-2-\delta_0},R]$ and an associated family of $L^2$-bounded pseudo-differential operators $\chi_{\overline{\mathcal{Y}}_{r,t}}(x,D)$. We require that for $t_i \in [-R,R]$ and $t \in t_i + [-r,r] \cap [-R,R]$, and $Q_r \subseteq Q_R$ being a space-time cube of size $r$ with time-center $t_i$, it holds the expansion in $L^2$
\begin{equation}
\label{eq:ExpansionInitialData}
\begin{split}
    S(t,t_i) (\chi_{\overline{\mathcal{Y}}_{r,t_i}} f) &= S(t,t_i) f_0 + S(t,t_i) f_1 \\
    &= \sum_{T \in \mathbb{T} \subseteq \Lambda_r \cap \overline{\mathcal{Y}}_{r,t_i}} \alpha_T \phi_T(t) + g(t) \text{ with } \sum_{T \in \mathbb{T}} |\alpha_T|^2 \lesssim \| \chi_{\overline{\mathcal{Y}}_{r,t_i}} f \|_{L^2}^2
\end{split}
\end{equation}
with $\# \T \lesssim r^{100d}$ and 
\begin{equation}
 \| g \|_{L_{x,t}^p(Q_r)} \lesssim \nu^{d \big( \frac{1}{2} - \frac{1}{p} \big)} \text{RapDec}(r \nu^{-2}) \| f \|_{L^2}, \qquad 2 \leq p \leq \infty.
\end{equation}
Let $\chi_{\mathcal{Y}} = \chi_{\overline{\mathcal{Y}}_{R,0}}$. We require for a normalized wave packet $\phi_T$ at scale $r_2$ with $r_1 \leq r_2^{1-\delta}$
\begin{equation}
\label{eq:WavePacketLocalizationFinal}
    \| (1-\chi_{\overline{\mathcal{Y}}_{r_1,t_f}} (x,D)) \phi_T(t_f) \|_{L^2} \lesssim_{\delta} \text{RapDec}(r_1)
\end{equation}
provided that $(x_T(t_f),\xi_T(t_f))$ is in the $C(R r_2^{-\frac{1}{2}+\delta_0},r_2^{-\frac{1}{2}+\delta_0})$-thickening of $\overline{\mathcal{Y}}_{r_2,t_f}$ and $\delta \geq \delta_0$.
\end{assumption}
\begin{remark}
The $L^2$-estimate for $g$ corresponds to $\chi_{\mathcal{Y}} f$ in phase space being rapidly decaying off $\overline{\mathcal{Y}}_r^c$. For $r \to \nu^{-2}$ the rapid decrease in $r$ becomes rapid decay in $r \nu^2$.
 For the $L^p$-estimates we use that the frequencies are essentially localized to a ball of radius $\nu$, which makes Bernstein's inequality applicable. We comment on the realization of Assumption \ref{ass:Frames} in Subsection \ref{subsection:WavePacketDecompositions}.

The above assumption describes our requirements on wave packet parametrices, which are needed to carry out the proof of the bilinear estimates. The microlocalization described in \eqref{eq:ExpansionInitialData} and \eqref{eq:WavePacketLocalizationFinal} will be used to carry out induction-on-scales and more precisely, to pass from wave packets at large spatial scales to wave packets at smaller scale.
\end{remark}

\subsection{Transversality assumptions and bilinear estimates}

We seek to obtain the following bilinear estimates for Hamiltonian flows $p_i$, $i=1,2$, which are compatible with evolutions $(S_i(t,s))$, $i=1,2$;
\begin{equation}
\label{eq:RoughBilinearEstimate}
\big\| S_1(t,0) (\chi_{\mathcal{Y}_1} f_1) S_2(t,0) (\chi_{\mathcal{Y}_2} f_2) \big\|_{L^{\frac{d+3}{d+1}}(Q_R)} \lesssim_\varepsilon R^\varepsilon \nu^{-\frac{2}{d+3}} \| f_1 \|_{L^2} \| f_2 \|_{L^2}.
\end{equation}
To this end, we require transversality assumptions on the propagation along the Hamiltonian flows $p_i$, $i=1,2$. 

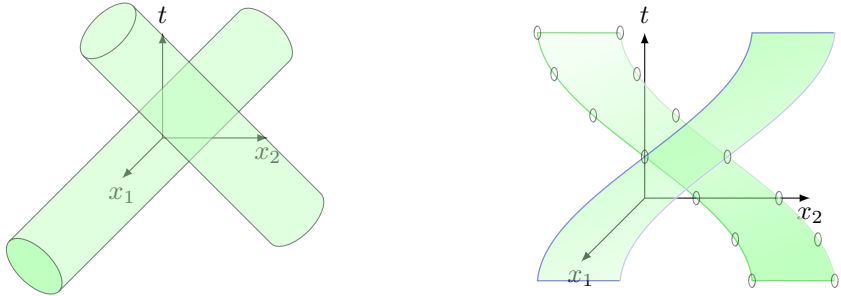
\begin{figure}[ht!]
    \centering
    \begin{minipage}[t]{0.45\textwidth} 
    	\vspace{0pt}
        \centering
        \begin{tikzpicture}[scale=0.7]

  \begin{scope}[shift={(6,0)}]

  \coordinate (O) at (0,0,0);
  \coordinate (A) at (2,0,0);
  \coordinate (B) at (0,2,0);
  \coordinate (C) at (0,0,2);

  \draw[-latex] (O) -- (A) node[below] {$x_2$};
  \draw[-latex] (O) -- (B) node[above] {$t$};
  \draw[-latex] (O) -- (C) node[below] {$x_1$};

    \node[cylinder, draw, shape aspect=.5, 
      cylinder uses custom fill, cylinder end fill=green!50, 
      minimum height=1.2cm,
      cylinder body fill=green!25, opacity=0.5, 
    scale=4, rotate=-135]{};
    302
  \end{scope}

  \begin{scope}[shift={(7,0)}]
  	\node[cylinder, draw, shape aspect=.5, 
      cylinder uses custom fill, cylinder end fill=green!25,
      minimum height=1cm,
      cylinder body fill=green!25, opacity=0.5, 
    scale=4, rotate=135]{};
   \end{scope}

\end{tikzpicture}

        \label{figure:TransverseWavePackets}
    \end{minipage}%
    \hfill 
    \begin{minipage}[t]{0.45\textwidth} 
    \vspace{0pt}
        \centering
\label{fig:Scales}
\centering
\begin{tikzpicture}[scale=1.1]

  \begin{scope}[shift={(6,0)}]
    \coordinate (O) at (0,0,0);
    \coordinate (A) at (2,0,0);
    \coordinate (B) at (0,2,0);
    \coordinate (C) at (0,0,2);

    \draw[-latex] (O) -- (A) node[below] {$x_2$};
    \draw[-latex] (O) -- (B) node[above] {$t$};
    \draw[-latex] (O) -- (C) node[below] {$x_1$};
  \end{scope}

  \begin{scope}[shift={(9,0.5)},rotate=90,scale=1]

    \tikzset{
      tubeA/.style={
        shading=axis, left color=green!10, right color=green!55,
        shading angle=20, draw=green!70!black, line width=0.5pt, opacity=0.45
      },
      tubeB/.style={
        shading=axis, left color=green!10, right color=green!55,
        shading angle=200, draw=blue!70!black, line width=0.5pt, opacity=0.45
      },
      rimBack/.style={line width=0.3pt, draw=black, dashed, opacity=0.18},
      rimFront/.style={line width=0.4pt, draw=black, opacity=0.45},
      highlight/.style={white, line cap=round, line width=0.9pt, opacity=0.55}
    }

    \begin{scope}[on background layer]
      \foreach \t in {-1.5,-1,...,1.5} {
        \draw[rimBack] ({\t},{1.3*sin(\t r)+2}) ellipse (0.08 and 0.04);
        \draw[rimBack] ({-\t},{-1.3*sin(\t r)+3}) ellipse (0.08 and 0.04);
      }
    \end{scope}

    \shade[tubeA]
      plot[domain=-1.5:1.5, variable=\t, samples=100] ({\t},{1.3*sin(\t r)+2})
      -- plot[domain=-1.5:1.5, variable=\t, samples=100] ({-\t},{-1.3*sin(\t r)+3})
      -- cycle;

    \draw[highlight]
      plot[domain=-1.5:1.5, variable=\t, samples=120] ({\t},{1.3*sin(\t r)+2});

    \shade[tubeB]
      plot[domain=-1.5:1.5, variable=\t, samples=100] ({-\t},{1.3*sin(\t r)+2})
      -- plot[domain=-1.5:1.5, variable=\t, samples=100] ({\t},{-1.3*sin(\t r)+3})
      -- cycle;

    \draw[highlight]
      plot[domain=-1.5:1.5, variable=\t, samples=120] ({-\t},{1.3*sin(\t r)+2});

    \foreach \t in {-1.5,-1,...,1.5} {
       \draw[rimFront] ({\t},{1.3*sin(\t r)+2}) ellipse (0.08 and 0.04);
       \draw[rimFront] ({-\t},{-1.3*sin(\t r)+3}) ellipse (0.08 and 0.04);
    }

  \end{scope}

\end{tikzpicture}
        \label{fig:TransverseCurves}
    \end{minipage}
    \caption{On the left: two transversely interacting wavepackets for Fourier extension operators. The bottoms of the cylinders intersect the $x$-plane in a circle of radius $R^{\frac{1}{2}}$. The direction of the long-side with length $R$ is determined by the support $\theta$ in Fourier space. On the right: the transverse interaction of curved wave packets. The wave packets retain their size, but the cores become curved.}
\end{figure}

For translation-invariant flows $p(z,\xi) = p_0(\xi)$ and $S(t,s) = e^{-i(t-s) p_0(\nabla/i)}$ the estimates of the form \eqref{eq:RoughBilinearEstimate} become the bilinear Fourier extension estimates due to Wolff \cite{Wolff01} and Tao \cite{Tao03}. Sharp bilinear estimates for oscillatory integral operators with smooth phase functions
\begin{equation*}
T^\lambda f(x,t) = \int e^{i \phi^\lambda(x,t;\xi)} a^\lambda(x,t;\xi) f(\xi) d\xi
\end{equation*}
and transversality $\nu \sim 1$ were proved by Lee \cite{Lee2006} via linearization and following Tao's argument \cite{Tao03} (see also \cite{Bourgain1991I} for an earlier contribution). These correspond to smooth flows.

\bigskip
We are now ready to state our results, which 
we split into two scenarios depending on the 
class of symbols $p$.
We begin with non-degenerate symbols $p_1,p_2$, i.e. when the Hessian $\partial^2_\xi p_i$ is nondegenerate, and associated localization regions $\mathcal{Y}_{i} \subset \overline{\mathcal{Y}}_{i}$ as in Assumption \ref{ass:RegularitySet}.  For sufficient control over the bicharacteristics relative to the transversality $\nu$, we confine the frequency support $\pi_{\xi}(\mathcal{Y}_i)$ essentially to a $\nu$-ball. In this case we formulate the following assumptions.

\begin{assumption}[Transversality~assumptions~for~non-degenerate~Hamiltonians]
\label{AssumptionNondegenerateHamiltonian}
Suppose that $p_i \in C_{z}^{1,1} C_{\xi}^3(\R^d \times \R \times \R^d)$ satisfies Assumption \ref{ass:RegularityFlow} for $i=1,2$ on the spatial scale $R \gg 1$ with transversality parameter $R^{-\frac{1}{2}+\delta_0} \leq \nu \leq 1$, and that there are sets $\mathcal{Y}_i \subseteq \overline{\mathcal{Y}}_i \subseteq T^* \R^d$ which satisfy Assumption \ref{ass:RegularitySet}. Suppose that the non-degeneracy assumption is satisfied:
\begin{equation}
\label{eq:NonDegeneracyAssumption}
d_2 \geq |\det(\partial_{\xi}^2 p_i(z,\xi))| \geq d_1 > 0
\end{equation}
with $d_1,d_2$ uniform in $(z,\xi) \in \overline{\mathcal{Y}}_{i,R}$.

Let $(x_i(t),\xi_i(t))$ be two bicharacteristics governed by $p_i$ with $(x_i(0),\xi_i(0)) \in \overline{\mathcal{Y}}_i$ for $i=1,2$. Suppose that the bicharacteristics intersect at $t_I \in [-R,R]$: $x_1(t_I) = x_2(t_I)$.
We assume the following for $\Delta_v = \nabla_{\xi} p_1(x_1(t),t,\xi_1(t)) - \nabla_{\xi} p_2(x_2(t),t,\xi_2(t))$ for any $t \in [-R,R]$:
\begin{align}
\label{eq:Transversality}
| \Delta_v | &\sim \nu, \\
\label{eq:NonDegeneracyTransversality}
| \langle \Delta_v, (\partial^2_{\xi} p_j(x_j(t),t,\xi_j(t))^{-1}  \Delta_v \rangle | &\sim \nu^2 \gtrsim R^{-1+2\delta_0}.
\end{align}

\end{assumption}

\begin{remark}[Basic~transversality]
We remark that in case $\nu \sim 1$ \eqref{eq:NonDegeneracyTransversality} implies \eqref{eq:Transversality}. Secondly, if $\nu \sim 1$ holds, then it suffices to require \eqref{eq:NonDegeneracyTransversality} at $t=t_I$ as for $t \in [-R,R]$ \eqref{eq:NonDegeneracyTransversality} persists by the fundamental theorem and regularity of the flow.
\end{remark}

Moreover, by the margin condition and the bi-Lipschitz property of the flow, \eqref{eq:Transversality} and \eqref{eq:NonDegeneracyTransversality} hold also for almost intersecting bicharacteristics: 
\begin{lemma}
\label{lem:RegularityAlmostIntersectingI}
Let $(x_i(t),\xi_i(t))$ be two bicharacteristics governed by $p_i$ with \\ $(x_i(0),\xi_i(0)) \in \overline{\mathcal{Y}}_i$ for $i=1,2$ and the flows satisfy Assumption \ref{AssumptionNondegenerateHamiltonian}.
Suppose that there is $t_I \in [-R,R]$ with
    \begin{equation*}
        |x_1(t_I) - x_2(t_I)| \leq R^{\frac{1}{2}+\delta_0}.
    \end{equation*}
    Then \eqref{eq:Transversality} and \eqref{eq:NonDegeneracyTransversality} hold also for the bicharacteristics $(x_i(t),\xi_i(t))$ on $t \in [-R,R]$. 
\end{lemma}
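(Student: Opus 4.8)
The plan is to upgrade the exact-intersection hypothesis in Assumption \ref{AssumptionNondegenerateHamiltonian} to the almost-intersection hypothesis $|x_1(t_I)-x_2(t_I)| \leq R^{\frac{1}{2}+\delta_0}$ by comparing the given almost-intersecting bicharacteristics with genuinely intersecting ones and tracking the discrepancy through the bi-Lipschitz bound of Lemma \ref{lem:GeometryWavepackets}. First I would observe that $R^{\frac{1}{2}+\delta_0}$ is exactly the natural spatial margin scale: in the metric $d_R$ we have $d_R((x_1(t_I),\xi_1(t_I)),(x_2(t_I),\xi_2(t_I))) \leq R^{-\frac12}\cdot R^{\frac{1}{2}+\delta_0} + R^{\frac12}\cdot 2C\nu \lesssim R^{\delta_0} + R^{\frac12}\nu$, and since $\nu \geq R^{-\frac12+\delta_0}$ the relevant comparison is that the two curves stay within $d_R$-distance $\lesssim R^{\frac12}\nu\cdot e^{2C_2} \lesssim_{\varepsilon} \nu R^{\frac12}$ of each other for all $|t|\leq R$, i.e.\ within spatial distance $O(\nu R)$ and frequency distance $O(\nu)$; here I use $\epsilon \ll \nu$ so the spatial-growth term $R^{\frac12+\delta_0}$ is absorbed into the $\nu R^{\frac12}$ scale. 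The margin conditions in Assumption \ref{ass:RegularitySet} guarantee that throughout this process all curves remain inside $\overline{\overline{\mathcal{Y}}}_i$, so the flow-out assumption keeps us in the region where \eqref{eq:NonDegeneracyAssumption} and the regularity bounds hold.

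The key step is then to compare $\Delta_v(t) = \nabla_\xi p_1(x_1(t),t,\xi_1(t)) - \nabla_\xi p_2(x_2(t),t,\xi_2(t))$ for the almost-intersecting pair against the corresponding quantity $\widetilde{\Delta}_v(t)$ for an honestly intersecting reference pair. I would pick, say, the bicharacteristic $(\tilde x_1,\tilde\xi_1)$ to be $(x_1,\xi_1)$ itself, and replace $(x_2,\xi_2)$ by the bicharacteristic $(\tilde x_2,\tilde\xi_2)$ of $p_2$ with initial frequency $\xi_2(0)$ but initial position shifted so that $\tilde x_2(t_I)=x_1(t_I)$; such a shift is of size $O(R^{\frac12+\delta_0})$ by hypothesis, hence admissible within the thickening. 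By Lemma \ref{lem:GeometryWavepackets}, $|x_2(t)-\tilde x_2(t)| \lesssim R^{\frac12+\delta_0}e^{2C_2} \lesssim_{\varepsilon} R^{\frac12+\delta_0}$ and $|\xi_2(t)-\tilde\xi_2(t)| \lesssim R^{-\frac12+\delta_0}$ for all $|t|\leq R$. Then $|\Delta_v(t)-\widetilde{\Delta}_v(t)|$ is controlled by the $C^1_z C^3_\xi$ and mixed-regularity bounds \eqref{eq:MixedRegularity}, \eqref{eq:XiRegularity} on $p_2$: the change in $\nabla_\xi p_2$ is at most $\epsilon R^{-1}\cdot R^{\frac12+\delta_0} + C_2 R^{-\frac12+\delta_0} \lesssim R^{-\frac12+\delta_0} \ll \nu$ since $\nu \geq R^{-\frac12+\delta_0}$ and the power can be improved by taking $\delta_0$ small relative to any fixed exponent (more carefully, using $\epsilon\ll\nu$ for the first term). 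Because $\widetilde{\Delta}_v$ satisfies $|\widetilde{\Delta}_v|\sim\nu$ and the nondegeneracy pairing \eqref{eq:NonDegeneracyTransversality} by the exact-intersection case, and the perturbation is of strictly smaller order than $\nu$, the estimates $|\Delta_v(t)|\sim\nu$ and $|\langle \Delta_v, (\partial_\xi^2 p_j)^{-1}\Delta_v\rangle|\sim\nu^2$ are inherited; for the quadratic form I additionally use that $(\partial_\xi^2 p_j)^{-1}$ is bounded above and below on $\overline{\mathcal{Y}}_{j,R}$ by \eqref{eq:NonDegeneracyAssumption} and Cramer's rule, so the bilinear form is Lipschitz in its arguments with constants uniform in $(z,\xi)$, and an $O(R^{-\frac12+\delta_0})$ perturbation of $\Delta_v$ produces an $O(\nu R^{-\frac12+\delta_0}) = O(\nu^2\cdot R^{-\frac12+\delta_0}/\nu) \lesssim o(\nu^2)$ change.

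The main obstacle I anticipate is bookkeeping the two-sided nature of the bounds: \eqref{eq:Transversality} and \eqref{eq:NonDegeneracyTransversality} are comparabilities, not just upper bounds, so one must check that the perturbative errors are genuinely of lower order than $\nu$ and $\nu^2$ respectively, uniformly in $t\in[-R,R]$ — this is where the precise hierarchy $\epsilon \ll \nu$, $\nu \geq R^{-\frac12+\delta_0}$, and $\delta_0$ small is used, and one should be slightly careful that $e^{2C_2 R^{-1}|t|}\leq e^{2C_2}$ contributes only an $\varepsilon$-harmless constant rather than something $\nu$-dependent. A secondary technical point is ensuring the reference intersecting pair can actually be chosen inside $\overline{\mathcal{Y}}_i$ (not merely $\overline{\overline{\mathcal{Y}}}_i$) — but since the original pair starts in $\overline{\mathcal{Y}}_i$ and the shift is only $O(R^{\frac12+\delta_0}) \ll R\nu$, the second thickening in Assumption \ref{ass:RegularitySet} provides the room, and \eqref{eq:NonDegeneracyAssumption} continues to hold along the reference bicharacteristics because their flow-out stays in $\overline{\mathcal{Y}}_{i,R}$. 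Once these uniformities are in place, Gr\o nwall-type control from Lemma \ref{lem:GeometryWavepackets} together with the mean value theorem closes the argument.
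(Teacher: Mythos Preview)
Your proposal is correct and follows essentially the same approach as the paper's proof: construct a reference $p_2$-bicharacteristic that genuinely intersects $x_1$ at $t_I$, control its deviation from $(x_2,\xi_2)$ via Lemma~\ref{lem:GeometryWavepackets}, and then perturb \eqref{eq:Transversality}--\eqref{eq:NonDegeneracyTransversality} by Taylor expansion using the regularity bounds. The only cosmetic difference is that the paper specifies the reference curve directly at time $t_I$ through $(x_1(t_I),\xi_2(t_I))$ (together with a short continuity argument to verify this point lies in the flow-out of $\overline{\mathcal{Y}}_2$), which sidesteps the shooting step implicit in your choice of shifting the initial position while fixing $\xi_2(0)$.
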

\begin{proof}
    In the first step we show that $(x_1(t_I),t_I,\xi_2(t_I))$ is in the flow-out of $\overline{\mathcal{Y}}_2$ under $p_2$.
    To this end, we argue by contradiction. We link $(x_2(t_I),t_I,\xi_2(t_I))$ and $(x_1(t_I),t_I,\xi_2(t_I))$ with a straight line, parametrized by $\gamma$. Since $\overline{\mathcal{Y}}_{2,R}$ is open, we have
    $0<t_* = \inf ( \{ t \in [0,1] : \gamma(t) \not\in \overline{\mathcal{Y}}_{2,R} \})$. However, for $0<t<t_*$ we can apply Lemma \ref{lem:GeometryWavepackets} to argue that the pullback $(x_2'(0),\xi_2'(0))$ along $p_2$ to $t=0$ satisfies $|x_2'(0) - x_2(0)| \leq \overline{C}_2 R^{\frac{1}{2}+\delta_0}$ and $|\xi_2'(0)-\xi_2(0)| \leq \overline{C}_2 R^{-\frac{1}{2}+\delta_0}$. The estimate is uniform up to $t_*$, which gives a contradiction with the margin condition.

    Let the intersecting bicharacteristic be denoted by $T_2'$. Another application of Lemma \ref{lem:GeometryWavepackets} shows that $T_2$ and $T_2'$ are at any time $t \in [-R,R]$ at most separated by $\overline{C}_2 R^{\frac{1}{2}+\delta_0}$ in space. The claim then follows from perturbing \eqref{eq:Transversality} and \eqref{eq:NonDegeneracyTransversality} by a Taylor expansion and regularity of the flow.
\end{proof}

Our main bilinear estimate in the scenario of non-degenerate Hamiltonian flows
is as follows:

\begin{theorem}[Sharp~bilinear~estimates~for~non-degenerate~flows]
\label{thm:GeneralizationBilinearParaboloid}
Let $R \gg 1$ and $\epsilon > 0$. Suppose that $p_i \in C_{z}^{1,1} C^3_{\xi}(\R^d \times \R \times \R^d)$ satisfies Assumption \ref{AssumptionNondegenerateHamiltonian} for sets $\mathcal{Y}_i$ for $i=1,2$. Suppose that the flows $p_i$ admit wave packet decompositions, which satisfy Assumption \ref{ass:WP}, and there are compatible two-parameter groups $(S_i(t,s))_{t,s \in [-R,R]}$ such that Assumption \ref{ass:Frames} is satisfied.

Then the bilinear estimate 
\begin{equation}\label{eq:bi-nondeg}
\big\| S_1(t,0) (\chi_{\mathcal{Y}_1} f_1) \, S_2(t,0) (\chi_{\mathcal{Y}_2} f_2) \big\|_{L^{\frac{d+3}{d+1}}(Q_R)} \lesssim C_\varepsilon R^{\varepsilon} \nu^{-\frac{2}{d+3}} \| f_1 \|_{L^2} \| f_2 \|_{L^2}
\end{equation}
 holds true with implicit constants depending on $\epsilon_0$, $C_2$, and $d_i$ defined in \eqref{eq:MixedRegularity}, \eqref{eq:XiRegularity}, and \eqref{eq:NonDegeneracyAssumption}.
\end{theorem}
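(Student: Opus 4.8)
The plan is to follow the Tao--Wolff induction-on-scales scheme, adapted to curved tubes governed by the rough Hamiltonian flows, with the wave packet axioms (Assumption \ref{ass:WP}) and the frame property (Assumption \ref{ass:Frames}) playing the role that the explicit Euclidean wave packet decomposition plays in the constant-coefficient case. First I would decompose each $S_i(t,0)(\chi_{\mathcal{Y}_i} f_i)$ into wave packets at the coarsest scale $r = R$ using \eqref{eq:ExpansionInitialData}, absorbing the $g$-error by its $L^p(Q_R)$ bound with $p = \frac{d+3}{d+1}$ (this is where the harmless factor $\nu^{d(1/2-1/p)} = \nu^{\frac{2}{d+3}}$ on the right-hand side of \eqref{eq:bi-nondeg} comes from, matching the $\nu^{-\frac{2}{d+3}}$ scaling loss). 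Then I would set up the induction: let $\mathcal{B}(R)$ be the best constant in \eqref{eq:bi-nondeg} over all admissible data and flows at scale $R$, and aim to prove $\mathcal{B}(R) \lesssim_\varepsilon R^\varepsilon$ by showing $\mathcal{B}(R) \leq C R^{C\delta} \mathcal{B}(R^{1-\delta})^{?} + (\text{lower order})$ for a suitable iteration.

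\textbf{Key steps.} The core geometric input is the transversality: by Lemma \ref{lem:RegularityAlmostIntersectingI}, two bicharacteristics $T_1, T_2$ (one from each family) whose curved tubes intersect inside $Q_R$ have quantitatively separated group velocities, $|\Delta_v| \sim \nu$, together with the non-degeneracy \eqref{eq:NonDegeneracyTransversality}. This means that for a fixed $T_1$, the tubes $T_2$ that meet it are confined to a slab, so the number of such $T_2$ at a given spatial scale is controlled — this is the curved analogue of Wolff's combinatorial lemma on transverse tubes, and together with the essentially-constant and almost-orthogonality properties (Lemma \ref{lem:AlmostOrthogonalityWavePackets}) it gives the key ``bush'' and ``hairbrush'' estimates. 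I would organize this as: (i) a local bilinear estimate on cubes $Q_r$ of intermediate scale $r$, proved by first re-expanding via Assumption \ref{ass:Frames} at scale $r$ (using \eqref{eq:WavePacketLocalizationFinal} to see that a wave packet at scale $r_2$ restricted to a smaller time interval is, up to $\text{RapDec}$, a sum of $O(r^{100d})$ packets at scale $r_1$); (ii) a pigeonholing on the ``multiplicity'' $\mu$ of how many $T_1$-tubes pass through a typical point, reducing to either a low-multiplicity regime (handled by $L^2$-orthogonality and the transversality slab bound directly, losing only $\nu^{-\frac{2}{d+3}}$) or a high-multiplicity regime; (iii) in the high-multiplicity regime, applying the induction hypothesis at scale $R^{1-\delta}$ on each subcube after rescaling — crucially, the rescaled flow still satisfies Assumption \ref{ass:RegularityFlow} and the transversality at the smaller scale, by the $d_R$-metric bi-Lipschitz bound of Lemma \ref{lem:GeometryWavepackets}, which guarantees that the relevant quantities scale correctly and that the thickenings $\overline{\mathcal{Y}}_{r,t}$ nest properly. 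Summing over subcubes with Hölder at the exponent $\frac{d+3}{d+1}$ and optimizing the split between the two regimes closes the induction with an $R^{C\delta}$ loss, which iterating gives $R^\varepsilon$.

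\textbf{Main obstacle.} The hard part will be step (i)--(ii): making the combinatorial tube-counting argument work for \emph{curved} tubes, where ``two transverse tubes meet in a bounded set'' is only true at the correct scale and requires the bi-Lipschitz control from Lemma \ref{lem:GeometryWavepackets} to ensure the tubes don't re-intersect or bend back. Unlike the straight-tube setting, one cannot simply quotient by translations; instead, I expect the argument to proceed by localizing to subcubes $Q_r$ on which the flow is approximately linear (which, by the regularity $|\partial_z^\alpha \partial_\xi^\beta p| \lesssim \epsilon R^{-|\alpha|}$, holds for $r$ up to $R$ with error $\epsilon$), proving the tube-incidence bound there using the non-degeneracy \eqref{eq:NonDegeneracyTransversality} of the relative velocity in the Hessian metric, and then patching. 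A secondary technical point is bookkeeping the $\text{RapDec}(r\nu^2)$ errors and the $\nu$-dependent Bernstein factors through the induction so that they telescope rather than accumulate; the constraint $r \geq \nu^{-2-\delta_0}$ is exactly what keeps these errors summable, and the lower bound $\nu^2 \gtrsim R^{-1+2\delta_0}$ in \eqref{eq:NonDegeneracyTransversality} is what keeps the transversality scale above the wave-packet uncertainty scale, so the induction can descend all the way without the transversality degenerating.
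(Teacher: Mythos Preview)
Your overall architecture is correct and matches the paper: Tao's induction-on-scales, a coarse-scale decomposition into $R^{1-\delta}$-cubes $S$, and a split into tubes that concentrate on $S$ (handled by the induction hypothesis) versus those that do not (handled by a direct estimate). You also correctly identify that the curved-tube geometry is controlled by Lemma~\ref{lem:GeometryWavepackets} and that the non-degeneracy \eqref{eq:NonDegeneracyTransversality} is what ultimately drives the transversality counting.

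However, there is a genuine gap in your treatment of the non-concentrated (``low-multiplicity'') regime. You write that it is ``handled by $L^2$-orthogonality and the transversality slab bound directly.'' This is not enough. The transversality slab bound only tells you that a \emph{single} product $|\phi_{T_1}|^2|\phi_{T_2}|^2$ has mass $\lesssim \nu^{-1} R^{-(d-1)/2}$; it does not by itself control the cross terms $\int \phi_{T_1}\overline{\phi_{T_1'}}\phi_{T_2}\overline{\phi_{T_2'}}$ when you expand the $L^2$-norm. The paper's argument (Section~\ref{subsection:FineScaleDecomposition}) shows that for a non-negligible contribution one must have, at each fine-scale cube $q$, not only congruence of position and conservation of momentum but also \emph{conservation of energy}
\[
p_1(z_q,\xi_{T_1}) + p_2(z_q,\xi_{T_2}) = p_1(z_q,\xi_{T_1'}) + p_2(z_q,\xi_{T_2'}) + \mathcal{O}(R^{-1/2+\delta}),
\]
which comes precisely from the \emph{time-frequency} localization in Assumption~\ref{ass:WP}(3). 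You never invoke this axiom, and without it there is no mechanism forcing $\xi_{T_1'}$ onto the codimension-one energy shell $\mathcal{E}^q_{\xi_1,\xi_2'}$. It is this energy-shell constraint, combined with the non-concentration hypothesis $T_1'\not\sim S$, that allows the paper to run the decisive double-counting: first, only $\lesssim R^{c\delta}$ tubes from a fixed energy shell can pass through two $R^{1-\delta}$-separated cubes $q,q'$ (using non-degeneracy of $\partial_\xi^2 p_1$), and second, the set of such $q'$ lying on a fixed $T_2$ is confined to a time interval of length $\nu^{-1}R^{1/2+C\delta}$ (using \eqref{eq:NonDegeneracyTransversality} to show $\nu$-transversality of $T_2$ to the manifold $\Gamma_q$ swept out by energy-shell bicharacteristics). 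These two lemmas together yield the combinatorial bound $|\T_1^{\not\sim S}(\mathcal{E}^q,\lambda_1,\mu_1,\mu_2)| \lesssim \nu^{-1}R^{c\delta}\,\#\T_2/(\lambda_1\mu_2)$, which is what closes the biorthogonality estimate~\eqref{eq:Biorthogonality}. Your ``bush/hairbrush'' language gestures at the second lemma but entirely misses the first, and more importantly misses the energy-shell reduction that makes both relevant.

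A secondary point: the paper does \emph{not} rescale to apply the induction hypothesis. Instead it uses the nested family of localizers $\chi_{\overline{\mathcal Y}_{r,t}}$ from Assumption~\ref{ass:Frames}, together with \eqref{eq:WavePacketLocalizationFinal}, to recognize $\sum_{T_j\sim S}\phi_{T_j}(t_S)$ as $\chi_{\overline{\mathcal Y}_{r^{1-\delta},t_S,j}}$-localized data at the smaller scale. Your proposed rescaling would require re-verifying all of Assumptions~\ref{ass:RegularityFlow}--\ref{ass:Frames} for the rescaled flow, which is not automatic and is avoided by the paper's formulation.
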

We note that in the constant coefficient case, e.g. for the self-interactions of a single Schr\"odinger flow, the transversality parameter $\nu$ plays the role of a scaling parameter, and the $\nu$ dependence of the implicit constant comes from scaling. This indicates that 
the power of $\nu$ above is sharp in general.

\smallskip

We now turn our attention to the second scenario, and formulate a corresponding theorem for $1$-homogeneous symbols $p_1,p_2$. In this case we require the frequencies to be separated from the origin:
\begin{assumption}[Transversality~assumptions~for~$1$-homogeneous~Hamiltonians]
\label{AssumptionNondegenerateHamiltonianHomogeneous}
Let $1 \ll R$ be a spatial scale and suppose that $p_i \in C_{z}^{1,1} C^3_{\xi}(\R^d \times \R \times \R^d)$ satisfies Assumption \ref{ass:RegularityFlow} in $\overline{\mathcal{Y}}_{i,R}$ for $i=1,2$. Suppose that $p_i$ are $1$-homogeneous in $\xi$, i.e., for $\lambda > 0$ it holds $p_i(x,t,\lambda \xi) = \lambda p_i(x,t,\xi)$.

Suppose that the non-degeneracy assumption is satisfied: For $(z,\xi) \in \overline{\mathcal{Y}}_{i,R}$  there are $d-1$ eigenvectors $w_1,\ldots,w_{d-1}$ for $A = \partial^2_{\xi} p(z,\xi)$ with eigenvalues $\lambda_1,\ldots,\lambda_{d-1}$ which satisfy $0< d_1 \leq |\lambda_i| \leq d_2$.

\smallskip

Let $(x_i(t),\xi_i(t))$ be two bicharacteristics governed by $p_i$ with $(x_i(0),\xi_i(0)) \in \overline{\mathcal{Y}}_i$, $i=1,2$. Suppose that the bicharacteristics  intersect for some $t_I \in [-R,R]$: $x_1(t_I) = x_2(t_I)$. 
We assume the following for $\Delta_v = \nabla_{\xi} p_1(x_1(t),t,\xi_1(t)) - \nabla_{\xi} p_2(x_2(t),t,\xi_2(t))$ for $j \in \{1,2\}$:
\begin{align}
\label{eq:BasicTransversalityHomogeneous}
|\Delta_v| \sim \nu, \\
\label{eq:NonDegeneracyTransversalityHomogeneous}
| \langle \xi_j^*, \Delta_v \rangle | \geq \nu \gtrsim R^{-\frac{1}{2}+\delta_0} >0.
\end{align}
\end{assumption}

We have the following analog of Lemma \ref{lem:RegularityAlmostIntersectingI} about the regularity of almost intersecting bicharacteristics whose proof is omitted to avoid repetition.

\begin{lemma}
Let $(x_i(t),\xi_i(t))$ be two bicharacteristics governed by $p_i$ with \\ $(x_i(0),\xi_i(0)) \in \mathcal{Y}_i$ for $i=1,2$, and the flows satisfy Assumption \ref{AssumptionNondegenerateHamiltonianHomogeneous}. Suppose that there is $t_I \in [-R,R]$ with 
\begin{equation*}
|x_1(t_I) - x_2(t_I)| \leq R^{\frac{1}{2}+\delta_0}.
\end{equation*}
Then \eqref{eq:BasicTransversalityHomogeneous} and \eqref{eq:NonDegeneracyTransversalityHomogeneous} hold also for the bicharacteristics $(x_i(t),\xi_i(t))$ on $t \in [-R,R]$.

\end{lemma}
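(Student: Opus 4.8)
The plan is to imitate the proof of Lemma~\ref{lem:RegularityAlmostIntersectingI} line by line; the $1$-homogeneity of the $p_i$ enters only through bookkeeping. \emph{First step.} Replace $T_2$ by a nearby bicharacteristic that genuinely meets $T_1$: let $T_2'$ be the bicharacteristic of $p_2$ with datum $(x_1(t_I),\xi_2(t_I))$ at time $t_I$, i.e.\ keep the frequency of $T_2$ at $t_I$ and translate its spatial foot onto that of $T_1$ — a displacement of at most $R^{\frac12+\delta_0}$. Then $x_1(t_I)=x_2'(t_I)$ by construction, so as soon as $T_2'$ is \emph{admissible} — meaning its time-$0$ datum lies in $\overline{\mathcal{Y}}_2$ — Assumption~\ref{AssumptionNondegenerateHamiltonianHomogeneous} applies to the pair $(T_1,T_2')$ and gives \eqref{eq:BasicTransversalityHomogeneous} and \eqref{eq:NonDegeneracyTransversalityHomogeneous} for $(T_1,T_2')$ on all of $[-R,R]$.

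\emph{Second step (the crux).} Proving admissibility of $T_2'$. Run the connectedness argument of Lemma~\ref{lem:RegularityAlmostIntersectingI}: join $(x_2(t_I),t_I,\xi_2(t_I))$ to $(x_1(t_I),t_I,\xi_2(t_I))$ by the straight segment $\gamma$ in the $x$-variable alone, and set $t_*=\inf\{s\in[0,1]:\gamma(s)\notin\overline{\mathcal{Y}}_{2,R}\}$; here $t_*>0$ since $\gamma(0)$ lies on $T_2$. For $s<t_*$ one pulls $\gamma(s)$ back along the $p_2$-flow to time $0$, and Lemma~\ref{lem:GeometryWavepackets}, measured in the metric $d_R$ and anchored at $(x_2(0),\xi_2(0))\in\mathcal{Y}_2$, keeps this pullback within $\overline{C}_2R^{\frac12+\delta_0}$ in space and $\overline{C}_2R^{-\frac12+\delta_0}$ in frequency of $\mathcal{Y}_2$, uniformly up to $s=t_*$. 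Since the thickening constant $C$ of Assumption~\ref{ass:RegularitySet} and Definition~\ref{def:Thickening} is chosen with $C\ge\overline{C}_2$ and since $\nu\ge R^{-\frac12+\delta_0}$, the pullback stays in the open set $\overline{\mathcal{Y}}_2$, whence $\gamma(t_*)$ lies in the flow-out of $\overline{\mathcal{Y}}_2$ and therefore in $\overline{\mathcal{Y}}_{2,R}$ — contradicting $\gamma(t_*)\notin\overline{\mathcal{Y}}_{2,R}$. Thus the whole segment, and in particular $(x_1(t_I),t_I,\xi_2(t_I))$, lies in $\overline{\mathcal{Y}}_{2,R}$, and its $t=0$ pullback is an admissible datum for $T_2'$. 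For the homogeneous symbols one records in addition that $|\xi|$ and $\xi/|\xi|$ move by only $O(R^{-\frac12+\delta_0})$ along this pullback, so the frequency stays in the sector around $\xi_2^*$ and away from the origin throughout, which is what keeps \eqref{eq:XiRegularity} and the flow of the $1$-homogeneous $p_2$ under control.

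\emph{Third step.} Transfer the conclusions from $(T_1,T_2')$ to $(T_1,T_2)$. Apply Lemma~\ref{lem:GeometryWavepackets} once more, to $T_2$ and $T_2'$: these coincide in frequency at $t_I$ and differ by $\le R^{\frac12+\delta_0}$ in space there, so $|x_2(t)-x_2'(t)|\lesssim R^{\frac12+\delta_0}$ and $|\xi_2(t)-\xi_2'(t)|\lesssim R^{-\frac12+\delta_0}$ for all $t\in[-R,R]$. A first-order Taylor expansion of $\nabla_\xi p_2$, using \eqref{eq:MixedRegularity} for the $x$-displacement (contributing $\lesssim\epsilon R^{-1}R^{\frac12+\delta_0}$) and \eqref{eq:XiRegularity} for the $\xi$-displacement (contributing $\lesssim R^{-\frac12+\delta_0}$), shows that $\Delta_v$ changes by $\lesssim R^{-\frac12+\delta_0}$; the unit vector $\xi_j^*$ carried along the bicharacteristic likewise changes by $\lesssim R^{-\frac12+\delta_0}$, the frequencies being of size $\sim 1$. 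As $R^{-\frac12+\delta_0}\lesssim\nu$ and $\epsilon\ll\nu$, these perturbations are a controlled fraction of $\nu$, so \eqref{eq:BasicTransversalityHomogeneous} and \eqref{eq:NonDegeneracyTransversalityHomogeneous} survive for $(T_1,T_2)$ on $[-R,R]$, with constants depending only on $\overline{C}_2$, $d_1$, $d_2$, $C_2$ and the harmless $R^{C\delta_0}$ factors absorbed as elsewhere in the paper.

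The main obstacle is the second step: keeping both $T_2'$ and the interpolating segment $\gamma$ inside $\overline{\mathcal{Y}}_{2,R}$, the region where Assumption~\ref{ass:RegularityFlow} — hence Lemma~\ref{lem:GeometryWavepackets} — is available. Everything there rests on the margins of Definition~\ref{def:Thickening} having been set as a fixed large multiple of $(R\nu,\nu)$, so that they dominate the bi-Lipschitz distortion $\overline{C}_2R^{\pm\frac12+\delta_0}$ accumulated over the time window $[-R,R]$.
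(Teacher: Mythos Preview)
Your proof is correct and follows exactly the approach the paper intends: the paper omits the proof of this lemma, stating it is a reprise of the proof of Lemma~\ref{lem:RegularityAlmostIntersectingI}, and your three steps (construct $T_2'$, verify admissibility via the connectedness argument and bi-Lipschitz control of Lemma~\ref{lem:GeometryWavepackets}, then perturb via Taylor expansion) are precisely that reprise. The additional bookkeeping you flag for the $1$-homogeneous case --- tracking $|\xi|$ and $\xi/|\xi|$ separately so the frequency stays in the sector and away from the origin --- is the only new ingredient, and it is handled correctly.
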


Now we can state our main result in this case:

\begin{theorem}[Sharp~bilinear~estimates~for~$1$-homogeneous~flows]
\label{thm:GeneralizationBilinearCone}
Let $1 \ll R$ be a spatial scale, $\epsilon > 0$ and suppose that $p_i \in C_{z}^{1,1} C^3_{\xi}(\R^d \times \R \times \R^d)$ satisfies Assumption \ref{AssumptionNondegenerateHamiltonianHomogeneous} for $i=1,2$.
Suppose that the flows $p_i$ admit wave packet decompositions, which satisfy Assumption \ref{ass:WP}, and there are compatible two-parameter group $(S_i(t,s))_{t,s \in [-R,R]}$ such that Assumption \ref{ass:Frames} is satisfied.

Then the bilinear estimate \eqref{eq:bi-nondeg} 
holds true with implicit constants only depending on $C_2$, $\epsilon_0$, and $d_i$ defined in \eqref{eq:MixedRegularity}, \eqref{eq:XiRegularity}, and \eqref{eq:NonDegeneracyTransversalityHomogeneous}.
\end{theorem}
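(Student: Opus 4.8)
The plan is to prove \eqref{eq:bi-nondeg} by induction on the spatial scale $R$, running in parallel with the proof of Theorem \ref{thm:GeneralizationBilinearParaboloid} and differing from it only in the geometric overlap estimate, which must be adapted to the ruled geometry of $1$-homogeneous symbols. As a first step I reduce to a combinatorial statement about wave packets. Applying Assumption \ref{ass:Frames} with $r=R$ and $t_i=0$, each evolution expands as
\begin{equation*}
S_i(t,0)(\chi_{\mathcal{Y}_i} f_i) = \sum_{T_i \in \mathbb{T}_i} \alpha_{T_i} \phi_{T_i}(t) + g_i(t),
\end{equation*}
with $\mathbb{T}_i \subseteq \Lambda_R \cap \overline{\mathcal{Y}}_i$, $\#\mathbb{T}_i \lesssim R^{100d}$, $\sum_{T_i}|\alpha_{T_i}|^2 \lesssim \|f_i\|_{L^2}^2$, and $\|g_i\|_{L^q(Q_R)} \lesssim \nu^{d(1/2-1/q)}\,\text{RapDec}(R\nu^{-2})\,\|f_i\|_{L^2}$ for $2\le q\le\infty$. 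Since the $g_i$ are rapidly decaying in every $L^q(Q_R)$, Hölder's inequality kills all cross terms involving a $g_i$, so it suffices to bound $\big\|\big(\sum_{T_1}\alpha_{T_1}\phi_{T_1}\big)\big(\sum_{T_2}\alpha_{T_2}\phi_{T_2}\big)\big\|_{L^{(d+3)/(d+1)}(Q_R)}$ under $\sum_{T_i}|\alpha_{T_i}|^2\le 1$; after pigeonholing at a cost of $(\log R)^{O(1)}$ one may additionally assume that the nonzero $|\alpha_{T_i}|$ are comparable and the $\phi_{T_i}$ have comparable $L^\infty(Q_R)$-norms, which reduces everything to counting incidences of the curved tubes $T_i$.

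Next I organize the induction. Let $A(R,\nu)$ be the optimal constant in the reduced estimate over all such data and all $p_i$ satisfying Assumption \ref{AssumptionNondegenerateHamiltonianHomogeneous} at scale $R$ with transversality $\nu$; the goal is $A(R,\nu)\lesssim_\varepsilon R^\varepsilon\nu^{-2/(d+3)}$. Fix an intermediate scale $\rho$ with $\nu^{-2-\delta_0}\le\rho\le R$ and tile $Q_R$ by space-time cubes $q$ of side $\rho$ with time centers $t_q$. On each $q$ the localization property \eqref{eq:WavePacketLocalizationFinal} together with the scale-$\rho$ expansion \eqref{eq:ExpansionInitialData} of Assumption \ref{ass:Frames} rewrites the restriction of every $\phi_{T_i}$ to $q$ as an $\ell^2$-bounded sum of normalized scale-$\rho$ wave packets attached to bicharacteristics through $\overline{\mathcal{Y}}_{i,\rho,t_q}$, up to $\text{RapDec}(\rho)$ errors. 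Thus on each $q$ the problem is an instance of the reduced estimate at scale $\rho$, to which $A(\rho,\nu)$ applies, provided one tracks how the $\ell^2$-masses of the coefficients split among the cubes, using the almost-orthogonality Lemma \ref{lem:AlmostOrthogonalityWavePackets} and that each $T_i$ meets $\lesssim R/\rho$ cubes. The gain comes from the usual dichotomy: for a cube $q$ let $\mathbb{T}_i(q)$ be the packets of family $i$ passing within $\rho^{1/2+\delta}$ of $q$ and $\mu_i(q)=\#\mathbb{T}_i(q)$. When some $\mu_i(q)$ is small the contribution of $q$ is bounded directly from Lemma \ref{lem:AlmostOrthogonalityWavePackets}, Bernstein's inequality in the $\nu$-localized frequency variable, and Hölder; when both $\mu_i(q)$ are large one applies $A(\rho,\nu)$ on $q$ and sums, the key input being a geometric bound for the number of $T_2\in\mathbb{T}_2(q)$ transverse to a fixed $T_1\in\mathbb{T}_1(q)$. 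By \eqref{eq:BasicTransversalityHomogeneous} a transverse pair has relative group velocity of size $\sim\nu$, so two tubes meeting in $q$ separate at rate $\gtrsim\nu$ and can share a $\rho^{1/2}$-subcube only over a time interval of length $\lesssim\rho^{1/2}/\nu$, while \eqref{eq:NonDegeneracyTransversalityHomogeneous} controls the spread along the ruling direction in which $\partial^2_\xi p_i$ degenerates; Lemma \ref{lem:GeometryWavepackets} together with the margin conditions of Assumption \ref{ass:RegularitySet} propagates all of this on $|t|\le R$. Feeding these counts and the $\ell^2$-normalization into the recursion and iterating $O_\delta(1)$ times (or $O(\log\log R)$ times if $\rho=R^{1/2}$) closes the induction with the claimed $\nu^{-2/(d+3)}$ and an $R^\varepsilon$ loss.

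The principal obstacle, and the one place where the argument genuinely departs from the paraboloid case of Theorem \ref{thm:GeneralizationBilinearParaboloid}, is that the degeneracy of $\partial^2_\xi p_i$ makes the tubes in each family only $(d-1)$-dimensionally spread transverse to the ruling, so a naive volume/overlap count does not produce the sharp exponent $\frac{d+3}{d+1}$. I would resolve this as Wolff did for the light cone: exploit the $1$-homogeneity through the time-frequency localization of item~3 in Assumption \ref{ass:WP}, lifting the incidence problem to space-time frequency $(\tau,\xi)$, where the hypersurface $\tau=-p(\cdot,t,\xi)$ has full rank $d$ and the lifted plates become curved plates over a non-degenerate hypersurface. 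The hypothesis that $\partial^2_\xi p_i$ has $d-1$ eigenvalues with $0<d_1\le|\lambda_i|\le d_2$, combined with \eqref{eq:NonDegeneracyTransversalityHomogeneous}, is exactly what guarantees this non-degeneracy, and — via Lemma \ref{lem:GeometryWavepackets} and the margin conditions — guarantees it \emph{uniformly along the flow}. Verifying that the curvature of these plates, measured along the rough bicharacteristics, is quantitatively stable on the entire range $|t|\le R$, so that the resulting two-parameter incidence combinatorics are precisely those of the constant-coefficient cone, is where the bulk of the work sits.
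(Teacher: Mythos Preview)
Your proposal has the right overall shape (reduction to wave packets, induction on scales) but departs from the paper's argument in two places, and the second is a genuine gap.

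\textbf{Induction structure.} The paper does not run a dichotomy on packet counts $\mu_i(q)$ at an intermediate scale $\rho$. It follows Tao's argument: pass to the coarse scale $r^{1-\delta}$, define a focusing relation $T_1\sim S$ through the pigeonholing in Definition~\ref{def:FocusingRelation}, treat the focusing contribution by the induction hypothesis (using \eqref{eq:EssentialDisjointConcentration}), and treat the non-focusing contribution by interpolating the trivial $L^1$ energy estimate \eqref{eq:EnergyEstimate} with an $L^2$ biorthogonality estimate \eqref{eq:Biorthogonality}. The sharp exponent $\frac{d+3}{d+1}$ comes precisely from this interpolation. The $L^2$ estimate is where item~3 of Assumption~\ref{ass:WP} actually enters: it forces the conservation of energy \eqref{eq:EnergyConservation}, which restricts the quadrilinear sum to an energy shell and reduces everything to the combinatorial bound \eqref{eq:CubeDoubleEndCounting}. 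Your small-$\mu_i$/large-$\mu_i$ split, as described, does not isolate this biorthogonality mechanism and would not by itself produce the sharp exponent.

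\textbf{The ruling degeneracy.} Your plan to ``lift to space-time frequency $(\tau,\xi)$, where the hypersurface $\tau=-p(\cdot,t,\xi)$ has full rank $d$'' does not work: for $1$-homogeneous $p$ this hypersurface is a cone in $\R^{d+1}$, whose second fundamental form has rank only $d-1$ (the radial direction is flat). So the lift does not manufacture non-degeneracy, and no reduction to paraboloid-type combinatorics follows. The paper's resolution is direct and is the only place where the argument differs from Theorem~\ref{thm:GeneralizationBilinearParaboloid}: in the analogue of Lemma~\ref{l:transverse} (namely Lemma~3.7), the spatial tangent vectors of the surface $\Gamma_q$ swept by energy-shell bicharacteristics lie, up to $O(\varepsilon)$, in the range of $A_{\xi_1^*}=\partial^2_\xi p_1(z,\xi_1^*)$. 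By $1$-homogeneity and self-adjointness, $\mathrm{R}(A_{\xi_1^*})=\ker(A_{\xi_1^*})^\perp=\langle\xi_1^*\rangle^\perp$. Transversality of $\Gamma_q$ to $T_2$ therefore reduces to $|\langle\xi_1^*,\Delta_v\rangle|\gtrsim\nu$, which is exactly hypothesis \eqref{eq:NonDegeneracyTransversalityHomogeneous}. Similarly, Lemma~\ref{lem:TubeCountingAuxNondegenerateCone} uses that the position of a bicharacteristic depends only on the \emph{angular} frequency (by $1$-homogeneity), so angular non-degeneracy alone determines $T_1$ from $q$ and $q'$. Your proposal does not identify either mechanism.
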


We end the section with remarks on the geometric interpretation of the transversality assumptions and the regularity assumption for small transversality.

\begin{remark}[Geometric~interpretation~of~transversality~assumptions]
We can also mix the assumptions: Suppose that $p_1$ is non-degenerate and $p_2$ is $1$-homo\-geneous. Then, in addition to the quantitative transversality assumption \eqref{eq:Transversality} we have to require that \eqref{eq:NonDegeneracyTransversality} holds for $j=1$ and \eqref{eq:NonDegeneracyTransversalityHomogeneous} holds for $j=2$. The conditions reflect the following geometric condition on the energy shell
given by
\begin{equation*}
\mathcal{E}^z_{\xi_1,\xi_2'} = \{ \eta \in \R^d : p_1(z,\xi_1) + p_2(z,\eta+\xi_2'-\xi_1) - p_1(z,\eta) - p_2(z,\xi_2') = 0 \}.
\end{equation*}
Let $A = \partial^2_{\xi} p_1(z,\eta)$ describe the leading order variation of the Hamiltonian flow for a fixed base point along frequencies. 
The transversality conditions \eqref{eq:NonDegeneracyTransversality} and \eqref{eq:NonDegeneracyTransversalityHomogeneous} become instances of
\begin{equation*}
\text{dist}(A \, T_{\eta} \mathcal{E}^z_{\xi_1,\xi_2'}, \nabla_{\xi} p_1(z,\xi_1) - \nabla_{\xi} p_2(z,\xi_2) ) \gtrsim \nu > 0.
\end{equation*}
The regularity assumptions with $\epsilon_0 \ll \nu$ allow us to vary the parameters $z$ and $\xi_i$ without changing the condition. We refer to works by Lee--Vargas \cite{LeeVargas2010}, Bejenaru \cite{Bejenaru2017}, and Candy \cite{Candy2019} in the case of constant coefficients.
\end{remark}

\begin{remark}[Transversality~vs.~regularity]
The regularity assumption \eqref{eq:MixedRegularity} with $\epsilon_0 \ll \nu$ for small transversality parameter $\nu \ll 1$ seems to be a restrictive assumption. We note that in case of self-interactions $p_1 = p_2=p$ for wave $p= \sqrt{g^{ij} \xi_i \xi_j}$ or Schr\"odinger flows $p = g^{ij} \xi_i \xi_j$ with uniformly elliptic $(g^{ij})$ the transversality condition on the initial frequencies corresponds to $\nu$-separation for Schr\"odinger flows and to $\nu$-angle separation for wave flows. In the Schr\"odinger case, after a Galilean transform and  parabolic rescaling we recover unit separation of the frequencies. Correspondingly, in the wave case, after a Lorentz transformation and rescaling, we recover unit angular separation. The regularity assumptions for the rescaled flows require careful inspection, and the full discussion is postponed to Section \ref{subsection:SelfInteractions}.
\end{remark}

\section{Proof of the bilinear estimates for rough Hamiltonian flows}
\label{section:ProofBilinear}

We rely on Tao's argument \cite{Tao03} (see also \cite{Lee2006}) to show Theorems \ref{thm:GeneralizationBilinearParaboloid} and \ref{thm:GeneralizationBilinearCone}. The proofs follow the same general strategy:
\begin{enumerate}
\item We induct on the space-time cubes of size $C \nu^{-2-\delta_0} \leq r \leq R$ and divide the tubes into focusing and non-focusing part on the coarse scale $r^{1-\delta}$. 
Roughly speaking, the highly concentrated coarse scales, where the tubes hit the most $r^{\frac{1}{2}}$-cubes after some pigeonholing arguments, can be estimated directly via induction on scales. This takes up Subsections \ref{subsection:Induction}-\ref{subsection:FineScaleDecomposition}.
\item It remains to estimate the non-focusing contribution. Here we rely on a biorthogonality estimate. It is obtained from a double counting argument, which differs in the non-degenerate and $1$-homogeneous case. The combinatorial argument is established in Subsection \ref{subsection:CombinatorialEstimate}.
\end{enumerate}

\subsection{Induction on scales and coarse-scale decomposition}
\label{subsection:Induction}
To show the theorems, we induct on $C \nu^{-2-\delta_0} \leq r \leq R$. We carry out the induction of uniform bilinear estimates for Hamiltonians satisfying the regularity assumption \ref{ass:RegularityFlow} with a parameter $R$, which are associated with the two-parameter groups $(S_i(t,s))_{t,s \in [-r,r]}$ and associated wave packet decompositions. 

\emph{Induction Assumption}: For $C \nu^{-2-\delta_0} \leq r \leq R/2$ 
we have for a cube $Q_r$ of size $r$, with time center $t_i$, and $Q_r \subseteq Q_R$ and free solutions 
\begin{equation}
\label{eq:InductionHypothesis}
\| \prod_{j=1}^2 S_j(t,t_i) (\chi_{\overline{\mathcal{Y}}_{r,t_i,j}}(x,D) f_j) \|_{L^{\frac{d+3}{d+1}}_{t,x}(Q_r)} \leq C_\varepsilon r^{\varepsilon} \nu^{-\frac{2}{d+3}} \| f_1 \|_{L^2} \| f_2 \|_{L^2}.
\end{equation}
We shall show in the \emph{induction step} that this implies the above for cubes up to size $2r$. Before we carry out the induction by establishing the \emph{base case} at $r = C \nu^{-2-\delta_0}$ and the \emph{induction step} to propagate the estimate up to $r=R$, we note how the estimate at $r=R$ yields the original claim of Theorems \ref{thm:GeneralizationBilinearParaboloid} and \ref{thm:GeneralizationBilinearCone} by the definition of $\chi_i(x,D)$. 

\smallskip

We turn to the \emph{base case}, which is dictated by the minimum scale for the validity of wave packet decompositions $r = C \nu^{-2-\delta_0}$ and still obtaining an admissible error for the wave packet decomposition.
Indeed, this matches that the frequencies are roughly supported on a ball of size $\nu$ (or in the homogeneous case on a slab of size $\nu$). In the non-degenerate case we have $\# (\Lambda_r \cap \overline{\mathcal{Y}}_{i,r}) = \mathcal{O}(\nu^{-d \delta_0})$. Letting $\delta_0 = c \varepsilon$ for some $c \ll 1$ we can assume that the initial frequency remains unchanged, as this incurs only an error of $r^{\varepsilon}$. In the homogeneous case there are possibly more lattice points, but the direction of the frequency does not change essentially. 

By Assumption \ref{ass:Frames} we have the wave packet expansion
\begin{equation*}
S_j(t,t_i) \big( \chi_{\overline{\mathcal{Y}}_{r,t_i,j}}(x,D) f_j \big) = \sum_{T_j \in \T_j \subseteq \Lambda_r \cap \overline{\mathcal{Y}}_{r,t_i,j}} a_{T_j} f_j + S_j(t,t_i) g_j
\end{equation*}
with an error that is negligible on $Q_r$.

We pigeonhole the amplitude of the wave packets, which incurs a logarithmic loss in $r$. This follows from assuming $\|f \|_2 = 1$ by linearity and recalling that $\# \T_j \leq r^{100d}$ by Assumption \ref{ass:Frames}. Next, we recall that the $\ell^2$-norm of the amplitude coefficients is bounded by the $L^2$-norm of the initial data, and it is enough to show the following estimate for wave packets on the $r$-scale:
\begin{equation*}
   \big\| \sum_{T_1 \in \T_1} \phi_{T_1} \sum_{T_2 \in \T_2} \phi_{T_2} \big\|_{L^{\frac{d+3}{d+1}}_{t,x}(Q_r)} \lesssim_{\varepsilon} r^{\varepsilon} \nu^{-\frac{2}{d+3}} \big( \# \T_1)^{\frac{1}{2}} \big( \# \T_2 \big)^{\frac{1}{2}}.
\end{equation*}
This will be proved via interpolating between the energy estimate:
\begin{equation*}
    \big\| \sum_{T_1 \in \T_1} \phi_{T_1} \sum_{T_2 \in \T_2} \phi_{T_2} \big\|_{L^1_{t,x}(Q_r)} \lesssim_{\varepsilon} r^{1+\varepsilon} \big( \# \T_1)^{\frac{1}{2}} \big( \# \T_2 \big)^{\frac{1}{2}},
\end{equation*}
which follows from Lemma \ref{lem:AlmostOrthogonalityWavePackets}, after applying H\"older's inequality in time.
Secondly, we show the following based on biorthogonality:
\begin{equation*}
    \big\| \sum_{T_1 \in \T_1} \phi_{T_1} \sum_{T_2 \in \T_2} \phi_{T_2} \big\|_{L^2_{t,x}(Q_r)}^2 \lesssim \nu^{-1} r^{C \delta} r^{- \frac{d-1}{2}} \big( \# \T_1 \big) \big( \# \T_2 \big).
\end{equation*}
We write the left-hand side as
\begin{equation*}
    \big\| \sum_{T_1 \in \T_1} \phi_{T_1} \sum_{T_2 \in \T_2} \phi_{T_2} \big\|_{L^2_{t,x}(Q_r)}^2 = \sum_{\substack{T_1,T_1', \\ T_2,T_2'}} \iint \phi_{T_1} \phi_{T_2} \overline{\phi_{T_1'}} \overline{\phi_{T_2'}} dx dt.
\end{equation*}
Since the initial frequency (or in the homogeneous case: the initial frequency direction) is always the same, for tubes $T_1$, $T_2$ there are only $\lesssim r^{C \delta}$ tubes $T_1'$ and $\lesssim r^{C \delta}$ tubes $T_2'$ such that 
\begin{equation*}
    \iint \phi_{T_1} \phi_{T_2} \overline{\phi_{T_1'}} \overline{\phi_{T_2'}} dx dt
\end{equation*}
is not rapidly decreasing in $r$. Indeed, as a consequence of Lemma \ref{lem:GeometryWavepackets}, the above expression can only be essentially contributing provided that the initial positions of $T_1$, $T_1'$ and $T_2$, $T_2'$ are not apart by $\gtrsim r^{\frac{1}{2}+\delta}$. Here we use that by the Hamiltonian flow equations the frequency does not change essentially and the bi-Lipschitz property of the flow.

By an application of Cauchy-Schwarz, we find
\begin{equation*}
\begin{split}
   \sum_{\substack{T_1,T_1', \\ T_2,T_2'}} \iint \phi_{T_1} \phi_{T_2} \overline{\phi_{T_1'}} \overline{\phi_{T_2'}} dx dt &\leq r^{C \delta} \sum_{T_1,T_2} \iint |\phi_{T_1}|^2 |\phi_{T_2}|^2 dx dt \\ 
   &\quad + \text{RapDec}(r) \big( \# \T_1 \big) \big( \# \T_2 \big).
   \end{split}
\end{equation*}
We can now carry out the estimate by computing the maximum essential overlap of $\phi_{T_1}$, $\phi_{T_2}$ as $r^{\delta} \big( (r^{\frac{1}{2}})^d \times (\nu^{-1} r^{\frac{1}{2}}) \big)$, using the transversality, and taking into account the amplitude size bounded by $r^{-\frac{d}{4}+C \delta}$:
\begin{equation*}
    \iint |\phi_{T_1}|^2 |\phi_{T_2}|^2 dx dt \lesssim r^{-d} \nu^{-1} r^{\frac{1}{2}} r^{\frac{d}{2}} r^{C \delta}.
\end{equation*}
This finishes the proof of the base case.

\medskip

We turn to the induction step $r /2 \to r$, establishing \eqref{eq:InductionHypothesis} on a cube of size $r$, taking the estimate \eqref{eq:InductionHypothesis} at $r/2$ for granted.  
We rely again on Assumption \ref{ass:Frames} yielding the wave packet decomposition
\begin{equation*}
S_j(t,t_i) \big( \chi_{\overline{\mathcal{Y}}_{r,t_i,j}}(x,D) f_j \big) = \sum_{T_j \in \T_j \subseteq \Lambda_r \cap \overline{\mathcal{Y}}_{r,t_i,j}} a_{T_j} f_j + S_j(t,t_i) g_j
\end{equation*}
with an error that is negligible on $Q_r$ and pigeonhole the amplitudes $|a_{T_j}| \sim \alpha$, which incurs only a logarithmic loss.

\smallskip

Decompose $Q_r = \bigcup Q_{i,r^{1-\delta}}$ modulo zero measure sets into a disjoint family of cubes with sidelength $r^{1-\delta}$. The collection is denoted by $\mathcal{S}_{1-\delta}$. We will choose $\delta=\varepsilon / C$ for a large constant $C$ depending on the regularity parameters.

\smallskip

We have by Minkowski's inequality:
\begin{equation*}
\big\| \sum_{T_1} \phi_{T_1} \sum_{T_2} \phi_{T_2} \big\|_{L^{\frac{d+3}{d+1}}(Q_r)} \leq \sum_{S \in \mathcal{S}_{1-\delta}} \big\| \sum_{T_1} \phi_{T_1} \sum_{T_2} \phi_{T_2} \big\|_{L^{\frac{d+3}{d+1}}(S)}. 
\end{equation*}
For tubes $T_i \in \T_i$ and cubes $S \in \mathcal{S}_{1-\delta}$, through a series of pigeonholing arguments, we will below define a relation $T_i \sim S$ that broadly says that $T_i$ is concentrated on $S$. We split the highly concentrated contribution of tubes $T_i \sim S$ (see Definition~\ref{def:FocusingRelation}) and the contribution of tubes, which are not concentrated on $S$:
\begin{equation}
\label{eq:SplittingConcentration}
\begin{split}
&\quad \big\| \sum_{T_1 \in \T_1} \phi_{T_1} \sum_{T_2 \in \T_2} \phi_{T_2}  \big\|_{L^{\frac{d+3}{d+1}}(S)} \leq \underbrace{\big\| \sum_{T_1 \sim S} \phi_{T_1} \sum_{T_2 \sim S} \phi_{T_2} \big\|_{L^{\frac{d+3}{d+1}}(S)}}_{(I)} \\
&\quad + \underbrace{\big\| \sum_{T_1 \not\sim S} \phi_{T_1} \sum_{T_2 \sim S} \phi_{T_2} \big\|_{L^{\frac{d+3}{d+1}}(S)}}_{(II)} + \underbrace{\big\| \sum_{T_1 \sim S} \phi_{T_1} \sum_{T_2 \not\sim S} \phi_{T_2} \big\|_{L^{\frac{d+3}{d+1}}(S)}}_{(III)} \\
&\quad + \underbrace{\big\| \sum_{T_1 \not\sim S} \phi_{T_1} \sum_{T_2 \not\sim S} \phi_{T_2} \big\|_{L^{\frac{d+3}{d+1}}(S)}}_{(IV)}.
\end{split}
\end{equation}
The contribution of $(I)$ is estimated by induction on scales and the key property of the focusing relation:
\begin{equation}
\label{eq:EssentialDisjointConcentration}
\forall T_i \in \T_i: \# \{ S \in \mathcal{S}_{1-\delta} : T_i \sim S \} \lesssim \log(r)^c.
\end{equation}
Let $t_S$ be the time-center of $S$. Applying the induction assumption to estimate $(I)$ at scale $r^{1-\delta}$ deserves clarification as we only have it at scale $r^{1-\delta}$ for products of functions $S_j(t,t_S) \chi_{\overline{\mathcal{Y}}_{r^{1-\delta},t_S,j}}(x,D) f_j$ localized with the PDO $\chi_{\overline{\mathcal{Y}}_{r^{1-\delta},t_S,j}}(x,D)$. By the assumption on the phase space localization of the wave packets we have
\begin{equation}
\label{eq:WavePacketMicroLocalization}
\| (1- \chi_{\overline{\mathcal{Y}}_{r^{1-\delta},t_S,j}}(x,D)) \phi_T(t_S) \|_{L^2} \lesssim_{\delta} \text{RapDec}(r)
\end{equation}
for $(x_{T}(t_S),\xi_{T}(t_S))$ contained in the $C (R r^{-\frac12+\delta_0},r^{-\frac{1}{2}+\delta_0})$ thickening of \\ ${\mathcal{Y}}_{r,t_S,j}$.
This condition is satisfied for the wave packets initated at $t_i$ like above, by the bi-Lipschitz property of the flow; see also Subsection \ref{subsection:WavePacketDecompositions}.

By the polynomial bound on $\# \T_j$ we have in $L^2$ for $t \in t_S + [-r^{1-\delta},r^{1-\delta}]$ and the bound \eqref{eq:WavePacketMicroLocalization}:
\begin{equation*}
\sum_{T_j \in \T_j} \phi_{T_j}(t) = S_j(t,t_S) \chi_{\overline{\mathcal{Y}}_{r,t_S,j}}(x,D) \big( \sum_{T_j \in \T_j} \phi_{T_j}(t_S) \big) + \text{RapDec}(r).
\end{equation*}
In summary, we have
\begin{equation*}
\begin{split}
\big\| \sum_{T_1 \sim S} \phi_{T_1} \sum_{T_2 \sim S} \phi_{T_2} \big\|_{L^{\frac{d+3}{d+1}}(S)} &\leq \big\| \prod_{j=1}^2 S_j(t,t_S) \chi_{\overline{\mathcal{Y}}_{r^{1-\delta},t_S,j}}(x,D) \big( \sum_{T_j \sim S} \phi_{T_j}(t_S) \big) \big\|_{L^{\frac{d+3}{d+1}}(S)}  \\ &\quad + \text{RapDec}(r).
\end{split}
\end{equation*}
This can now be estimated by the induction hypothesis and Lemma \ref{lem:AlmostOrthogonalityWavePackets}:
\begin{equation*}
\begin{split}
&\quad \big\| \prod_{j=1}^2 S_j(t,t_S) \chi_{\overline{\mathcal{Y}}_{r^{1-\delta},t_S,j}}(x,D) \big( \sum_{T_i \sim S} \phi_{T_j}(t_S) \big) \big\|_{L^{\frac{d+3}{d+1}}(S)} \\
 &\leq C_\varepsilon r^{\varepsilon(1-\delta)} \nu^{-\frac{2}{d+3}} C_{\delta'} r^{\delta'} \prod_{j=1}^2 \# \{ \T_j \sim S \}^{\frac{1}{2}}.
\end{split}
\end{equation*}

\smallskip

Coming back to \eqref{eq:SplittingConcentration}: Using the induction hypothesis, Cauchy-Schwarz, changing the order of summation and \eqref{eq:EssentialDisjointConcentration}, we find up to rapidly decaying errors:
\begin{equation*}
\begin{split}
(I) &\lesssim_{\varepsilon,\delta'} r^{(1-\delta)\varepsilon+\delta'}  \nu^{-\frac{2}{d+3}} \sum_{S \in \mathcal{S}_{1-\delta}} \# \{ T_1 \in \T_1 : T_1 \sim S \}^{\frac{1}{2}} \# \{ T_2 \in \T_2 : T_2 \sim S \}^{\frac{1}{2}} \\
&\lesssim_{\varepsilon,\delta'} r^{(1-\delta)\varepsilon+\delta'} \nu^{-\frac{2}{d+3}} \prod_{i=1}^2 \big( \sum_{S \in \mathcal{S}_{1-\delta}} \# \{ T_i \in \T_i : T_i \sim S \} \big)^{\frac{1}{2}} \\
&\lesssim_{\varepsilon,\delta'} r^{(1-\delta)\varepsilon+\delta'} \nu^{-\frac{2}{d+3}} \prod_{i=1}^2 \big( \sum_{T_i \in \T_i} \# \{ S \in \mathcal{S}_{1-\delta} : T_i \sim S \} \big)^{\frac{1}{2}} \\
&\lesssim_{\varepsilon,\delta'} r^{(1-\delta) \varepsilon+\delta'} \nu^{-\frac{2}{d+3}} \log(r)^c (\# \T_1)^{\frac{1}{2}} (\# \T_2)^{\frac{1}{2}}.
\end{split}
\end{equation*}
So, induction closes for the highly concentrated part as a consequence of the property \eqref{eq:EssentialDisjointConcentration} of the focusing relation, choosing $\delta'$ small and $r$ large enough.

In the following we again denote $r$ by $R$ to ease notation. It will suffice to prove an estimate for the non-concentration part $(II)$:
\begin{equation*}
\big\| \sum_{\substack{T_1 \in \T_1, \\ T_1 \not\sim S}} \phi_{T_1} \sum_{T_2 \in \T_2} \phi_{T_2} \big\|_{L^{\frac{d+3}{d+1}}(S)} \lesssim R^{C \delta} \nu^{-\frac{2}{d+3}} (\# \T_1 )^{\frac{1}{2}} (\#  \T_2 )^{\frac{1}{2}}. 
\end{equation*}
By symmetry it suffices to consider $(II)$; the estimate of $(III)$ is carried out analogously. $(IV)$ can be estimated either like $(II)$ or $(III)$.

\subsection{Pigeonholing and the focusing relation}
\label{subsection:Pigeonholing}

We turn to the estimate of the non-concentrated parts $(II)$ and $(III)$. By symmetry it suffices to estimate $(II)$. 
To make sense of these terms, we need  
to define the focusing relation, which will be achieved via several pigeonholings.

\medskip

Decompose $Q_R$ into essentially disjoint cubes $q$ of side length $R^{\frac{1}{2}}$, the collection being denoted by $\mathfrak{q}$. Now let for $1 \leq \mu_1,\mu_2 \leq R^{100d}$, $\mu_i \in 2^{\N_0}$:
\begin{equation*}
\mathfrak{q}[\mu_1,\mu_2] = \{ q \in \mathfrak{q} : \# \{ T_i \in \T_i : q \cap R^\delta T_i \neq \emptyset \} \sim \mu_i \}.
\end{equation*}
Next we pigeonhole tubes from $\T_1$. For $\lambda_1 \in 2^{\N_0}$, $\lambda_1 \leq R^{100d}$ let
\begin{equation*}
\T_1[\lambda_1,\mu_1,\mu_2] = \{ T_1 \in \T_1 : \# \{ q \in \mathfrak{q}[\mu_1,\mu_2] : R^\delta T_1 \cap q \neq \emptyset \} \sim \lambda_1 \}.
\end{equation*}

\begin{definition}[The relation $\sim$ between tubes and cubes]
\label{def:FocusingRelation}
Let $S \in \mathcal{S}_{1-\delta}$ and for $T_1 \in \T_1[\lambda_1,\mu_1,\mu_2]$ we let $T_1 \sim_{\lambda_1,\mu_1,\mu_2} S$ if the number of cubes $q \in \mathfrak{q}[\mu_1,\mu_2]$, $q \subseteq S$ with $R^{\delta} T_1 \cap  q \neq \emptyset$ is maximal. If there are several $S \in \mathcal{S}_{1-\delta}$, choose anyone of them. With $S$ fixed, we let moreover $T_1 \sim_{\lambda_1,\mu_1,\mu_2} S'$ for $S$ and $S'$ neighbouring cubes.
 We let $T_1 \sim S$ if $T_1 \sim_{\lambda_1,\mu_1,\mu_2} S$ for some $\lambda_1,\mu_1,\mu_2$. 
\end{definition}
 
\begin{remark} 
The relation $\sim$ satisfies the counting relation \eqref{eq:EssentialDisjointConcentration} because there are $\lesssim \log(R)^3$ choices for dyadic numbers $1 \leq \lambda_1,\mu_1,\mu_2 \leq R^{100d}$.
\end{remark}

\smallskip

It suffices to estimate on $Q_R$ 
\begin{equation}
\label{eq:pruned}
\begin{split}
   &\quad \big\| \sum_{\substack{ T_1 \in \T_1, \\ T_1 \not\sim \T_1} } \phi_{T_1} \sum_{T_2 \in \T_2} \phi_{T_2} \big\|_{L^{\frac{d+3}{d+1}}(S)} \\
   &\leq \sum_{1 \leq \mu_1, \mu_2, \lambda_1 \leq R^{100d}} \big\| \sum_{ \substack{T_1 \in \T_1[\lambda_1,\mu_1,\mu_2], \\ T_1 \not\sim T_1}} \phi_{T_1} \sum_{T_2} \phi_{T_2} \big\|_{L^{\frac{d+3}{d+1}}(\bigcup \mathfrak{q}[\mu_1,\mu_2] \cap S )}.
\end{split}
\end{equation}

For $T_1 \in \T_1[\lambda_1,\mu_1,\mu_2]$ with $T_1 \sim_{\lambda_1,\mu_1,\mu_2} S$
we have by averaging
\begin{equation}
\label{eq:AveragingEstimate}
\# \{ q \in \mathfrak{q}[\mu_1,\mu_2] : q \subseteq S, \quad q \cap R^\delta T_1 \neq \emptyset \} \gtrsim \lambda_1 R^{-c \delta}.
\end{equation}

We turn to the estimate of the non-focusing part after pigeonholing, i.e., fixing the maximizing dyadic values $\lambda_1, \mu_1, \mu_2$ in \eqref{eq:pruned}, which incurs a permissible logarithmic loss:
\begin{equation}
\label{eq:NonfocusingEstimate}
\big\| \sum_{\substack{T_1 \not\sim S, \\ T_1 \in \T_1[\lambda_1,\mu_1,\mu_2]}} \phi_{T_1} \sum_{T_2} \phi_{T_2} \big\|_{L^{\frac{d+3}{d+1}}(\mathcal{Q} = \bigcup \mathfrak{q}[\mu_1,\mu_2] \cap S)} \lesssim R^{C \delta} (\# \T_1)^{\frac{1}{2}} (\# \T_2)^{\frac{1}{2}}.
\end{equation}

The estimate \eqref{eq:NonfocusingEstimate} will follow from interpolating the energy estimate and an estimate obtained from biorthogonality. The energy estimate does not depend on the pigeonholing and is given by:
\begin{equation}
\label{eq:EnergyEstimate}
\big\| \sum_{T_1 \in \T_1} \phi_{T_1} \sum_{T_2} \phi_{T_2} \big\|_{L^1(S)} \lesssim R^{1-\delta} R^{\delta^2} (\# \T_1)^{\frac{1}{2}} (\# \T_2)^{\frac{1}{2}}.
\end{equation}
This is based on the almost orthogonality of the wave packets at $t > 0$ by Lemma \ref{lem:AlmostOrthogonalityWavePackets}, after applying H\"older's inequality.

\smallskip

The second estimate is a consequence of biorthogonality, which proof relies on the pigeonholing and focusing relation: 
\begin{equation}
\label{eq:Biorthogonality}
\begin{split}
\big\| \sum_{\substack{T_1 \not\sim S, \\ T_1 \in \T_1[\lambda_1,\mu_1,\mu_2]}} \phi_{T_1} \sum_{T_2} \phi_{T_2} \big\|^2_{L^2(\mathcal{Q})} &\leq \sum_{q \in \mathfrak{q}[\mu_1,\mu_2]\cap S} \sum_{T_1,T_2,T_1',T_2'} \iint_q \chi_q^4 \phi_{T_1} \phi_{T_2} \overline{\phi_{T_1'}} \overline{\phi_{T_2'}} dx dt \\
&\lesssim \nu^{-1} R^{C \delta} R^{-\frac{d-1}{2}} (\# \T_1) (\# \T_2).
\end{split}
\end{equation}
where $\chi_q$ denotes a smooth bump function adapted to $q \in \mathfrak{q}$, which we assume for simplicity to be of product type, such that
\begin{equation*}
\chi_q \geq c > 0 \text{ on } q, \quad \text{supp}(\chi_q) \subseteq 2 q, \text{ and } \sum_{q \in \mathfrak{q}} \chi_q^4 \equiv 1.
\end{equation*}

Interpolating \eqref{eq:EnergyEstimate} with \eqref{eq:Biorthogonality} at $p=\frac{d+3}{d+1}$ yields \eqref{eq:AveragingEstimate}. The remainder of the argument consists of establishing \eqref{eq:Biorthogonality}.

\subsection{Fine-scale decomposition and conservation laws}
\label{subsection:FineScaleDecomposition}

We start by investigating when  $T_1, T_2, T_1', T_2'$  make a significant contribution on $q \in \mathfrak{q}[\mu_1,\mu_2]$ to the integral in \eqref{eq:Biorthogonality}. Recall that $T_1,T_1' \in \T_1[\lambda_1,\mu_1,\mu_2]$ and $T_1, T_1' \notin S$. To derive the conservation laws, we for the moment omit this information from notation, nor is it relevant to this point.

Let $I_{t,q} = \pi_t(q)$ denote the $R^{\frac{1}{2}}$-interval of times covered in $q$ and $t_q = c(I_{t,q})$ denote the center in time. Let $x_q = c_x(q)$ denote the center in space coordinates and $z_q = (x_q,t_q)$ the space-time center.

The cubes $q \in \mathfrak{q}$ for which
\begin{equation*}
\iint \chi_q^4 \phi_{T_1}(z) \overline{\phi_{T_1'}(z)} \phi_{T_2}(z) \overline{\phi_{T_2'}(z)} dz = \text{RapDec}(R)
\end{equation*}
are referred to as \emph{inessential} as these can be neglected in the proof of the estimate, recalling that we always have a bound in $\# T_j$ that is polynomial in $R$.
In the following we collect identities, which must hold true, else the cube will be inessential.
We obtain
\begin{equation}
\label{eq:EssentialDecomposition}
\begin{split}
    &\; \iint_{\bigcup \mathfrak{q}[\mu_1,\mu_2] \cap S} \big| \sum_{T_1 \not\sim S} \phi_{T_1} \big|^2 \big| \sum_{T_2 } \phi_{T_2} \big|^2 \\
    &\lesssim \sum_{\substack{q \in \mathfrak{q}[\mu_1,\mu_2], \\ q \subseteq S}} \iint \chi_q^4 \big| \sum_{T_1 \not \sim S} \phi_{T_1} \big|^2 \big| \sum_{T_2} \phi_{T_2} \big|^2 \\
    &= \sum_{\substack{T_1, T_1'. \\ T_2, T_2'}} \sum_{\substack{q \in \mathfrak{q}[\mu_1,\mu_2], \\ q \subseteq S}} \iint \chi_q^4 \phi_{T_1} \overline{\phi_{T_1'}} \phi_{T_2} \overline{\phi_{T_2'}} \\
    &= \sum_{T_i, T_j'} \sum_{\substack{q \in \mathfrak{q}[\mu_1,\mu_2], \\ \text{essential } q \subseteq S}} \iint \chi_q^4 \phi_{T_1} \overline{\phi_{T_1'}} \phi_{T_2} \overline{\phi_{T_2'}} + \text{RapDec}(R) \big( \# \T_1 \big) \big( \# \T_2 \big).
\end{split}
\end{equation}

For $t \in I_{t,q}$ we can suppose the following identities:

\smallskip

\emph{Congruence of position:}
\begin{equation}
\label{eq:CongruencePosition}
\begin{split}
x_{T_1}(t) + \mathcal{O}(R^{\frac{1}{2}+\delta}) &= x_{T_1'}(t) + \mathcal{O}(R^{\frac{1}{2}+\delta}) = x_{T_2}(t) + \mathcal{O}(R^{\frac{1}{2}+\delta}) \\
&\quad = x_{T_2'}(t) + \mathcal{O}(R^{\frac{1}{2}+\delta}).
\end{split}
\end{equation}

\smallskip

\emph{Conservation of momentum:}
\begin{equation}
\label{eq:MomentumConservation}
\xi_{T_1}(t) + \xi_{T_2}(t) = \xi_{T_1'}(t) + \xi_{T_2'}(t) + \mathcal{O}(R^{-\frac{1}{2}+\delta}).
\end{equation}

To prove the above two statements, we argue by contradiction and suppose that the congruence of position or conservation of momentum fails at one time $t \in I_{t,q}$. For this time-slice, we have
\begin{equation}
\label{eq:InessentialCubeSlice}
\int \phi_{T_1}(t,x) \overline{\phi_{T_1'}(t,x)} \phi_{T_2}(t,x) \overline{\phi_{T_2'}(t,x)} dx = \text{RapDec}(R).
\end{equation}

 We shall see that in this case we find \eqref{eq:InessentialCubeSlice} for all times $t \in I_{t,q}$. But then the contribution over the whole cube becomes negligible. Here we use that position and momentum are not significantly changing over $q$. This can be justified by linearizing the Hamiltonian flow on $q$.

\begin{lemma}
For $t \in I_{t,q}$ we have
\begin{equation*}
|x_t - ( x_{t_q} + (t-t_q) \frac{\partial p}{\partial \xi}(x_{t_q},t_q,\xi_{t_q})) | \lesssim 1 \text{ and } |\xi_t - \xi_{t_q} | \lesssim R^{-\frac{1}{2}}.
\end{equation*}
In particular, if \eqref{eq:CongruencePosition} or \eqref{eq:MomentumConservation} are violated, then it holds
\begin{equation*}
\iint \chi_q^4 \phi_{T_1}(t,x) \overline{\phi_{T_1'}(t,x)} \phi_{T_2}(t,x) \overline{\phi_{T_2'}(t,x)} dx dt = \text{RapDec}(R).
\end{equation*}
\end{lemma}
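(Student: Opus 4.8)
The plan is to prove the two claimed estimates in sequence, since the first (the linearization bound on the bicharacteristics over the small cube $q$) feeds directly into the second (the rapid decay of the quadrilinear integral). For the linearization bound, I would start from the bicharacteristic ODE \eqref{eq:RescaledHamiltonianFlowAssumptions}. Writing $x_t - x_{t_q} = \int_{t_q}^t \frac{\partial p}{\partial \xi}(x_s,s,\xi_s)\,ds$ and comparing with the frozen-coefficient expression $(t-t_q)\frac{\partial p}{\partial \xi}(x_{t_q},t_q,\xi_{t_q})$, the difference is an integral of $\frac{\partial p}{\partial \xi}(x_s,s,\xi_s) - \frac{\partial p}{\partial \xi}(x_{t_q},t_q,\xi_{t_q})$ over an interval of length $|I_{t,q}| \lesssim R^{1/2}$. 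I would Taylor expand this difference using the second-order bounds from Assumption \ref{ass:RegularityFlow}: the $\partial_x \partial_\xi p$ and $\partial_t \partial_\xi p$ terms contribute $\epsilon R^{-1}$ against a displacement in $z$ of size $\lesssim R^{1/2}$ (using $|x_s - x_{t_q}| \lesssim R^{1/2}$, which holds since $C_2$-Lipschitz velocity over time $R^{1/2}$ moves the position by $\lesssim R^{1/2}$), and the $\partial_\xi^2 p$ term contributes $C_2$ against a frequency displacement $|\xi_s - \xi_{t_q}|$. For the frequency, $\dot\xi_s = -\frac{\partial p}{\partial x}$ and $|\frac{\partial p}{\partial x}| \le \epsilon R^{-1}$ by \eqref{eq:MixedRegularity} with $|\alpha|=1$, $|\beta| = 0$, so integrating over an interval of length $R^{1/2}$ gives $|\xi_s - \xi_{t_q}| \lesssim \epsilon R^{-1/2} \lesssim R^{-1/2}$, which is exactly the second claimed bound. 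Feeding this back, the velocity difference integrand is $\lesssim \epsilon R^{-1} \cdot R^{1/2} + C_2 \cdot R^{-1/2} \lesssim R^{-1/2}$, and integrating once more over $I_{t,q}$ gives $|x_t - (x_{t_q} + (t-t_q)\frac{\partial p}{\partial\xi}(z_{t_q},\xi_{t_q}))| \lesssim R^{-1/2} \cdot R^{1/2} \lesssim 1$. This is a clean Gr\o nwall-free bootstrap; the first-order bounds keep the frequency nearly frozen over the $R^{1/2}$ time-scale and the second-order bounds control the velocity error.

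For the second assertion, the idea is that \eqref{eq:CongruencePosition} and \eqref{eq:MomentumConservation} are precisely the stationary-phase / support-overlap conditions that must hold for the spatial integral at a single time slice not to be $\text{RapDec}(R)$, and the linearization lemma upgrades "fails at one time $t \in I_{t,q}$" to "fails at all times in $I_{t,q}$," whence the cube-integral is rapidly decaying. Concretely, suppose the congruence of position \eqref{eq:CongruencePosition} fails at some $t_0 \in I_{t,q}$, i.e. two of the central positions are separated by $\gg R^{1/2+\delta}$. By the wave packet localization in position (Assumption \ref{ass:WP}, item 1), each $\phi_{T}(t_0)$ is, up to $\text{RapDec}(R)$, supported in $\mathcal{N}_{R^{1/2+\delta}}(x^{t_0}_T)$, so the product of the four factors vanishes up to $\text{RapDec}(R)$ on that time slice; by the linearization lemma the centers move by only $\mathcal{O}(1) \ll R^{1/2+\delta}$ over $I_{t,q}$, so the separation persists for every $t \in I_{t,q}$ and the full cube integral is $\text{RapDec}(R)$. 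For momentum, suppose \eqref{eq:MomentumConservation} fails at $t_0$: $|\xi_{T_1}(t_0) + \xi_{T_2}(t_0) - \xi_{T_1'}(t_0) - \xi_{T_2'}(t_0)| \gg R^{-1/2+\delta}$. Here I would pass to the Fourier side in $x$ at the fixed time $t_0$: the product $\phi_{T_1}\phi_{T_2}$ has spatial Fourier transform supported (up to $\text{RapDec}(R)$, by item 2 of Assumption \ref{ass:WP} and the convolution structure) near $\xi_{T_1}(t_0) + \xi_{T_2}(t_0)$ at scale $R^{-1/2+\delta}$, while $\overline{\phi_{T_1'}\phi_{T_2'}}$ is supported near $\xi_{T_1'}(t_0) + \xi_{T_2'}(t_0)$ at the same scale; if these centers are separated by $\gg R^{-1/2+\delta}$, Plancherel gives that the $x$-integral is $\text{RapDec}(R)$. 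Again the linearization lemma shows $|\xi_T(t) - \xi_T(t_q)| \lesssim R^{-1/2} \ll R^{-1/2+\delta}$, so the failure persists over all of $I_{t,q}$ and the cube integral is rapidly decaying.

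The main obstacle I anticipate is the bookkeeping of the rapid-decay constants: Assumption \ref{ass:WP} gives $\text{RapDec}(R)$ tails for each single wave packet, but here we are multiplying four of them and then integrating over a cube of volume $R^{(d+1)/2}$, and there is the $\delta$-loss in the localization radius $r^{1/2+\delta}$. One needs to be careful that the polynomial-in-$R$ volume factor and the polynomial bounds on $\#\T_j$ are absorbed by the rapid decay — this is fine because $\text{RapDec}(R)$ beats any fixed power of $R$ — but the argument must be phrased so that the "off-diagonal" region (where the congruence fails by a definite amount $\gg R^{1/2+\delta}$ in position or $\gg R^{-1/2+\delta}$ in frequency) is cleanly separated from the "essential" region, so that one genuinely gains rapid decay rather than merely a small power. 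A secondary subtlety is that the wave packet frequency localization in item 2 of Assumption \ref{ass:WP} is stated on the Fourier side of $\phi_T$ itself, and to control the Fourier support of the \emph{product} $\phi_{T_1}\phi_{T_2}$ one convolves two such supports; since both are localized to $R^{-1/2+\delta}$-balls around $\xi_{T_i}(t_0)$, the convolution is localized to a $2R^{-1/2+\delta}$-ball around the sum, which is exactly what is needed, but this elementary convolution-of-supports step should be spelled out. With these points handled, both statements follow directly from Assumption \ref{ass:WP} and the first part of the lemma.
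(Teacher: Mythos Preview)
Your proposal is correct and follows essentially the same approach as the paper: the paper likewise obtains the linearization bounds from a Taylor expansion of the Hamilton flow over the $R^{1/2}$ time-scale (writing the second-order remainder as $\mathcal{O}(t^2(\partial^2_{\xi x}p\,\partial_\xi p - \partial^2_{\xi\xi}p\,\partial_x p))$ rather than your integral form, but this is the same estimate), and then invokes the position and frequency localization from Assumption~\ref{ass:WP} together with Plancherel for the momentum case, exactly as you outline. Your explicit remarks on the convolution-of-supports step and the absorption of polynomial volume factors into $\text{RapDec}(R)$ are more detailed than the paper's one-line appeal to Plancherel, but the argument is the same.
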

\begin{proof}
Suppose $t_q = 0$ to ease notation. We have by Taylor expansion:
\begin{equation*}
\begin{split}
x_t &= x_0 + t \frac{\partial p}{\partial \xi}(x_0,0,\xi_0) + \mathcal{O}( t^2 \big( \frac{\partial^2 p}{\partial \xi \partial x} \frac{\partial p}{\partial \xi} - \frac{\partial^2 p}{\partial \xi^2} \frac{\partial p}{\partial x} \big) ), \\
\xi_t &= \xi_0 + \mathcal{O}(t \frac{\partial p}{\partial x}).
\end{split}
\end{equation*}
We estimate both terms by \eqref{eq:MixedRegularity} and \eqref{eq:XiRegularity}:
\begin{equation*}
\big| t^2 \big( \frac{\partial^2 p}{\partial \xi \partial x} \frac{\partial p}{\partial \xi} - \frac{\partial^2 p}{\partial \xi^2} \frac{\partial p}{\partial x} \big) \big| \lesssim_{C_2} \epsilon_0.
\end{equation*}

For the second term we have
\begin{equation*}
\big| t \frac{\partial p}{\partial x} \big| \lesssim_{C_2} \epsilon_0 R^{-\frac{1}{2}}.
\end{equation*}
This shows that the Hamiltonian flow is not essentially changing over $q$. Then, it follows from the wave packet axioms concerning the localization in position and frequency that violation of \eqref{eq:CongruencePosition} leads to rapid decay. If  \eqref{eq:MomentumConservation} is violated, the claim follows from Plancherel's theorem. 
\end{proof}

The third conservation law is the \emph{conservation of energy}: 
\begin{equation}
\label{eq:EnergyConservation}
p_1(z_q,\xi_{T_1}(t_q)) + p_2(z_q,\xi_{T_2}(t_q)) = p_1(z_q,\xi_{T_1'}(t_q)) + p_2(z_q,\xi_{T_2'}(t_q)) + \mathcal{O}(R^{-\frac{1}{2}+\delta}),
\end{equation}
which is a consequence of time orthogonality.

\begin{lemma}
Suppose that \eqref{eq:EnergyConservation} fails. Then
\begin{equation*}
\big| \iint_{\R^{d+1}} \chi_q^4 \phi_{T_1} \overline{\phi_{T_1'}} \phi_{T_2} \overline{\phi_{T_2'}} dz \big| \lesssim \text{RapDec(R)}.
\end{equation*}
\end{lemma}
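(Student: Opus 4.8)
The plan is to mimic the proof of the previous lemma (on congruence of position and conservation of momentum), replacing the spatial/frequency localization axioms with the time-frequency localization axiom, Assumption~\ref{ass:WP}(3). First I would fix an essential cube $q \in \mathfrak{q}[\mu_1,\mu_2]$ with space-time center $z_q = (x_q,t_q)$ and introduce the time cutoff $\chi(R^{-\frac12}(t - t_q))$ implicit in the axiom. The key observation is that on $q$ each $\phi_{T_k}$ (respectively $\overline{\phi_{T_k'}}$) has temporal Fourier support essentially concentrated, up to $\mathcal{O}(R^{-\frac12+\delta})$, at the frequency $\tau_{T_k} = -p_k(x^{t_q},t_q,\xi^{t_q}_{T_k})$; this is exactly the content of the time-frequency localization axiom once one verifies that the central bicharacteristic does not move appreciably over $q$. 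That verification is already available: the preceding lemma shows $|x_t - (x_{t_q} + (t-t_q)\partial_\xi p(x_{t_q},t_q,\xi_{t_q}))| \lesssim 1$ and $|\xi_t - \xi_{t_q}| \lesssim R^{-\frac12}$ on $I_{t,q}$, so replacing $(x^{t_0},t_0,\xi^{t_0})$ in the axiom by $(x_q,t_q,\xi_{T_k}(t_q))$ only changes the energy value $p_k$ by $\mathcal{O}(\epsilon) = \mathcal{O}(R^{-\frac12+\delta})$ using the regularity bounds \eqref{eq:MixedRegularity} and the chain rule, together with \eqref{eq:XiRegularity} to control $\partial_\xi p$.

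Next I would write the $t$-integral over $I_{t,q}$ as a one-dimensional convolution on the Fourier side. Specifically, for fixed $x$, $\int \chi_q^4(x,t)\,\phi_{T_1}\overline{\phi_{T_1'}}\phi_{T_2}\overline{\phi_{T_2'}}\,dt$ is, by Plancherel in $t$, an integral against $\mathcal{F}_t[\chi_q^4(x,\cdot)\phi_{T_1}] * \mathcal{F}_t[\overline{\phi_{T_1'}}] * \mathcal{F}_t[\phi_{T_2}] * \mathcal{F}_t[\overline{\phi_{T_2'}}]$ evaluated at $0$ (or one groups the four factors into a single Fourier-support statement). Each of the four temporal Fourier transforms is, by axiom~(3) and the essentially-constant property, concentrated within $\mathcal{O}(R^{-\frac12+\delta})$ of $\tau_{T_1}$, $-\tau_{T_1'}$, $\tau_{T_2}$, $-\tau_{T_2'}$ respectively, with a $\mathrm{RapDec}(R)$ tail. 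Hence the convolution is supported within $\mathcal{O}(R^{-\frac12+\delta})$ of $\tau_{T_1} - \tau_{T_1'} + \tau_{T_2} - \tau_{T_2'}$; if \eqref{eq:EnergyConservation} fails, i.e. this quantity exceeds $C R^{-\frac12+\delta}$ in absolute value for a large enough $C$, then the value at $0$ picks up only the rapidly decaying tails. Integrating the resulting $\mathrm{RapDec}(R)$ bound in $x$ over the $\mathcal{O}(R^{\frac12})$-ball in space — where all four wave packets are anyway supported up to $\mathrm{RapDec}$ — and using $\|\phi_{T_k}\|_\infty \lesssim R^{-d/4+C\delta}$ from \eqref{eq:AmplitudeSize} to keep the $x$-integral polynomially bounded, gives the claimed $\mathrm{RapDec}(R)$ estimate.

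There is a technical point to handle: the cutoff $\chi_q^4$ depends on $x$ as well as $t$, so one cannot simply pull it out of the $t$-Fourier transform as a pure $t$-multiplier. The clean fix is to write $\chi_q$ as a product $\chi_q(x,t) = \chi_q^x(x)\chi_q^t(t)$ (the paper already assumes $\chi_q$ is of product type), so that $\chi_q^{t}{}^4$ plays exactly the role of the $\chi(R^{-\frac12}(t-t_q))$ cutoff in axiom~(3), up to a harmless rescaling of the bump profile that does not affect the rapid-decay conclusion. A second point is that axiom~(3) localizes $\mathcal{F}_t[\chi(R^{-\frac12}(t-t_0))\phi_T]$, i.e. a single packet times a single cutoff; when I take the product of two cutoff packets $\chi_q^t{}^2\phi_{T_1}\cdot\chi_q^t{}^2\phi_{T_2}$ I should either invoke the axiom for each factor and convolve, or note that the product's temporal Fourier transform is the convolution of the two individually localized transforms, which is localized near the sum of the two central frequencies with the same $\mathrm{RapDec}(R)$ error — this is the main (though routine) place where one chains the rapid-decay estimates.

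The main obstacle, such as it is, is bookkeeping rather than conceptual: one must carefully track that the scale $R^{\frac12}$ appearing in axiom~(3) matches the side length of $q$, that the various $\mathcal{O}(R^{-\frac12+\delta})$ errors coming from (a) the width of each temporal Fourier packet, (b) the drift of the bicharacteristic over $q$, and (c) the regularity error $\epsilon$ in replacing $p_k$ at nearby base points, all combine to a single $\mathcal{O}(R^{-\frac12+\delta})$ with $\delta$ absorbed appropriately, so that a genuine failure of \eqref{eq:EnergyConservation} (i.e. a separation $\gg R^{-\frac12+\delta}$) really does force disjoint Fourier supports. Given the essentially-constant and rapid-decay axioms this is straightforward, and the conclusion $|\iint_q \chi_q^4 \phi_{T_1}\overline{\phi_{T_1'}}\phi_{T_2}\overline{\phi_{T_2'}}\,dz| \lesssim \mathrm{RapDec}(R)$ follows.
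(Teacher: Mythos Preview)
Your proposal is correct and follows essentially the same approach as the paper: apply Plancherel in $t$, invoke the time-frequency localization axiom (Assumption~\ref{ass:WP}(3)) for each factor $\chi_q\phi_{T_k}$, and observe that a failure of \eqref{eq:EnergyConservation} forces the resulting temporal Fourier supports to be disjoint up to $\mathrm{RapDec}(R)$ tails. The paper's proof is terser---it groups the four factors into two pairs and writes the integral as $\iint \langle \mathcal{F}_t[\chi_q\phi_{T_1}]*\mathcal{F}_t[\chi_q\phi_{T_2}],\,\mathcal{F}_t[\chi_q\phi_{T_1'}]*\mathcal{F}_t[\chi_q\phi_{T_2'}]\rangle\,dx$---and then uses congruence of position plus Taylor expansion to pass from $p_k(x_{T_k}(t_q),t_q,\xi_{T_k}(t_q))$ to $p_k(z_q,\xi_{T_k}(t_q))$, exactly the base-point replacement you identify; your more explicit bookkeeping of the product structure of $\chi_q$ and the $\mathcal{O}(R^{-\frac12+\delta})$ error budget is a faithful expansion of what the paper leaves implicit.
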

\begin{proof}
We apply Plancherel's theorem in time
\begin{equation*}
\iint \chi_q \phi_{T_1} \overline{\chi_q \phi_{T_1'}} \chi_q \phi_{T_2} \overline{\chi_q \phi_{T_2'}} dz = \iint \langle \mathcal{F}_t[ \chi_q \phi_{T_1} ] * \mathcal{F}_t [ \chi_{q} \phi_{T_2} ], \mathcal{F}_t [ \chi_q \phi_{T_1'}] * \mathcal{F}_t [ \chi_q \phi_{T_2'}] \rangle dx.
\end{equation*}

By using the essential support of the time frequencies according to Assumption \ref{ass:WP} (3) we can suppose that
\begin{equation*}
\begin{split}
&\big| p_1(x_{T_1}(t_q),t_q, \xi_{T_1}(t_q)) + p_2(x_{T_2}(t_q),t_q,\xi_{T_2}(t_q)) \\
&\quad - p_1(x_{T'_1}(t_q),t_q, \xi_{T'_1}(t_q)) - p_2(x_{T'_2}(t_q),t_q,\xi_{T'_2}(t_q) \big| \lesssim R^{-\frac{1}{2}+\delta}.
\end{split}
\end{equation*}
Now by the congruence of position and Taylor expansion we conclude
\begin{equation*}
|p_1(z_q,\xi_{T_1}(t_1)) + p_2(z_q,\xi_{T_2}(t_q)) - p_1(z_q,\xi_{T_1'}(t_q)) - p_2(z_q,\xi_{T_2'}(t_q)) | \lesssim R^{-\frac{1}{2}+\delta}.
\end{equation*}

\end{proof}

 To summarize, at this point we have determined that all essential contributions to 
the quadrilinear expression in \eqref{eq:EssentialDecomposition} must satisfy the conservation of 
position \eqref{eq:CongruencePosition}, momentum \eqref{eq:MomentumConservation} and energy \eqref{eq:EnergyConservation}. This allows us to restrict our attention only to such quadruples of tubes, which we now seek to describe.
By momentum conservation we have for essential cubes 
\begin{equation*}
\xi_{T_2}(t_q) = \xi_{T_1'}(t_q)+\xi_{T_2'}(t_q) -\xi_{T_1}(t_q) + \mathcal{O}(R^{-\frac{1}{2}+\delta})
\end{equation*}
 and by Taylor expansion, we can plug this identity into \eqref{eq:EnergyConservation}.

\smallskip

Fixing two of the tube frequencies, this leads us to define the energy difference function, as a function of a third frequency $\eta$,
\begin{equation*}
F^z_{\xi_1,\xi_2'}(\eta) =p_1(z,\xi_1)+p_2(z,\eta+\xi_{2}'-\xi_{1})-p_1(z,\eta) - p_2(z,\xi_{2}'),
\end{equation*}
where $z=(x,t)$, $\xi_1 \in \Xi$, $\xi_2' \in \Xi'$ and $T_1 \in \T_1$ such that $|x_{T_1}(t) -x| \lesssim R^{\frac{1}{2}+\delta}$.

\smallskip

With this notation, the set of frequencies $\eta$ which yields essential contributions 
is confined to an \emph{energy shell}, defined as the collection of frequencies of $T_1$-tubes at time $t_q$, which essentially annihilate the energy difference function:
\begin{equation*}
\mathcal{E}^{q}_{\xi_1,\xi_2'} = \{ \eta_{T_1}(t_q) : T_1 \in \T_1, \; |F^{z_q}_{\xi_1,\xi_2'}(\eta_{T_1}(t_q))| \lesssim R^{-\frac{1}{2} +\delta } \}.
\end{equation*}

\smallskip
For $q \in \mathfrak{q}[\mu_1,\mu_2]$ let 
 $\T_1^{\not\sim S}(q,\lambda_1,\mu_1,\mu_2)$ denote the tubes $T_1 \in \T_1[\lambda_1,\mu_1,\mu_2]$ associated to $q$, i.e. with $T_1 \not\sim S$, $R^\delta T_1 \cap q \neq \emptyset$.
Given frequencies $(\xi_1,\xi'_2)$, let $\T_1^{\not\sim S}(\mathcal{E}^{z_q}_{\xi_1,\xi_2'},\lambda_1,\mu_1,\mu_2)$ be the subset $\T_1^{\not\sim S}(q,\lambda_1,\mu_1,\mu_2)$ 
whose spatial frequency $\eta_{T_1}(t_q)$ at $t_q=c_t(q)$ belongs to the associated energy shell
\begin{equation*}
|F^{z_q}_{\xi_1,\xi_2'}(\eta_{T_1}(t_q))| \lesssim R^{-\frac{1}{2}+\delta}.
\end{equation*}
We find from counting and integration, taking into account the wave packet amplitude size in \eqref{eq:AmplitudeSize}:
\begin{equation}
\label{eq:AuxEstimateEnergyI}
\begin{split}
\sum_{\substack{q \in \mathfrak{q}[\mu_1,\mu_2] \\  q \subseteq S}} \sum_{\substack{T_i,T_j',\\ T_1 \not\sim S, T_1' \not\sim S}} \iint \chi_q^4 \phi_{T_1} \overline{\phi_{T_1'}} \phi_{T_2} \overline{\phi_{T_2'}} dz &\lesssim R^{-\frac{d-1}{2}} R^{c \delta} \sum_q | \T_1^{\not\sim S}(q,\lambda_1,\mu_1,\mu_2)|   \\
&\, \times |\T_2(q)| \, \big( \sup_{\xi_1,\xi_2'} \big| \T_1^{\not\sim S}(\mathcal{E}^{q}_{\xi_1,\xi_2'},\lambda_1,\mu_1,\mu_2) \big| \big).
\end{split}
\end{equation}

\medskip

The following combinatorial estimate for $q \in \mathfrak{q}[\mu_1,\mu_2]$, $q \subseteq S$ is decisive:
\begin{equation}
\label{eq:CombEstIntro}
|\T_1^{\not\sim S}(\mathcal{E}^{q}_{\xi_1,\xi_2'},\lambda_1,\mu_1,\mu_2)| \lesssim \nu^{-1} R^{c \delta} \frac{\# \T_2}{\lambda_1 \mu_2}.
\end{equation}
Here $q, \xi_1, \xi'_2$ are fixed, and the fact that we are counting tubes not concentrated on $S$ is essential.

If this is the case we can continue \eqref{eq:AuxEstimateEnergyI} as follows:
\begin{equation}
\label{eq:AuxEstimateEnergyII}
\begin{split}
\eqref{eq:AuxEstimateEnergyI} &\lesssim R^{-\frac{d-1}{2}} \sum_{q \in \mathfrak{q}[\mu_1,\mu_2]}|\T^{\not\sim S}_1(q,\lambda_1,\mu_1,\mu_2)| R^{c \delta} \nu^{-1} \frac{\# \T_2}{\lambda_1 \mu_2} |\T_2(q)| \\
&\lesssim R^{c \delta} \nu^{-1} \# \T_2 \sum_{q\in \mathfrak{q}[\mu_1,\mu_2]} \frac{|\T^{\not\sim S}_1(q,\lambda_1,\mu_1,\mu_2)|}{\lambda_1}.
\end{split}
\end{equation}
Finally, we find from changing the summation:
\begin{equation}
\label{eq:ChangingSummation}
\begin{split}
\sum_{q \in \mathfrak{q}[\mu_1,\mu_2]} |\T_1^{\not\sim S}(q,\lambda_1,\mu_1,\mu_2)| &\leq \sum_{T_1 \in \T_1[\lambda_1,\mu_1,\mu_2]} \# \{ q \in \mathfrak{q}[\mu_1,\mu_2] : R^\delta T_1 \cap q \neq \emptyset \} \\ &\lesssim \lambda_1 |\T_1|,
\end{split}
\end{equation}
where the requirement that we are only counting tubes not concentrated on $S$ is nonessential and is dropped.
Plugging \eqref{eq:ChangingSummation} into \eqref{eq:AuxEstimateEnergyII} we conclude the proof of the estimate \eqref{eq:Biorthogonality}.

\subsection{The combinatorial estimate}
\label{subsection:CombinatorialEstimate}

Here we lay the last brick in the proofs of Theorems \ref{thm:GeneralizationBilinearParaboloid} and \ref{thm:GeneralizationBilinearCone} by establishing \eqref{eq:CombEstIntro}, which we recall here:
\begin{equation}
\label{eq:CombEst}
|\T_1^{\not\sim S}(\mathcal{E}^{q}_{\xi_1,\xi_2'},\lambda_1,\mu_1,\mu_2)| \lesssim R^{c \delta} \nu^{-1} \frac{\# \T_2}{\lambda_1 \mu_2}.
\end{equation}

For this we critically use the non-focusing property of $T_1$ from the above set. Firstly, observe that for any such $T_1$  there are $\gtrsim R^{-c \delta} \lambda_1$ cubes $q' \in \mathfrak{q}[\mu_1,\mu_2]$ with $R^\delta T_1 \cap q' \neq \emptyset$ which are at distance $\gtrsim R^{1-\delta}$ from $q \subseteq S$ because $T_1$ is not concentrating on $S$. By pigeonholing there are roughly $\mu_2$ tubes $T_{2} \in \T_2$ passing through $q' \in \mathfrak{q}[\mu_1,\mu_2]$ with $\text{dist}(q',q) \geq R^{1-\delta}$. 
Recall the estimate \eqref{eq:AveragingEstimate} from averaging, which gives 
\begin{equation*}
\begin{split}
&| \{ (q',T_1) \in \mathfrak{q}[\mu_1,\mu_2] \times \T_1^{\not \sim S}(\mathcal{E}^{q}_{\xi_1,\xi_2'},\lambda_1,\mu_1,\mu_2) : \; \text{dist}(q',q) \gtrsim R^{1-\delta}, 
 R^{\delta} T_1 \cap q' \neq \emptyset \} | \\ &\qquad \qquad\gtrsim R^{- C \delta} \lambda_1 | \T_1^{\not \sim}(\mathcal{E}^{q}_{\xi_1,\xi_2'},\lambda_1,\mu_1,\mu_2) |.
\end{split}
\end{equation*}

Consequently, adding tubes $T_2$ through $q'$ to the counting, we have
\begin{equation}
\label{eq:ProductSetCounting}
\begin{split}
&\quad | \{ (q',T_1,T_2) \in \mathfrak{q}[\mu_1,\mu_2] \times \T_1^{\not\sim S}(\mathcal{E}^{q}_{\xi_1,\xi_2'},\lambda,\mu_1,\mu_2) \times \T_2 :  q' \cap R^\delta T_{1} \neq \emptyset, \\
&\quad \text{dist}(q',q) \geq R^{1-\delta}, \;  q' \cap R^\delta T_{2} \neq \emptyset \}| \gtrsim R^{-C \delta} \lambda_1 \mu_2 |\T_1^{\not\sim S}(\mathcal{E}^{q}_{\xi_1,\xi_2'},\lambda,\mu_1,\mu_2)|.
\end{split}
\end{equation}

In order to change the counting order on the left, we now fix $T_{2} \in \T_2$, and we estimate the size of the set 
\begin{equation*}
\begin{split}
\mathcal{Q}_{q}[\lambda,\mu_1,\mu_2] &= \{ (q',T_1) \in \mathfrak{q}[\mu_1,\mu_2] \times \T_1^{\not\sim S}(\mathcal{E}^{q}_{\xi_1,\xi_2'},\lambda,\mu_1,\mu_2) : \text{dist}(q,q') \geq R^{1-\delta},   \\
&\quad R^{\delta} T_{2} \cap  q' \neq \emptyset, \; R^{\delta} T_1 \cap q \neq \emptyset, \; R^{\delta} T_1 \cap q' \neq \emptyset \}.
\end{split}
\end{equation*}
We claim that
\begin{equation}
\label{eq:CubeDoubleEndCounting0}
|\mathcal{Q}_{q}[\lambda,\mu_1,\mu_2]| \lesssim \nu^{-1} R^{C \delta}.
\end{equation}
Suppose that \eqref{eq:CubeDoubleEndCounting} holds. Then we can estimate
\begin{equation*}
lhs \eqref{eq:ProductSetCounting} \lesssim \nu^{-1} R^{C \delta} |\T_2|,
\end{equation*}
which yields the desired bound \eqref{eq:CombEst}.

\medskip

At this point the symplifying observation is that 
the parameters $\lambda,\mu_1,\mu_2$ as well as the nonconcentration on $S$ no longer play a role. Dropping 
these constraints we define the larger and simpler set 
\begin{equation*}
\begin{split}
\mathcal{Q}_{q} &= \{ (q',T_1) \in \mathfrak{q} \times \T_1(\mathcal{E}^{q}_{\xi_1,\xi_2'}) : \text{dist}(q,q') \geq R^{1-\delta},   \\
&\quad R^{\delta} T_{2} \cap  q' \neq \emptyset, \; R^{\delta} T_1 \cap q \neq \emptyset, \; R^{\delta}  T_1 \cap q' \neq \emptyset \}.
\end{split}
\end{equation*}
and replace \eqref{eq:CubeDoubleEndCounting0} with the stronger bound
\begin{equation}
\label{eq:CubeDoubleEndCounting}
|\mathcal{Q}_{q}| \lesssim \nu^{-1} R^{C \delta}.
\end{equation}
It remains to prove \eqref{eq:CubeDoubleEndCounting}. This is where we use the transversality condition, and therefore the proofs of Theorems \ref{thm:GeneralizationBilinearParaboloid} and \ref{thm:GeneralizationBilinearCone} bifurcate.

\subsubsection{Proof of Theorem \ref{thm:GeneralizationBilinearParaboloid}}
In this section we deal with the non-degenerate case, i.e., the Hessians of $p_j$ are assumed to be non-degenerate.

We have the following lemma as a consequence of the non-degeneracy of the Hamiltonian flow, which reduces us to count the intersections $q'$ with $T_2$ and $T_1$. The lemma states that only $R^{c\delta}$ tubes from an energy shell can pass through two $R^{1-\delta}$ separated points.
\begin{lemma}
\label{lem:TubeCountingAuxNondegenerate}
Let $q' \in \mathfrak{q}$ such that there is $T_1 \in \T_1$ so that $ (q',T_1) \in \mathcal{Q}_{q}$. Let
\begin{equation*}
\mathcal{Q}_{q}(q') = \{ T_1 \in \T_1(\mathcal{E}^{q}_{\xi_1,\xi_2'}) : (q',T_1) \in \mathcal{Q}_{q} \}.
\end{equation*}
Then it holds
\begin{equation*}
|\mathcal{Q}_{q}(q')| \lesssim R^{c \delta}.
\end{equation*}
\end{lemma}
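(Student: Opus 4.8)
The plan is to show that the tubes $T_1 \in \T_1(\mathcal{E}^q_{\xi_1,\xi_2'})$ which pass near both the fixed cube $q$ (at time $t_q$) and near the cube $q'$ (at time $t_{q'}$) are essentially determined: once the two time-slices $I_{t,q}$ and $I_{t,q'}$ are separated by $\gtrsim R^{1-\delta}$, a tube belonging to the energy shell and passing near $q$ at $t_q$ has its frequency $\eta_{T_1}(t_q)$ confined to the level set $\{ |F^{z_q}_{\xi_1,\xi_2'}(\eta)| \lesssim R^{-\frac12+\delta} \}$, which is (up to the $R^\delta$-slack) a hypersurface; the additional requirement that the tube also pass through $q'$ at the well-separated time $t_{q'}$ pins down the remaining degree of freedom up to $R^{c\delta}$ choices. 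First I would set $t_q = c_t(q)$, $t_{q'} = c_t(q')$, and use the linearization lemma of Subsection~\ref{subsection:FineScaleDecomposition} to write $x_{T_1}(t_{q'}) = x_{T_1}(t_q) + (t_{q'}-t_q)\, \partial_\xi p_1(z_q, \xi_{T_1}(t_q)) + \mathcal{O}(1)$, valid because $p_1$ satisfies Assumption~\ref{ass:RegularityFlow} (the error from curvature of the bicharacteristic over this time interval is $\lesssim \epsilon$, hence $\ll 1$); the analogous expansion holds along $p_2$ for $T_2$.

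Next I would fix the base point: since $R^\delta T_1 \cap q \neq \emptyset$ all admissible $T_1$ have $x_{T_1}(t_q) = x_q + \mathcal{O}(R^{\frac12+\delta})$, so the map $T_1 \mapsto \eta := \eta_{T_1}(t_q)$ determines $x_{T_1}(t_{q'})$ up to $|t_{q'}-t_q|\,|\nabla_\xi p_1(z_q,\eta) - \nabla_\xi p_1(z_q,\eta')| + \mathcal{O}(R^{\frac12+\delta})$ for two candidate frequencies $\eta,\eta'$. The constraint $R^\delta T_1 \cap q' \neq \emptyset$ together with $\mathrm{dist}(q,q')\gtrsim R^{1-\delta}$ therefore forces
\begin{equation*}
|t_{q'}-t_q|\,\bigl|\nabla_\xi p_1(z_q,\eta) - \nabla_\xi p_1(z_q,\eta')\bigr| \lesssim R^{\frac12+C\delta},
\end{equation*}
and since $|t_{q'}-t_q| \gtrsim R^{1-\delta}$ this gives $|\nabla_\xi p_1(z_q,\eta) - \nabla_\xi p_1(z_q,\eta')| \lesssim R^{-\frac12+C\delta}$. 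By the non-degeneracy hypothesis \eqref{eq:NonDegeneracyAssumption}, $\nabla_\xi p_1(z_q,\cdot)$ is a bi-Lipschitz map on $\overline{\mathcal{Y}}_{1,R}$ with both-sided bounds depending only on $d_1,d_2$ (and $C_2$), so $|\eta - \eta'| \lesssim R^{-\frac12+C\delta}$. Thus all admissible $T_1$ have frequency $\eta_{T_1}(t_q)$ in a ball of radius $\lesssim R^{-\frac12+C\delta}$; since distinct tubes have frequencies separated by $\gtrsim R^{-\frac12}$ at a fixed time (Assumption~\ref{ass:WP}, part~2, via Lemma~\ref{lem:GeometryWavepackets}), the number of such tubes is $\lesssim R^{C\delta} = R^{c\delta}$. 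The energy-shell and non-focusing constraints are only discarded here, which can only decrease the count.

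The main obstacle I anticipate is the bookkeeping for the base-point slack: the $\mathcal{O}(R^{\frac12+\delta})$ uncertainty in $x_{T_1}(t_q)$, the $R^\delta$-fattening of the tube $T_1$, the $R^{\frac12}$-size of $q'$, and the $\mathcal{O}(1)$ curvature error must all be collected and shown to be absorbed into $R^{\frac12+C\delta}$ after division by $|t_{q'}-t_q|\sim R^{1-\delta}$; this is where the separation $\mathrm{dist}(q,q')\gtrsim R^{1-\delta}$ is used crucially, and one must be careful that $C\delta$ stays below the exponent budget fixed by $\delta = \varepsilon/C$. A secondary point is to check that the bi-Lipschitz constant for $\nabla_\xi p_1(z_q,\cdot)$ is uniform over the relevant frequency range — this follows because $\pi_\xi(\overline{\mathcal{Y}}_{1,R})$ is contained in a fixed ball on which the Hessian bounds \eqref{eq:NonDegeneracyAssumption} and the $C^3_\xi$-bound \eqref{eq:XiRegularity} hold, so the inverse function is globally bi-Lipschitz there with the stated dependence. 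Everything else is the linearization already proved in this section.
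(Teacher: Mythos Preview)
Your approach is the same as the paper's --- show that two admissible tubes must have frequencies within $R^{-\frac12+C\delta}$ of each other by comparing their trajectories at the two well-separated times --- and the conclusion via the lattice spacing is correct. There is, however, a genuine gap in your linearization step.

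You write $x_{T_1}(t_{q'}) = x_{T_1}(t_q) + (t_{q'}-t_q)\,\partial_\xi p_1(z_q,\eta_{T_1}(t_q)) + \mathcal{O}(1)$ and justify this by invoking the linearization lemma of Subsection~\ref{subsection:FineScaleDecomposition}. But that lemma is stated and proved only for $t\in I_{t,q}$, i.e.\ over an $R^{\frac12}$-time interval; here $|t_{q'}-t_q|\gtrsim R^{1-\delta}$. On the full scale the second-order remainder $t^2\zeta$ in the Taylor expansion $x(t)=x_0+t\,\partial_\xi p_1(x_0,\xi_0)+t^2\zeta(t,x_0,\xi_0)$ satisfies only $|\zeta|\lesssim \epsilon C_2 R^{-1}$, so $|t^2\zeta|\lesssim \epsilon R$ for $|t|\lesssim R$, not $\mathcal{O}(1)$. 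Your claim ``the error from curvature is $\lesssim\epsilon$'' is off by a factor of $R$.

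The fix, which is what the paper does, is not to linearize each trajectory separately but to compare two bicharacteristics with a common starting point $x_0$ and estimate the \emph{difference} of the remainders. Using $p_1\in C^{1,1}_{z,\xi}\cap C^1_zC^3_\xi$ one gets
\[
|t^2\zeta(t,x_0,\xi_1)-t^2\zeta(t,x_0,\xi_2)|\lesssim \frac{\epsilon\,t^2}{R}\,|\xi_1-\xi_2|,
\]
and for $|t|\leq R$ this is $\lesssim \epsilon R\,|\xi_1-\xi_2|$, which is dominated by the main separation term $|t|\,d_1|\xi_1-\xi_2|\gtrsim R^{1-\delta}d_1|\xi_1-\xi_2|$ once $\epsilon$ is small. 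The $R^{\frac12+\delta}$ slack in the starting positions of $T_1,T_1'$ near $q$ is then absorbed via the bi-Lipschitz property (Lemma~\ref{lem:GeometryWavepackets}), and the rest of your argument goes through unchanged. Note that, as you correctly observe, the energy-shell constraint plays no role here; only the non-degeneracy \eqref{eq:NonDegeneracyAssumption} is used.
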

\begin{proof}

We shall see that $T_1$ is essentially determined by $q$ and $q'$ as property of bi-Lipschitz flow. 
Note that the cubes $q$ and $q'$ are separated on a scale of $R^{1-\delta}$ in time (because we have the bound $|\partial_{\xi} p_i| \lesssim 1$ and the tubes could not hit both cubes otherwise).

\smallskip

For the $p_1$ bicharacteristic starting at $x_0$ with frequency $\xi_0$ we have the Taylor expansion
\begin{equation*}
x(t) = x_0 + t \frac{\partial p_1}{\partial \xi}(x_0,\xi_0) + t^2 \zeta(t,x_0,\xi_0).
\end{equation*}
For the remainder of the Taylor expansion we have the following Lagrange representation:
\begin{equation*}
\int_0^t (t-s) \big[ \frac{\partial^2 p_1}{\partial x \partial \xi}(x_s,\xi_s) \frac{\partial p_1}{\partial \xi}(x_s,\xi_s) - \frac{\partial^2 p_1}{\partial \xi^2}(x_s,\xi_s) \frac{\partial p_1}{\partial x}(x_s,\xi_s) \big] ds = t^2 \zeta(t,x_0,\xi_0)
\end{equation*}
with $\zeta$ a continuous function and bounds only depending on $ \epsilon_0$ and $C_2$.

We compare two bicharacteristics starting at the same point but with different frequencies, at times $R^{1-\delta} \leq |t| \leq R$:
\begin{equation*}
\begin{split}
x_1(t) &= x_0 + t \frac{\partial p_1 }{\partial \xi}(x_0,\xi_1) + t^2 \zeta(t,x_0,\xi_1), \\
x_2(t) &= x_0 + t \frac{\partial p_1 }{\partial \xi}(x_0,\xi_2) + t^2 \zeta(t,x_0,\xi_2).
\end{split}
\end{equation*}
By the non-degeneracy of the Hessian we have
\begin{equation*}
|t| \big| \frac{\partial p_1}{\partial \xi}(x_0,\xi_1) - \frac{\partial p_1}{\partial \xi}(x_0,\xi_2) \big| = |t| |\partial^2_{\xi} p_1(x_0,\xi^*) (\xi_1 - \xi_2)| \geq |t| d_1 C_2 |\xi_1 - \xi_2|.
\end{equation*}

Since $p \in C^{1,1}_{z,\xi} \cap C^{1}_{z} C^3_{\xi}$ we have
\begin{equation*}
|\zeta(t,x_0,\xi_1) - \zeta(t,x_0,\xi_2) | \lesssim \frac{ \epsilon_0 |t|^2}{R} |\xi_1-\xi_2|.
\end{equation*}
Consequently, for $\epsilon_0$ small enough, we find
\begin{equation*}
|x_1(t) - x_2(t)| \geq \frac{C_2 d_1 |t|}{2} |\xi_1 - \xi_2|.
\end{equation*}

With $\xi_i \in \mathcal{L}^*$ separated by $R^{-\frac{1}{2}}$ this bounds the numbers of frequencies resulting in tubes which can intersect $q$ after a time $|t| \gtrsim R^{1-\delta}$, by $R^{c \delta}$.
\end{proof}

\smallskip

With the previous lemma at hand, it suffices to estimate the intersections of $q'$ for which $\mathcal{Q}_q(q') \neq \emptyset$ with $T_2$. With the next lemma, we can bound the cubes $q'$ as follows.

\begin{lemma}\label{l:transverse}
It holds
\begin{equation*}
\big( \bigcup_{q': \mathcal{Q}_{q} (q') \neq \emptyset } q' \big) \cap T_2 \subseteq  T_2 \vert_{I_{\tau}}
\end{equation*}
with $I_{\tau}$ being a time interval of size $\nu^{-1} R^{\frac{1}{2}+\delta}$.
\end{lemma}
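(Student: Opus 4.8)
\subsubsection*{Proof of Lemma~\ref{l:transverse} (plan)}

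The plan is to parametrise the eligible cubes $q'$ by their time‑centre $t_{q'}$, to observe that the frequency of the bicharacteristic $T_1$ joining $q$ to $q'$ is essentially determined by $t_{q'}$ via the non‑degeneracy of $\partial^2_\xi p_1$, and then to use the quantitative transversality \eqref{eq:NonDegeneracyTransversality} to see that the energy‑shell membership confines $t_{q'}$ to an interval of length $\nu^{-1}R^{\frac12+\delta}$.

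First I would fix $q\subseteq S$, the tube $T_2$, and the frequencies $\xi_1,\xi_2'$ defining $\mathcal{E}^{q}_{\xi_1,\xi_2'}$, and take an eligible $q'$ together with some $T_1\in\mathcal{Q}_q(q')$. Set $t'=c_t(q')$; since $q'$ meets $R^{\delta}T_2$ we have $x_{q'}=x_{T_2}(t')+\mathcal{O}(R^{\frac12+\delta})$, and $t'$ is separated from $t_q$ by $\gtrsim R^{1-\delta}$ since $|\partial_\xi p_i|\lesssim 1$. On this separation scale the Taylor/Lagrange expansion from the proof of Lemma~\ref{lem:TubeCountingAuxNondegenerate} gives $x_{T_1}(t')=x_q+(t'-t_q)\nabla_\xi p_1(z_q,\eta)+(t')^2\zeta$ with $\eta=\xi_{T_1}(t_q)$ and the remainder and its $\eta$‑derivative controlled by $\epsilon$ and $C_2$; since $\partial^2_\xi p_1$ is non‑degenerate and bounded, $\eta\mapsto\nabla_\xi p_1(z_q,\cdot)$ is bi‑Lipschitz, so $T_1\cap R^{\delta}q'\neq\emptyset$ forces $\eta=G(t')+\mathcal{O}(R^{-\frac12+c\delta})$, where $G(t')=(\nabla_\xi p_1(z_q,\cdot))^{-1}\bigl(\tfrac{x_{T_2}(t')-x_q}{t'-t_q}\bigr)$ is smooth in $t'$. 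Then $|F^{z_q}_{\xi_1,\xi_2'}(\eta)|\lesssim R^{-\frac12+\delta}$ together with $|\nabla F|\lesssim 1$ yields $|H(t')|\lesssim R^{-\frac12+c\delta}$ for the scalar function $H(t):=F^{z_q}_{\xi_1,\xi_2'}(G(t))$. It suffices to show that $\{t:|H(t)|\lesssim R^{-\frac12+c\delta}\}$ lies in an interval of length $\lesssim\nu^{-1}R^{\frac12+\delta}$, since transporting this set back along $T_2$ through $x_{q'}=x_{T_2}(t')+\mathcal{O}(R^{\frac12+\delta})$ produces $I_\tau$.

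The heart of the argument is a monotonicity statement for $H$. Differentiating, and using $\tfrac{x_{T_2}(t)-x_q}{t-t_q}=\nabla_\xi p_1(z_q,G(t))$, one finds $G'(t)=(\partial^2_\xi p_1(z_q,G(t)))^{-1}\tfrac{\dot x_{T_2}(t)-\nabla_\xi p_1(z_q,G(t))}{t-t_q}$. At a time where the bicharacteristic $T_1^{G(t)}$ runs within $R^{\frac12+\delta}$ of $T_2$, Lemma~\ref{lem:GeometryWavepackets} and the regularity of the flow give $\nabla_\xi p_1(z_q,G(t))=\dot x_{T_1^{G(t)}}(t)+\mathcal{O}(\epsilon)$, and since $T_1^{G(t)}$ and $T_2$ are then almost‑intersecting bicharacteristics, Lemma~\ref{lem:RegularityAlmostIntersectingI} applies, so $\dot x_{T_2}(t)-\dot x_{T_1^{G(t)}}(t)=-\Delta_v(t)$ with $|\Delta_v(t)|\sim\nu$; likewise $\nabla_\eta F^{z_q}_{\xi_1,\xi_2'}(G(t))$ equals the corresponding velocity difference $\Delta_v(t_q)+\mathcal{O}(\epsilon)$. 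Feeding these into $H'(t)=\langle\nabla_\eta F(G(t)),G'(t)\rangle$ and invoking the non‑degenerate transversality \eqref{eq:NonDegeneracyTransversality}, which is precisely the non‑degeneracy of $\langle\Delta_v,(\partial^2_\xi p_1)^{-1}\Delta_v\rangle$, one obtains a lower bound $|H'(t)|\gtrsim \nu\,R^{-1-c\delta}$ throughout the relevant range (the $\mathcal{O}(\epsilon)$ corrections are swallowed because $\epsilon\ll\nu$, and $\Delta_v(t)=\Delta_v(t_q)+\mathcal{O}(\epsilon)$ by the same regularity). Hence $H$ is quantitatively monotone there, and $\{|H|\lesssim R^{-\frac12+c\delta}\}$ is an interval of length $\lesssim R^{-\frac12+c\delta}/(\nu R^{-1-c\delta})\lesssim \nu^{-1}R^{\frac12+\delta}$, which is the asserted $I_\tau$.

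The main obstacle is exactly this last step: making the lower bound for $|H'|$ rigorous with uniform control of all perturbative terms — the $\epsilon$‑sized mixed derivatives of the metric entering through the flow, the curvature remainder $(t')^2\zeta$ and its $\eta$‑derivative over the long span $|t-t_q|\le R$, and the $\mathcal{O}(R^{-\frac12+c\delta})$ slack between $\eta$ and $G(t')$ — so that the correct power of $\nu$ survives and none of these errors competes with the main term $\langle\Delta_v,(\partial^2_\xi p_1)^{-1}\Delta_v\rangle/(t-t_q)$. This is where Assumption~\ref{ass:RegularityFlow} (the hypothesis $\epsilon\ll\nu$ and the mixed‑regularity bounds \eqref{eq:MixedRegularity}), the boundedness \eqref{eq:XiRegularity}, and the persistence of transversality along almost‑intersecting bicharacteristics (Lemma~\ref{lem:RegularityAlmostIntersectingI}) are used.
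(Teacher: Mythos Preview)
Your scalar reformulation via $H(t)=F^{z_q}_{\xi_1,\xi_2'}(G(t))$ is essentially the paper's geometric argument (transverse intersection of the zero--energy--shell hypersurface $\Gamma_q$ with $T_2$) written in one coordinate, and the main ingredients match. But the key quantitative claim $|H'(t)|\gtrsim \nu\,R^{-1-c\delta}$ cannot hold: both $\nabla_\eta F$ and the numerator $\dot x_{T_2}-\nabla_\xi p_1(z_q,G)$ of $G'$ are velocity differences between a $p_1$-- and a $p_2$--object, hence of size $\sim\nu$ by \eqref{eq:Transversality}, so
\[
|H'(t)|\le |\nabla_\eta F(G(t))|\,|G'(t)|\ \lesssim\ \frac{\nu^2}{|t-t_q|}\ \le\ \nu^2 R^{-1+\delta},
\]
which is strictly smaller than your asserted lower bound once $\nu\ll R^{-\delta}$. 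The genuine lower bound coming from \eqref{eq:NonDegeneracyTransversality} is $|H'(t)|\gtrsim \nu^2/|t-t_q|$, and feeding this into your monotonicity argument only produces the length $\nu^{-2}R^{1/2+c\delta}$, not $\nu^{-1}R^{1/2+\delta}$.

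A related gap is the identification $\nabla_\eta F(G(t))=\Delta_v(t_q)+O(\epsilon)$: the $p_2$--velocity in $\nabla_\eta F$ is evaluated at the frequency $G(t)+\xi_2'-\xi_1$, whereas $\Delta_v$ uses the frequency $\xi_{T_2}$ of the fixed tube; these two type--2 frequencies agree only to $O(\nu)$ by the size assumption on $\pi_\xi(\mathcal{Y}_2)$, not to $O(\epsilon)$, so the cross term $\langle\nabla_\eta F,\,A^{-1}\Delta_v\rangle$ is not automatically $\sim\nu^2$ by this route. The paper does not try to control the scalar $H$ directly. Instead it (i) estimates the spatial spread due to the $R^{-1/2+\delta}$--width of the energy shell by $\nu^{-1}R^{1/2+\delta}$ and thereby reduces to the \emph{zero} energy shell (cf.~\eqref{eq:VariationLevelSets}); (ii) shows geometrically that the zero--shell flow--out $\Gamma_q$ meets $T_2$ at angle $\gtrsim\nu$, so that only the tube width $R^{1/2+\delta}$ gets divided by $\nu$; and (iii) rules out multiple intersections of $\Gamma_q$ with $T_2$ by a separate contradiction argument. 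Your single function $H$ merges steps (i) and (ii), and it is exactly this merging that costs the extra factor of $\nu^{-1}$.
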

With the last two lemmas at hand, the desired bound \eqref{eq:CubeDoubleEndCounting} immediately follows. It remains to prove this last lemma.
\begin{proof}

Recall that the energy shell is given by
\begin{equation*}
\mathcal{E}^{q}_{\xi_1,\xi_2'} = \{ \eta \in B_d(0,2) : \, |p_1(z_q,\xi_1) + p_2(z_q,\eta +\xi_2'-\xi_1) - p_1(z_q,\eta) - p_2(z_q,\xi_2') | \lesssim R^{-\frac{1}{2}+\delta} \}.
\end{equation*}
We define the zero energy shell  $\mathcal{E}^{q}_{0,\xi_1,\xi_2'} = \{ \eta \in B_d(0,2) : \, F^{z_q}_{\xi_1,\xi_2'}(\eta) = 0 \}$ where it suffices to consider bicharacteristics $(x(t),\xi(t))$ with $x(t_q) = x_q$ and $F^{z_q}_{\xi_1,\xi_2'}(\xi(t_q)) = 0$. Note that for the energy difference functional, 
by the transversality condition we know that its gradient $\nabla_{\xi} F = \nabla_{\xi} ( p_2(z_q,\eta + \xi_2'-\xi_1) - p_1(z_q,\eta) )$ is of size $\gtrsim \nu$ and it is normal to the energy shell. This allows us to estimate the thickness of the energy shell in normal direction by $\nu^{-1} R^{-\frac{1}{2}+\delta}$. For this reason the variation of a bicharacteristic taking into account the thickness of the shell can be estimated by
\begin{equation}
\label{eq:VariationLevelSets}
R^{-\frac{1}{2}+\delta} \nu^{-1} | \frac{\partial x}{\partial \eta} | \lesssim R^{\frac{1}{2} + \delta } \nu^{-1},
\end{equation}
which makes it acceptable to restrict to the zero enery shell $F^{z_q}_{\xi_1,\xi_2'}(\eta) = 0$ as this is precisely the scale of $T_2 \big\vert_{I_{\tau}}$.

Without loss of generality
we will assume  that $0= t_q < t_{q'} \in \mathcal{I}_T = [\frac{R^{1-\delta}}{10},R]$. The restriction on the time interval stems from the timewise difference between $q$ and $q'$. 
 For $t \in \mathcal{I}_T$ we consider the $d-1$-dimensional manifold
\begin{equation*}
\Gamma_{q}(t) = \{ (x(t),t) : (x(t),\xi(t)) \text{ is a bicharacteristic of } p_1, \; x(t_q) = x_q, \; F(\xi(t_q)) = 0 \}
\end{equation*}
describing the time slice of the spatial projection of the tubes passing through $q$ and with frequency in the energy shell. We let
\begin{equation*}
\Gamma_{q} = \bigcup_{t \in \mathcal{I}_T} \Gamma_{q}(t),
\end{equation*}
which is a $d$-dimensional manifold.

It will suffice to show that any intersection of $\Gamma_q$ with $T_2$ occurs with a transversality of at least $\nu$. The following facts come together:
\begin{itemize}
\item We have restricted the frequencies to $F^{z_q}_{\xi_1,\xi_2'}(\xi(t_q)) = 0$. 
Below are further remarks on varying $\Gamma_q$ along the frequencies.
\item We can estimate the number of total intersections of $\Gamma_q$ with $T_2$ by $\mathcal{O}(1)$. This will be verified at the end of the proof.
\end{itemize}

We take an intersection point $(x_{t_2},t_2)$ of $T_{2}$ with $\Gamma_{q}$. Let $\eta_1$ denote the ``intersection frequency", which satisfies $\eta_1 \in \mathcal{E}^{z_q}_{0,\xi_1,\xi_2'}$ and $(x_{t_2},t_2) \in T_{1}(z_q,\eta_1).$ We examine the tangent space of $\Gamma_q$ at this point. 

We obtain $d-1$ spatial tangent vectors $ \mathfrak{t}_1^{\Gamma_q}, \ldots, \mathfrak{t}_{d-1}^{\Gamma_q} $ of $\Gamma_q$ from varying the frequencies in $T_{\eta_1} \mathcal{E}^{z_q}_{0,\xi_1,\xi_2'} = \langle \mathfrak{t}_1, \ldots, \mathfrak{t}_{d-1} \rangle$, and observing the variation of the Hamiltonian flow $\frac{\partial x_{t_q}}{\partial \eta}$ at $q$. For $i=1,\ldots,d-1$ we obtain
\begin{equation*}
\mathfrak{t}_i^{\Gamma_q} = \frac{\partial x_{t_q}}{\partial \xi} \mathfrak{t}_i.
\end{equation*}

This is complemented with the tangential direction of the intersecting tube $T_2$. Note that here we use the fact that we are working on a scale of $R^{\frac{1}{2}}$ (up to admissible losses of $R^{c \delta}$).

\medskip

Let $\mathfrak{t}_1,\ldots,\mathfrak{t}_{d-1}$ denote an orthonormal basis of the tangent space $T_{\eta_1} \mathcal{E}^{z_q}_{0,\xi_1,\xi_2'}$ at $\eta_1$ (corresponding to an intersection frequency of $\Gamma_{q}(t)$ with $T_{2}$). Note that an (unnormalized) normal of $T_{\eta_1} \mathcal{E}^{z_q}_{0,\xi_1,\xi_2'}$ is given by
\begin{equation*}
\mathfrak{n}_{\mathcal{E}^{z_q}_{0}} = \nabla_{\xi} [ p_2(z_q,\eta_1 + \xi_2' - \xi_1) - p_1(z_q,\eta_1)] = \nabla_{\xi} p_2(z_q,\xi_2(t_q)) - \nabla_{\xi} p_1(z_q,\xi_1(t_q)).
\end{equation*}
As a consequence of the transversality assumption we have $|\mathfrak{n}_{\mathcal{E}^{z_q}_{0}}| \gtrsim \nu$.

\smallskip

We compute the variation of $(x_{\eta_1 + \varepsilon \mathfrak{t}_i}(t),t)$ at $\varepsilon = 0$ with $\eta_1 \in \mathcal{E}^{z_q}_{0,\xi_1,\xi_2'}$ and $x_{\eta}(t)$ denoting the spatial projection of the bicharacteristic governed by $p_1$ passing through $(x(t_q)=x_q,\eta)$ at $t=t_q$. To this end, we compute the variation of the Hamiltonian flow with respect to frequencies:
\begin{equation*}
\left\{ \begin{array}{cl}
\dot{x}_t &= \frac{\partial p_1}{\partial \xi}(x_t,t,\xi_t), \\
\dot{\xi}_t &= - \frac{\partial p_1}{\partial x}(x_t,t,\xi_t).
\end{array} \right.
\end{equation*}
Taking the derivative with respect to $\xi$ we find
\begin{equation*}
\frac{d}{dt} \frac{\partial x_t}{\partial \xi} = \frac{\partial^2 p_1}{\partial \xi \partial x}(x_t,\xi_t) \frac{\partial x_t}{\partial \xi} + \frac{\partial^2 p_1}{(\partial \xi)^2} \frac{\partial \xi_t}{\partial \xi}.
\end{equation*}
Note that $\partial_{\xi} x_{t = t_q} = 0$ because the bicharacteristics have the common starting point $x_q$. We obtain by integration and projection to $\mathfrak{t}_i$:
\begin{equation*}
\frac{\partial x_t}{\partial \xi} \mathfrak{t}_i = \int_{t_q}^{t} \frac{\partial^2 p_1}{\partial \xi \partial x}(x_s,s,\xi_s) \frac{\partial x_s}{\partial \xi} \mathfrak{t}_i ds + \int_{t_q}^{t} \frac{\partial^2 p_1}{(\partial \xi)^2}(x_s,s,\xi_s) \frac{\partial \xi_s}{\partial \xi} \mathfrak{t}_i ds.
\end{equation*}
The first integral is estimated as $\mathcal{O}(\epsilon_0 R^{-1} (t-t_q)^2)$, the second integral is $\mathcal{O}(t-t_q)$. Here we use
\begin{equation*}
\big| \frac{\partial x_t}{\partial \xi} \big| \leq  |t-t_q| \text{ and } \big| \frac{\partial \xi_t}{\partial \xi} - I_d \big| \leq \frac{\epsilon_0}{R} |t-t_q|
\end{equation*}
as an immediate consequence of the regularity of the Hamiltonian $p_1$ and integration
in the linearized Hamilton flow.

 Consequently, upon normalization denoted by $(\cdot)_{\mathbf{n}}$, we find
\begin{equation*}
\big( \frac{\partial x_t}{\partial \xi} \mathfrak{t}_i \big)_{\mathbf{n}} = \frac{1}{t-t_q} \int_{t_q}^{t} \frac{\partial^2 p_1}{\partial \xi^2}  ds \, \mathfrak{t}_i + \mathcal{O}(\epsilon_0 R^{-1} (t-t_q)).
\end{equation*}
Recall that $\epsilon_0 \ll \nu$ and we aim to show a transversality of order $\nu$. This means an error of order $\epsilon_0$ is negligible. We let
\begin{equation*}
A = \frac{1}{t_{q'}-t_q} \int_{t_q}^{t_{q'}} \frac{\partial^2 p_1}{\partial \xi^2}(x_s,s,\xi_s) ds,
\end{equation*}
which by non-degeneracy assumption on $p$ can be regarded as normalized.
Then it suffices to check $\nu$-transversality using the $\epsilon_0$-approximate spatial tangent vectors of $\Gamma_{q}(t)$:
\begin{equation*}
\big( A \mathfrak{t}_i, 0), \quad i =1,\ldots,d-1.
\end{equation*}
To complete the tangent space to $\Gamma_q$ we add one last  tangent vector which also points in the time direction. By $\eta_1(t)$ we denote the frequency evolution of the bicharacteristic $(x(t),\xi(t))$ with $(x(t_q),\xi(t_q)) = (x_q,\eta_1)$.
The derivative in $t$ yields a tangent vector of $\Gamma_{q}$ with non-trivial time component:
\begin{equation*}
V_1 = (\frac{\partial p_1}{\partial \xi}(z_{q'},\eta_1(t_{q'})),1).
\end{equation*}
On the other hand the tangent vector of $T_2$ is given by
\begin{equation*}
V_2 = (\frac{\partial p_2}{\partial \xi} (z_{q'},\eta_2(t_{q'})), 1).
\end{equation*}

Hence for the desired transversality condition we need to prove the space-time relation 
\begin{equation}
\label{eq:nuTransversality}
|\mathfrak{t}_1^{\Gamma_q} \wedge \ldots \wedge \mathfrak{t}_{d-1}^{\Gamma_q} \wedge V_1 \wedge V_2 | \gtrsim \nu,
\end{equation}
which is in turn  equivalent to the purely spatial relation
\[
|\mathfrak{t}_1^{\Gamma_q} \wedge \ldots \wedge \mathfrak{t}_{d-1}^{\Gamma_q} \wedge (V_1-V_2) | \gtrsim \nu.
\]
It suffices to check that the difference of $V_1$ and $V_2$ is quantitatively transverse to the spatial tangential vectors. We substitute $\mathfrak{t}_j^{\Gamma_q}$ 
by $ A \mathfrak{t}_j$ as discussed above. Inverting $A$ (which is allowed due to the non-degeneracy of $\partial^2_{\xi} p_1$ \eqref{eq:NonDegeneracyAssumption}), this is equivalent to
\begin{equation}
\label{eq:TransversalityAuxI}
| \mathfrak{t}_1 \wedge \ldots \wedge \mathfrak{t}_{d-1} \wedge A^{-1} \big( \nabla_{\xi} p(z_{q'},\eta_1(t_{q'})) - \nabla_{\xi} p(z_{q'},\eta_2(t_{q'})) | \gtrsim \nu.
\end{equation}

To this end, we use the normalized normal vector of $\mathcal{E}^{z_q}_{0,\xi_1,\xi_2'}$ given by $\mathfrak{n}_{\mathcal{E}_0^{z_q}} / \nu$ and write
\begin{equation*}
A^{-1} \big( \nabla_{\xi} p_1(z_{q'},\eta_1(t_{q'})) - \nabla_{\xi} p_2(z_{q'},\eta_2(t_{q'}) \big) = \sum_{i=1}^{d-1} \alpha_i \mathfrak{t}_i + \alpha \mathfrak{n}_{\mathcal{E}^{z_0}_{0}}
\end{equation*}
such that
\begin{equation*}
| \mathfrak{t}_1 \wedge \ldots \wedge \mathfrak{t}_{d-1} \wedge A^{-1} \big( \nabla_{\xi} p_1(z_{q'},\eta_1(t_{q'})) - \nabla_{\xi} p_2(z_{q'},\eta_2(t_{q'})) | = |\alpha|.
\end{equation*}

Consequently, \eqref{eq:TransversalityAuxI} amounts to
\begin{equation*}
\begin{split}
 | \langle \nabla_{\xi} p_1(z_q,\xi_1(t_q)) - \nabla_{\xi} p_2(z_q,\xi_2(t_q)), &A^{-1} (\nabla_{\xi} p_1(z_{q'},\eta_1(t_{q'})) - \nabla_{\xi} p_2(z_{q'},\eta_2(t_{q'}))) \rangle | \\&\gtrsim \nu^2.
 \end{split}
\end{equation*}
Let $\Delta_v = \nabla_{\xi} p_1(z_{q'},\eta_1(t_{q'})) - \nabla_{\xi} p_2(z_{q'},\eta_2(t_{q'}))$.
Finally, by Taylor expansion, the regularity, $\epsilon_0$-smallness of the frequency support and $\epsilon_0 \ll \nu$, we find that the leading order term with an error of size $\mathcal{O}(\epsilon_0 \nu)$ is given by
\begin{equation}
\label{eq:LeadingOrderTermTransversality}
|\langle \Delta_v , (\partial_{\xi}^2 p_1(z_{q'},\xi_1(t_{q'})))^{-1} \Delta_v \rangle | \gtrsim \nu^2
\end{equation}
where the final estimate is a consequence of our  hypothesis \eqref{eq:NonDegeneracyTransversality}.

\medskip

It remains to check for multiple intersections: First, suppose that the same bicharacteristic $T_1$ intersects $T_2$ at times $t_1$ and $t_2 \geq t_1 + C R^{\frac{1}{2}+\delta} \nu^{-1}$. We compute the difference between the two as
\begin{equation*}
\begin{split}
(x_{T_1} - x_{T_2})(t) &= \int_{t_1}^t \partial_{\xi} p_1(x_{T_1}(s),s,\xi_{T_1}(s)) - \partial_{\xi} p_2(x_{T_2}(s),s,\xi_{T_2}(s)) ds \\
&= \int_{t_1}^t \partial_{\xi} p_1(x_{T_1}(t_1),t_1,\xi_{T_1}(t_1)) - \partial_{\xi} p_2(x_{T_2}(t_1),t_1,\xi_{T_2}(t_1)) ds \\
&\quad + \mathcal{O}(\epsilon_0 C_2).
\end{split}
\end{equation*}
Consequently, we have $|x_{T_1}(t) - x_{T_2}(t)| \gtrsim |t-t_1| \nu \gtrsim R^{\frac{1}{2}+\delta}$, which proves the separation of the bicharacteristics after one intersection and contradicts the hypothesis that the same bicharacteristic intersects $T_2$ at separated times.

\smallskip

Next, suppose we have two bicharacteristics $T_1$, $T_1'$ satisfying $x_{T_1}(t_q) = x_{T'_1}(t_q) = x_q$ and with frequencies $\xi_{T_1}, \xi_{T_1'} \in \mathcal{E}^{z_q}_{0,\xi_1,\xi_2'}$. Suppose that the first intersects $T_2$ at time $t_1$, the other at time $t_2 \geq t_1$ with $|t_2 - t_1| \geq R^{\frac{1}{2}+\delta} \nu^{-1}$. We have
\begin{equation*}
x_{T_1}(t_1) - x_{T_1'}(t_1) + x_{T_2}(t_2) - x_{T_2}(t_1) = x_{T_1'}(t_2) - x_{T_1'}(t_1).
\end{equation*}
We can write equivalently
\begin{equation*}
\begin{split}
x_{T_1}(t_1) - x_{T_1'}(t_1) &= (x_{T_1'}(t_2) - x_{T_1'}(t_1)) - (x_{T_2}(t_2) - x_{T_2}(t_1)) \\
&= (t_2 - t_1) (\nabla_{\xi} p_1(x_{T_1}(t_*),t_*, \xi_{T_1}(t_*)) - \nabla_{\xi} p_2(x_{T_2}(t_*),t_*,\xi_{T_2}(t_*))).
\end{split}
\end{equation*}

Let $\gamma:[0,1] \to \mathcal{E}^{z_q}_{0,\xi_1,\xi_2'}$ which connects the two frequencies $\xi_{T_1}, \xi_{T_1'}$. We write the lefthand side of the preceding display as
\begin{equation*}
\frac{\partial x_t}{\partial \eta} \frac{d \eta(\gamma(s^*))}{ds} = (t_2 - t_1) (\nabla_{\xi} p_1(x_{T_1}(t_*),t_*, \xi_{T_1}(t_*)) - \nabla_{\xi} p_2(x_{T_2}(t_*),t_*,\xi_{T_2}(t_*))).
\end{equation*}
Clearly, $\mathfrak{t} = \frac{d \eta(\gamma(s^*))}{ds}$ is an element of the tangent space $T_{\eta^*} \mathcal{E}^{z_q}_{0,\xi_1,\xi_2'}$ at some frequency $\eta^*$ in $\mathcal{E}^{z_q}_{0,\xi_1,\xi_2'}$. This frequency yields a point $x^* \in \Gamma_q(t)$. It follows a reprise of the above argument to rule out the multiple intersections.

We write again by the Hamiltonian flow
\begin{equation*}
\begin{split}
\frac{\partial x_t}{\partial \xi} &= \int_{t_q}^{t_1} \frac{\partial^2 p}{(\partial \xi)^2} \frac{\partial \xi_s}{\partial \xi} + \frac{\partial^2 p}{\partial x \partial \xi} \frac{\partial x_s}{\partial \xi} ds \\
&= (t_1-t_q) (\partial^2_{\xi} p(x',\xi') (1 + \mathcal{O}(\varepsilon)) + \mathcal{O}(\varepsilon )) = (t_1-t_q) A^*.
\end{split}
\end{equation*}
In particular, by our non-degeneracy assumption $\partial_{\xi} x_t$ is invertible.

We can summarize the above as
\begin{equation*}
(t_1 - t_q) / (t_2 - t_1) \mathfrak{t} = (A^*)^{-1} (\nabla_{\xi} p_1(x_{T_1}(t_*),t_*,\xi_{T_1}(t_*)) - \nabla_{\xi} p_2(x_{T_2}(t_*),t_*,\xi_{T_2}(t_*))).
\end{equation*}

Now we project to an unnormalized normal vector $\mathfrak{n}$ at $\mathfrak{t}$ which is given by 
\begin{equation*}
\mathfrak{n} = \nabla_{\xi} p_1(x^*,t_1,\eta_1^*) - \nabla_{\xi} p_2(x^*,t_1,\eta_2^*),
\end{equation*}
and we find
\begin{equation*}
0 = \langle \mathfrak{n}, (A^*)^{-1} (\nabla_{\xi} p_1(x_{T_1}(t_*),t_*,\xi_{T_1}(t_*)) - \nabla_{\xi} p_2(x_{T_2}(t_*),t_*,\xi_{T_2}(t_*))) \rangle.
\end{equation*}
Let $\Delta_v = \nabla_{\xi} p_1(x_{T_1}(t_*),t_*,\xi_{T_1}(t_*)) - \nabla_{\xi} p_2(x_{T_2}(t_*),t_*,\xi_{T_2}(t_*)))$.
To conclude a contradiction, we carry out a Taylor expansion, which yields by size and regularity estimates on $p_i$:
\begin{equation*}
0 = \langle \Delta_v, (\partial^2_{\xi} p_1(x_{T_1}(t_*),t_*,\xi_{T_1}(t_*)))^{-1} \Delta_v \rangle + \mathcal{O}(\nu \varepsilon) \gtrsim \nu^2.
\end{equation*}
This contradiction shows that there can be no multiple intersections of $\Gamma_q$ with $T_2$.
\end{proof}

\subsection{Proof of Theorem \ref{thm:GeneralizationBilinearCone}}

To conclude the proof of the theorem, we need remains to show the combinatorial estimate \eqref{eq:CubeDoubleEndCounting} for $1$-homogeneous phase functions. As in the proof of Theorem~\ref{thm:GeneralizationBilinearParaboloid}, we proceed in two steps. In the first step, we prove the counterpart of Lemma \ref{lem:TubeCountingAuxNondegenerate}.
\begin{lemma}
\label{lem:TubeCountingAuxNondegenerateCone}
Let $q' \in \mathfrak{q}$ such that there is $T_1 \in \T_1 : (q',T_1) \in \mathcal{Q}_{q}$. Let
\begin{equation*}
\mathcal{Q}_{q}(q') = \{ T_1 \in \T_1(\mathcal{E}^{q}_{\xi_1,\xi_2'}) : (q',T_1) \in \mathcal{Q}_{q} \}.
\end{equation*}
Then it holds
\begin{equation*}
|\mathcal{Q}_{q}(q')| \lesssim R^{c \delta}.
\end{equation*}
\end{lemma}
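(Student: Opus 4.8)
The plan is to follow the proof of Lemma~\ref{lem:TubeCountingAuxNondegenerate}, the only essential new feature being that for $1$-homogeneous $p_1$ Euler's identity forces $\partial_\xi^2 p_1(z,\xi)\,\xi=0$, so the Hessian degenerates radially; passing through two far-apart $R^{1/2}$-cubes no longer determines the frequency of a $T_1$-tube, only its direction, and the radial freedom must be removed using the energy-shell membership together with the $1$-homogeneous transversality hypothesis \eqref{eq:NonDegeneracyTransversalityHomogeneous}.

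First I would argue, as in Lemma~\ref{lem:TubeCountingAuxNondegenerate}, that $q$ and $q'$ are time-separated by $\gtrsim R^{1-\delta}$ (since $|\partial_\xi p_i|\lesssim 1$), Taylor-expand the $p_1$-bicharacteristics of tubes in $\mathcal{Q}_q(q')$ about $(x_q,t_q)$ with the quadratic remainder controlled by \eqref{eq:MixedRegularity}--\eqref{eq:XiRegularity}, and use that on the sphere $\omega\mapsto\nabla_\xi p_1(z_q,\omega)$ is a bi-Lipschitz embedding on the relevant $\nu$-sector (its differential being $\partial_\xi^2 p_1(z_q,\omega)$ restricted to $\omega^\perp$, with $d-1$ eigenvalues of size between $d_1$ and $d_2$). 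This gives $|x_T(t)-x_{T'}(t)|\gtrsim|t-t_q|\,|\omega_T-\omega_{T'}|$ (the $\epsilon$-small corrections from the roughness being negligible), so the requirement that the two tubes meet the common cube $q'$ after a time $\gtrsim R^{1-\delta}$ forces the directions $\omega_T=\xi_T(t_q)/|\xi_T(t_q)|$ of all $T_1\in\mathcal{Q}_q(q')$ to lie in a single spherical cap of radius $\lesssim R^{-1/2+c\delta}$.

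It remains to pin the radial variable. Writing $\xi_T(t_q)=r_T\,\omega_T$ with $\omega_T$ now essentially fixed and $r_T\sim 1$, membership in $\mathcal{E}^{q}_{\xi_1,\xi_2'}$ reads $|F^{z_q}_{\xi_1,\xi_2'}(r_T\omega_T)|\lesssim R^{-1/2+\delta}$. Applying Euler's identity to $p_1$ and to $p_2$ gives the exact identity
\[
\frac{d}{dr}F^{z_q}_{\xi_1,\xi_2'}(r\omega)=-\bigl\langle\omega,\ \nabla_\xi p_1(z_q,\omega)-\nabla_\xi p_2(z_q,r\omega+\xi_2'-\xi_1)\bigr\rangle,
\]
which is minus the $\omega$-component of the velocity difference $\Delta_v$ of the associated $(T_1,T_2)$-pair, with $\omega=\xi_1^*$. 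Since $T_1$ and the fixed tube $T_2$ both meet $q'$, i.e.\ $|x_{T_1}(t_{q'})-x_{T_2}(t_{q'})|\lesssim R^{1/2+\delta}\le R^{1/2+\delta_0}$, the almost-intersection lemma for $1$-homogeneous Hamiltonians applies and yields $|\langle\xi_1^*,\Delta_v\rangle|\gtrsim\nu$ (the $O(\epsilon)$ discrepancies from evaluating the flows at $z_q$ rather than along the bicharacteristics are harmless as $\epsilon\ll\nu$). Hence $|\partial_r F^{z_q}_{\xi_1,\xi_2'}(r\omega_T)|\gtrsim\nu$, so the admissible $r_T$ form an interval of length $\lesssim\nu^{-1}R^{-1/2+\delta}$.

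Combining the two steps, $\xi_T(t_q)$ is confined to a frequency region of size $\lesssim R^{-1/2+c\delta}$ in the $d-1$ angular directions and $\lesssim\nu^{-1}R^{-1/2+\delta}$ radially; counting points of $\Lambda_R$ (spacing $R^{-1/2}$) gives the stated bound for $|\mathcal{Q}_q(q')|$ up to the radial factor $\nu^{-1}$, which enters \eqref{eq:CubeDoubleEndCounting} exactly once — the analogue of the $\nu^{-1}$ produced by Lemma~\ref{l:transverse} in the non-degenerate case. I expect the crux to be the radial-derivative lower bound of the third step: one has to make sure that the cancellation between the $p_1$- and $p_2$-contributions to $F$ still leaves a radial derivative of size $\gtrsim\nu$, which is precisely why the sharper hypothesis \eqref{eq:NonDegeneracyTransversalityHomogeneous} is needed (and why one must know that the pair $(T_1,T_2)$ almost intersects in order to invoke it), rather than merely \eqref{eq:BasicTransversalityHomogeneous}.
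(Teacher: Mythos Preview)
Your angular analysis matches the paper's approach: the paper uses the observation that for $1$-homogeneous $p_1$ the spatial projection of a bicharacteristic depends only on $\xi/|\xi|$ (the paper's Observation $(\star)$), then shows via Taylor expansion and the angular non-degeneracy of $\partial_\xi^2 p_1$ that the direction $\omega_T=\xi_T(t_q)/|\xi_T(t_q)|$ is determined by $q$ and $q'$ up to $R^{c\delta}$ choices.

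However, the paper's proof \emph{ends} there: it concludes ``the angular frequency is essentially uniquely determined by cubes $q$, $q'$'' and asserts $|\mathcal{Q}_q(q')|\lesssim R^{c\delta}$ without any radial analysis. Your radial step is an addition, and while your computation of $|\partial_r F^{z_q}_{\xi_1,\xi_2'}|\gtrsim\nu$ via \eqref{eq:NonDegeneracyTransversalityHomogeneous} is correct as a computation, it yields the weaker bound $\nu^{-1}R^{c\delta}$, not the stated $R^{c\delta}$. Your attempt to reconcile this --- claiming the $\nu^{-1}$ ``enters \eqref{eq:CubeDoubleEndCounting} exactly once'' in place of the $\nu^{-1}$ from Lemma~\ref{l:transverse} --- does not work in the paper's organization: immediately after the present lemma the paper proves a separate analogue of Lemma~\ref{l:transverse} (bounding the number of admissible $q'$ along $T_2$ by $\nu^{-1}R^{c\delta}$), and the two lemmas are \emph{multiplied} to obtain \eqref{eq:CubeDoubleEndCounting}. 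With your weaker bound here, the product would be $\nu^{-2}R^{c\delta}$, which overshoots.

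So as written, you have not proved the lemma as stated, and the proposed reorganization to absorb the extra $\nu^{-1}$ is inconsistent with how the paper combines the two lemmas.
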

\begin{proof}
Again we shall see that $T_1$ is essentially determined by $q$ and $q'$ due to the bi-Lipschitz property of the flow. For $1$-homogeneous $p_1$ the following is immediate: 

\smallskip

\emph{Observation $(\star)$: Let $(x_t,\xi^\lambda_t)$ be the bicharacteristic with initial data $(x_0,\lambda \xi_0)$. Then it holds $(x_t,\xi^\lambda_t) = (x_0, \lambda \xi_t)$.}

\smallskip

In the argument we can only use the non-degeneracy in the angular direction of $p_i$. Regarding the degeneracy in the radial direction, we observe that for a frequency $\eta = \eta_{T_1}$ we have that
\begin{equation*}
\frac{d}{d\lambda} F^{z_q}_{\xi_1,\xi_2'}(\lambda \eta) = (\nabla_{\eta} p_2(z_q,\eta + \xi_2'-\xi_1) - \nabla_{\eta} p_1(z_q,\eta)) \cdot \eta, \quad |\frac{d}{d\lambda} F^{z_q}_{\xi_1,\xi_2'}(\lambda \eta)| \gg R^{-\frac{1}{2}+\delta}.
\end{equation*}
For this reason it is enough to consider angular variations.

 Indeed, for two frequencies $\xi_1$, $\xi_2$ with very small angular separation:
\begin{equation*}
\big| \frac{\xi_1}{|\xi_1|} - \frac{\xi_2}{|\xi_2|} \big| \lesssim R^{-\frac{1}{2}+\delta}
\end{equation*}
it is easy to see that the tubes with the same starting point are comparable on a scale of $R^{\frac{1}{2}+\delta}$ by the below.

We obtain for the bicharacteristic emanating from $x_0$ with frequency $\xi_0$, $| \xi_0 | = 1$:
\begin{equation*}
x(t) = x_0 + t \frac{\partial p_1}{\partial \xi}(x_0,0,\xi_0) + t^2 \zeta(t,x_0,\xi_0)
\end{equation*}

For the remainder of the Taylor expansion we have the following Lagrange representation:
\begin{equation*}
\int_0^t (t-s) \big[ \frac{\partial^2 p_1}{\partial x \partial \xi}(x_s,\xi_s) \frac{\partial p_1}{\partial \xi}(x_s,\xi_s) - \frac{\partial^2 p_1}{\partial \xi^2}(x_s,\xi_s) \frac{\partial p_1}{\partial x}(x_s,\xi_s) \big] ds = t^2 \zeta(t,x_0,\xi_0)
\end{equation*}
with $\zeta$ a continuous function and bounds only depending on $\varepsilon$ and $C_2$.

We compare two tubes with $R^{1-\delta} \leq |t| \leq R$ and frequencies $\xi_1$, $\xi_2$:
\begin{equation*}
\begin{split}
x_1(t) &= x_0 + t \frac{\partial p_1 }{\partial \xi}(x_0,\xi_1) + t^2 \zeta(t,x_0,\xi_1), \\
x_2(t) &= x_0 + t \frac{\partial p_1 }{\partial \xi}(x_0,\xi_2) + t^2 \zeta(t,x_0,\xi_2).
\end{split}
\end{equation*}
By $(\star)$ we can normalize $| \xi_i | = 1$ and retain the same curves $x_i(t)$.
It follows from the non-degeneracy in the angular directions
\begin{equation*}
\big| \frac{\partial p_1}{\partial \xi}(x_0,\xi_1) - \frac{\partial p_1}{\partial \xi}(x_0,\xi_2) \big| = \big| \frac{\partial^2 p_1}{(\partial \xi)^2} (x_0,\xi_*) \big( \frac{\xi_1}{|\xi_1|} - \frac{\xi_2}{|\xi|} \big) \big| \sim_{d_1,d_2} \big| \frac{\xi_1}{|\xi_1|} - \frac{\xi_2}{|\xi|} \big|.
\end{equation*}
Secondly,
\begin{equation*}
\big| \zeta(t,x_0,\xi_1) - \zeta(t,x_0,\xi_2) \big| \lesssim \frac{\epsilon_0 C t^2 }{R} \big| \frac{\xi_1}{|\xi_1|} - \frac{\xi_2}{|\xi|} \big|. 
\end{equation*}
And it follows that the angular frequency is essentially uniquely determined by cubes $q$, $q'$.
\end{proof}

We continue with the counterpart of Lemma~\ref{l:transverse}. Together, the next lemma and the previous one complete the proof of the desired bound \eqref{eq:CubeDoubleEndCounting}.
\begin{lemma}
It holds
\begin{equation*}
\big( \bigcup_{q': \mathcal{Q}_{q} (q') \neq \emptyset } q' \big) \cap T_2 \subseteq T_2 \big\vert_{I_T}
\end{equation*}
with $I_T$ an interval of length $\nu^{-1} R^{\frac{1}{2}+C\delta}$.
\end{lemma}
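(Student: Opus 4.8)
The plan is to follow the proof of Lemma~\ref{l:transverse} closely, replacing the invertibility of the full Hessian $\partial^2_\xi p_1$ by the angular non-degeneracy of Assumption~\ref{AssumptionNondegenerateHamiltonianHomogeneous} together with the homogeneous transversality \eqref{eq:NonDegeneracyTransversalityHomogeneous}. First, as before, by \eqref{eq:BasicTransversalityHomogeneous} the gradient $\nabla_\xi F^{z_q}_{\xi_1,\xi_2'}$ of the energy difference function has size $\gtrsim\nu$ along the energy shell (the frequency supports being $\varepsilon\ll\nu$-localized, so that the $z$- and $\xi$-perturbations are harmless), hence the shell has normal thickness $\lesssim\nu^{-1}R^{-\frac12+\delta}$; combining this with $|\partial x/\partial\eta|\lesssim R$ and observation $(\star)$ (so that the radial frequency variable does not affect the spatial curves), one obtains a spatial uncertainty $\lesssim\nu^{-1}R^{\frac12+\delta}$, which is precisely the width of $T_2|_{I_T}$. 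So it suffices to work with the zero energy shell $\mathcal E^{z_q}_{0,\xi_1,\xi_2'}$ and the flow-out manifold $\Gamma_q=\bigcup_{t\in\mathcal I_T}\Gamma_q(t)$ of bicharacteristics of $p_1$ through $x_q$ at $t_q$ with frequency in the zero shell, defined exactly as in Lemma~\ref{l:transverse}, and to show that $\Gamma_q$ meets $T_2$ transversally with angle $\gtrsim\nu$ and in $\mathcal O(1)$ points.

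At an intersection point with intersection frequency $\eta_1$, the tangent space of $\Gamma_q$ is spanned by the $d-1$ spatial vectors obtained by differentiating the bicharacteristic in the frequency along $T_{\eta_1}\mathcal E^{z_q}_{0,\xi_1,\xi_2'}$, which after normalization equal $A\mathfrak t_1,\dots,A\mathfrak t_{d-1}$ with $A=(t-t_q)^{-1}\int_{t_q}^{t}\partial^2_\xi p_1\,ds$ up to $\mathcal O(\varepsilon R^{-1}(t-t_q))$, together with the time-like vector $V_1=(\partial_\xi p_1(z_{q'},\eta_1(t_{q'})),1)$; the tangent of $T_2$ is $V_2=(\partial_\xi p_2(z_{q'},\eta_2(t_{q'})),1)$. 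As in Lemma~\ref{l:transverse}, the transversality \eqref{eq:nuTransversality} reduces to the purely spatial estimate $|A\mathfrak t_1\wedge\dots\wedge A\mathfrak t_{d-1}\wedge\Delta_v|\gtrsim\nu$ for $\Delta_v=\nabla_\xi p_1(z_{q'},\eta_1(t_{q'}))-\nabla_\xi p_2(z_{q'},\eta_2(t_{q'}))$. Here is the one point that differs from the non-degenerate argument: by Euler's identity $A\xi_1^*=0$ (up to $\mathcal O(\varepsilon)$ from the variation of the base point and frequency along the flow), so $A$ is degenerate with $\ker A=\mathbb R\xi_1^*$ and, by symmetry, $\mathrm{range}(A)=(\xi_1^*)^\perp$; on the other hand the $\mathfrak t_i$ span the shell tangent $\Delta_v^\perp$, and \eqref{eq:NonDegeneracyTransversalityHomogeneous} forces $|\langle\xi_1^*,\Delta_v\rangle|\gtrsim\nu$, so $\ker A=\mathbb R\xi_1^*$ is quantitatively transverse to $\Delta_v^\perp$ (angle $\gtrsim 1$). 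Consequently $A|_{\Delta_v^\perp}$ is injective with $|Av|\gtrsim d_1|v|$, so that $A(\Delta_v^\perp)=(\xi_1^*)^\perp$ for dimension reasons, and therefore $|A\mathfrak t_1\wedge\dots\wedge A\mathfrak t_{d-1}\wedge\Delta_v|\sim\mathrm{dist}(\Delta_v,(\xi_1^*)^\perp)=|\langle\xi_1^*,\Delta_v\rangle|\gtrsim\nu$. Geometrically $\Gamma_q$ is, to leading order, the $p_1$-characteristic cone emanating from $q$, whose spatial tangent at $\eta_1$ is $(\xi_1^*)^\perp$, and \eqref{eq:NonDegeneracyTransversalityHomogeneous} says precisely that this cone is $\nu$-transverse to the $p_2$-tube $T_2$.

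Finally, multiple intersections are excluded exactly as in Lemma~\ref{l:transverse}: if a single $T_1$ with frequency in the zero shell met $T_2$ at two times $t_1<t_2$ with $t_2-t_1\geq CR^{\frac12+\delta}\nu^{-1}$, then integrating $\dot x_{T_1}-\dot x_{T_2}$ and using $|\Delta_v|\sim\nu$ gives $|x_{T_1}(t)-x_{T_2}(t)|\gtrsim|t-t_1|\nu\gtrsim R^{\frac12+\delta}$, which is impossible on the $R^{\frac12}$ spatial scale; and if two distinct $T_1,T_1'$ in the zero shell met $T_2$ at separated times, then $x_{T_1}(t_1)-x_{T_1'}(t_1)=(t_2-t_1)(\nabla_\xi p_1-\nabla_\xi p_2)$ rewrites the left side as a shell-frequency variation $\frac{\partial x_t}{\partial\xi}\mathfrak t$ with $\mathfrak t\in T_{\eta^*}\mathcal E^{z_q}_{0,\xi_1,\xi_2'}$, and projecting this identity onto the normal $\mathfrak n=\nabla_\xi p_1-\nabla_\xi p_2$ at $\mathfrak t$ the same computation (with $A^*$ in place of $A$: injective on $\Delta_v^\perp$, range $(\xi_1^*)^\perp$, and $|\langle\xi_1^*,\Delta_v\rangle|\gtrsim\nu$) forces $0\gtrsim\nu^2+\mathcal O(\nu\varepsilon)$, a contradiction. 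Combining with the reduction to the zero shell, $\big(\bigcup_{q':\mathcal Q_q(q')\neq\emptyset}q'\big)\cap T_2$ lies in a piece of $T_2$ over a time interval $I_T$ of length $\nu^{-1}R^{\frac12+C\delta}$, which together with Lemma~\ref{lem:TubeCountingAuxNondegenerateCone} yields \eqref{eq:CubeDoubleEndCounting}. The only genuinely new point, and the one requiring care, is the verification that $\ker A=\mathbb R\xi_1^*$ is transverse to the shell tangent $\Delta_v^\perp$ --- equivalently the identity $A(\Delta_v^\perp)=(\xi_1^*)^\perp$ --- which is exactly where \eqref{eq:NonDegeneracyTransversalityHomogeneous} is used; the rest is a transcription of the non-degenerate proof with $\varepsilon\ll\nu$ absorbing all error terms.
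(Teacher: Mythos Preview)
Your proof is correct and follows essentially the same route as the paper's. Both arguments reduce to the zero energy shell, set up $\Gamma_q$ identically, compute the spatial tangent vectors as $A\mathfrak t_i$ with $A$ the averaged Hessian, and then exploit the $1$-homogeneity via $\ker A=\mathbb R\xi_1^*$, $\mathrm{range}(A)=(\xi_1^*)^\perp$ to reduce the transversality estimate to $|\langle\xi_1^*,\Delta_v\rangle|\gtrsim\nu$, which is precisely \eqref{eq:NonDegeneracyTransversalityHomogeneous}; your treatment is slightly more explicit about the identification $A(\Delta_v^\perp)=(\xi_1^*)^\perp$ and about the multiple-intersection argument (which the paper omits by referring back to the non-degenerate case), but the substance is the same.
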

\begin{proof}
The argument follows along the above lines, but since $p$ is homogeneous, we cannot use the non-degeneracy. So, like above, let
\begin{equation*}
\Gamma_{q}(t) = \{(x(t),t) : (x(t),\xi(t)) \text{ solves } \eqref{eq:RescaledHamiltonianFlowAssumptions}, \; x(t_q) = x_q, \; \xi(t_q) \in \mathcal{E}^{z_q}_{0,\xi_1,\xi_2'} \}
\end{equation*}
describe the time-slice of the spatial projection of the bicharacteristic curves \\ $(x(t),\xi(t))$ with $x(t_q) = x_q$ and $\xi(t_q) = \eta \in \mathcal{E}^{z_q}_{0,\xi_1,\xi_2'}$ and let $\Gamma_{q} = \bigcup_{t \in \mathcal{I}_T} \Gamma_{q}(t)$.

Again suppose that $t_q = 0$ and $0=t_q < t_{q'} \in \mathcal{I}_T = [\frac{R^{1-\delta}}{20},R]$.

\smallskip

Consider a tube $T_2 \in \T_2$ which intersects $\Gamma_{q}$ at $(x_{q'},t')$. We show again the $\nu$-transversality of $T_2$ with the tangent space of $\Gamma_{q}$. The perturbative arguments, i.e. checking the $R^{-\frac{1}{2}+\delta}$-neighborhood of the energy shell and for multiple intersections can be done essentially like above, so this is omitted here.

Presently, we have $d-1$ spatial directions obtained from varying $x_{\eta_1}$ with variational frequencies in $T_{\eta_1} \mathcal{E}^{z_q}_{0,\xi_1,\xi_2'}$, the intersection frequency, and \\
 $V_1 = (\partial_{\xi} p_1(x_{q'},t',\eta_1(t')),1)$. We suppose by $(\star)$ that $| \eta_1 | = 1$. Also, let $V_2 = (\partial_{\xi} p_2(x_{q'},t',\eta_2(t')),1)$ denote the tangent direction of $T_2$ at the intersection point. Recall the variation formula from above:
\begin{equation*}
\frac{\partial x_t}{\partial \xi} = \int_{0}^{t'} \frac{\partial^2 p}{\partial \xi \partial x} \frac{\partial x_t}{\partial \xi}  ds + \int_0^{t'} \frac{\partial^2 p}{\partial \xi^2} \frac{\partial \xi_t}{\partial \xi} ds.
\end{equation*}

By the mean-value theorem we let
\begin{equation*}
\frac{\partial x_t}{\partial \xi} = t' \big( \frac{\partial^2 p_1}{\partial x \partial \xi}(x_*,\xi_*) \frac{\partial x_t}{\partial \xi} + \frac{\partial^2 p_1}{(\partial \xi)^2}(x_*,\xi_*) \frac{\partial \xi_{t_*}}{\partial \xi} \big) = t' A^*.
\end{equation*}
It will suffice to check the transversality
\begin{equation}
\label{eq:TransveralityHomAux}
\big| \frac{\partial^2 p_1}{(\partial \xi)^2}(x_*,\xi_*) \mathfrak{t}_1 \wedge \ldots \wedge \frac{\partial^2 p_1}{(\partial \xi)^2}(x_*,\xi_*) \mathfrak{t}_{d-1} \wedge V_1 \wedge V_2 \big| \gtrsim \nu
\end{equation}
where $\mathfrak{t}_i$ denote the normalized tangent vectors spanning $T_{\eta_1} \mathcal{E}_{0,\xi_1,\xi_2'}^{z_q}$.

This reduction follows from estimating
\begin{equation*}
\big| \frac{\partial^2 p_1}{\partial x \partial \xi}(x_*,\xi_*) \frac{\partial x_t}{\partial \xi} \big| \lesssim \varepsilon \text{ and } \big| \frac{\partial \xi_{t_*}}{\partial \xi} - I_d \big| \lesssim \varepsilon.
\end{equation*}
By elimination, another Taylor expansion and using the confinement of the frequencies, it suffices to check that
\begin{equation*}
\big| \frac{\partial^2 p_1}{(\partial \xi)^2}(x_q',\xi_1^*) \mathfrak{t}_1 \wedge \ldots \wedge \frac{\partial^2 p_1}{(\partial \xi)^2}(x_q',\xi_1^*) \mathfrak{t}_{d-1} \wedge V_1 - V_2 \big| \gtrsim \nu.
\end{equation*}
To prove the above estimate, we can argue equivalently for $A_{\xi_1^*} = \frac{\partial^2 p_1}{(\partial \xi)^2}(x_q',\xi_1^*)$:
\begin{equation*}
\text{dist}(V_1-V_2,\text{R}(A_{\xi_1^*})) \gtrsim \nu.
\end{equation*}
Now we use self-adjointness and $\text{R}(A_{\xi_1^*}) = \ker(A_{\xi_1^*})^{\perp} = \langle \xi_1^* \rangle^{\perp}$ by $1$-homogeneity.

Consequently, this is equivalent to
\begin{equation*}
|\langle \xi_1^*, \nabla_{\xi} p_1(x_{q'},t',\eta_1(t')) - \nabla_{\xi} p_2(x_{q'},t',\eta_2(t')) \rangle| \gtrsim \nu,
\end{equation*}
which is true by our hypothesis \eqref{eq:NonDegeneracyTransversalityHomogeneous}.
\end{proof}

\section{Wave packet decompositions for rough flows}
\label{section:WavePacketDecomposition}

In this section we construct wave packet decompositions to 
\begin{equation}
\label{eq:FirstOrderRoughCoefficients}
(D_t + a^w(x,t,D))u = f, \quad u(0) = u_0
\end{equation}
for $a \in C^{\infty}_{t,x} C_{\xi}^\infty$ and phase-space localized $u_0$. To comprehend the regularity assumptions on $a$, recall that in the applications we consider a dyadic frequency localization $N \gg 1$ with a corresponding paradifferential decomposition. For real $a$, we find that $a^w$ is self-adjoint, and the above PDE generates an isometric evolution operator $S(t,s)$ in $L^2(\R^d)$. 
The key examples are $1$-homogeneous symbols, corresponding to variable-coefficient wave equations, and symbols quadratic in $\xi$ ($2$-homogeneous), corresponding to variable-coefficient Schr\"odinger equations. We denote the homogeneity by $\mathfrak{h}$. For $\mathfrak{h} = 1$, we have the important example
\begin{equation}
\label{eq:1HomogeneousA}
a(x,t,\xi) = (g^{ij}_{\leq N^{\frac{1}{2}}}(x,t) \xi_i \xi_j)^{\frac{1}{2}}, \quad |\xi| \sim N.
\end{equation}
In this case we consider $|t| \leq 1$ and we can localize to $|x| \lesssim 1$.

For $\mathfrak{h} = 2$, we have
\begin{equation}
\label{eq:SchroedingerSymbol}
a(x,t,\xi) = g^{ij}_{\leq N, \leq N^{\frac{1}{2}}}(x,t) \xi_i \xi_j.
\end{equation} 
Here we consider times $|t| \lesssim N^{-1}$ such that we can still localize spatially $|x| \lesssim 1$ and due to parabolic scaling, we can consider a more relaxed paradifferential decomposition with respect to time. We shall see that it suffices to consider $C^1$-regularity in time.

\smallskip

Our phase space transform will be formulated for the unit frequency evolution on a large time scale $R$. The transition between the examples above and this normalized setting is carried out by scaling. Precisely, with $R=N$, in the case wave case $\mathfrak{h} = 1$ we rescale both space and time by $R$, while in the Schr\"odinger case we use the anisotropic rescaling by $(R^2,R)$ for time respectively space. 
The regularity assumptions on the Hamiltonian flow are formulated as follows:
\begin{assumption}
    \label{ass:HamiltonianFlowRegularity}
    Let $R \gg 1$. Let $\mathcal{Y} \subseteq T^* \R^d$ which satisfies Assumption \ref{ass:RegularitySet} with $\overline{\mathcal{Y}}_R \subseteq \R^d \times \R \times \R^d$, and $a \in C^\infty_{z,\xi}(\R^d \times \R \times \R^d)$. We require that the flow-out of $\mathcal{Y}$ under the Hamiltonian flow $a$ for $t \in [-10R,10R]$:
    \begin{equation*}
        \left\{ \begin{array}{cl}
             \dot{x}^t &= \frac{\partial a}{\partial \xi}, \\
             \dot{\xi}^t &= - \frac{\partial a}{\partial x}
        \end{array} \right.
    \end{equation*}
    is contained in $\overline{\mathcal{Y}}_R$. We require the following regularity estimates:
    \begin{equation}
        \label{eq:RegularityEstimatesHamiltonianFlowAssumption}
        |\partial_z^{\alpha} \partial_{\xi}^{\beta} a(x,t;\xi)| \lesssim_{\alpha,\beta} \begin{cases}
            R^{-|\alpha|}, &\quad 0 \leq |\alpha| \leq 2, \\
            R^{- \frac{(|\alpha|-2)_+}{2}-2}, &\quad |\alpha| \geq 2.
        \end{cases}
    \end{equation}
\end{assumption}

We obtain wave packets from applying the modified FBI transform with parameter $R$ on the time-scale $|t| \leq R$. The wave packets are of size $\Delta x \sim R^{\frac{1}{2}}$, $\Delta \xi \sim R^{-\frac{1}{2}}$. 
Note that it means for a general variable-coefficient Schr\"odinger evolution for frequencies $N \gg 1$ we only find a parametrix on times $\lesssim N^{-1}$. This is unsurprising as for larger times, by infinite speed of propagation, the global behavior of the Hamiltonian flow would have to play a role. We shall see that wave packets constructed this way satisfy the axioms from Assumption \ref{ass:WP} and comply with the evolution generated by the PDE as in Assumption \ref{ass:Frames}.

Our objective in this section will be to prove 
the result in Theorem~\ref{thm:WavePacketDecompositions} on wave packet decompositions, which is  stated in the introduction but we recall it here for convenience:
\begin{theorem}
Let $R \gg 1$, and $\nu \in [R^{-\frac{1}{2}+\delta_0},1]$. Suppose that $a$ is a Hamiltonian, which satisfies Assumption \ref{ass:HamiltonianFlowRegularity}, with the set $\mathcal{Y}$ satisfying Assumption \ref{ass:RegularitySet}. Let $r \in [\nu^{-2-\delta_0},R]$ and $\mathcal{Y}_r, \, \overline{\mathcal{Y}}_r$ be defined like in Definition \ref{def:Thickening}. 
There is a pseudo-differential operator $\chi_{\overline{\mathcal{Y}}_r}(x,D): L^2(\R^d) \to L^2(\R^d)$ such that for solutions to
\begin{equation*}
\left\{ \begin{array}{cl}
(D_t + a^w) u &= 0, \quad (x,t) \in \R^{d+1}, \\
u(0) &= \chi_{\overline{\mathcal{Y}}_r}(x,D) u_0 \in L^2(\R^d)
\end{array} \right.
\end{equation*}
we have the decomposition in $L^2$ for $t \in [-r,r]$ with a rapidly decaying error term
\begin{equation*}
u = \sum_{(x_0,\xi_0) \in \Lambda_r \cap \overline{\mathcal{Y}}_r} \alpha_{(x_0,\xi_0)} \phi_{(x_0,\xi_0)}(t) + g(t)
\end{equation*}
with 
\begin{equation*}
\| g(t) \|_{L^p} \lesssim \nu^{d \big( \frac12 - \frac1p \big)} \text{RapDec}(r \nu^2) \| u_0 \|_{L^2}
\end{equation*}
for $2 \leq p \leq \infty$ and $(\phi_{(x_0,\xi_0)})_{(x_0,\xi_0) \in \Lambda_r \cap \overline{\mathcal{Y}}_r}$ are normalized wave packets satisfying Assumption \ref{ass:WP}.
\end{theorem}

\subsection{Decomposition into coherent states}

The parametrix for rough coefficients will be based on the modified FBI transform:
\begin{equation}
\label{eq:RescaledPhaseSpaceTransform}
\begin{split}
T_R : L^2(\R^d) &\to L^2(\R^{2d}), \\ T_R f(x,\xi) &= C_R \int_{\R^d} e^{i \xi (x-y)} e^{-\frac{\rho}{2} (x-y)^2} f(y) dy, \quad (x,\xi) \in \R^d \times \R^d
\end{split}
\end{equation}
for functions $f \in L^2(\R^d)$, and $\rho = R^{-1}$. The normalizing constant is given by $C_R = R^{-\frac{d}{4}} 2^{-\frac{d}{2}} \pi^{-\frac{d}{4}}$. $T_R: L^2(\R^d) \to L^2(\R^{2d})$ is an isometry and maps onto the closed subspace of analytic functions identifying $\R^{2d} \tilde{=} \C^d$, letting $z = x - i \xi \in \C^n$. 
One inversion formula is provided by the adjoint:
\begin{equation*}
f(y) = C_R \int_{\R^{2d}} e^{-\frac{\rho}{2}(x-y)^2} e^{-i \xi (x-y)} F(x,\xi) dx d\xi.
\end{equation*}

We remark that after applying $T_R$ the frequencies are essentially normalized. 
The FBI transform decomposes a function into coherent states,
which exhaust the uncertainty relation, and are given as follows
\begin{equation}
\label{eq:CoherentStates}
\phi_{x,\xi}(y) = e^{- i \xi(x-y)} e^{-\frac{\rho}{2}(x-y)^2}.
\end{equation}
These are localized in space on a scale $R^{\frac{1}{2}}$ around $x$ and on the frequency side on a scale of $R^{-\frac{1}{2}}$ around $\xi$. We can write the modified FBI transform as
\begin{equation*}
T_R f(x,\xi) = C_R \int_{\R^{d}} \phi^*_{x,\xi}(y) f(y) dy.
\end{equation*}

The FBI transform conjugates the evolution of \eqref{eq:FirstOrderRoughCoefficients} with well-controlled error terms to the Hamiltonian flow governed by $a$ in phase space.
By these means, in a series of papers, the second author \cite{Tataru2000, Tataru2002,Tataru03} proved Strichartz estimates for wave equations with $C^{1,1}$-coefficients with the same derivative loss like on Euclidean space (see also Tataru~\cite{T:phase-space} and Koch--Tataru \cite{KochTataru05} for parametrices for symbols in a restricted $S^0_{0,0}$-class).
We remark that for $C^{1,1}$-coefficients, which represents the natural threshold for classical well-posedness of the Hamiltonian flow, after the paradifferential decomposition explained in the beginning of the section, Assumption \ref{ass:HamiltonianFlowRegularity} is satisfied.

We denote the solutions to the Hamiltonian flow with initial data $(x^0,\xi^0)=(x,\xi)$ by $x^t(x,\xi)$, $\xi^t(x,\xi)$ and define the phase shift $\psi$ as solution to the equation
\begin{equation*}
\left\{ \begin{array}{cl}
\frac{d}{dt} \psi (x,t,\xi) &= - a(x^t,t,\xi^t)+ \xi^t \frac{\partial a}{\partial \xi}, \quad (x,t,\xi) \in \overline{\mathcal{Y}} \\
\psi(x,0,\xi) &= 0.
\end{array} \right.
\end{equation*}
We note that for $\mathfrak{h} \in \{1,2\}$ 
\begin{equation*}
\psi(x,t,\xi) = (\mathfrak{h}-1) \int_0^t a(x^s,s,\xi^s) ds.
\end{equation*}

\medskip

An explicit formula for the solution to the homogeneous equation \eqref{eq:FirstOrderRoughCoefficients} with $f =0$ was provided by the second author in \cite{T:phase-space}, see also the later paper \cite{KochTataru05}. Here we have the following:
\begin{proposition}
\label{prop:ParametrixConstruction}
Let $R \gg 1$. The kernel $K$ of the fundamental solution to $D_t + a^w$  can be represented in the form
\begin{equation}
\label{eq:KernelFundamentalSolution}
\begin{split}
K(y,t,\tilde{y},s) &= \int_{\R^{2d}} e^{-\frac{\rho}{2}(\tilde{y}-x^s)^2} e^{-i \xi^s (\tilde{y}-x^s)} e^{i ( \psi(x,t,\xi) - \psi(x,s,\xi))} e^{i  \xi^t \cdot (y-x^t)} \\
&\quad \quad \times G(t,s,x,\xi,y) dx d\xi,
\end{split}
\end{equation}
where the function $G$ satisfies
\begin{equation}
\label{eq:RegularityGreen}
\| (R^{-\frac{1}{2}}(x^t - y))^{\gamma} \partial_t^{\sigma} \partial_x^{\alpha} \partial_{\xi}^{\beta} \partial_y^{\nu} G(t,s,x,\xi,y) \|_{L_y^2} \lesssim c_{\nu,\alpha,\beta,\gamma} R^{-\frac{|\alpha| - |\beta| + |\nu|+|\sigma|}{2}} C_R^2 \| \phi_{x,\xi} \|_2
\end{equation}
for $|t-s| \lesssim R$.
\end{proposition}
In the case of normalized frequencies on the unit time interval for symbols in a restricted $S^{0}$-class, this is \cite[Theorem 5]{T:phase-space}. We carry out a detailed analysis to carefully track the dependence of the constants on frequency and time.

\smallskip

\emph{Beginning~of~the~Proof~of~Proposition~\ref{prop:ParametrixConstruction}.} The argument follows \cite[Proposition~4.3]{KochTataru05}. But here we use the rescaled phase space transform.

We let $s=0$ without loss of generality. For $u_0 \in L^2$ we use the modified FBI transform to write $u_0$ as superposition into coherent states:
\begin{equation}
\label{eq:SuperpositionCoherent}
u(y,t) =  S(t,0) T_R^* T_R u_0 = C_{R} \int S(t,0) \phi_{x,\xi}(y) (T_R u_0)(x,\xi) dx d\xi
\end{equation}
with coherent states given by \eqref{eq:CoherentStates}. The function $G$ is correspondingly defined by
\begin{equation*}
G(t,x,\xi,y) = C^2_{R} e^{-i \xi^t (y-x^t)} e^{-i \psi(t,x,\xi)} (S(t,0) \phi_{x,\xi})(y).
\end{equation*}
It remains to verify the bounds on $G$, which rely on bounds for the regularity of the Hamiltonian flow. This will be carried out in Section \ref{section:RegularityFlow}. 

\smallskip

With \eqref{eq:SuperpositionCoherent} at hand, we proceed to the next step:

\medskip

\emph{Construction of wave packets:} We use a smooth partition in phase space. Let $\psi_{x_0,\xi_0}(x,\xi)$ denote a smooth bump function with $\text{supp}(\psi_{x_0,\xi_0}) \subseteq B(x_0,3 R^{\frac{1}{2}}) \times B(\xi_0,3 R^{-\frac{1}{2}})$.
Let $\Lambda_{R} = R^{\frac{1}{2}} \Z^d \times R^{-\frac{1}{2}} \Z^d$. We require that for any $(x,\xi) \in \R^{2n}$
\begin{equation*}
\sum_{(x_0,\xi_0) \in \Lambda_R} \psi_{x_0,\xi_0}(x,\xi) \equiv 1.
\end{equation*}

We obtain a decomposition of the solution
\begin{equation*}
\begin{split}
u &= S(t,0) T_R^* T_R u_0 = C_R S(t,0) T_R^* \sum_{(x_0,\xi_0) \in \Lambda_R} \psi_{x_0,\xi_0}(x,\xi) ( T_R u_0 ) \\
&= C_R \sum_{(x_0,\xi_0) \in \Lambda_R} \int (S(t,0) \phi_{x,\xi}) \psi_{x_0,\xi_0}(x,\xi) (T_R u_0)(x,\xi),
\end{split}
\end{equation*}
and we obtain a decomposition of the kernel
\begin{equation*}
\begin{split}
&\quad K(y,t,\tilde{y},s) \\
&= \sum_{(x_0,\xi_0) \in \Lambda_R} \int_{\R^{2d}} e^{- \frac{\rho}{2}(\tilde{y}-x^s)^2} e^{-i \xi^s (\tilde{y}-x^s)} e^{i \xi^t (y-x^t)} e^{i (\psi(t,x,\xi) - \psi(s,x,\xi))} \\
&\quad \quad \times \psi_{x_0,\xi_0}(x,\xi) G(t,s,x,\xi,y) dx d\xi = \sum_{(x_0,\xi_0) \in \Lambda_R} K_{x_0,\xi_0}(t,y,s,\tilde{y}).
\end{split}
\end{equation*}
What we expect is that the solution obtained from the kernel localized to $(x_0,\xi_0)$ acting on the initial data,
\begin{equation*}
u_{x_0,\xi_0}(y,t) = \int K_{x_0,\xi_0}(y,t,\tilde{y},0) u_0(\tilde{y}) d\tilde{y},
\end{equation*}
has frequency support essentially in an $R^{-\frac{1}{2}}$-ball around $\xi_0^t$ and spatial support in an $R^{\frac{1}{2}}$-ball around $x_0^t$. Indeed, the above formula, together with regularity of the Hamiltonian flow and $G$, already indicates that the wave packets have the desired shape: they can be regarded as curved tubes centered at bicharacteristics with radius of size $R^{\frac{1}{2}}$; see Lemma \ref{lem:GeometryWavepackets}.
\begin{remark}
We note that the phase space localization by $\psi_{x_0,\xi_0}$ combines well with the bi-Lipschitz property proved in Lemma \ref{lem:GeometryWavepackets} in the sense that with respect to the phase space metric the unit distance localization is propagated up to times $|t| \lesssim R$. 
\end{remark}

The bounds for $G$ are proved in the next section.  Once we have established the regularity of $G$ claimed in Proposition \ref{prop:ParametrixConstruction}, we conclude the localization properties of the wave packets in Section \ref{section:Localization}. At the end of Section \ref{section:Localization} we introduce the weight in phase space and finish the proof of Theorem~\ref{thm:WavePacketDecompositions}.

\subsection{The regularity of the Hamiltonian flow}
\label{section:RegularityFlow}
This section is devoted to the analysis of the Hamiltonian flow. We shall see that on the frequency-dependent time $|t| \lesssim R^{\frac{1}{2}}$ the Hamiltonian flow is smooth. Naturally, the coherent states $\phi_{x,\xi}$ retain their shape and can be regarded as $\phi_{x_t,\xi_t}$. For $|t| \gg R^{\frac{1}{2}}$, the Hamiltonian flow ceases to be smooth and blows up exponentially: but we can take advantage of the localization of the Gaussians and argue that the coherence persists up to times $|t| \lesssim R$. 

The main tool to close the bootstrap assumption on the localization of $S(t,0) \phi_{x,\xi}$ are energy estimates. With the above in mind, we would like to regard $S(t,0) \phi_{x,\xi} \approx e^{i \xi^t \cdot (x^t - y)} e^{- \frac{\rho}{2}(x^t - y)^2}$ such that
\begin{equation*}
G(t,x,\xi,y) \approx C_R^2 e^{- \frac{\rho}{2}(x^t - y)^2}.
\end{equation*}
This explains the estimates in Proposition \ref{prop:ParametrixConstruction}.

For ease of presentation, we state the regularity of the Hamiltonian flow in a series of lemmas. We begin with the regularity in frequency variables, which up to a rescaling is \cite[Lemma~9]{Tataru2002}. We revisit the proof because the arguments will be important to obtain the joint regularity in position and frequency variables.

To be precise, we need to prove bounds on the Hamiltonian flow associated with $a$ satisfying Assumption \ref{ass:HamiltonianFlowRegularity}.

\smallskip

The regularity of the Hamiltonian flow is easier to analyze and state for normalized time as various Gr\o nwall arguments are involved. This leads us to analyze Hamiltonians on the unit time interval which satisfy the regularity assumption
\begin{equation}
\label{eq:BoundHamiltonSymbol}
|\partial_{z}^{\alpha} \partial_{\xi}^{\beta} a(x,t,\xi) | \lesssim_{\alpha,\beta} R^{\frac{(|\alpha|-2)_+}{2}}.
\end{equation}

\medskip

In the following we analyze in detail the Hamiltonian flow associated with $a$ satisfying \eqref{eq:BoundHamiltonSymbol} on the unit time interval $|t| \lesssim 1$:
\begin{equation}
\label{eq:RescaledHamiltonFlow}
\left\{ \begin{array}{cl}
\dot{x}^t &= \partial_{\xi} a(x^t,t,\xi^t), \\
\dot{\xi}^t &= - \partial_x a(x^t,t,\xi^t).
\end{array} \right.
\end{equation}
When the explicit time-dependence plays no role in the following, it will be suppressed to lighten the notation.

\begin{lemma}[Regularity~in~frequency~variables]
\label{lem:HamiltonFlowFrequencyRegularity}
For $|\beta| \geq 1$, the Hamiltonian flow \eqref{eq:RescaledHamiltonFlow} satisfies for $|t| \leq 2$ the following regularity estimates:
\begin{equation}
\label{eq:RegularityHamiltonFlowFrequencies}
\left\{ \begin{array}{cl}
|\partial_{\xi}^{\beta} x^t| &\lesssim_{\beta} |t| (1+ R^{\frac{1}{2}} |t|)^{|\beta|-1}, \\
|\partial_{\xi}^{\beta} \xi^t| &\lesssim_{\beta} (1+ R^{\frac{1}{2}} |t|)^{|\beta|-1}.
\end{array} \right.
\end{equation}
\end{lemma}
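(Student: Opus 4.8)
The plan is to prove the two bounds by simultaneous induction on $|\beta| \geq 1$, differentiating the variational equations obtained from \eqref{eq:RescaledHamiltonFlow} and running a Gr\o nwall argument on each scale. First I would set up the base case $|\beta| = 1$: differentiating \eqref{eq:RescaledHamiltonFlow} in $\xi$ gives the linear system
\begin{equation*}
\frac{d}{dt} \partial_\xi x^t = \partial_{\xi x} a \, \partial_\xi x^t + \partial_{\xi\xi} a \, \partial_\xi \xi^t, \qquad \frac{d}{dt} \partial_\xi \xi^t = - \partial_{xx} a \, \partial_\xi x^t - \partial_{x\xi} a \, \partial_\xi \xi^t,
\end{equation*}
with initial data $\partial_\xi x^0 = 0$, $\partial_\xi \xi^0 = I_d$. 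By \eqref{eq:BoundHamiltonSymbol} the coefficients $\partial_{\xi x} a$, $\partial_{xx} a$, $\partial_{\xi\xi} a$, $\partial_{x\xi} a$ are all $\mathcal{O}(1)$ (these are the $|\alpha|+|\beta| = 2$ cases where the exponent $(|\alpha|-2)_+/2$ vanishes). A direct Gr\o nwall estimate then yields $|\partial_\xi \xi^t| \lesssim 1$ and $|\partial_\xi x^t| \lesssim |t|$ on $|t| \leq 2$, matching \eqref{eq:RegularityHamiltonFlowFrequencies} with $|\beta| = 1$ (the extra factor $(1 + R^{1/2}|t|)^{0} = 1$).

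For the inductive step, I would apply $\partial_\xi^\beta$ with $|\beta| = k \geq 2$ to \eqref{eq:RescaledHamiltonFlow} and use the Faà di Bruno / Leibniz expansion. The highest-order terms $\partial_{\xi x} a \, \partial_\xi^\beta x^t$ and $\partial_{\xi\xi} a \, \partial_\xi^\beta \xi^t$ (and similarly for the $\xi$-equation) again have $\mathcal{O}(1)$ coefficients and go into the Gr\o nwall loop; all the remaining terms are \emph{forcing} terms built from $\partial_z^{\alpha'}\partial_\xi^{\beta'} a$ with $|\alpha'| + |\beta'| \geq 3$ paired against products of lower-order flow derivatives $\partial_\xi^{\gamma} x^t$, $\partial_\xi^{\gamma} \xi^t$ with $1 \leq |\gamma| < k$. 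The key bookkeeping point is that a symbol derivative of total order $m \geq 2$ contributes a factor $R^{(m-2)/2}$ by \eqref{eq:BoundHamiltonSymbol}, while each of its $x$-slots that it consumes is matched against some $\partial_\xi^{\gamma} x^t$ carrying — by the inductive hypothesis — a factor $|t|(1+R^{1/2}|t|)^{|\gamma|-1}$, and the $\xi$-slots against $\partial_\xi^{\gamma}\xi^t$ carrying $(1+R^{1/2}|t|)^{|\gamma|-1}$. One then checks that the worst such forcing term, after integrating in time, is $\lesssim |t|(1+R^{1/2}|t|)^{k-1}$ for the $x$-equation and $\lesssim (1+R^{1/2}|t|)^{k-1}$ for the $\xi$-equation; the factor $R^{1/2}$ is exactly what is needed to upgrade $R^{(m-2)/2} \cdot (\text{powers of } R^{1/2}|t| \text{ of total degree } < k-1)$ up to the asserted power $k-1$. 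Gr\o nwall in integral form then closes the induction.

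\textbf{Main obstacle.} The delicate step is the combinatorial accounting in the inductive step: one must verify that in \emph{every} term of the Leibniz expansion of $\partial_\xi^\beta(\partial_z a(x^t,\xi^t))$ the exponents balance, i.e. that the total power of $R^{1/2}$ generated (half the excess order of the symbol derivative beyond $2$, plus the powers $R^{1/2}|t|$ coming from the lower-order flow factors, plus possibly one extra $R^{1/2}$ if we want to express the bound cleanly) never exceeds $k-1$, while simultaneously tracking that exactly one factor of $|t|$ is present for each $x$-derivative of the flow. I would organize this by assigning to each flow factor $\partial_\xi^\gamma(\text{component})$ a "weight" equal to $|\gamma| - 1$ plus an indicator for whether the component is $x$ or $\xi$, and to each symbol derivative the weight $(m-2)/2$, and then argue the weights are subadditive under the constraint $\sum |\gamma_i| + (\text{slots of the symbol factor}) = k + 1$. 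Once the weight inequality is established the rest is routine Gr\o nwall. A secondary, purely technical point is to make sure the Gr\o nwall loop is legitimate — i.e. that the highest-order unknowns $\partial_\xi^\beta x^t$, $\partial_\xi^\beta \xi^t$ really do appear linearly with bounded coefficients and everything else is genuine forcing — which follows because $\partial_z a$ and $\partial_\xi a$ are $C^1$ in their arguments along the flow, guaranteed by \eqref{eq:BoundHamiltonSymbol} with $|\alpha| + |\beta| = 2$.
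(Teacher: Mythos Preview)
Your proposal is correct and follows essentially the same strategy as the paper: induction on $|\beta|$, with the base case handled by the linearized variational system and Gr\o nwall, and the inductive step by differentiating the variational equation, isolating the linear-in-$(\partial_\xi^\beta x^t,\partial_\xi^\beta \xi^t)$ part with bounded coefficients, and estimating the Fa\`a di Bruno forcing terms using the symbol bound \eqref{eq:BoundHamiltonSymbol} and the induction hypothesis. One small imprecision: the bound \eqref{eq:BoundHamiltonSymbol} reads $R^{(|\alpha|-2)_+/2}$ with $|\alpha|$ counting only the $x,t$-derivatives, not the total order, so in your weight accounting the powers of $R^{1/2}$ come solely from the $x$-slots of the symbol (which then pair with the $|t|$ factors from $\partial_\xi^\gamma x^t$); this only makes the estimate easier and the paper's explicit case analysis (the terms labeled \eqref{eq:InhomogeneityI} and \eqref{eq:InhomogeneityII}) confirms the balance you describe.
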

\begin{proof}
For $|\beta| = 1$, using the differentiability of the flow, we find
\begin{equation}
\label{eq:VariationalEquationI}
\frac{d}{dt} \begin{pmatrix}
\partial x^t / \partial \xi \\ \partial \xi^t / \partial \xi
\end{pmatrix}
=
\underbrace{\begin{pmatrix}
\partial_{x \xi}^2 a  & \partial^2_{\xi \xi} a \\
- \partial_{xx}^2 a & - \partial_{x \xi}^2 a 
\end{pmatrix}}_{A(t)}
\begin{pmatrix}
\partial x^t / \partial \xi \\
\partial \xi^t / \partial \xi
\end{pmatrix}
.
\end{equation}
$A(t)$ is clearly bounded by $a \in C^{1,1}$ uniformly. Moreover, $\partial x^0 / \partial \xi = 0$, $\partial \xi^0 / \partial \xi = I_d$. This gives the bounds for $|\beta| = 1$ and $|t| \lesssim 1$.

\medskip

For the induction step $|\beta|>1$ we find by taking $|\beta|-1$ derivatives of \eqref{eq:VariationalEquationI}:
\begin{equation*}
\frac{d}{dt} \begin{pmatrix}
\partial_{\xi}^{\beta} x^t \\ \partial_{\xi}^{\beta} \xi^t
\end{pmatrix}
= A(t) 
\begin{pmatrix}
\partial_{\xi}^{\beta} x^t \\ \partial_{\xi}^{\beta} \xi^t
\end{pmatrix}
+ \eta.
\end{equation*}
The inhomogeneity consists of lower order terms
\begin{equation*}
\eta = \sum_{\beta_1,\beta_2,\zeta} \mu_{\beta_1,\beta_2,\zeta} \partial_{\xi}^{\beta_1} (\partial^2_{\kappa_1 \kappa_2} a)(x^t,\xi^t) \partial_{\xi}^{\beta_2} \partial_{\xi} \zeta^t,
\end{equation*}
with $\kappa_i, \zeta \in \{ x; \xi \}$, $|\beta|-1 = |\beta_1|+|\beta_2|$, $|\beta_2| < |\beta|-1$. $\mu_{\beta_1,\beta_2,\zeta} \in \N$ denote coefficients. The first condition is a consequence of the Leibniz's rule, and the second condition reflects that in the inhomogeneity we are collecting the terms for which not all derivatives fall on the second term in \eqref{eq:VariationalEquationI}.

Furthermore, computing $\partial_{\xi}^{\beta_1} \partial^2_{\kappa_1 \kappa_2} a$ by the chain rule, we find in one case expressions of the form
\begin{equation}
\label{eq:InhomogeneityI}
\begin{split}
&\quad \partial_{\xi}^{\beta_1} \partial^2_{\kappa_1,\kappa_2} a \\
&= \sum_{\beta_{11},\beta_{12}} \mu_{\beta_{11},\beta_{12}} \big[ (\partial^{\beta_{11}}_x \partial^{\beta_{12}}_{\xi} \partial^2_{xx} a)(x^t,\xi^t) \prod_{k=1}^{|\beta_{11}|} (\partial_{\xi}^{\beta_{11}^k} x^t) \prod_{k=1}^{|\beta_{12}|} (\partial_{\xi}^{\beta_{12}^k} \xi^t) \big] \; \cdot \; (\partial_{\xi}^{\beta_2} \partial_{\xi} x^t),
\end{split}
\end{equation}
where 
\begin{itemize}
\item $1 \leq |\beta_{11}| + |\beta_{12}| \leq |\beta_1| \leq |\beta_1|+|\beta_2| = |\beta|-1$, $\quad$ $|\beta_2| < |\beta|-1$,
\item  $1 \leq |\beta_{11}^{k_1}|,|\beta_{12}^{k_2}| \leq |\beta| - 1$, 
\item $\sum_{k_1} |\beta_{11}^{k_1}| + \sum_{k_2} |\beta_{12}^{k_2}| = |\beta_1|$.
\end{itemize} 

There are more terms, involving different second order derivatives of $a$ in the first factor, but these are better behaved since $a$ is smooth in $\xi$. On the other hand, the second factor might read $\partial_{\xi} \xi^t$, for which we have inferior estimates than for $\partial_{\xi} x^t$. We estimate the other terms in \eqref{eq:InhomogeneityII}.

\smallskip

 We have the bound \eqref{eq:BoundHamiltonSymbol}. Plugging in the induction assumption and integrating we finish the proof of \eqref{eq:RegularityHamiltonFlowFrequencies}:
\begin{equation*}
\begin{split}
&\quad \int_0^t \big| \eqref{eq:InhomogeneityI} \big| ds \\
&\lesssim |t| \cdot  R^{\frac{|\beta_{11}|}{2}} \big( \prod_{k=1}^{|\beta_{11}|}  |t| (1+R^{\frac{1}{2}} |t|)^{|\beta_{11}^{k}| - 1} \big) \, \cdot \, \big( \prod_{k=1}^{|\beta_{12}|} (1+ R^{\frac{1}{2}} |t|)^{|\beta_{12}^k| - 1} \big) \\ 
&\quad \times \big( |t| (1+R^{\frac{1}{2}} |t|)^{|\beta_2| } \big) \\
&\lesssim |t| R^{\frac{|\beta_{11}|}{2}} |t|^{|\beta_{11}|} (1+ R^{\frac{1}{2}}|t|)^{\sum_k |\beta_{11}^k| - |\beta_{11}|} (1+ R^{\frac{1}{2}} |t|)^{\sum_k |\beta_{12}^k| - |\beta_{12}|} \\
&\lesssim |t|^2 (1+ R^{\frac{1}{2}} |t|)^{|\beta| -1 - |\beta_{12}|},
\end{split}
\end{equation*}
which in the case $|\beta_{12}| = 0$ is trivially estimated by the above.

A second term we need to estimate is given by
\begin{equation}
\label{eq:InhomogeneityII}
\big[ (\partial^{\beta_{11}}_x \partial^{\beta_{12}}_{\xi} \partial^2_{x \xi} a)(x^t,\xi^t) \prod_{k=1}^{|\beta_{11}|} (\partial_{\xi}^{\beta_{11}^k} x^t) \prod_{k=1}^{|\beta_{12}|} (\partial_{\xi}^{\beta_{12}^k} \xi^t) \big] \; \cdot \; (\partial_{\xi}^{\beta_2} \partial_{\xi} \xi^t),
\end{equation}
for which we find the estimate for $|\beta_2| > 0$:
 \begin{equation*}
 \begin{split}
	&\quad \int_0^t \big| \eqref{eq:InhomogeneityII} \big| ds \\
 	 &\lesssim |t| R^{\frac{(|\beta_{11}|-1)_+}{2}} \big( \prod_{k=1}^{|\beta_{11}|}  |t| (1+R^{\frac{1}{2}} |t|)^{|\beta_{11}^{k}| - 1} \big) \, \cdot \, \big( \prod_{k=1}^{|\beta_{12}|} (1+ R^{\frac{1}{2}} |t|)^{|\beta_{12}^k| - 1} \big) \\ 	 
 &\quad \times \big( (1+R^{\frac{1}{2}} |t|)^{|\beta_2| } \big) \\
 	&\lesssim |t| R^{\frac{(|\beta_{11}|-1)_+}{2}} |t|^{|\beta_{11}|} (1+ R^{\frac{1}{2}} |t|)^{\sum_k |\beta_{11}^k| + \sum_k |\beta_{12}^k| - |\beta_{11}| - |\beta_{12}| + |\beta_2|} \\
 	&\lesssim |t| (1+ R^{\frac{1}{2}} |t|)^{|\beta_1|} (1+R^{\frac{1}{2}} |t|)^{|\beta| -1 - |\beta_1| - |\beta_2|} \\
 	&\lesssim |t| (1+R^{\frac{1}{2}} |t|)^{|\beta|-1}.
 	\end{split}
 \end{equation*}
 
For $\beta_2 = 0$ we find 
\begin{equation*}
\begin{split}
\int_0^t \big| \eqref{eq:InhomogeneityII} \big| ds &\lesssim |t|  R^{\frac{(|\beta_{11}|-1)_+}{2}} \big( \prod_{k=1}^{|\beta_{11}|}  |t| (1+R^{\frac{1}{2}} |t|)^{|\beta_{11}^{k}| - 1} \big) \, \cdot \, \big( \prod_{k=1}^{|\beta_{12}|} (1+ R^{\frac{1}{2}} |t|)^{|\beta_{12}^k| - 1} \big) \\
&\lesssim |t|  R^{\frac{(|\beta_{11}|-1)_+}{2}} |t|^{|\beta_{11}|} (1+ R^{\frac{1}{2}} |t|)^{\sum_k |\beta_{11}^k| + \sum_k |\beta_{12}^k| - |\beta_{11}| - |\beta_{12}|} \\
&\lesssim |t| (1+ R^{\frac{1}{2}} |t|)^{|\beta|-1}.
\end{split}
\end{equation*}
The proof is complete.
\end{proof}

\begin{lemma}[Regularity~of~the~Hamiltonian~flow~in~position~variables]
\label{lem:HamiltonFlowPositionRegularity}
The Hamiltonian flow \eqref{eq:RescaledHamiltonFlow} satisfies the following estimates:
\begin{equation}
\label{eq:RegularityHamiltonFlowSpatialI}
\left\{ \begin{array}{cl}
|\partial_x x^t| &\lesssim 1, \\
|\partial_x \xi^t| &\lesssim |t|,
\end{array} \right.
\end{equation}
and for $|\alpha| \geq 2$, we find
\begin{equation}
\label{eq:RegularityHamiltonFlowSpatialII}
\left\{ \begin{array}{cl}
|\partial_x^{\alpha} x^t| &\lesssim R^{\frac{|\alpha|-1}{2}} |t|, \\
|\partial_x^{\alpha} \xi^t| &\lesssim R^{\frac{|\alpha|-1}{2}} |t|.
\end{array} \right.
\end{equation}
\end{lemma}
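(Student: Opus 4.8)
The plan is to argue by induction on $|\alpha|$, mirroring the proof of Lemma~\ref{lem:HamiltonFlowFrequencyRegularity}; the one structural change is that, unlike $\xi$-derivatives, an $x$-derivative of order $m$ of $a$ is no longer free but costs $R^{(m-2)_+/2}$ by \eqref{eq:BoundHamiltonSymbol}. First I would dispose of the base case $|\alpha|=1$. Differentiating \eqref{eq:RescaledHamiltonFlow} in $x$ produces the linearized system
\begin{equation*}
\frac{d}{dt}\begin{pmatrix}\partial_x x^t\\ \partial_x\xi^t\end{pmatrix}=A(t)\begin{pmatrix}\partial_x x^t\\ \partial_x\xi^t\end{pmatrix},
\end{equation*}
with the same matrix $A(t)$ as in \eqref{eq:VariationalEquationI}, which is bounded since $a\in C^{1,1}_{z,\xi}$, and with initial data $\partial_x x^0=I_d$, $\partial_x\xi^0=0$. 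Gr\o nwall on $|t|\lesssim1$ gives $|\partial_x x^t|\lesssim1$ and $|\partial_x\xi^t|\lesssim1$, and the sharper bound $|\partial_x\xi^t|\lesssim|t|$ then follows by noting that $\frac{d}{dt}\partial_x\xi^t=-\partial^2_{xx}a\cdot\partial_x x^t-\partial^2_{x\xi}a\cdot\partial_x\xi^t=\mathcal{O}(1)$ while $\partial_x\xi^0=0$.

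For the inductive step, fix $|\alpha|\ge2$ and apply $|\alpha|-1$ further $x$-derivatives to the linearized equation above. This yields an inhomogeneous linear system
\begin{equation*}
\frac{d}{dt}\begin{pmatrix}\partial_x^\alpha x^t\\ \partial_x^\alpha\xi^t\end{pmatrix}=A(t)\begin{pmatrix}\partial_x^\alpha x^t\\ \partial_x^\alpha\xi^t\end{pmatrix}+\eta_\alpha,
\end{equation*}
whose initial data vanish, because $x^0=x$ and $\xi^0=\xi$ are affine in $x$ and hence $\partial_x^\alpha x^0=\partial_x^\alpha\xi^0=0$ for $|\alpha|\ge2$. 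Since $A$ is bounded and $|t|\lesssim1$, Duhamel's formula combined with Gr\o nwall reduces the assertion to the pointwise bound $|\eta_\alpha(s)|\lesssim R^{(|\alpha|-1)/2}$ for $|s|\lesssim1$; the extra factor $|t|$ in the statement then appears for free from the time integration $\int_0^t|\eta_\alpha(s)|\,ds$.

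It remains to estimate $\eta_\alpha$. Expanding by Leibniz's rule, so that the $|\alpha|-1$ extra derivatives are split between the matrix $A(t)$ and the flow vector, and by the multivariate chain rule for the derivatives of the entries $\partial^2_{\kappa_1\kappa_2}a(x^t,\xi^t)$ (with $\kappa_i\in\{x,\xi\}$) along the flow, one writes $\eta_\alpha$ as a finite sum of terms of the form
\begin{equation*}
\big(\partial_x^{p}\partial_\xi^{q}\partial^2_{\kappa_1\kappa_2}a\big)(x^t,\xi^t)\,\prod_j\partial_x^{\gamma_j}\big(x^t\text{ or }\xi^t\big),\qquad 1\le|\gamma_j|\le|\alpha|-1,
\end{equation*}
so that the induction hypothesis (and the base case) applies to every flow factor, giving $|\partial_x^{\gamma_j}x^t|,|\partial_x^{\gamma_j}\xi^t|\lesssim R^{(|\gamma_j|-1)/2}$ once powers of $|t|\lesssim1$ are absorbed. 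Feeding in the symbol bound $R^{(\#\{i:\kappa_i=x\}+p-2)_+/2}$ from \eqref{eq:BoundHamiltonSymbol} and carrying out the same exponent bookkeeping as in the proof of Lemma~\ref{lem:HamiltonFlowFrequencyRegularity}, one checks that the total power of $R$ in each such term is exactly $(|\alpha|-1)/2$. The mechanism behind this count is a conservation principle: relative to the ground term ($\partial^2_{xx}a$ along the flow times order-one flow factors, which is $\mathcal{O}(1)$), each of the $|\alpha|-1$ additional $x$-differentiations costs exactly $R^{1/2}$, whether it falls on the symbol or raises the order of one of the flow factors.

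The main obstacle is precisely this exponent bookkeeping: one must verify, uniformly over all combinatorial types of terms occurring in $\eta_\alpha$, that promoting a flow factor to higher order (which by the induction hypothesis costs $R^{1/2}$ per unit of extra order) is exactly offset by the fewer $x$-derivatives then landing on $a$, so that no term exceeds $R^{(|\alpha|-1)/2}$. This is the same balancing that drives the proof of Lemma~\ref{lem:HamiltonFlowFrequencyRegularity}, now transposed from the $\xi$-variables to the $x$-variables, and I expect it to go through with only cosmetic modifications; there is nothing delicate in the analytic input beyond the Gr\o nwall and Duhamel steps already used.
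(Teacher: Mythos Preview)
Your proposal is correct and follows essentially the same approach as the paper: induction on $|\alpha|$, the base case via the linearized variational equation with bounded coefficient matrix $A(t)=B(t)$ and the given initial data, and the inductive step by bounding the inhomogeneity $\eta_\alpha$ through Leibniz and the chain rule, counting powers of $R$. The paper carries out the exponent bookkeeping explicitly for the worst-case term $\partial_x^{\alpha_1}(\partial^2_{xx}a)\cdot\partial_x^{\alpha_2}\partial_x x^t$ rather than invoking your ``conservation principle'' heuristic, but the computation is exactly the one you outline.
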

\begin{proof}
The argument is similar to the preceding proof. For $|\alpha| = 1$ we find
\begin{equation}
\label{eq:VariationalEquationII}
\frac{d}{dt} \begin{pmatrix}
\partial x^t / \partial x \\ \partial \xi^t / \partial x
\end{pmatrix}
= \underbrace{\begin{pmatrix}
\partial_{x \xi}^2 a & \partial^2_{\xi \xi} a \\
- \partial_{x x}^2 a  & - \partial^2_{x \xi} a
\end{pmatrix}}_{B(t)}
 \begin{pmatrix}
\partial x^t / \partial x \\
\partial \xi^t / \partial x
\end{pmatrix}
\end{equation}
with initial data
\begin{equation*}
\begin{pmatrix}
\partial x^0 / \partial x \\
\partial \xi^0 / \partial x
\end{pmatrix}
=
\begin{pmatrix}
I_d \\ 0
\end{pmatrix}
\end{equation*}
and bounded $B(t)$. This yields \eqref{eq:RegularityHamiltonFlowSpatialI}. 

We turn to the induction step $|\alpha| > 1$, for which by taking $|\alpha|-1$ derivatives of \eqref{eq:VariationalEquationII} we obtain
\begin{equation*}
\frac{d}{dt}
\begin{pmatrix}
\partial_x^{\alpha} x^t \\
\partial_x^{\alpha} \xi^t
\end{pmatrix}
= B(t)
\begin{pmatrix}
\partial_x^{\alpha} x^t \\
\partial_x^{\alpha} \xi^t
\end{pmatrix}
+ \eta
\end{equation*}
with the inhomogeneity consisting of lower order terms. In the ``worst" case we have terms of the form
\begin{equation*}
\begin{split}
&\quad \partial_x^{\alpha_1} (\partial^2_{xx} a(x^t,\xi^t)) \partial_x^{\alpha_2} (\partial x^t / \partial x) \\
&= \sum_{\alpha_{11},\alpha_{12}} (\partial_{x}^{\alpha_{11}} \partial_{\xi}^{\alpha_{12}} \partial^2_{xx} a)(x^t,\xi^t) \prod_{k=1}^{|\alpha_{11}|} (\partial_{x}^{\alpha_{11}^k} x^t) \prod_{k=1}^{|\alpha_{12}|} (\partial_x^{\alpha_{12}^k} \xi^t) \cdot( \partial_x^{\alpha_2} \partial x^t / \partial x).
\end{split}
\end{equation*}
The reason for the other terms being better behaved is that for the second derivative $\partial^2_{xx} a$ any further derivative in $x$ gives rise to factors of $R^{\frac{1}{2}}$, which makes the above term the worst case scenario. The conditions on the indices are given by
\begin{equation*}
\sum_k |\alpha_{11}^k| + \sum_k |\alpha_{12}^k| = |\alpha_1|, \quad 1 \leq |\alpha_{1i}^k| \leq |\alpha_1| \leq |\alpha| - 1, \quad |\alpha_1| + |\alpha_2| = |\alpha| - 1.
\end{equation*}

The induction assumption is applicable and gives
\begin{equation*}
\begin{split}
|\cdot|_{\alpha} &\lesssim |\partial_{x}^{\alpha_{11}} \partial_{\xi}^{\alpha_{12}} \partial^2_{xx} a| \prod_{k=1}^{|\alpha_{11}|} |\partial_x^{\alpha_{11}^k} x^t| \prod_{k=1}^{|\alpha_{12}^k|} |\partial_x^{\alpha_{12}^k} \xi^t| |\partial_x^{\alpha_2} \partial_x x^t| \\
&\lesssim R^{\frac{|\alpha_{11}|}{2}} \prod_{k=1}^{|\alpha_{11}|} R^{\frac{|\alpha_{11}^k| - 1}{2}} \prod_{k=1}^{|\alpha_{12}|} (R^{\frac{|\alpha_{12}^k| - 1}{2}} |t| ) \cdot R^{\frac{|\alpha_2|}{2}} \\
&\lesssim R^{\frac{|\alpha_1|+|\alpha_2|}{2}}.
\end{split}
\end{equation*}
Integration in $t$ shows the claim.
\end{proof}
Next, we turn to the regularity of the Hamiltonian flow in position and frequency. We start with the following:

\begin{lemma}
\label{lem:HamiltonFlowMixedRegularityI}
The Hamiltonian flow \eqref{eq:RescaledHamiltonFlow} satisfies the following estimates for $|\beta| \geq 1$:
\begin{equation}
\label{eq:MixedRegularityI}
\left\{ \begin{array}{cl}
|\partial_{\xi}^{\beta} \partial_x x^t | &\lesssim |t| (1+ R^{\frac{1}{2}} |t|)^{|\beta|}, \\
|\partial_{\xi}^{\beta} \partial_x \xi^t| &\lesssim |t| (1+ R^{\frac{1}{2}} |t|)^{|\beta|}.
\end{array} \right.
\end{equation}
\end{lemma}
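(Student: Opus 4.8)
The plan is to argue by induction on $|\beta| \geq 1$, in the same spirit as Lemmas \ref{lem:HamiltonFlowFrequencyRegularity} and \ref{lem:HamiltonFlowPositionRegularity}: differentiate in $\xi$ the variational equation \eqref{eq:VariationalEquationII} for $(\partial_x x^t,\partial_x\xi^t)$, keep the linear coupling through the Hessian matrix $B(t)$ on the left, and push everything else into an inhomogeneity $\eta$ to which Gr\o nwall's inequality is applied. Since $\partial_x x^0 = I_d$ is constant and $\partial_x\xi^0 = 0$, each vector $\partial_\xi^\beta\partial_x x^t$, $\partial_\xi^\beta\partial_x\xi^t$ vanishes at $t=0$, so Gr\o nwall outputs a bound governed purely by $\int_0^{|t|}|\eta(s)|\,ds$ (the exponential factor is harmless for $|t|\lesssim 1$).

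For the base case $|\beta|=1$ one differentiates \eqref{eq:VariationalEquationII} once in $\xi$, obtaining $\frac{d}{dt}(\partial_\xi\partial_x x^t,\partial_\xi\partial_x\xi^t)^T = B(t)(\partial_\xi\partial_x x^t,\partial_\xi\partial_x\xi^t)^T + \eta$ with $\eta = \big(\partial_\xi[B(x^t,t,\xi^t)]\big)(\partial_x x^t,\partial_x\xi^t)^T$. Writing $\partial_\xi[B(x^t,t,\xi^t)] = (\partial_x B)\,\partial_\xi x^t + (\partial_\xi B)\,\partial_\xi\xi^t$ and using that the worst entry of $\partial_x B$ is $\partial^3_{xxx}a \lesssim R^{1/2}$ by \eqref{eq:BoundHamiltonSymbol} while $|\partial_\xi x^t|\lesssim|t|$ by Lemma \ref{lem:HamiltonFlowFrequencyRegularity}, and that $\partial_\xi B$ is bounded with $|\partial_\xi\xi^t|\lesssim 1$, one gets $|\eta(t)|\lesssim 1+R^{1/2}|t|$; since $\int_0^{|t|}(1+R^{1/2}s)\,ds\leq|t|(1+R^{1/2}|t|)$, this yields \eqref{eq:MixedRegularityI} for $|\beta|=1$.

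For the induction step I would apply $\partial_\xi^\beta$ to \eqref{eq:VariationalEquationII}; by Leibniz the terms with no $\xi$-derivative on the Hessian reproduce $B(t)(\partial_\xi^\beta\partial_x x^t,\partial_\xi^\beta\partial_x\xi^t)^T$, and the rest make up $\eta$, a sum of terms $\partial_\xi^{\beta_1}\big[(\partial^2_{\kappa_1\kappa_2}a)(x^t,t,\xi^t)\big]\cdot\partial_\xi^{\beta_2}\partial_x(x^t\text{ or }\xi^t)$ with $\kappa_i\in\{x,\xi\}$, $|\beta_1|+|\beta_2|=|\beta|$ and $|\beta_1|\geq 1$. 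Expanding the first factor by the multivariate chain rule produces a head $(\partial_x^{j}\partial_\xi^{k}\partial^2_{\kappa_1\kappa_2}a)(x^t,t,\xi^t)$ times $\prod_{i=1}^j\partial_\xi^{\mu_i}x^t$ times $\prod_{i=1}^k\partial_\xi^{\nu_i}\xi^t$ with $\sum|\mu_i|+\sum|\nu_i|=|\beta_1|$ and all $|\mu_i|,|\nu_i|\geq 1$. The worst case is $\kappa_1=\kappa_2=x$, where \eqref{eq:BoundHamiltonSymbol} bounds the head by $R^{j/2}$; using $|\partial_\xi^{\mu_i}x^t|\lesssim|t|(1+R^{1/2}|t|)^{|\mu_i|-1}$ and $|\partial_\xi^{\nu_i}\xi^t|\lesssim(1+R^{1/2}|t|)^{|\nu_i|-1}$ from Lemma \ref{lem:HamiltonFlowFrequencyRegularity}, the head-plus-flow block is $\lesssim R^{j/2}|t|^j(1+R^{1/2}|t|)^{|\beta_1|-j-k} = (R^{1/2}|t|)^j(1+R^{1/2}|t|)^{|\beta_1|-j-k}\lesssim(1+R^{1/2}|t|)^{|\beta_1|}$, the decisive algebraic step being $(R^{1/2}|t|)^j\leq(1+R^{1/2}|t|)^j$. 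Bounding the remaining factor $\partial_\xi^{\beta_2}\partial_x(\cdot)$ by the induction hypothesis when $|\beta_2|\geq 1$ and by Lemma \ref{lem:HamiltonFlowPositionRegularity} (so $\lesssim 1$ or $\lesssim|t|$) when $|\beta_2|=0$ yields $|\eta(t)|\lesssim(1+R^{1/2}|t|)^{|\beta|}$; then $\int_0^{|t|}(1+R^{1/2}s)^{|\beta|}\,ds\leq|t|(1+R^{1/2}|t|)^{|\beta|}$ by monotonicity, and Gr\o nwall closes the induction.

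The step that needs the most care is not deep but delicate: tracking the exponents in the chain-rule expansion so that the $R^{j/2}$ cost of the $j$ extra spatial derivatives landing on $a$ cancels exactly against the $j$ frequency-differentiated position factors (each carrying one power of $|t|$), collapsing into $(1+R^{1/2}|t|)^j$ and leaving precisely $(1+R^{1/2}|t|)^{|\beta_1|}$ with no surplus power — which is what makes the Gr\o nwall output match \eqref{eq:MixedRegularityI} rather than a bound with an extra factor of $(1+R^{1/2}|t|)$. One should also note that the other head choices $\kappa_1\kappa_2\in\{x\xi,\xi\xi\}$ are strictly better, since extra $\xi$-derivatives on $a$ are free of $R$-growth by \eqref{eq:BoundHamiltonSymbol}.
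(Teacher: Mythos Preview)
Your proposal is correct and follows essentially the same route as the paper: induct on $|\beta|$, apply $\partial_\xi^\beta$ to the variational equation \eqref{eq:VariationalEquationII}, identify the worst inhomogeneous term as the one with head $\partial^2_{xx}a$, expand by the chain rule, and use Lemma~\ref{lem:HamiltonFlowFrequencyRegularity} together with the key cancellation $(R^{1/2}|t|)^j\le(1+R^{1/2}|t|)^j$ to bound $|\eta|\lesssim(1+R^{1/2}|t|)^{|\beta|}$, then integrate. The paper's write-up splits the cases $|\beta_2|=0$ and $|\beta_2|\ge1$ and records the slightly sharper intermediate bound $(1+R^{1/2}|t|)^{|\beta_1|-|\beta_{12}|}$, but your uniform bound is already enough to close the induction.
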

\begin{proof}
The proof is again carried out by induction on $|\beta|$. Here the base case is the one for mixed derivatives of first order:
\begin{equation*}
\begin{split}
\frac{d}{dt}  \begin{pmatrix}
\partial_{\xi_i} (\partial x^t / \partial x) \\
\partial_{\xi_i} (\partial x^t / \partial x)
\end{pmatrix} 
&=
\partial_{\xi_i} \frac{d}{dt} \big[ \begin{pmatrix}
\partial_{x} x^t \\ \partial_{x} \xi^t
\end{pmatrix}
\big] = \partial_{\xi_i} ( B(t) \begin{pmatrix}
\partial x^t / \partial x \\ \partial \xi^t / \partial x
\end{pmatrix}
) \\
&= (\partial_{\xi_i} B)
\begin{pmatrix}
\partial x^t / \partial x \\
\partial \xi^t / \partial x
\end{pmatrix}
+ B(t)
\begin{pmatrix}
\partial^2 x^t / (\partial \xi_i \partial x) \\
\partial^2 \xi^t / (\partial \xi_i \partial x)
\end{pmatrix}
.
\end{split}
\end{equation*}
We compute the coefficients of
\begin{equation*}
\partial_{\xi_i} B = \begin{pmatrix}
\frac{\partial^3 a}{\partial x \partial \xi \partial x} \partial_{\xi_i} x + \frac{\partial^3 a}{\partial x (\partial \xi)^2} \partial_{\xi_i} \xi & \frac{\partial^3 a}{(\partial \xi)^2 \partial x} \partial_{\xi_i} x + \frac{\partial^3 a}{(\partial \xi)^3} \partial_{\xi_i} \xi \\
- \frac{\partial^3 a}{(\partial x)^3} \partial_{\xi_i} x - \frac{\partial^3 a}{(\partial x)^2 \partial \xi} \partial_{\xi_i} \xi & - \frac{\partial^3 a}{\partial x \partial \xi \partial x} \partial_{\xi_i} x - \frac{\partial^3 a}{\partial x (\partial \xi)^2} \partial_{\xi_i} \xi
\end{pmatrix}
.
\end{equation*}
Notably, $|\partial^3 a / (\partial x)^3| \lesssim R^{\frac{1}{2}}$. The other derivatives are uniformly bounded. Since $(\partial_{\xi_i} B)_{21}$ acts on $\partial_x x^t$, which is bounded by $1$, we obtain after integration
\begin{equation*}
\int_0^t \big| ( \partial_{\xi_i} B) \begin{pmatrix}
\partial x^t / \partial x \\ \partial \xi^t / \partial x
\end{pmatrix} \big| ds \lesssim \int_0^t (1+ R^{\frac{1}{2}}|s|) ds \lesssim |t| (1+ R^{\frac{1}{2}} |t|).
\end{equation*}
This proves \eqref{eq:MixedRegularityI} for $|\beta| = 1$ because of vanishing initial data.

Next, we prove \eqref{eq:MixedRegularityI} for higher values of $|\beta|$ by induction. To this end, we apply $\partial_{\xi}^{\beta}$ to \eqref{eq:VariationalEquationII} which leads us to
\begin{equation*}
\frac{d}{dt} \begin{pmatrix}
\partial_{\xi}^{\beta} \partial_x  x^t \\ \partial_{\xi}^{\beta} \partial_x \xi^t
\end{pmatrix}
= B(t) 
\begin{pmatrix}
\partial_{\xi}^{\beta} \partial_x x^t \\ \partial_{\xi}^{\beta} \partial_x \xi^t
\end{pmatrix}
+ \eta
\end{equation*}
with inhomogeneity $\eta$ also consists of terms of the form
\begin{equation*}
\partial_{\xi}^{\beta_1} (\partial^2_{xx} a)(x^t, \xi^t) \cdot \partial_{\xi}^{\beta_2} \partial_x x^t, \quad |\beta_1| + |\beta_2| = |\beta|, \quad |\beta_2| \leq |\beta|-1.
\end{equation*}
These terms again give rise to the worst case because we cannot afford anymore $x$-derivatives on $\partial^2_{xx} a$ without losing factors of $R^{\frac{1}{2}}$. The inhomogeneous terms involving derivatives $\partial^2_{x \xi} a$ or $\partial^2_{\xi \xi} a$ are better behaved.

We expand the first factor by the chain rule like above:
\begin{equation}
\label{eq:MixedDerivativesAuxiliary}
= \big( \sum_{\beta_{11},\beta_{12}} \big[ (\partial_x^{\beta_{11}} \partial_{\xi}^{\beta_{12}} \partial^2_{xx} a)(x^t,\xi^t) \prod_{k=1}^{|\beta_{11}^k|} \big( \partial_{\xi}^{\beta_{11}^k} x^t \big) \prod_{k=1}^{|\beta_{12}^k|} \big( \partial_{\xi}^{\beta_{12}^k} \xi^t \big) \big] \big) \; \cdot \; \big( \partial_{\xi}^{\beta_2} \partial_x x^t \big)
\end{equation}
with indices satisfying
\begin{enumerate}
\item $|\beta_1| + |\beta_2| = |\beta|$, $|\beta_2| \leq |\beta|-1$,
\item $\sum_{k=1}^{|\beta_{11}|} |\beta_{11}^k| + \sum_{k=1}^{|\beta_{12}^k|} |\beta_{12}^k| = |\beta_1|, \quad |\beta_{1i}^k| \geq 1$.
\end{enumerate}
For $|\beta_2| = 0$ we find the following bound for \eqref{eq:MixedDerivativesAuxiliary} for fixed $\beta$-indices:
\begin{equation*}
\begin{split}
| \cdot |_{\beta} &\lesssim R^{\frac{|\beta_{11}|}{2}} \prod_{k=1}^{|\beta_{11}|} |t| (1+ R^{\frac{1}{2}} |t|)^{|\beta_{11}^k| - 1} \prod_{k=1}^{|\beta_{12}|} (1+ R^{\frac{1}{2}} |t|)^{|\beta_{12}^k| - 1} \\
&\lesssim R^{\frac{|\beta_{11}|}{2}} |t|^{|\beta_{11}|} (1+R^{\frac{1}{2}} |t|)^{|\beta_1|-|\beta_{11}| -|\beta_{12}|} \\
&\lesssim (1+ R^{\frac{1}{2}} |t|)^{|\beta_1| - |\beta_{12}|},
\end{split}
\end{equation*}
which upon integration gives the claim.

For $|\beta_2| \geq 1$ we obtain
\begin{equation*}
\begin{split}
&\lesssim R^{\frac{|\beta_{11}|}{2}} \prod_{k=1}^{|\beta_{11}|} |t| ( 1+ R^{\frac{1}{2}} |t|)^{|\beta_{11}^k| - 1} \prod_{k=1}^{|\beta_{12}|} (1+R^{\frac{1}{2}} |t|)^{|\beta_{12}^k| - 1} (1+ R^{\frac{1}{2}} |t|)^{|\beta_2|} |t|^{|\beta_2|} \\
&\lesssim (1+ R^{\frac{1}{2}} |t|)^{|\beta| - |\beta_{12}|} |t|^{|\beta_2|},
\end{split}
\end{equation*}
which is better than the first bound. The proof is complete.
\end{proof}

Finally, we formulate the estimates for the joint regularity:
\begin{lemma}
\label{lem:HamiltonFlowMixedRegularityII}
Let $|\alpha| \geq 1$ and $|\beta| \geq 1$. The Hamiltonian flow \eqref{eq:RescaledHamiltonFlow} satisfies the following estimates:
\begin{equation}
\label{eq:MixedRegularityHigherOrder}
\left\{ \begin{array}{cl}
| \partial_x^{\alpha} \partial_{\xi}^{\beta} x^t| &\lesssim R^{\frac{|\alpha|-1}{2}} |t| (1+R^{\frac{1}{2}} |t|)^{|\beta|}, \\
| \partial_x^{\alpha} \partial_{\xi}^{\beta} \xi^t| &\lesssim R^{\frac{|\alpha|-1}{2}} |t| (1+R^{\frac{1}{2}} |t|)^{|\beta|}.
\end{array} \right.
\end{equation}
\end{lemma}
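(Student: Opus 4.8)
The plan is to argue by induction on $N = |\alpha| + |\beta|$, proving the two bounds in \eqref{eq:MixedRegularityHigherOrder} simultaneously since their right-hand sides coincide. When $|\alpha| = 1$ the statement is exactly Lemma \ref{lem:HamiltonFlowMixedRegularityI}, so we may assume $|\alpha| \geq 2$ and that \eqref{eq:MixedRegularityHigherOrder} holds for all smaller values of $N$; in addition we shall freely invoke Lemmas \ref{lem:HamiltonFlowFrequencyRegularity}, \ref{lem:HamiltonFlowPositionRegularity} and \ref{lem:HamiltonFlowMixedRegularityI} for the factors of low order that appear below. Writing $|\alpha| = |\alpha'| + 1$, I would apply $\partial_x^{\alpha'}\partial_\xi^{\beta}$ to the first-order variational system \eqref{eq:VariationalEquationII}, which yields a linear inhomogeneous system
\[
\frac{d}{dt}\begin{pmatrix}\partial_x^{\alpha}\partial_\xi^{\beta} x^t\\ \partial_x^{\alpha}\partial_\xi^{\beta} \xi^t\end{pmatrix}
= B(t)\begin{pmatrix}\partial_x^{\alpha}\partial_\xi^{\beta} x^t\\ \partial_x^{\alpha}\partial_\xi^{\beta} \xi^t\end{pmatrix} + \eta ,
\]
where $B(t)$ is the (uniformly bounded) matrix of second derivatives of $a$ from \eqref{eq:VariationalEquationII} and $\eta$ collects all the Leibniz terms in which at least one of the $|\alpha'|+|\beta|$ applied derivatives lands on $B(t)$. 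Since the initial data of $\partial_x^{\alpha}\partial_\xi^{\beta}(x^t,\xi^t)$ vanishes and $B(t)$ is bounded on $|t| \lesssim 1$, a trivial Gr\o nwall argument reduces everything to the bound $\int_0^t |\eta|\,ds \lesssim R^{\frac{|\alpha|-1}{2}} |t| (1+R^{\frac12}|t|)^{|\beta|}$.

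As in the proofs of Lemmas \ref{lem:HamiltonFlowPositionRegularity} and \ref{lem:HamiltonFlowMixedRegularityI}, the decisive (worst) contributions to $\eta$ are those in which the derivatives hit the entry $\partial^2_{xx} a$ of $B$, because each further $x$-derivative of $\partial^2_{xx} a$ produces a factor $R^{\frac12}$ via \eqref{eq:BoundHamiltonSymbol}, whereas the terms involving $\partial^2_{x\xi} a$ or $\partial^2_{\xi\xi} a$ tolerate more $x$-derivatives and are strictly better. Expanding such a worst term by the chain rule, it has the form
\[
\bigl(\partial_x^{\gamma}\partial_\xi^{\delta}\partial^2_{xx} a\bigr)(x^t,\xi^t)\ \prod_{k}\bigl(\partial_x^{\gamma_k}\partial_\xi^{\delta_k} x^t\bigr)\ \prod_{\ell}\bigl(\partial_x^{\rho_\ell}\partial_\xi^{\sigma_\ell} \xi^t\bigr)\cdot\bigl(\partial_x^{\alpha_2}\partial_\xi^{\beta_2}\partial_x x^t\bigr),
\]
where the multi-indices satisfy $|\gamma| + \sum_k|\gamma_k| + \sum_\ell|\rho_\ell| + |\alpha_2| = |\alpha|-1$, $\ |\delta| + \sum_k|\delta_k| + \sum_\ell|\sigma_\ell| + |\beta_2| = |\beta|$, all the subindices on the flow factors are $\geq 1$, and $|\gamma|+|\delta|\geq 1$ (because at least one derivative fell on $B$). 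The last constraint forces every flow factor, including $\partial_x^{\alpha_2 + e_j}\partial_\xi^{\beta_2} x^t$, to have order strictly less than $N$, so each of them is controlled either by the induction hypothesis or, when it carries $\leq 1$ position derivative, by the earlier lemmas.

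Finally I would run the count. The symbol factor is $\lesssim R^{|\gamma|/2}$ by \eqref{eq:BoundHamiltonSymbol} (since $\partial_x^{\gamma}\partial_\xi^{\delta}\partial^2_{xx} a = \partial_x^{\gamma+2e}\partial_\xi^{\delta}a$). Each flow factor $\partial_x^{\gamma_k}\partial_\xi^{\delta_k} x^t$ (and similarly for $\xi^t$) contributes a factor of the shape $R^{(|\gamma_k|-1)_+/2}\,|t|^{\epsilon_k}(1+R^{\frac12}|t|)^{|\delta_k| - \theta_k}$ with $\epsilon_k,\theta_k\in\{0,1\}$ coming from Lemma \ref{lem:HamiltonFlowFrequencyRegularity} (pure $\xi$-derivatives), Lemma \ref{lem:HamiltonFlowMixedRegularityI} or the induction hypothesis; and the final factor contributes $R^{(|\alpha_2|-1)/2}|t|(1+R^{\frac12}|t|)^{|\beta_2|}$. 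Multiplying these, bounding $R^{\frac12}|t| \leq 1+R^{\frac12}|t|$ to absorb surplus half-powers of $R$ into powers of $(1+R^{\frac12}|t|)$, using the index identities to collapse the exponents, and picking up one extra power of $|t|$ from the integration $\int_0^t(\cdot)\,ds$, gives precisely $\lesssim R^{\frac{|\alpha|-1}{2}}|t|(1+R^{\frac12}|t|)^{|\beta|}$ for each worst term, and the same (or better) for the $\partial^2_{x\xi}a$, $\partial^2_{\xi\xi}a$ terms. I expect the main obstacle to be purely the bookkeeping: organizing the iterated Leibniz and Fa\`a di Bruno expansions so that the worst case is correctly singled out, and verifying that in that worst case the accumulated half-powers of $R$ (one per $x$-derivative falling on $\partial^2_{xx}a$, plus those from $x$-derivatives of the flow) are exactly balanced by the compensating powers of $|t|$ carried by the corresponding $\partial_\xi$-type flow factors, so that nothing exceeds the claimed bound.
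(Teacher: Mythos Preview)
Your proposal is correct and follows essentially the same route as the paper's proof: induction on $|\alpha|+|\beta|$, differentiation of the variational system \eqref{eq:VariationalEquationII}, identification of the $\partial^2_{xx}a$ terms as the worst case, Fa\`a di Bruno/Leibniz expansion, and the same index count closed by Gr\o nwall/integration. The paper's write-up only differs in notation (it applies $\partial_x^{\alpha}\partial_\xi^{\beta}$ to \eqref{eq:VariationalEquationII} rather than $\partial_x^{\alpha'}\partial_\xi^{\beta}$, so the indices are shifted by one throughout) and in taking only $|\alpha|=|\beta|=1$ as the explicit base case rather than all of $|\alpha|=1$; your observation that the full $|\alpha|=1$ case is already Lemma~\ref{lem:HamiltonFlowMixedRegularityI} is a harmless and slightly cleaner choice.
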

\begin{proof}
We prove the claim by induction on $|\alpha| + |\beta|$.
The base case $|\alpha| = 1$ and $|\beta| = 1$ has been treated above. The starting point is \eqref{eq:VariationalEquationII} to which we apply $\partial_x^{\alpha} \partial_{\xi}^{\beta}$. We arrive at
\begin{equation*}
\frac{d}{dt} \begin{pmatrix}
\partial_x^{\alpha} \partial_{\xi}^{\beta} \partial_x x^t \\ \partial_x^{\alpha} \partial_{\xi}^{\beta} \partial_x \xi^t
\end{pmatrix}
= B(t) \begin{pmatrix}
\partial_x^{\alpha} \partial_{\xi}^{\beta} \partial_x x^t \\ \partial_x^{\alpha} \partial_{\xi}^{\beta} \partial_x \xi^t
\end{pmatrix}
+ \eta.
\end{equation*}
The ``worst" terms in $\eta$ stem from $\partial_x^{\alpha} \partial_{\xi}^{\beta} ( \partial^2_{xx} a \cdot \partial_x x^t)$, where not all derivatives $\partial_x^{\alpha} \partial_{\xi}^{\beta}$ hit $\partial_x x^t$. After distributing the derivatives on both factors we find terms
\begin{equation*}
\partial_x^{\alpha_1} \partial_{\xi}^{\beta_1} (\partial^2_{xx} a(x^t,\xi^t)) \cdot (\partial_x^{\alpha_2} \partial_{\xi}^{\beta_2} \partial_x x^t)
\end{equation*}
with conditions
\begin{equation*}
|\alpha_1| + |\alpha_2| = |\alpha|, \quad |\beta| = |\beta_1| + |\beta_2|, \quad |\alpha_2| + |\beta_2| < |\alpha| + |\beta|.
\end{equation*}
The first two constraints are due to Leibniz's rule, the last one reflects that not all derivatives act on $\partial_x x^t$ in the inhomogeneity $\eta$.

We expand the first factor to invoke the induction hypothesis: after applying $\partial_{\xi}^{\beta_1} (\partial^2_{xx} a(x^t,\xi^t))$ we find like above terms of the form
\begin{equation}
\label{eq:IntermediateMixedTerm}
\partial_x^{\beta_{11}} \partial_{\xi}^{\beta_{12}} \partial^2_{xx} a(x^t,\xi^t) \prod_{k=1}^{|\beta_{11}|} \partial_{\xi}^{\beta_{11}^k} x^t \prod_{k=1}^{|\beta_{12}|} \partial_{\xi}^{\beta_{12}^k} \xi^t.
\end{equation}
The conditions on $\beta_{ij}^k$ read
\begin{equation*}
1 \leq |\beta_{1j}^k| \leq |\beta_1|, \quad \sum_{k=1}^{|\beta_{11}|} |\beta_{11}^k| + \sum_{k=1}^{|\beta_{12}|} |\beta_{12}^k| =  |\beta_1|.
\end{equation*}
We apply $\partial_x^{\alpha_1}$ to \eqref{eq:IntermediateMixedTerm} to find
\begin{equation*}
\begin{split}
&\quad \sum_{\alpha_{11},\alpha_{12}} \partial_x^{\alpha_{11}} (\partial_x^{\beta_{11}} \partial_{\xi}^{\beta_{12}} \partial^2_{xx} a(x^t,\xi^t)) \partial_x^{\alpha_{12}} \big( \prod_{k=1}^{|\beta_{11}|} \partial_{\xi}^{\beta_{11}^k} x^t \prod_{k=1}^{|\beta_{12}|} \partial_{\xi}^{\beta_{12}^k} \xi^t \big) \\
&= \sum_{\substack{\alpha_{111},\alpha_{112},\alpha_{111}^k, \\ \alpha_{112}^k, \alpha_{121}^k, \alpha_{122}^k}} (\partial_x^{\alpha_{111}} \partial_{\xi}^{\alpha_{112}} \partial_x^{\beta_{11}} \partial_{\xi}^{\beta_{12}} \partial^2_{xx} a) ( \prod_{k=1}^{|\alpha_{111}|} \partial_x^{\alpha_{111}^k} x^t ) (\prod_{k=1}^{|\alpha_{112}|} \partial_x^{\alpha_{112}^k} \xi^t) \\
&\quad \quad \times \big( \prod_{k=1}^{|\beta_{11}|} \partial_x^{\alpha_{121}^k} \partial_{\xi}^{\beta_{11}^k} x^t \prod_{k=1}^{|\beta_{12}|} \partial_x^{\alpha_{122}^k} \partial_{\xi}^{\beta_{12}^k} \xi^t \big).
\end{split}
\end{equation*}
The additional conditions read
\begin{equation*}
\sum_{k=1}^{|\beta_{11}|} |\alpha_{121}^k| + \sum_{k=1}^{|\beta_{12}|} |\alpha_{122}^k| = |\alpha_{12}|, \, |\alpha_{11}| + |\alpha_{12}| = |\alpha_1|, \, \sum_{k=1}^{|\alpha_{111}|} |\alpha_{111}^k| + \sum_{k=1}^{|\alpha_{112}|} |\alpha_{112}^k| = |\alpha_{11}|.
\end{equation*}

Having collected all conditions on indices we can invoke the induction hypothesis to estimate the terms as
\begin{equation}
\label{eq:AllTermsMixedRegularity}
\begin{split}
&\quad |\partial_x^{\alpha_{111}} \partial_{\xi}^{\alpha_{112}} \partial_x^{\beta_{11}} \partial_{\xi}^{\beta_{12}} \partial^2_{xx} a | \prod_{k=1}^{|\alpha_{111}|} |\partial_x^{\alpha_{111}^k} x^t | \prod_{k=1}^{|\alpha_{112}|} |\partial_x^{\alpha_{112}^k} \xi^t | \\
&\quad \quad \times \prod_{k=1}^{|\beta_{11}|} |\partial_x^{\alpha_{121}^k} \partial_{\xi}^{\beta_{11}^k} x^t | \prod_{k=1}^{|\beta_{12}|} |\partial_x^{\alpha_{122}^k} \partial_{\xi}^{\beta_{12}^k} \xi^t | \\
&\lesssim R^{\frac{|\alpha_{111}|}{2}+\frac{|\beta_{11}|}{2}} \prod_{k=1}^{|\alpha_{111}|} R^{\frac{|\alpha_{111}^k|-1}{2}} \prod_{k=1}^{|\alpha_{112}|} |t| R^{\frac{|\alpha_{112}^k|-1}{2}} \prod_{k=1}^{|\beta_{11}|} |\partial_x^{\alpha_{121}^k} \partial_{\xi}^{\beta_{11}^k} x^t | \prod_{k=1}^{|\beta_{12}|} |\partial_x^{\alpha_{122}^k} \partial_{\xi}^{\beta_{12}^k} \xi^t |.
\end{split}
\end{equation}

By the summation properties of the $\alpha$-indices we can estimate the first three factors in \eqref{eq:AllTermsMixedRegularity} by
\begin{equation}
\label{eq:JointRegularityI}
\begin{split}
&\quad R^{\frac{|\alpha_{111}|}{2}+\frac{|\beta_{11}|}{2}} \prod_{k=1}^{|\alpha_{111}|} R^{\frac{|\alpha_{111}^k|-1}{2}} \prod_{k=1}^{|\alpha_{112}|} |t| R^{\frac{|\alpha_{112}^k|-1}{2}} \\
&\lesssim R^{\frac{|\beta_{11}|}{2} + \frac{|\alpha_{11}|}{2} - \frac{|\alpha_{112}|}{2}} |t|^{|\alpha_{112}|}.
\end{split}
\end{equation}

To estimate the fourth factor efficiently, we partition $ \{ 1, \ldots, |\beta_{11}| \}$ into sets $(a)$ and $(b)$:
\begin{equation*}
k \in (a) \Leftrightarrow \alpha_{121}^k = 0, \quad k \in (b) \Leftrightarrow \alpha_{121}^k \geq 1. 
\end{equation*}
We can now effectively estimate
\begin{equation}
\label{eq:AuxiliaryEstimateAllTerms}
\begin{split}
(IV) \text{ in } \eqref{eq:AllTermsMixedRegularity} &\lesssim \prod_{k \in (a)} |t| (1+R^{\frac{1}{2}} |t|)^{|\beta_{11}^k| - 1} \cdot \prod_{k \in (b)} |t| R^{\frac{|\alpha_{121}^k|-1}{2}} (1+ R^{\frac{1}{2}} |t|)^{|\beta_{11}^k|} \\
&\lesssim |t|^{|\beta_{11}|} R^{\sum_{k \in (b)} \frac{|\alpha_{121}^k| - 1}{2}} (1+ R^{\frac{1}{2}} |t|)^{\sum_{k \in (b)} |\beta_{12}^k| + \sum_{k \in (a)} |\beta_{11}^k| - \# (a)}.
\end{split}
\end{equation}
And moreover,
\begin{equation}
\label{eq:JointRegularityII}
\begin{split}
R^{\frac{|\beta_{11}|}{2}} \eqref{eq:AuxiliaryEstimateAllTerms} &\lesssim R^{\frac{|\beta_{11}|}{2}} |t|^{|\beta_{11}|} (1 + R^{\frac{1}{2}} |t|)^{\sum_k |\beta^k_{11}| - \# (a)} R^{\sum_k \frac{|\alpha_{121}^k|}{2} - \# (b)} \\
&\lesssim R^{\sum_k \frac{|\alpha_{121}^k|}{2}} (1+R^{\frac{1}{2}} |t|)^{\sum_k |\beta_{11}^k|}.
\end{split}
\end{equation}

Taking \eqref{eq:JointRegularityI}, \eqref{eq:JointRegularityII}, and the fifth term in \eqref{eq:AllTermsMixedRegularity} together we find
\begin{equation*}
\begin{split}
\eqref{eq:AllTermsMixedRegularity} &\lesssim R^{\frac{|\alpha_{11}|}{2}} R^{\sum_k \frac{|\alpha_{121}^k|}{2}} (1+R^{\frac{1}{2}} |t|)^{\sum_{k} |\beta_{11}^k|} \prod_{k=1}^{|\beta_{12}|} R^{\frac{(|\alpha_{122}^k|-1)_+}{2}} (1+ R^{\frac{1}{2}} |t|)^{|\beta_{12}^k|} \\
&\lesssim R^{\frac{|\alpha_{11}|}{2}} R^{\sum_k \frac{|\alpha_{121}^k|}{2} + \sum_k \frac{|\alpha_{122}^k|}{2}} (1+ R^{\frac{1}{2}} |t|)^{\sum_k |\beta_{11}^k| + \sum_k |\beta_{12}^k|} \\
&\lesssim R^{\frac{|\alpha|}{2}} (1+ R^{\frac{1}{2}}|t|)^{|\beta|}.
\end{split}
\end{equation*}
The estimates for the fith term are crude, but since possibly $|\beta_{12}| = 0$ there is no overall improvement carrying out a refined analysis.
\end{proof}

We state the essence of the above lemma; on the unit time scale any derivative loses factors of $R^{\frac{1}{2}}$:
\begin{corollary}
Let $|\alpha| \geq 1$ and $|\beta| \geq 1$. Then the following estimate holds for $|t| \lesssim 1$:
\begin{equation*}
|\partial_x^{\alpha} \partial_{\xi}^{\beta} x^t| + |\partial_x^{\alpha} \partial_{\xi}^{\beta} \xi^t| \lesssim R^{\frac{|\alpha|+|\beta|-1}{2}} |t|.
\end{equation*}
\end{corollary}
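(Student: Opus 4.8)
The plan is to read off the Corollary directly from Lemma~\ref{lem:HamiltonFlowMixedRegularityII}: it is nothing more than the specialization of that estimate to the regime $|t| \lesssim 1$. Indeed, Lemma~\ref{lem:HamiltonFlowMixedRegularityII} provides, for $|\alpha| \geq 1$ and $|\beta| \geq 1$,
\[
|\partial_x^{\alpha} \partial_{\xi}^{\beta} x^t| + |\partial_x^{\alpha} \partial_{\xi}^{\beta} \xi^t| \lesssim R^{\frac{|\alpha|-1}{2}} |t| \, (1 + R^{\frac12} |t|)^{|\beta|},
\]
so the entire task is to bound the factor $(1 + R^{\frac12} |t|)^{|\beta|}$ on the unit time scale.

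First I would use that $R \gg 1$ and that on the unit time scale $|t| \leq C$ for a fixed constant $C$, so that $1 + R^{\frac12} |t| \leq 1 + C R^{\frac12} \leq (1 + C) R^{\frac12}$ and hence $(1 + R^{\frac12} |t|)^{|\beta|} \leq (1 + C)^{|\beta|} R^{|\beta|/2}$, with the implied constant depending only on $|\beta|$ and $C$, which is exactly the dependence permitted by $\lesssim$. Substituting this into the displayed bound gives
\[
|\partial_x^{\alpha} \partial_{\xi}^{\beta} x^t| + |\partial_x^{\alpha} \partial_{\xi}^{\beta} \xi^t| \lesssim R^{\frac{|\alpha|-1}{2}} |t| \cdot R^{|\beta|/2} = R^{\frac{|\alpha|+|\beta|-1}{2}} |t|,
\]
which is precisely the asserted estimate.

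There is essentially no obstacle here; all of the work is already contained in Lemmas~\ref{lem:HamiltonFlowFrequencyRegularity}--\ref{lem:HamiltonFlowMixedRegularityII}, and the Corollary merely records the resulting worst-case bookkeeping: on the unit time interval every derivative in $x$ or $\xi$ beyond the first costs a factor $R^{\frac12}$, while exactly one overall factor of $|t|$ survives. If one prefers not to bound the leading $1$ by $R^{\frac12}$, one can instead note that $R^{|\beta|/2} \geq 1$ for $|\beta| \geq 1$ to absorb the term coming from $1$, and use $|t|^{|\beta|} \leq |t|$ for $|t| \leq 1$ to handle $(R^{\frac12}|t|)^{|\beta|} = R^{|\beta|/2}|t|^{|\beta|}$; either bookkeeping route closes the argument.
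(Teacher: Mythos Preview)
Your argument is correct and matches the paper's intent exactly: the Corollary is stated there without proof, simply as ``the essence of the above lemma'' (Lemma~\ref{lem:HamiltonFlowMixedRegularityII}) on the unit time scale, and your bookkeeping $(1+R^{1/2}|t|)^{|\beta|}\lesssim R^{|\beta|/2}$ for $|t|\lesssim 1$ is precisely the intended reduction.
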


Finally, we consider time regularity:
\begin{lemma}
\label{lem:TimeRegularityHamiltonFlow}
For $|\alpha| + |\beta| + |\sigma| \geq 1$ the following holds:
\begin{equation}
\label{eq:TimeRegularityHamiltonFlow}
|\partial_t^{\sigma} \partial_x^{\alpha} \partial_{\xi}^{\beta} x^t | + |\partial_t^{\sigma} \partial_x^{\alpha} \partial_{\xi}^{\beta} \xi^t | \lesssim R^{\frac{|\alpha| + |\beta| + |\sigma| -1}{2}}.
\end{equation}
\end{lemma}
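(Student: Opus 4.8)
The plan is to induct on the number $\sigma$ of time derivatives, trading one time derivative at each step for position and frequency derivatives by means of the Hamilton equations \eqref{eq:RescaledHamiltonFlow}. The base case $\sigma = 0$ is already contained in Lemma~\ref{lem:HamiltonFlowFrequencyRegularity}, Lemma~\ref{lem:HamiltonFlowPositionRegularity} and the corollary to Lemma~\ref{lem:HamiltonFlowMixedRegularityII}: on the unit time interval $|t| \lesssim 1$ one has $|t| \lesssim 1$ and $(1 + R^{\frac12}|t|)^m \lesssim R^{m/2}$, so all of those estimates collapse to $|\partial_x^\alpha\partial_\xi^\beta x^t| + |\partial_x^\alpha\partial_\xi^\beta\xi^t| \lesssim R^{(|\alpha|+|\beta|-1)/2}$ whenever $|\alpha| + |\beta| \geq 1$, which is \eqref{eq:TimeRegularityHamiltonFlow} with $\sigma = 0$.

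For the inductive step fix $\sigma \geq 1$ and assume \eqref{eq:TimeRegularityHamiltonFlow} for all multi-indices with strictly fewer than $\sigma$ time derivatives. Using $\dot x^t = \partial_\xi a(x^t,t,\xi^t)$ and $\dot\xi^t = -\partial_x a(x^t,t,\xi^t)$ one writes
\begin{equation*}
\partial_t^\sigma\partial_x^\alpha\partial_\xi^\beta x^t = \partial_t^{\sigma-1}\partial_x^\alpha\partial_\xi^\beta\big(\partial_\xi a(x^t,t,\xi^t)\big), \qquad \partial_t^\sigma\partial_x^\alpha\partial_\xi^\beta \xi^t = -\partial_t^{\sigma-1}\partial_x^\alpha\partial_\xi^\beta\big(\partial_x a(x^t,t,\xi^t)\big),
\end{equation*}
and then expands the right-hand sides by the Fa\`a di Bruno formula for the composition $(x,\xi,t)\mapsto g(x^t(x,\xi),t,\xi^t(x,\xi))$ with $g\in\{\partial_\xi a,\partial_x a\}$. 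Since the middle argument is the variable $t$ itself, a $\partial_t$ hitting that slot only contributes a factor $1$ (and $\partial_t^2 t = 0$), while every $\partial_x$ and $\partial_\xi$, together with the remaining time derivatives, is routed through the arguments $x^t$ and $\xi^t$. Consequently each term of the expansion has the shape
\begin{equation*}
\big(\partial_x^p\partial_t^q\partial_\xi^r g\big)(x^t,t,\xi^t)\,\prod_{j=1}^{J}\big(\partial_t^{\sigma_j}\partial_x^{\alpha_j}\partial_\xi^{\beta_j}y_j^t\big), \qquad y_j\in\{x,\xi\},
\end{equation*}
where each $(\sigma_j,\alpha_j,\beta_j)$ is a nonzero multi-index, $p$ of the $J$ factors arise from differentiating the $x^t$ argument and $r = J-p$ from the $\xi^t$ argument, and $\sum_j\sigma_j + q = \sigma-1$, $\sum_j|\alpha_j| = |\alpha|$, $\sum_j|\beta_j| = |\beta|$.

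The symbol factor is estimated by \eqref{eq:BoundHamiltonSymbol}, in which position and time derivatives count equally and frequency derivatives are free; writing $g = \partial_\xi a$ costs no extra position derivative while $g = \partial_x a$ costs exactly one, so $|\partial_x^p\partial_t^q\partial_\xi^r g| \lesssim R^{(p+q+e-2)_+/2}$ with $e = 0$ for the $x^t$-equation and $e = 1$ for the $\xi^t$-equation. Each flow factor has $\sigma_j \leq \sigma-1 < \sigma$, so the induction hypothesis gives $|\partial_t^{\sigma_j}\partial_x^{\alpha_j}\partial_\xi^{\beta_j}y_j^t| \lesssim R^{(|\alpha_j|+|\beta_j|+\sigma_j-1)/2}$. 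Multiplying, a single term carries the $R$-power
\begin{equation*}
\frac{(p+q+e-2)_+}{2} + \sum_{j=1}^{J}\frac{|\alpha_j|+|\beta_j|+\sigma_j-1}{2} = \frac{(p+q+e-2)_+}{2} + \frac{|\alpha|+|\beta|+(\sigma-1-q)-J}{2},
\end{equation*}
and since $J\geq p$ and $e\leq 1$ one has $(p+q+e-2)_+ \leq (p+q-1)_+ \leq p+q \leq J+q$, so this power is at most $(|\alpha|+|\beta|+\sigma-1)/2$. Summing over the finitely many Fa\`a di Bruno terms yields \eqref{eq:TimeRegularityHamiltonFlow}, completing the induction.

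I expect the only step requiring genuine care to be the index bookkeeping in the Fa\`a di Bruno expansion together with the verification of $(p+q+e-2)_+ \leq J+q$; morally this says that each time derivative parked on the explicit $t$-slot of $a$ and each differentiated flow factor supplies precisely the $R^{-1/2}$ needed to absorb the extra $R^{1/2}$ produced by a position (or time) derivative of $a$, which is the same mechanism underlying the $\sigma = 0$ estimates in Lemmas~\ref{lem:HamiltonFlowFrequencyRegularity} through \ref{lem:HamiltonFlowMixedRegularityII}. Alternatively one could differentiate the variational equation \eqref{eq:VariationalEquationII} in time and run a Gr\o nwall argument as in those lemmas, but the direct route above avoids introducing any new ODE.
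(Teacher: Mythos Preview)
Your proof is correct and takes a genuinely shorter route than the paper's. The paper stays within the Gr\o nwall framework throughout: it first handles the pure-time case $\alpha=\beta=0$ by deriving an ODE for $(x^{(k+1)},\xi^{(k+1)})$ with matrix $C(t)$ and an inhomogeneity expanded via the chain rule, bounds this by induction on $k$, and then successively applies $\partial_x^\alpha$ and finally $\partial_\xi^\beta$ to that ODE, each time re-expanding the inhomogeneity and invoking the earlier lemmas together with the induction hypothesis. Your argument instead trades one $\partial_t$ directly for an evaluation of $\partial_\xi a$ or $\partial_x a$ along the flow and performs a single Fa\`a di Bruno expansion, collapsing all of the bookkeeping to the power-counting inequality $(p+q+e-2)_+\leq J+q$, which is immediate from $J\geq p$ and $e\leq 1$. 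This avoids any new ODE analysis beyond the $\sigma=0$ base case. The only thing the paper's longer computation yields beyond the stated lemma is the sharper transient bound $|(x^{(k)}(t),\xi^{(k)}(t))|\lesssim R^{(k-2)_+/2}+|t|\,R^{(k-1)_+/2}$ noted in passing; since the lemma as stated does not track the $|t|$-smallness, your argument suffices for it.
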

Here we do not track the time-dependence anymore. Indeed, we have \\
 $\partial_t^{\sigma}(x^t,\xi^t) \big\vert_{t = 0} \neq 0$. This leads to slightly inferior bounds compared to exclusive space or frequency derivatives because the initial data is non-vanishing in general.
\begin{proof}
We begin by deriving the governing equations for the higher time derivatives. Starting from the Hamiltonian flow
\begin{equation}
\label{eq:HamiltonFlowTimeDerivatives}
\left\{ \begin{array}{cl}
\dot{x}^t &= \partial_{\xi} a(x^t,t,\xi^t), \\
\dot{\xi}^t &= - \partial_x a(x^t,t,\xi^t),
\end{array} \right.
\end{equation}
we find for the second time derivatives:
\begin{equation}
\label{eq:HamiltonFlowTimeI}
\begin{pmatrix}
\ddot{x}^t \\
\ddot{\xi}^t
\end{pmatrix}
=
\underbrace{\begin{pmatrix}
\partial^2_{x \xi} a & \partial^2_{\xi \xi} a \\
- \partial^2_{x x} a & - \partial^2_{x \xi} a
\end{pmatrix}}_{C(t)}
\begin{pmatrix}
\dot{x}^t \\ \dot{\xi}^t
\end{pmatrix}
+
\begin{pmatrix}
\partial^2_{t \xi} a(x^t,t,\xi^t) \\
- \partial^2_{t x} a(x^t,t,\xi^t)
\end{pmatrix}
.
\end{equation}
$C(t)$ is bounded like before $A(t)$ and $B(t)$, and so is the inhomogeneity by the regularity of $a$. This yields boundedness of $(\dot{x}^t,\dot{\xi}^t)$.

We denote the $k$th time derivatives by $x^{(k)}(t)$, $\xi^{(k)}(t)$. By applying $d^k / (dt)^k$ to \eqref{eq:HamiltonFlowTimeI} we find
\begin{equation}
\label{eq:HamiltonFlowHigherOrderTimeDerivative}
\begin{pmatrix}
(\dot{x}^t)^{(k+1)} \\ (\dot{\xi}^t)^{(k+1)}
\end{pmatrix}
= C(t)
\begin{pmatrix}
x^{(k+1)}(t) \\ \xi^{(k+1)}(t)
\end{pmatrix}
+ \eta.
\end{equation}
The terms from the inhomogeneity $\eta$ take the form
\begin{equation*}
\frac{d^{k_1}}{dt^{k_1}} (\partial^2_{\zeta_1 \zeta_2} a ) \cdot \frac{d^{k_2}}{(dt)^{k_2}} \dot{\zeta}, \quad \zeta_i, \zeta \in \{ x ; \xi \}; \quad k_1 + k_2 = k, \; k_2 < k,
\end{equation*}
or
\begin{equation*}
\frac{d^k}{dt^k} \partial^2_{t \zeta_i} a, \quad \zeta_i \in \{x,\xi\}.
\end{equation*}

For the terms of the first kind we compute by the chain and product rule:
\begin{equation*}
\sum_{\kappa_1, \kappa_2, \kappa_3} \partial_x^{\kappa_1} \partial_{\xi}^{\kappa_2} \partial_t^{\kappa_3} \partial^2_{\zeta_1 \zeta_2} a(x^t,t,\xi^t) \prod_{k=1}^{|\kappa_1|} (x^t)^{(\kappa_1^k)} \prod_{k=1}^{|\kappa_2|} (\xi^t)^{(\kappa_2^k)} \cdot \zeta^{(k_2+1)}
\end{equation*}
with
\begin{equation*}
0 \leq |\kappa_1|, |\kappa_2|, |\kappa_3| \leq k_1, \quad \sum_{k=1}^{|\kappa_1|} \kappa_1^{k} + \sum_{k=1}^{|\kappa_2|} \kappa_2^k + \kappa_3 = k_1, \quad 1 \leq |\kappa_1| + |\kappa_2| + |\kappa_3|.
\end{equation*}
For terms of the second kind we find
\begin{equation*}
\sum_{\kappa_1, \kappa_2, \kappa_3} \partial_x^{\kappa_1} \partial_{\xi}^{\kappa_2} \partial_t^{\kappa_3} \partial^2_{\zeta_1 \zeta_2} a(x^t,t,\xi^t) \prod_{k=1}^{|\kappa_1|} (x^t)^{(\kappa_1^k)} \prod_{k=1}^{|\kappa_2|} (\xi^t)^{(\kappa_2^k)} 
\end{equation*}
with
\begin{equation*}
0 \leq |\kappa_1|, |\kappa_2|, |\kappa_3| \leq k_1, \quad \sum_{k=1}^{|\kappa_1|} \kappa_1^{k} + \sum_{k=1}^{|\kappa_2|} \kappa_2^k + \kappa_3 = k, \quad 1 \leq |\kappa_1| + |\kappa_2| + |\kappa_3|.
\end{equation*}

We claim that $|(x^{(k+1)}(t),\xi^{(k+1)}(t))| \lesssim R^{\frac{k}{2}}$  for $k \geq 1$. This is proved by induction. We find for the inhomogeneity, taking the estimate for $l \leq k$ for granted:
\begin{equation*}
\begin{split}
&\quad \big| (\partial_x^{\kappa_1} \partial_{\xi}^{\kappa_2} \partial_t^{\kappa_3} \partial^2_{\zeta_1 \zeta_2} a)(x^t,t,\xi^t) \big| \prod_{k=1}^{|\kappa_1|} x^{(\kappa_1^k)}(t) \big| \big| \prod_{k=1}^{|\kappa_2|} \xi^{(\kappa_2^k)}(t) \big| \cdot \big| \zeta^{(k_2+1)}(t) \big| \\
&\lesssim R^{\frac{|\kappa_1|+|\kappa_3|}{2}} \prod_{k=1}^{\kappa_1} R^{\frac{\kappa_1^k-1}{2}} \prod_{k=1}^{\kappa_2} R^{\frac{\kappa_2^k-1}{2}} \cdot R^{\frac{k_2}{2}} \\
&\lesssim R^{(\sum_k \kappa_1^k + \sum_k \kappa_2^k)/2 + \kappa_3 /2 +  k_2 / 2} = R^{\frac{k_1+k_2}{2}} = R^{\frac{k}{2}}.
\end{split}
\end{equation*}

By the estimate for the inhomogeneity, evaluating \eqref{eq:HamiltonFlowHigherOrderTimeDerivative} at $t=0$ and induction, we find
\begin{equation*}
|(x^{(k)}(0),\xi^{(k)}(0))| \lesssim R^{\frac{(k-2)_+}{2}} \text{ for } k \geq 1.
\end{equation*}
We remark that this yields the refined estimate for $k \geq 1$:
\begin{equation*}
|(x^{(k)}(t),\xi^{(k)}(t))| \lesssim R^{\frac{(k-2)_+}{2}} + |t| R^{\frac{(k-1)_+}{2}}.
\end{equation*}

Next we turn to the estimate of mixed derivatives. The starting point is \eqref{eq:HamiltonFlowHigherOrderTimeDerivative}. We apply $\partial_x^{\alpha}$. We obtain
\begin{equation}
\label{eq:MixedDerivativesTimeRegularityI}
\begin{pmatrix}
\partial_x^{\alpha} \partial_t^k \ddot{x}^t \\ \partial_x^{\alpha} \partial_t^k \ddot{\xi}^t
\end{pmatrix}
= C(t) 
\begin{pmatrix}
\partial_x^{\alpha} \partial_t^k \dot{x}(t) \\ \partial_x^{\alpha} \partial_t^k \dot{\xi}(t)
\end{pmatrix}
+ \underbrace{\sum_{\alpha_1,\alpha_2} \mu_{\alpha_1,\alpha_2} \partial_x^{\alpha_1} (\partial^2_{\zeta_1 \zeta_2} a) \partial_x^{\alpha_2} \zeta^{(k+1)}}_{(I)} + \underbrace{\partial_x^{\alpha} \eta}_{(II)}.
\end{equation}
The inhomogeneities of the first kind are readily estimated like above by expanding
\begin{equation*}
(\partial_x^{\alpha_{11}} \partial_{\xi}^{\alpha_{12}} \partial^2_{\zeta_1 \zeta_2} a) \prod_{k=1}^{|\alpha_{11}|} \partial_x^{\alpha_{11}^k} x^t \prod_{k=1}^{|\alpha_{12}|} \partial_x^{\alpha_{12}^k} \xi^t \cdot \partial_x^{\alpha_2} \zeta^{(k+1)}(t).
\end{equation*}
with
\begin{equation*}
\sum_k |\alpha_{11}^k| + \sum_k |\alpha_{12}^k| = |\alpha_1|, \quad |\alpha_2| <k.
\end{equation*}
We obtain by the previous Lemmas and the induction assumption that
\begin{equation*}
\begin{split}
| \cdot | &\lesssim R^{\frac{|\alpha_{11}|}{2}} R^{\frac{\sum_k |\alpha_{11}^k| - 1}{2}} R^{\frac{\sum_k |\alpha_{12}^k| - 1}{2}} R^{\frac{|\alpha_2|+k-1}{2}} \\
&\lesssim R^{\frac{|\alpha_{11}^k| + \sum_k |\alpha_{12}^k|}{2} + \frac{|\alpha_2| + k-1}{2}} = R^{\frac{|\alpha_1| + |\alpha_2| + k-1}{2}} = R^{\frac{|\alpha|+k-1}{2}}. 
\end{split}
\end{equation*}

For the second term we find in the ``worst" case:
\begin{equation*}
\begin{split}
&\quad \partial_x^{\alpha} (\partial_x^{\kappa_1} \partial_{\xi}^{\kappa_2} \partial_t^{\kappa_3} \partial^2_{xx} a(x^t,t,\xi^t) \prod_{k=1}^{|\kappa_1|} (x^t)^{\kappa_1^k} \prod_{k=1}^{|\kappa_2|} (\xi^t)^{\kappa_2^k} \cdot \zeta^{(k_2+1)} ) \\
&= \sum_{\alpha_1,\alpha_2} \partial_x^{\alpha_1} \big[ \partial_x^{\kappa_1} \partial_{\xi}^{\kappa_2} \partial_t^{\kappa_3} \partial^2_{xx} a(t,x^t,\xi^t) \prod (x^t)^{\kappa_1^k} \prod (\xi^t)^{\kappa_2^k} \big] \cdot \partial_x^{\alpha_2} \zeta^{(k_2+1)} \\
&= \sum_{\alpha_{11},\alpha_{12}} \partial_x^{\alpha_{111}} \partial_{\xi}^{\alpha_{112}} \partial_x^{\kappa_1} \partial_{\xi}^{\kappa_2} \partial_t^{\kappa_3} \partial^2_{xx} a(x^t,t,\xi^t) \prod_{k=1}^{|\alpha_{111}|} \partial_x^{\alpha_{111}^k} x^t \prod_{k=1}^{|\alpha_{112}|} \partial_x^{\alpha_{112}^k} \xi^t \\
&\quad \quad \times \partial_x^{\alpha_{12}} \big[ \big( \prod_{k=1}^{|\kappa_1|} (x^t)^{\kappa_1^k} \big( \prod_{k=1}^{|\kappa_2|} (\xi^t)^{\kappa_2^k} \big) \big] \big( \partial_x^{\alpha_2} \zeta^{(k_2+1)} \big) \\
&= \sum_{\alpha_{11},\alpha_{12}} \partial_x^{\alpha_{111}} \partial_{\xi}^{\alpha_{112}} \partial_x^{\kappa_1} \partial_{\xi}^{\kappa_2} \partial_t^{\kappa_3} \partial^2_{xx} a(x^t,t,\xi^t) \prod_{k=1}^{|\alpha_{111}|} \partial_x^{\alpha_{111}^k} x^t \prod_{k=1}^{|\alpha_{112}|} \partial_x^{\alpha_{112}^k} \xi^t \\
&\quad \quad \times \big[ \big( \prod_{k=1}^{|\kappa_1|} \partial_x^{\alpha_{121}^{k}} (x^t)^{(\kappa_1^k)} \big( \prod_{k=1}^{|\kappa_2|} \partial_x^{\alpha_{122}^{k}} (\xi^t)^{(\kappa_2^k)} \big) \big] \big( \partial_x^{\alpha_2} \zeta^{(k_2+1)} \big).
\end{split}
\end{equation*}
with
\begin{itemize}
\item $\sum_{k=1}^{|\alpha_{111}|} |\alpha_{111}^k| + \sum_{k=1}^{|\alpha_{112}|} |\alpha_{112}^k| = |\alpha_{11}|$,
\item $\sum_{k=1}^{|\kappa_1|} |\alpha_{121}^k| + \sum_{k=1}^{|\kappa_2|} |\alpha_{122}^k| = |\alpha_{12}|$, 
\item $|\alpha_{11}| + |\alpha_{12}| + |\alpha_2| = |\alpha|$.
\end{itemize}

By the induction assumption and the previously established lemmas we find
\begin{equation*}
\begin{split}
| \cdot | &\lesssim R^{\frac{|\alpha_{111}| + |\kappa_1| + |\kappa_3|}{2}} R^{\sum_{k=1}^{|\alpha_{111}|} \frac{|\alpha_{111}|-1}{2}} R^{\sum_{k=1}^{|\alpha_{112}|} \frac{|\alpha_{112}^k|-1}{2} } \\
&\quad \quad \times R^{\sum_{k=1}^{|\kappa_1|} \frac{|\alpha_{121}^k| + |\kappa_1^k| - 1}{2}} R^{\sum_{k=1}^{|\kappa_2|} \frac{|\alpha_{122}^k| + |\kappa_2^k| - 1}{2}} R^{\frac{|\alpha_2| + k_2}{2}} \\
&\lesssim R^{\frac{|\alpha_{11}|}{2} + \frac{|\kappa_1|+ |\kappa_3|}{2}} R^{\frac{|\alpha_{12}|}{2}} R^{\frac{k_1}{2}-\frac{|\kappa_1|}{2}} R^{\frac{|\alpha_2|}{2}+\frac{k_2}{2}} \\
&\lesssim R^{\frac{|\alpha_1| + |\alpha_2| + k_1 + k_2}{2}} = R^{\frac{|\alpha| + k}{2}},
\end{split}
\end{equation*}
which establishes the claim \eqref{eq:TimeRegularityHamiltonFlow} for $\beta  =0$.

To show the claim for mixed derivatives in $x,\xi,t$ we apply $\partial_{\xi}^{\beta}$ to \eqref{eq:MixedDerivativesTimeRegularityI} to find
\begin{equation*}
\begin{split}
\begin{pmatrix}
\partial_{\xi}^{\beta} \partial_x^{\alpha} \partial_t^k \ddot{x}^t \\ \partial_{\xi}^{\beta} \partial_x^{\alpha} \partial_t^k \ddot{\xi}^t
\end{pmatrix}
&= C(t)
\begin{pmatrix}
\partial_{\xi}^{\beta} \partial_x^{\alpha} \partial_t^k \dot{x}^t \\ \partial_{\xi}^{\beta} \partial_x^{\alpha} \partial_t^k \dot{\xi}^t
\end{pmatrix}
 + \underbrace{\sum_{\beta_1,\beta_2} \mu_{\alpha,\beta_1,\beta_2} \partial_{\xi}^{\beta_1} (\partial^2_{\zeta_1 \zeta_2} a) \partial_{\xi}^{\beta_2} \partial_x^{\alpha} \zeta^{(k+1)}}_{(I)}
\\
&\quad + \underbrace{\sum_{\beta_1,\beta_2} \nu_{\alpha,\beta_1,\beta_2} \partial_{\xi}^{\beta_1} \partial_x^{\alpha_1} (\partial^2_{\zeta_1 \zeta_2} a) \partial_{\xi}^{\beta_2} \partial_x^{\alpha_2} \zeta^{(k+1)}}_{(II)} + \underbrace{\partial_{\xi}^{\beta} \partial_x^{\alpha} \eta}_{(III)}.
\end{split}
\end{equation*}
For the estimate of the first term we expand
\begin{equation*}
(I) = \sum_{\substack{\beta_{11},\beta_{12}, \\ \beta_{11}^k, \beta_{12}^k}} \mu_{\beta_1,\beta_2,\alpha} \partial_x^{\beta_{11}} \partial_{\xi}^{\beta_{12}} \partial^2_{\zeta_1 \zeta_2} a \prod_{k=1}^{|\beta_{11}|} \partial_{\xi}^{\beta_{11}^k} x^t \prod_{k=1}^{|\beta_{12}|} \partial_{\xi}^{\beta_2^k} \partial_{\xi}^{\beta_2} \partial_x^{\alpha} \zeta^{(k+1)}
\end{equation*}
with $\beta = \beta_1 + \beta_2$, $\sum_{k=1}^{|\beta_{11}|} |\beta_{11}^k| + \sum_{k=1}^{|\beta_{12}|} |\beta_{12}^k| = |\beta_1| \geq 1$. We find
\begin{equation*}
\begin{split}
|(I)|_{\beta} &\lesssim R^{\frac{|\beta_{11}|}{2}} R^{\sum_{k=1}^{|\beta_{11}|} \frac{|\beta_{11}^k| - 1}{2}} R^{\sum_{k=1}^{|\beta_{12}|} \frac{|\beta_{12}^k|-1}{2}} R^{\frac{|\beta_2|+|\alpha|+k}{2}} \\
&\lesssim R^{\frac{|\beta_1|+|\beta_2|+|\alpha|+k}{2}} = R^{\frac{|\beta|+|\alpha|+k}{2}}.
\end{split}
\end{equation*}

For the estimate of the second term we expand the first factor as
\begin{equation*}
\begin{split}
\partial_{\xi}^{\beta_1} \partial_x^{\alpha_1} \partial^2_{\zeta_1 \zeta_2} a 
&= \sum_{\alpha_{11},\alpha_{12}} \partial_{\xi}^{\beta_1} \big[ \partial_x^{\alpha_{11}} \partial_{\xi}^{\alpha_{12}} \partial^2_{\zeta_1 \zeta_2} a \prod_{k=1}^{|\alpha_{11}|} \partial_x^{\alpha_{11}^k} x^t \prod_{k=1}^{|\alpha_{12}|} \partial_x^{\alpha_{12}^k} \xi^t \big] \\
&= \sum_{\substack{\alpha_{11},\alpha_{12}, \\ \beta_{111},\beta_{112}}} \partial_x^{\beta_{111}} \partial_{\xi}^{\beta_{112}} \partial_x^{\alpha_{11}} \partial_{\xi}^{\alpha_{12}} \partial^2_{\zeta_1 \zeta_2} a \prod_{k=1}^{|\beta_{111}|} \partial_{\xi}^{\beta_{111}^k} x^t \prod_{k=1}^{|\beta_{112}|} \partial_{\xi}^{\beta_{112}^k} \xi^t \\
&\quad \times \partial_{\xi}^{\beta_{12}} \big[ \prod_{k=1}^{|\alpha_{11}|} \partial_x^{\alpha_{11}^k} x^t \prod_{k=1}^{|\alpha_{12}|} \partial_x^{\alpha_{12}^k} \xi^t \big] \\
&= \sum_{\substack{\alpha_{11},\alpha_{12}, \\ \beta_{111},\beta_{112}}} \partial_x^{\beta_{111}} \partial_{\xi}^{\beta_{112}} \partial_x^{\alpha_{11}} \partial_{\xi}^{\alpha_{12}} \partial^2_{\zeta_1 \zeta_2} a \prod_{k=1}^{|\beta_{111}|} \partial_{\xi}^{\beta_{111}^k} x^t \prod_{k=1}^{|\beta_{112}|} \partial_{\xi}^{\beta_{112}^k} \xi^t \\
&\quad \quad \times \prod_{k=1}^{|\alpha_{11}|} \partial_{\xi}^{\beta_{121}^k} \partial_x^{\alpha_{11}^k} x^t \prod_{k=1}^{|\alpha_{12}|} \partial_{\xi}^{\beta_{122}^k} \partial_x^{\alpha_{12}^k} \xi^t.
\end{split}
\end{equation*}

The conditions on the indices are
\begin{itemize}
\item $\sum_{k=1}^{|\alpha_{11}|} |\alpha_{11}^k| + \sum_{k=1}^{|\alpha_{12}|} |\alpha_{12}^k| = |\alpha_1|$,
\item $\sum_{k=1}^{|\beta_{111}|} |\beta_{111}^k| + \sum_{k=1}^{|\beta_{112}|} |\beta_{112}^k| = |\beta_{11}|$,
\item $\sum_{k=1}^{|\alpha_{111}|} |\beta_{121}^k| + \sum_{k=1}^{|\alpha_{12}|} |\beta_{122}^k| = |\beta_{112}|$.
\end{itemize}
With these at hand, we can estimate
\begin{equation*}
\begin{split}
| \cdot | &\lesssim R^{\frac{|\beta_{111}|}{2} + \frac{|\alpha_{111}|}{2}} R^{\sum_{k=1}^{|\beta_{111}|} \frac{|\beta_{111}^k| - 1}{2}} R^{\sum_{k=1}^{|\beta_{112}|} \frac{|\beta_{112}^k|-1}{2}} R^{\sum_{k=1}^{|\alpha_{11}|} \frac{|\beta_{121}^k| + |\alpha_{11}^k| - 1}{2}} \\
&\quad \quad \times R^{\sum_{k=1}^{|\alpha_{12}|} \frac{|\beta_{122}^k| + |\alpha_{12}^k| - 1}{2}} \\
&\lesssim R^{\frac{|\beta_{11}|}{2} + \frac{|\alpha_{11}|}{2}} R^{\sum_{k=1}^{|\alpha_{11}|} \frac{|\beta_{121}^k| + |\alpha_{11}^k| - 1}{2}} R^{\sum_{k=1}^{|\alpha_{12}|} \frac{|\beta_{122}^k| + |\alpha_{12}^k| - 1}{2}} \\
&\lesssim R^{\frac{|\beta_{11}|}{2} + \frac{|\alpha_{11}|}{2}} R^{\frac{|\alpha_1|}{2} - \frac{|\alpha_{11}|}{2}} R^{\frac{|\beta_{12}|}{2}} = R^{\frac{|\alpha_1|+|\beta_1|}{2}}.
\end{split}
\end{equation*}
For the second factor we find by induction assumption
\begin{equation*}
| \partial_{\xi}^{\beta_2} \partial_x^{\alpha_2} \zeta^{(k+1)} | \lesssim R^{\frac{|\beta_2| + |\alpha_2| + k}{2}}.
\end{equation*}
Together with the previous bound we conclude the estimate of $(II)$.

We turn to the estimate of $(III)$. To this end, we write for $\beta_1 = \beta_{11} + \beta_{12} + \beta_{13}$, $\beta = \beta_1 + \beta_2$:
\begin{equation*}
\begin{split}
&\sum \partial_{\xi}^{\beta_{11}} (\partial_x^{\alpha_{111}} \partial_{\xi}^{\alpha_{112}} \partial_x^{\kappa_1} \partial_{\xi}^{\kappa_2} \partial_t^{\kappa_3} \partial^2_{\zeta_1 \zeta_2} a) \partial_{\xi}^{\beta_{12}} \big( \prod_{k=1}^{|\alpha_{111}|} \partial_x^{\alpha_{111}^k} x^t \prod_{k=1}^{|\alpha_{112}|} \partial_x^{\alpha_{112}^k} \xi^t \big) \\
&\quad \quad \times \partial_{\xi}^{\beta_{13}} \big( \prod_{k=1}^{|\kappa_1|} \partial_x^{\alpha_{121}^k} \partial_t^{\kappa_1^k} x^t \prod_{k=1}^{|\kappa_2|} \partial_x^{\alpha_{122}^k} \partial_t^{\kappa_2^k} \xi^t \big) \partial_{\xi}^{\beta_2} \partial_x^{\alpha_2} \zeta^{(k_2+1)} \\
&= \sum \partial_x^{\beta_{111}} \partial_{\xi}^{\beta_{112}} \partial_x^{\alpha_{111}} \partial_{\xi}^{\alpha_{112}} \partial_x^{\kappa_1} \partial_{\xi}^{\kappa_2} \partial_t^{\kappa_3} \partial^2_{\zeta_1 \zeta_2} a \prod_{k=1}^{|\beta_{111}|} \partial_{\xi}^{\beta_{111}^k} x^t \prod_{k=1}^{|\beta_{112}|} \partial_{\xi}^{\beta_{112}^k} \xi^t \\
&\quad \quad \times \prod_{k=1}^{|\alpha_{111}|} \partial_{\xi}^{\beta_{121}^k} \partial_x^{\alpha_{111}^k} x^t \prod_{k=1}^{|\alpha_{112}|} \partial_{\xi}^{\beta_{122}^k} \partial_x^{\alpha_{112}^k} \xi^t \\
&\quad \quad \times \prod_{k=1}^{|\kappa_1|} \partial_{\xi}^{\beta_{131}^k} \partial_x^{\alpha_{121}^k} \partial_t^{\kappa_1^k} x^t \prod_{k=1}^{|\kappa_2|} \partial_{\xi}^{\beta_{132}^k} \partial_x^{\alpha_{122}^k} \partial_t^{\kappa_2^k} \xi^t \; \times \; \partial_{\xi}^{\beta_2} \partial_x^{\alpha_2} \zeta^{(k_2+1)}.
\end{split}
\end{equation*}
In addition to the conditions on $\alpha_{ij}$ from above, we have
\begin{itemize}
\item $\sum_{k=1}^{|\beta_{111}|} |\beta_{111}^k| + \sum_{k=1}^{|\beta_{112}|} |\beta_{112}^k| = |\beta_{11}|$, 
\item $\sum_{k=1}^{|\alpha_{111}|} |\beta_{121}^k| + \sum_{k=1}^{|\alpha_{112}|} |\beta_{122}^k| = |\beta_{12}|$,
\item $\sum_{k=1}^{|\kappa_1|} |\beta_{131}^k| + \sum_{k=1}^{|\kappa_2|} |\beta_{132}^k| = |\beta_{13}|$.
\end{itemize}

We can estimate the single terms as follows:
\begin{equation*}
\begin{split}
| \cdot | &\lesssim R^{\frac{|\beta_{111}|}{2} + \frac{|\kappa_1|+ |\kappa_3|}{2}} R^{\sum_{k=1}^{|\beta_{111}|} \frac{|\beta_{111}^k|-1}{2}} R^{\sum_{k=1}^{|\beta_{112}|} \frac{|\beta_{112}^k| - 1}{2}} \\
&\quad  \times R^{\frac{|\alpha_{111}|}{2}} R^{\sum_{k=1}^{|\alpha_{111}|} \frac{|\beta_{121}^k| + |\alpha_{111}^k| - 1}{2}} R^{\sum_{k=1}^{|\alpha_{112}|} \frac{|\beta_{122}^k| + |\alpha_{112}^k| - 1}{2}} \\
&\quad \times R^{\frac{|\kappa_1|}{2}} R^{\sum_{k=1}^{|\kappa_1|} \frac{|\beta_{131}^k| + |\alpha_{121}^k| + |\kappa_1^k| - 1}{2}} R^{\sum_{k=1}^{|\kappa_2|} \frac{|\beta_{132}^k| + |\alpha_{122}^k| + |\kappa_2^k| - 1}{2}} R^{\frac{|\beta_2| + |\alpha_2| + k_2}{2}} \\
&\lesssim R^{\frac{|\beta_{11}|}{2}} R^{\frac{|\alpha_{11}|}{2} + \frac{|\beta_{12}|}{2}} R^{\frac{|\beta_{13}|}{2} + \frac{|\alpha_{12}|}{2} + \frac{k_1}{2}} R^{\frac{|\beta_2| + |\alpha_2| + k_2}{2}} \\
&\lesssim R^{\frac{|\beta| + |\alpha| + k}{2}}.
\end{split}
\end{equation*}
The claim follows from integration in time.
\end{proof}

Let $(x'_{t'},\xi'_{t'})$ denote a solution to the time-normalized Hamiltonian flow \eqref{eq:RescaledHamiltonFlow}. We observe that $(x^t,\xi^t) = (R x'_{R^{-1} t}, \xi'_{R^{-1} t})$ with $|t| \lesssim R$ yields solutions to the original Hamiltonian flow
\begin{equation*}
\left\{ \begin{array}{cl}
\dot{x}^t &= \partial_{\xi} a(x^t,t, \xi^t), \\
\dot{\xi}^t &= - \partial_x a( x^t,t,\xi^t),
\end{array} \right.
\end{equation*}
which was considered in Assumption \ref{ass:HamiltonianFlowRegularity}. We record the bounds for the original solution:
\begin{corollary}
Let $(x^t,\xi^t)$ be a solution to the Hamiltonian flow governed by $a$ satisfying Assumption \ref{ass:HamiltonianFlowRegularity}. The following estimate holds for $|\alpha| + |\beta| + |\sigma| \geq 1$:
\begin{equation}
\label{eq:RegularityOriginalHamiltonFlow}
\begin{split}
|\partial_t^{\sigma} \partial_x^{\alpha} \partial_{\xi}^{\beta} x^t|  &\lesssim R^{\frac{-|\alpha| + |\beta| - |\sigma| + 1}{2}}, \\
|\partial_t^{\sigma} \partial_x^{\alpha} \partial_{\xi}^{\beta} \xi^t| &\lesssim R^{\frac{-|\alpha|+|\beta|-|\sigma|-1}{2}}.
\end{split}
\end{equation}
\end{corollary}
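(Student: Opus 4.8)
The plan is to obtain \eqref{eq:RegularityOriginalHamiltonFlow} as a pure rescaling of the unit-time bounds already established — principally Lemma \ref{lem:TimeRegularityHamiltonFlow}, which subsumes Lemmas \ref{lem:HamiltonFlowFrequencyRegularity}--\ref{lem:HamiltonFlowMixedRegularityII} — so that no new analytic input is needed. First I would make the change of variables explicit. Setting $\tilde a(x',t',\xi') = a(Rx',Rt',\xi')$ and $z'=(x',t')$, a direct check shows that whenever $(x'_{t'},\xi'_{t'})$ solves the normalized flow \eqref{eq:RescaledHamiltonFlow} for $\tilde a$, the pair $(x_t,\xi_t):=(R\,x'_{R^{-1}t},\,\xi'_{R^{-1}t})$ solves the flow governed by $a$, exactly as recorded above the corollary; equivalently, at the level of the flow map one has $x'=R^{-1}x$, $\xi'=\xi$, $t'=R^{-1}t$. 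Since $z=Rz'$ yields $\partial_{z'}^{\gamma}\partial_{\xi'}^{\beta}\tilde a = R^{|\gamma|}\,(\partial_z^{\gamma}\partial_\xi^{\beta}a)(Rz',\xi')$, I would then verify that the two-regime bound \eqref{eq:RegularityEstimatesHamiltonianFlowAssumption} of Assumption \ref{ass:HamiltonianFlowRegularity} transforms exactly into the normalized bound \eqref{eq:BoundHamiltonSymbol} for $\tilde a$: for $|\gamma|\le 2$ the prefactor $R^{|\gamma|}$ cancels $R^{-|\gamma|}$, while for $|\gamma|\ge 2$ it turns $R^{-(|\gamma|-2)/2-2}$ into $R^{(|\gamma|-2)/2}$. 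In particular Lemma \ref{lem:TimeRegularityHamiltonFlow} is then applicable to $(x'_{t'},\xi'_{t'})$.

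Next I would track the derivatives through the scaling. From $x^t = R\,x'_{t'}$, $\xi^t=\xi'_{t'}$ and the chain-rule relations $\partial_x=R^{-1}\partial_{x'}$, $\partial_\xi=\partial_{\xi'}$, $\partial_t=R^{-1}\partial_{t'}$ one gets
\begin{align*}
\partial_t^{\sigma}\partial_x^{\alpha}\partial_\xi^{\beta}x^t &= R^{1-|\alpha|-|\sigma|}\,\partial_{t'}^{\sigma}\partial_{x'}^{\alpha}\partial_{\xi'}^{\beta}x'_{t'}, \\
\partial_t^{\sigma}\partial_x^{\alpha}\partial_\xi^{\beta}\xi^t &= R^{-|\alpha|-|\sigma|}\,\partial_{t'}^{\sigma}\partial_{x'}^{\alpha}\partial_{\xi'}^{\beta}\xi'_{t'}.
\end{align*}
For $|t|\lesssim R$ we have $|t'|=R^{-1}|t|\lesssim 1$, so Lemma \ref{lem:TimeRegularityHamiltonFlow} bounds each right-hand factor by $R^{(|\alpha|+|\beta|+|\sigma|-1)/2}$ as soon as $|\alpha|+|\beta|+|\sigma|\ge 1$. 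Plugging this in and simplifying the exponents, namely $1-|\alpha|-|\sigma|+\tfrac{|\alpha|+|\beta|+|\sigma|-1}{2}=\tfrac{-|\alpha|+|\beta|-|\sigma|+1}{2}$ and $-|\alpha|-|\sigma|+\tfrac{|\alpha|+|\beta|+|\sigma|-1}{2}=\tfrac{-|\alpha|+|\beta|-|\sigma|-1}{2}$, gives precisely the two bounds claimed in \eqref{eq:RegularityOriginalHamiltonFlow}.

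I expect no genuine difficulty here; the one point deserving attention is the bookkeeping in the first step, i.e. checking that \eqref{eq:RegularityEstimatesHamiltonianFlowAssumption} is the exact scaling image of \eqref{eq:BoundHamiltonSymbol} — this is what pins down the $R$-rescaling of both space and time (recorded above the corollary) used to pass between the $|t|\lesssim R$ scale and the unit time scale — together with counting the number of $z$-derivatives $|\gamma|=|\alpha|+|\sigma|$ consistently between Assumption \ref{ass:HamiltonianFlowRegularity} and the lemmas. Once that is settled, the corollary is immediate.
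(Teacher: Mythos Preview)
Your proposal is correct and matches the paper's approach exactly: the paper states the rescaling $(x_t,\xi_t)=(R\,x'_{R^{-1}t},\,\xi'_{R^{-1}t})$ just before the corollary and records \eqref{eq:RegularityOriginalHamiltonFlow} as its immediate consequence, leaving the chain-rule bookkeeping implicit. Your verification that \eqref{eq:RegularityEstimatesHamiltonianFlowAssumption} rescales to \eqref{eq:BoundHamiltonSymbol} and the exponent arithmetic are both correct.
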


We use the following to center the (modified) Green function $G$:
\begin{lemma}
\label{lem:RegularityPhaseFunction}
Assume that $a$ is like in \eqref{eq:1HomogeneousA} or \eqref{eq:SchroedingerSymbol}. Then for $|\alpha|+|\beta| + |\sigma| + |\nu| \geq 1$ we have
\begin{equation}
\label{eq:RegularityHamiltonII}
\begin{split}
&\quad |\partial_x^{\alpha} \partial_y^{\nu} \partial_{\xi}^{\beta} \partial_t^{\sigma} [ \xi^t (y-x^t) +  \xi_0 (x-x_0) \\
&\quad - \psi(x_0,t,\xi_0) + \psi(x,t,\xi) - \xi_0^t (y-x_0^t)] |_{x=x_0,\xi=\xi_0} \\
&\leq c_{\alpha \beta \sigma \nu} R^{\frac{-|\alpha|+|\beta|-|\sigma|}{2}} (1+R^{-\frac{1}{2}} |y-x_0^t| ).
\end{split}
\end{equation}
\end{lemma}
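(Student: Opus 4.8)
The plan is to exploit the fact that the bracketed expression, call it $\Phi(t,x,\xi,y)$, is precisely the residual phase left after factoring a moving coherent state out of the parametrix, and is therefore flat at the center $(x,\xi)=(x_0,\xi_0)$. First I would record the algebraic structure: writing $g(x,\xi)=\xi^t\cdot(y-x^t)+\xi_0\cdot(x-x_0)+\psi(t,x,\xi)$ one has $\Phi=g(x,\xi)-g(x_0,\xi_0)$, since the bicharacteristic issued from $(x_0,\xi_0)$ is $(x_0^t,\xi_0^t)$; in particular $\Phi|_{x=x_0,\xi=\xi_0}=0$, $\Phi(t,x_0,\xi_0,y)$ vanishes identically in $t$, and $\Phi$ is affine-linear in $y$. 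Consequently, for $|\nu|\geq 2$ the left-hand side of \eqref{eq:RegularityHamiltonII} vanishes, and for $|\alpha|=|\beta|=0$ it also vanishes (one is differentiating either the identically-zero function $\Phi(t,x_0,\xi_0,y)$ or its $y$-gradient $\xi^t_0-\xi^t_0=0$); the only nontrivial range is $|\alpha|+|\beta|\geq 1$, possibly with one $y$-derivative and arbitrarily many $t$-derivatives.

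The key input I would then bring in is the generating-function identity for the Hamiltonian flow. Since the time-$t$ flow $(x,\xi)\mapsto(x^t,\xi^t)$ is symplectic and homotopic to the identity, the one-form $\sum_j(\xi^t_j\,dx^t_j-\xi_j\,dx_j)$ is exact; differentiating the transport equation defining $\psi$ (namely $\tfrac{d}{dt}\psi=-a+\xi^t\cdot\partial_\xi a$ along bicharacteristics, $\psi|_{t=0}=0$, cf.\ the $\psi$-equation preceding Proposition~\ref{prop:ParametrixConstruction}) together with the first variational equation identifies it as $d_{x,\xi}\psi(t,\cdot,\cdot)$, i.e.
\begin{equation*}
\partial_{x_k}\psi=\sum_j\xi_j^t\,\partial_{x_k}x_j^t-\xi_k,\qquad \partial_{\xi_k}\psi=\sum_j\xi_j^t\,\partial_{\xi_k}x_j^t.
\end{equation*}
(For the $1$-homogeneous symbol \eqref{eq:1HomogeneousA} this degenerates to $\psi\equiv0$; for \eqref{eq:SchroedingerSymbol}, $\mathfrak{h}=2$.) Substituting into $\nabla_{x,\xi}g$, the $\psi$-terms cancel the $\xi^t\cdot\nabla_{x,\xi}x^t$ terms exactly, leaving the clean identities
\begin{equation*}
\partial_{x_k}\Phi=(\partial_{x_k}\xi^t)\cdot(y-x^t)+(\xi_0)_k-\xi_k,\qquad \partial_{\xi_k}\Phi=(\partial_{\xi_k}\xi^t)\cdot(y-x^t),
\end{equation*}
while $\partial_{y_j}\Phi=\xi_j^t-(\xi_0^t)_j$. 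This is the crux: the large pieces of $\Phi$ have disappeared, and after evaluation at $x=x_0,\xi=\xi_0$ the surviving terms are always either a derivative of $\xi^t$ (which decays) times $y-x_0^t$, or the harmless linear remainder $(\xi_0)_k-\xi_k$ whose only nonvanishing derivative is a single Kronecker delta.

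Finally I would push the remaining derivatives through by Leibniz on these first-order expressions (for $|\nu|=1$, directly on $\partial_{y_j}\Phi=\xi_j^t-(\xi_0^t)_j$) and estimate term by term using the flow bounds \eqref{eq:RegularityOriginalHamiltonFlow}: every $x$- or $t$-derivative hitting $x^t$ or $\xi^t$ lowers the exponent by $1/2$ and every $\xi$-derivative is neutral, the $\xi^t$-factor always carries the extra $R^{-1/2}$ appearing in its exponent in \eqref{eq:RegularityOriginalHamiltonFlow}, and the factor $(y-x^t)|_{x=x_0,\xi=\xi_0}=y-x_0^t$ is always multiplied by at least one differentiated $\xi^t$, which supplies a spare $R^{-1/2}$ converting $|y-x_0^t|$ into $1+R^{-1/2}|y-x_0^t|$. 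Counting exponents shows every term is $\lesssim R^{(-|\alpha|+|\beta|-|\sigma|)/2}(1+R^{-1/2}|y-x_0^t|)$, the $(\xi_0)_k-\xi_k$ contribution producing only an $\mathcal{O}(1)$ term in the single case $|\alpha|=|\beta|=1$, $|\sigma|=0$. I expect the main obstacle to be exactly this last bookkeeping — verifying that the worst Leibniz term saturates the claimed power and the weight $1+R^{-1/2}|y-x_0^t|$ and never worse — but once the generating-function identity is in hand it is mechanical; making that identity precise in the presence of the explicit $t$-dependence of $a$ is the conceptual heart of the argument.
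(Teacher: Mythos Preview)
Your approach is correct and in fact cleaner than the paper's. The paper proceeds by writing, for each multi-index, $F(t)=[\xi^t\,\partial_x^{\alpha}\partial_{\xi}^{\beta}\partial_t^{\sigma}x^t-\partial_x^{\alpha}\partial_{\xi}^{\beta}\partial_t^{\sigma}\psi]_{x=x_0,\xi=\xi_0}$ and then, separately for $\sigma\geq 1$ and $\sigma=0$, expands either $F(t)$ or $F'(t)$ via the chain rule into a long sum of products of symbol derivatives and flow derivatives, which it bounds term by term over several pages. Only in the base case $|\alpha|=1$, $|\beta|=\sigma=0$ does the paper actually observe that $F\equiv 0$; this is precisely your generating-function identity $\partial_{x_k}\psi=\xi^t\!\cdot\partial_{x_k}x^t-\xi_k$, but the paper does not exploit it systematically for higher orders.

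Your route --- proving once that $\nabla_{x,\xi}\psi$ is given by the action one-form (the verification that $\tfrac{d}{dt}(\xi^t\!\cdot\partial_{x_k}x^t-\xi_k)=\partial_{x_k}(-a+\xi^t\!\cdot a_\xi)$ goes through verbatim with explicit $t$-dependence in $a$, as you anticipate) --- kills $\psi$ from $\nabla_{x,\xi}\Phi$ entirely and reduces the whole lemma to Leibniz on $(\partial\xi^t)\cdot(y-x^t)$ plus the single constant term from $(\xi_0)_k-\xi_k$. This is more transparent and much shorter; the paper's approach trades the symplectic observation for a self-contained but laborious bookkeeping of $\partial^{\alpha\beta\sigma}\psi$ directly from its transport equation. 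Both land on the same flow bounds \eqref{eq:RegularityOriginalHamiltonFlow} at the end.
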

\begin{proof}
We note that derivatives in $y$ can be handled by linearity. We suppose in the following that $\nu = 0$. Moreover, for $|\alpha|=|\nu|=|\beta|=0$, that is to say, considering only time derivatives, we see that the expression vanishes.

\smallskip

In the following we turn to the main estimate for derivatives in $x$, $\xi$, and $t$ such that $|\alpha|+|\beta| \geq 1$. Let $|\alpha| = 1$ and $|\beta| = \sigma = 0$. We have
\begin{equation*}
\partial_x [ \xi^t (y-x^t) + \psi(t,x,\xi) + \xi_0 (x-x_0)] = \partial_x \xi^t (y-x^t) + \partial_x \psi(t,x,\xi) - \xi^t \partial_x x^t + \xi_0.
\end{equation*}
The first term is estimated by Lemma \ref{lem:HamiltonFlowFrequencyRegularity}:
\begin{equation*}
| [ \partial_x \xi^t (y-x^t) ]_{x=x_0,\xi=\xi_0} | \lesssim R^{-\frac{1}{2}} \cdot R^{-\frac{1}{2}} |y-x_0^t|.
\end{equation*}
For the second term let $F(t) = [ \xi^t \partial_x x^t - \partial_x \psi(x,t,\xi) - \xi_0 ]_{x = x_0,\xi = \xi_0}$. Clearly, $F(0) = 0$ and we have for the time derivative
\begin{equation*}
\begin{split}
&\, \frac{d [\xi^t \partial_x x^t - \partial_x \psi(x,t,\xi) - \xi_0]}{dt} \\
 &= -  \partial_x a(x^t,\xi^t) \partial_x x^t + \xi^t \partial_x ( \partial_{\xi} a(x^t, \xi^t)) \\
&\quad - \partial_x [ \xi^t \partial_{\xi} a(x^t,t;\xi^t) - a(x^t,t;\xi^t)] \\
&= - \partial_x a(x^t,t,\xi^t) \partial_x x^t - \partial_x \xi^t \partial_{\xi} a(x^t,t,\xi^t) + \partial_x a(x^t,t,\xi^t) \partial_x x^t +  \partial_{\xi} a(x^t,t;\xi^t) \partial_x \xi^t = 0.
\end{split}
\end{equation*}

\medskip

For other combinations of $\alpha,\beta,\sigma$ we have $\partial_x^{\alpha} \partial_{\xi}^{\beta} \partial_t^{\sigma} [ \xi_0 (x-x_0)] = 0$. We recast the remaining terms as
\begin{equation*}
\begin{split}
&\quad \partial_x^{\alpha} \partial_{\xi}^{\beta} \partial_t^{\sigma} [  \xi^t (y-x^t) +  \psi(t,x,\xi)] \\
 &=  \partial_x^{\alpha} \partial_{\xi}^{\beta} \partial_{t}^{\sigma} \xi^t \cdot (y-x^t) +  \xi^t \partial_x^{\alpha} \partial_{\xi}^{\beta} \partial_t^{\sigma} (y-x^t) + (\text{mixed derivatives}) + \partial_x^{\alpha} \partial_{\xi}^{\beta} \partial_t^{\sigma}  \psi(t,x,\xi).
\end{split}
\end{equation*}
The first term is simply estimated by Lemma \ref{lem:TimeRegularityHamiltonFlow}:
\begin{equation*}
|\partial_x^{\alpha} \partial_{\xi}^{\beta} \partial_t^{\sigma} \xi^t \cdot (y-x^t)| \big\vert_{x=x_0,\xi=\xi_0} \lesssim R^{\frac{-|\alpha|+|\beta|-|\sigma|}{2}} R^{-\frac{1}{2}} |y-x_0^t|.
\end{equation*}

The terms with derivatives as well on $\xi^t$ as on $y-x^t$ are estimated by Leibniz's rule and Lemma \ref{lem:TimeRegularityHamiltonFlow}. Estimating the second and fourth term is most involved. Let
\begin{equation*}
F(t) = [ \xi^t \partial_x^{\alpha} \partial_{\xi}^{\beta} \partial_t^{\sigma} x^t - \partial_x^{\alpha} \partial_{\xi}^{\beta} \partial_t^{\sigma} \psi(x,t,\xi) ]_{x=x_0,\xi=\xi_0}.
\end{equation*}

We handle the case $\sigma \geq 1$ first. We compute
\begin{equation*}
\begin{split}
&\quad \xi^t \partial_x^{\alpha} \partial_{\xi}^{\beta} \partial_t^{\sigma -1 } (\partial_{\xi} a(x^t,t;\xi^t)) - \partial_x^{\alpha} \partial_{\xi}^{\beta} \partial_t^{\sigma -1 } [ \xi^t \partial_{\xi} a(x^t,t;\xi^t) - a ] \\
&= - (\partial_x^{\alpha} \partial_{\xi}^{\beta} \partial_t^{\sigma - 1} \xi^t) \partial_{\xi} a(x^t,t;\xi^t) - \text{mixed der.} + \partial_x^{\alpha} \partial_{\xi}^{\beta} \partial_t^{\sigma - 1} a.
\end{split}
\end{equation*}
For the mixed terms we have
\begin{equation*}
\text{mixed der.} = \sum_{\substack{\alpha_1,\alpha_2, \beta_1, \\ \beta_2, \sigma_1, \sigma_2 }} (\partial_x^{\alpha_1} \partial_{\xi}^{\beta_1} \partial_t^{\sigma_1} \xi^t) (\partial_x^{\alpha_2} \partial_{\xi}^{\beta_2} \partial_t^{\sigma_2} \partial_{\xi} a)
\end{equation*}
with the constraints $\sigma_1 + \sigma_2 = \sigma - 1$, $\alpha_1 + \alpha_2 = \alpha$, $\beta_1 + \beta_2 = \beta$, $|\alpha_1|+|\beta_1|+|\sigma_1| \geq 1$, and $|\alpha_2|+|\beta_2|+|\sigma_2| \geq 1$. Noting that $\partial_{\xi} a = \partial_t x^t$ and applying Lemma \ref{lem:TimeRegularityHamiltonFlow} yields
\begin{equation*}
|\text{mixed der.}| \lesssim R^{\frac{-|\alpha_1|+|\beta_1|-\sigma_1 - 1}{2}} R^{\frac{- |\alpha_2| + |\beta_2| - (\sigma_2+1)+1}{2}} = R^{\frac{-|\alpha|+|\beta|-\sigma}{2}}.
\end{equation*}
This makes the contribution of the mixed terms admissible.
For the contribution of the first term we can apply Lemma \ref{lem:TimeRegularityHamiltonFlow} and symbol regularity:
\begin{equation*}
|\partial_x^{\alpha} \partial_{\xi}^{\beta} \partial_t^{\sigma - 1} \xi^t \partial_{\xi} a| \lesssim R^{\frac{-|\alpha|+|\beta|-(\sigma -1) -1}{2}} = R^{\frac{-|\alpha|+|\beta|-\sigma}{2}}.
\end{equation*}

It remains to treat the final term.
The expansion is reminiscent of the proof of Lemma \ref{lem:HamiltonFlowMixedRegularityII}. We obtain for $\partial_t^{\sigma-1} a$ by the chain rule:
\begin{equation*}
\begin{split}
&\sum_{\kappa_1,\kappa_2, \kappa_3} \big( \partial_x^{\kappa_1} \partial_{\xi}^{\kappa_2} \partial_t^{\kappa_3} a \big) \big[ \prod_{k=1}^{|\kappa_1|} \partial_t^{\kappa_1^k} x^t \prod_{k=1}^{|\kappa_2|} \partial_t^{\kappa_2^k} \xi^t \big]; \quad 0 \leq |\kappa_1|, |\kappa_2|,|\kappa_3| \leq \sigma-1, \\ 
&\quad \quad  \sum_{k=1}^{|\kappa_1|} \kappa_1^k + \sum_{k=1}^{|\kappa_2|} \kappa_2^k + \kappa_3  = \sigma-1.
\end{split}
\end{equation*}
Next, we apply $\partial_x^{\alpha}$, which leads to terms
\begin{equation*}
\begin{split}
&\quad \sum_{\substack{\alpha_1,\alpha_2, \\ \alpha_{11}, \alpha_{12}}} (\partial_x^{\alpha_{11}} \partial_{\xi}^{\alpha_{12}} \partial_x^{\kappa_1} \partial_{\xi}^{\kappa_2} \partial_t^{\kappa_3} a) \cdot \big[ \prod_{k=1}^{|\alpha_{11}|} \partial_x^{\alpha_{11}^k} x^t \prod_{k=1}^{|\alpha_{12}|} \partial_x^{\alpha_{12}^k} \xi^t  \big] \cdot \partial_x^{\alpha_{2}} \big[ \prod_{k=1}^{|\kappa_1|} \partial_t^{\kappa_1^k} x^t \prod_{k=1}^{|\kappa_2|} \partial_t^{\kappa_2^k} \xi^t \big] \\
&= \sum_{\substack{\alpha_1,\alpha_2, \\ \alpha_{11}, \alpha_{12}}} (\partial_x^{\alpha_{11}} \partial_{\xi}^{\alpha_{12}} \partial_x^{\kappa_1} \partial_{\xi}^{\kappa_2} \partial_t^{\kappa_3} a) \cdot \big[ \prod_{k=1}^{|\alpha_{11}|} \partial_x^{\alpha_{11}^k} x^t \prod_{k=1}^{|\alpha_{12}|} \partial_x^{\alpha_{12}^k} \xi^t \big] \\
&\quad \quad \cdot \big[ \prod_{k=1}^{|\kappa_1|} \partial_t^{\kappa_1^k} \partial_x^{\alpha_{21}^k} x^t \prod_{k=1}^{|\kappa_2|} \partial_t^{\kappa_2^k} \partial_x^{\alpha_{22}^k} \xi^t \big]
\end{split}
\end{equation*}
with
\begin{equation*}
    \alpha_1 + \alpha_2 = \alpha, \, \sum_{k=1}^{|\alpha_{11}|} |\alpha_{11}^k| + \sum_{k=1}^{|\alpha_{12}|} |\alpha_{12}^k| = |\alpha_{1}|, \, \sum_{k=1}^{|\kappa_1|} |\alpha_{21}^k | + \sum_{k=1}^{|\kappa_2|} |\alpha_{22}^k| = |\alpha_2|.
\end{equation*}

Finally, we apply $\partial_{\xi}^{\beta}$ to the above terms to find
\begin{equation*}
\begin{split}
&\partial_{\xi}^{\beta_1} (\partial_x^{\alpha_{11}} \partial_{\xi}^{\beta_{12}} \partial_x^{\kappa_1} \partial_{\xi}^{\kappa_2} \partial_t^{\kappa_3}  a) \cdot \partial_{\xi}^{\beta_2} \big[ \prod_{k=1}^{|\alpha_{11}|} \partial_x^{\alpha_{11}^k} x^t \prod_{k=1}^{|\alpha_{12}|} \partial_x^{\alpha_{12}^k} \xi^t \big] \\
&\quad \quad \cdot \partial_{\xi}^{\beta_3} \big[ \prod_{k=1}^{|\kappa_1|} \partial_t^{\kappa_1^k} \partial_x^{\alpha_{21}^k} x^t \prod_{k=1}^{|\kappa_2|} \partial_t^{\kappa_2^k} \partial_x^{\alpha_{22}^k} \xi^t \big] \\
&= \sum (\partial_x^{\beta_{11}} \partial_{\xi}^{\beta_{12}} \partial_x^{\alpha_{11}} \partial_{\xi}^{\alpha_{12}} \partial_x^{\kappa_1} \partial_{\xi}^{\kappa_2} \partial_t^{\kappa_3} a) \cdot \big( \prod_{k=1}^{|\beta_{11}|} \partial_{\xi}^{\beta_{11}^k} x^t \prod_{k=1}^{|\beta_{12}|} \partial_{\xi}^{\beta_{12}^k} \xi^t \big) \\
&\quad \cdot \big( \prod_{k=1}^{|\alpha_{11}|} \partial_{\xi}^{\beta_{21}^k} \partial_{x}^{\alpha_{11}^k} x^t \prod_{k=1}^{|\alpha_{12}|} \partial_{\xi}^{\beta_{22}^k} \partial_x^{\alpha_{12}^k} \xi^t \big) \cdot \big( \prod_{k=1}^{|\kappa_1|} \partial_{\xi}^{\beta_{31}^k} \partial_t^{\kappa_1^k} \partial_x^{\alpha_{21}^k} x^t \prod_{k=1}^{|\kappa_2|} \partial_{\xi}^{\beta_{32}^k} \partial_t^{\kappa_2^k} \partial_x^{\alpha_{22}^k} \xi^t \big).
\end{split}
\end{equation*}
The conditions on $\beta$ read
\begin{itemize}
\item $\beta = \beta_1 + \beta_2 + \beta_3$, 
\item $\sum_{k=1}^{|\beta_{11}|} |\beta_{11}^k| + \sum_{k=1}^{|\beta_{12}|} |\beta_{12}^k| = |\beta_1|$, 
\item $\sum_{k=1}^{|\alpha_{11}|} |\beta_{21}^k| + \sum_{k=1}^{|\alpha_{12}|} |\beta_{22}^k| = |\beta_2|$, 
\item $\sum_{k=1}^{|\kappa_1|} |\beta_{31}^k| + \sum_{k=1}^{|\kappa_2|} |\beta_{32}^k| = |\beta_{3}|$.
\end{itemize}

We obtain the $R$-dependent bounds:
\begin{equation}
\label{eq:PhaseFunctionBound}
\begin{split}
|\cdot|_{\alpha,\beta,\sigma} &\lesssim R^{-|\beta_{11}| - |\alpha_{11}| - |\kappa_1| - |\kappa_3|}  R^{\frac{(|\alpha_{11}| + |\beta_{11}| + |\kappa_1| + |\kappa_3| - 2)_+}{2}} \\
&\quad \times R^{\sum_{k=1}^{|\beta_{11}|} \frac{|\beta_{11}^k| - 1}{2}} R^{\sum_{k=1}^{|\beta_{12}|} \frac{|\beta_{12}^k|-1}{2}} \\
&\quad \times R^{\sum_{k=1}^{|\alpha_{11}|} \frac{|\beta_{21}^k| - |\alpha_{11}^k| + 1}{2}} R^{\sum_{k=1}^{|\alpha_{12}|} \frac{|\beta_{22}^k|- |\alpha_{12}^k| - 1}{2}} \\
&\quad \times R^{\sum_{k=1}^{|\kappa_1|} \frac{|\beta_{31}^k| - |\kappa_1^k| - |\alpha_{12}^k| + 1}{2}} 
 R^{\sum_{k=1}^{|\kappa_2|} \frac{|\beta_{32}^k| - |\kappa_2^k| - |\alpha_{22}^k| - 1}{2}} \\
&\lesssim R^{-|\beta_{11}| - |\alpha_{11}| - |\kappa_1| - |\kappa_3|} R^{\frac{(|\alpha_{11}| + |\beta_{11}| + |\kappa_1| + |\kappa_3| - 2)_+}{2}} R^{\frac{|\beta_1|}{2} + \frac{|\beta_{11}|}{2} - \frac{|\beta_{12}|}{2}}  \\
&\quad \times R^{\frac{|\beta_2|}{2} - \frac{|\alpha_1|}{2} + \frac{|\alpha_{11}|}{2} - \frac{|\alpha_{12}|}{2}} R^{\frac{|\beta_3|}{2}-\frac{|\alpha_2|}{2} - \frac{\sum_k |\kappa_1^k|}{2} - \frac{\sum |\kappa_2^k|}{2} +  \frac{|\kappa_1|}{2}-\frac{|\kappa_2|}{2}}.
\end{split}
\end{equation}

In case $|\alpha_{11}|+|\beta_{11}| + |\kappa_1| + |\kappa_3| \geq 2$, the expression simplifies to
\begin{equation*}
| \cdot |_{\alpha,\beta,\sigma} \lesssim R^{\frac{-|\alpha|+|\beta|-\sigma-1}{2}},
\end{equation*}
which yields an acceptable contribution.

\smallskip

\textbf{Case A:} $|\beta_{11}|=|\alpha_{11}|=|\kappa_1| = |\kappa_3|= 0$.

The above simplifies to
\begin{equation*}
| \cdot |_{\alpha,\beta,\sigma} \lesssim R^{\frac{-|\alpha|+|\beta|-|\sigma|+1}{2} - \frac{|\kappa_2|}{2} - \frac{|\alpha_{12}|}{2} - \frac{|\beta_{12}|}{2}}.
\end{equation*}

\textbf{Case A.I:} For $|\kappa_2| + |\alpha_{12}| + |\beta_{12}| \geq 1$, the estimate suffices.

\textbf{Case A.II:} $|\kappa_2| = 0$, $|\alpha_{12}| = |\beta_{12}| = 0$. In this case we find $\sigma = 1$ and $|\alpha| = |\beta| = 0$, which is not considered here as this amounts to only time derivatives, which was handled above.
\smallskip

Finally, let us turn to $\sigma = 0$ and $(|\alpha|,|\beta|) \neq (1,0)$ as the case $(|\alpha|,|\beta|) = (1,0)$ was handled above.
We consider again
\begin{equation*}
F(t) = [ \xi^t \partial_x^{\alpha} \partial_{\xi}^{\beta} x^t - \partial_x^{\alpha} \partial_{\xi}^{\beta} \psi(x,t,\xi) ]_{x=x_0,\xi = \xi_0}.
\end{equation*}
It seems difficult to estimate this directly. So, we observe that $F(0)= 0$ and estimate the time-derivative by $|F'(t)| \lesssim R^{\frac{-|\alpha|+|\beta|-2}{2}}$.
We compute (and omit the evaluation to ease notation)
\begin{equation}
\label{eq:FPrime}
\begin{split}
F'(t) &= - \partial_x a \cdot \partial_x^{\alpha} \partial_{\xi}^{\beta} x^t + \xi^t \partial_x^{\alpha} \partial_{\xi}^{\beta} \partial_t x^t - \partial_x^{\alpha} \partial_{\xi}^{\beta} [ \xi^t \partial_{\xi} a] + \partial_x^{\alpha} \partial_{\xi}^{\beta} a \\
&= - \partial_x a \partial_x^{\alpha} \partial_{\xi}^{\beta} x^t - \sum_{(*)} (\partial_x^{\alpha_1} \partial_{\xi}^{\beta_1} \xi^t) (\partial_x^{\alpha_2} \partial_{\xi}^{\beta_2} (\partial_{\xi} a)) + \partial_x^{\alpha} \partial_{\xi}^{\beta} a.
\end{split}
\end{equation}
The summation restricted to $(*)$ refers to $\alpha_1,\beta_1,\alpha_2,\beta_2$ with $|\alpha_1|+|\beta_1| \neq 0$, noting that the complementary term cancels the second term of the first line.

In the following we expand $\partial_x^{\alpha} \partial_{\xi}^{\beta} a$ and see that the terms, which do not cancel with the first and second term, contribute only to order $R^{\frac{-|\alpha|+|\beta|-2}{2}}$.

We treat $\beta = 0$ and $|\beta| \geq 1$ separately. We first consider $\beta = 0$. In this case we expand
\begin{equation*}
\begin{split}
    \partial_x^{\alpha} \partial_{\xi}^{\beta} a &= \partial_x^{\alpha} a = \sum_{\alpha_1,\alpha_2} \partial_x^{\alpha_1} \partial_{\xi}^{\alpha_2} \prod_{k=1}^{|\alpha_1|} \partial_x^{\alpha_1^k} x^t \prod_{k=1}^{|\alpha_2|} \partial_x^{\alpha_2^k} \xi^t \\
    &= \partial_x a \partial_x^{\alpha} x^t + \partial_{\xi} a \partial_x^{\alpha} \xi^t + \sum_{\substack{\alpha_1,\alpha_2,\\|\alpha_1|+|\alpha_2| \geq 2}} (\partial_x^{\alpha_1} \partial_{\xi}^{\alpha_2} a) \prod_{k=1}^{|\alpha_1|} \partial_x^{\alpha_1^k } x^t  \prod_{k=1}^{|\alpha_2|} \partial_x^{\alpha_2^k} \xi^t.
\end{split}    
\end{equation*}
The first and second expression cancels with the first and second term in \eqref{eq:FPrime}. For the third expression we estimate by the symbol regularity and Lemma \ref{lem:TimeRegularityHamiltonFlow}:
\begin{equation*}
        \lesssim R^{-|\alpha_1|} R^{\frac{(|\alpha_1|-2)_+}{2}} R^{\sum_{k=1}^{|\alpha_1|} \frac{-|\alpha_1^k|}{2}+\frac{1}{2}} R^{\sum_{k=1}^{|\alpha_2|} \frac{-|\alpha_2^k|}{2} - \frac{1}{2}}.
\end{equation*}
For $|\alpha_1| \geq 2$ this is acceptable by symbol regularity. For $\alpha_1 = 0$ this is $\lesssim R^{-\frac{|\alpha|}{2}-\frac{|\alpha_2|}{2}}$, which is acceptable because $|\alpha_2| \geq 2$. For $|\alpha_1| = 1$ we find $\lesssim R^{-1} R^{-\frac{|\alpha|}{2}} R^{\frac{1}{2}} R^{-\frac{|\alpha_2|}{2}}$, again acceptable by $|\alpha_2| \geq 2$. 

\smallskip

We turn to the case $|\beta| \geq 1$, for which we expand 
\begin{equation*}
\partial_{\xi}^{\beta} a = \sum_{\beta_1,\beta_2} \partial_x^{\beta_1} \partial_{\xi}^{\beta_2} a \cdot \prod_{i=1}^{|\beta_1|} \partial_{\xi}^{\beta_1^k} x^t \prod_{k=1}^{|\beta_2|} \partial_{\xi}^{\beta_2} \xi^t \text{ with } \sum_k|\beta_1^k| + \sum_j |\beta_2^j| = |\beta|.
\end{equation*}

This is split as
\begin{equation}
\label{eq:FPrimeExpAux}
    (\partial_{\xi} a) \partial_{\xi}^{\beta} \xi^t + (\partial_x a) \partial_{\xi}^{\beta} x^t + \sum_{|\beta_1|+|\beta_2| \geq 2} (\partial_{x}^{\beta_1} \partial_{\xi}^{\beta_2} a) \big( \prod_{k=1}^{|\beta_1|} x^t \big) \big( \prod_{k=1}^{|\beta_2|} \partial_{\xi}^{\beta_2} \xi^t \big).
\end{equation}
It remains to apply $\partial_x^{\alpha}$: For the first two terms we find
\begin{equation}
\label{eq:FPrimeAux}
    \sum_{\alpha_1,\alpha_2} \partial_x^{\alpha_1} (\partial_\xi a) \partial_x^{\alpha_2} \partial_{\xi}^{\beta} \xi^t + (\partial_x^{\alpha_1} (\partial_x a)) \partial_{x}^{\alpha_2} \partial_{\xi}^{\beta} x^t.
\end{equation}
The terms with $|\alpha_1| = 0$ in \eqref{eq:FPrimeAux} are cancelled by the first term in \eqref{eq:FPrime} and the second term in \eqref{eq:FPrime} with the condition on the summation $|\alpha_2| + |\beta_2| = 0$.

\textbf{First expression in \eqref{eq:FPrimeAux}:} The single terms are given by
\begin{equation*}
    \partial_x^{\alpha_1} (\partial_{\xi} a) (\partial_x^{\alpha_2} \partial_{\xi}^{\beta} \xi^t) = \sum_{\alpha_{11},\alpha_{12}} (\partial_{x}^{\alpha_{11}} \partial_{\xi}^{\alpha_{12}} \partial_{\xi} a) \big( \prod_{k=1}^{|\alpha_{11}} \partial_x^{\alpha_{11}^k} x^t \big) \big( \prod_{k=1}^{|\alpha_{12}|} (\partial_x^{\alpha_{12}^k} \xi^t) \big( \partial_x^{\alpha_2} \partial_{\xi}^{\beta} \xi^t \big).
\end{equation*}
We have the conditions $|\alpha_{11}| + |\alpha_{12}| \geq 1$, $\sum_{k=1}^{|\alpha_{11}|} |\alpha_{11}^k| + \sum_{k=1}^{|\alpha_{12}^k|} |\alpha_{12}^k| = |\alpha_1|$. We estimate a single term $|\cdot|_{\alpha,\beta}$ by symbol regularity and Lemma \ref{lem:TimeRegularityHamiltonFlow}:
\begin{equation*}
\begin{split}
|\cdot|_{\alpha,\beta}| &\lesssim R^{-|\alpha_{11}|} R^{\frac{(|\alpha_{11}|-2)_+}{2}} R^{\sum_{k=1}^{|\alpha_{11}|} (- \frac{|\alpha_{11}^k|}{2} + \frac{1}{2})} R^{\sum_{k=1}^{|\alpha_{12}|} (- \frac{|\alpha_{12}^k|}{2} - \frac{1}{2} )} R^{\frac{-|\alpha_2|+|\beta|-1}{2}} \\
&\lesssim R^{-|\alpha_{11}|} R^{\frac{(|\alpha_{11}|-2)_+}{2}} R^{-\frac{|\alpha_1|}{2}+\frac{|\alpha_{11}|}{2}-\frac{|\alpha_{12}|}{2}} R^{\frac{-|\alpha_2|+|\beta|-1}{2}}.
\end{split}
\end{equation*}
Now we can conclude with a case-by-case analysis. For $|\alpha_{11}| \geq 2$ the expression is found to be $|\cdot|_{\alpha,\beta} \lesssim R^{\frac{-|\alpha|+|\beta|-2}{2}}$. For $|\alpha_{11}| = 0$ we find for a single term
\begin{equation*}
   |\cdot|_{\alpha,\beta} \lesssim R^{\sum_{k=1}^{|\alpha_{12}|} (- \frac{|\alpha_{12}^k|}{2} - \frac{1}{2})} R^{\frac{-|\alpha_2|+|\beta|-1}{2}} \lesssim R^{\frac{-|\alpha|+|\beta|-2}{2}}.
\end{equation*}
In the ultimate estimate we used $|\alpha_1| \geq 1$ by assumption such that $|\alpha_{12}| \geq 1$.
Similarly, the term with $|\alpha_{11}| = 1$ can be estimated:
\begin{equation*}
    \begin{split}
    |\cdot|_{\alpha,\beta} &\lesssim R^{-1} R^{-\frac{|\alpha_1|}{2}} R^{\frac{1}{2}} R^{-\frac{|\alpha_{12}|}{2}} R^{\frac{-|\alpha_2|+|\beta|-1}{2}} \\
    &\lesssim R^{\frac{-|\alpha|+|\beta|-2}{2}}.
    \end{split}
\end{equation*}

\textbf{Second expression in \eqref{eq:FPrimeAux}}: we have, recalling that $|\alpha_1| \geq 1$:
\begin{equation*}
    \partial_x^{\alpha_1} (\partial_x a) \partial_{x_2}^{\alpha_2} \partial_{\xi}^{\beta} x^t = \sum_{\alpha_{11},\alpha_{12}} (\partial_{x}^{\alpha_{11}} \partial_{\xi}^{\alpha_{12}} \partial_x a) \big( \prod_{k=1}^{|\alpha_{11}|} \partial_{x}^{\alpha_{11}^k} x^t \big) \big( \prod_{k=1}^{|\alpha_{12}|} \partial_x^{\alpha_{12}^k} \xi^t \big) \big( \partial_x^{\alpha_2} \partial_{\xi}^{\beta} x^t \big).
\end{equation*}
The indices satisfy $\sum_{k=1}^{|\alpha_{11}|} |\alpha_{11}^k| + \sum_{k=1}^{|\alpha_{12}|} |\alpha_{12}^k| = |\alpha_1| \geq 1$. 
We obtain for a single term the estimate:
\begin{equation*}
\begin{split}
 |\cdot|_{\alpha,\beta} &\lesssim R^{-|\alpha_{11}|-1} R^{\frac{(|\alpha_{11}|-1)_+}{2}} R^{\frac{-|\alpha_1|+|\alpha_{11}|-|\alpha_{12}|}{2}} R^{-\frac{|\alpha_2|}{2}+\frac{|\beta|}{2}-\frac{1}{2}} \\
 &\lesssim R^{-|\alpha_{11}|-1} R^{\frac{(|\alpha_{11}|-1)_+}{2}} R^{-\frac{|\alpha|}{2}+\frac{|\alpha_{11}|}{2}-\frac{|\alpha_{12}|}{2}+\frac{|\beta|}{2}-\frac{1}{2}}.
 \end{split}
\end{equation*}
It is straightforward that $|\alpha_{11}| \geq 1$ gives an acceptable contribution, and we check that $\alpha_{11} = 0$ likewise.

We have taken care of the first and second term in \eqref{eq:FPrimeExpAux}. It remains to estimate the third term, for which holds $|\beta_1|+|\beta_2| \geq 2$:
\begin{equation*}
    \begin{split}
        &\quad \partial_x^{\alpha} \big[ (\partial_x^{\beta_1} \partial_{\xi}^{\beta_2} a) \big( \prod_{k=1}^{|\beta_1|} \partial_{\xi}^{\beta_1^k} x^t \big) \big( \prod_{k=1}^{|\beta_2|} \partial_{\xi}^{\beta_2^k} \xi^t \big) \big] \\
        &= \sum_{\alpha_{11},\alpha_{12},\ldots} (\partial_x^{\alpha_{11}} \partial_{\xi}^{\alpha_{12}} \partial_x^{\beta_1} \partial_{\xi}^{\beta_2} a) \prod_{k=1}^{|\alpha_{11}|} \partial_x^{\alpha_{11}^k} x^t \prod_{k=1}^{|\alpha_{12}|} \partial_x^{\alpha_{12}^k} \xi^t \prod_{k=1}^{|\beta_1|} (\partial_x^{\alpha_{21}^k} \partial_{\xi}^{\beta_1^k} x^t \big) \prod_{k=1}^{|\beta_2|} \big( \partial_x^{\alpha_{22}^k} \partial_{\xi}^{\beta_2^k} \xi^t \big)
    \end{split}
\end{equation*}
and additionally,
\begin{equation*}
|\alpha_1|+|\alpha_2| = |\alpha|, \, \sum_k|\alpha_{11}^k| + \sum_k |\alpha_{12}^k| = |\alpha_1|, \, \sum_{k=1}^{|\beta_1|} |\alpha_{21}^k| + \sum_{k=1}^{|\beta_2|} |\alpha_{22}^k| = |\alpha_2|.
\end{equation*}
This allows us to estimate a single term as
\begin{equation*}
    |\cdot|_{\alpha,\beta} \lesssim R^{-|\alpha_{11}|-|\beta_1|} R^{\frac{(|\alpha_{11}|+|\beta_1|-2)_+}{2}} R^{-\frac{|\alpha_1|}{2}} R^{\frac{|\alpha_{11}|-|\alpha_{12}|}{2}} R^{-\frac{|\alpha_2|}{2}} R^{\frac{|\beta|}{2}} R^{\frac{|\beta_1|}{2}} R^{-\frac{|\beta_2|}{2}}.
\end{equation*}
For $|\beta_1| \geq 2$ the expression is acceptable by the symbol regularity, and more generally for $|\beta_{11}|+|\alpha_{11}| \geq 2$. In case $|\beta_1| = 1$ and $|\alpha_{11}| = 0$ the estimate
\begin{equation*}
     |\cdot|_{\alpha,\beta} \lesssim R^{-1} R^{-\frac{|\alpha_1|}{2}} R^{-\frac{|\alpha_{12}|}{2}} R^{-\frac{|\alpha_2|}{2}} R^{\frac{|\beta|}{2}} R^{\frac{|\beta_1|}{2}} R^{-\frac{|\beta_2|}{2}} \lesssim R^{\frac{-|\alpha|+|\beta|}{2}} R^{-1}.
\end{equation*}
Finally, we estimate for $|\beta_1|=0$, $|\beta_2| \geq 2$
\begin{equation*}
    |\cdot|_{\alpha,\beta} \lesssim R^{-|\alpha_{11}|} R^{\frac{(|\alpha_{11}|-2)_+}{2}} R^{-\frac{|\alpha_1|}{2}} R^{\frac{|\alpha_{11}|-|\alpha_{12}|}{2}} R^{-\frac{|\alpha_2|}{2}} R^{\frac{|\beta|}{2}} R^{-1}.
\end{equation*}
From Case-By-Case analysis for $|\alpha_{11}|=0$, $|\alpha_{11}| = 1$, $|\alpha_{11}| \geq 2$, we see that the expression is always estimated $\lesssim R^{\frac{-|\alpha|}{2}+\frac{|\beta|}{2}-1}$.

\smallskip

We have proved that $|F'(t)| \lesssim R^{\frac{-|\alpha| + |\beta| + |\sigma|-2}{2}}$,
which completes the proof by integrating over $|t| \lesssim R$.
\end{proof}

\begin{proof}[Proof~of~Proposition~\ref{prop:ParametrixConstruction}]
Suppose that we want to prove \eqref{eq:RegularityGreen} for $(x,\xi) = (x_0,\xi_0)$. With Lemma \ref{lem:RegularityPhaseFunction} at hand, it suffices to show regularity and decay for the modified function at $(x_0,\xi_0)$:
\begin{equation*}
G_1(t,x,\xi,y) = C_R^2 e^{-i \xi_0^t (y-x_0^t) } e^{-i \psi(x_0,t,\xi_0)} (S(t,0) (e^{i \xi_0 (x-x_0)} \phi_{x,\xi}))(y).
\end{equation*}
We translate $G_1$ to the origin by
\begin{equation*}
\begin{split}
G_2(t,x,\xi,y) &= G_1(t,x_0+x,\xi_0+\xi,x_0^t + y) \\
&= C_R^2 e^{-i \xi_0^t \cdot y} e^{-i \psi(x_0,t,\xi_0)} (S(t,0) (e^{i \xi_0 \cdot x} \phi_{x+x_0,\xi+\xi_0}))(y+x_0^t)
\end{split}
\end{equation*}
and we shall verify \eqref{eq:RegularityGreen} for $G_2$ at the origin. We have the following lemma:
\begin{lemma}
$G_2$ solves
\begin{equation*}
(D_t + a_2^w(y,t,D_y)) G_2 = 0, \quad G_2(0) = C_{R}^2 \phi_{x,\xi} = C_R^2 e^{i \xi (x-y)} e^{- \rho(y-x)^2},
\end{equation*}
where
\begin{equation*}
a_2(y,t,\eta) = a(x_0^t+y,t, \xi_0^t + \eta) - a(x_0^t,t,\xi_0^t) - y a_x(x_0^t,t,\xi_0^t) - \eta a_\xi(x_0^t,t,\xi_0^t).
\end{equation*}    
\end{lemma}
\begin{proof}
    Verifying the claimed initial value is straightforward. Next, we compute
    \begin{equation}
    \label{eq:ExpansionTimeDerivativeModGreen}
        \begin{split}
            (-i \partial_t G_2) &= C_R^2 e^{-i \xi_0^t \cdot y} e^{-i \psi(x_0,t,\xi_0)} \big( \big[ (\partial_x a(x_0^t,t,\xi_0^t) \cdot y - [ \xi_0^t \partial_{\xi} a(x_0^t,t,\xi_0^t) - a(x_0^t,t,\xi_0^t)]) \\
            &\quad \times (S(t,0) (e^{i \xi_0 \cdot x} \phi_{x+x_0,\xi+\xi_0}))(y+x_0^t) \big] \\
            \quad &+ (-a^w (y+x_0^t,t,D_y))[S(t,0) e^{i \xi_0 \cdot x} \phi_{x+x_0,\xi+\xi_0}](y+x_0^t) \\
            \quad &+ (-i \partial_y)[S(t,0) e^{i \xi_0 \cdot x} \phi_{x+x_0,\xi+\xi_0}](y+x_0^t) \partial_{\xi} a(x_0^t,t,\xi_0^t) \big).
        \end{split}
    \end{equation}
    For the second summand on the rhs we find
    \begin{equation}
        \label{eq:ModGreenII}
        \begin{split}
        &\quad C_R^2 e^{-i \xi_0^t \cdot y} e^{-i \psi(x_0,t,\xi_0)} (-a^w(y+x_0^t,t,D_y)[S(t,0) e^{i \xi_0 \cdot x} \phi_{x+x_0,\xi+\xi_0}](y+x_0^t) \\
        &= - a^w(y+x_0^t,t,\eta+\xi_0^t)[ C_R^2 e^{-i \xi_0^t \cdot y} e^{-i \psi(x_0,t,\xi_0)} S(t,0) (e^{i \xi_0 \cdot x} \phi_{x+x_0,\xi+\xi_0})](y+x_0^t).
        \end{split}
    \end{equation}
    For the third summand on the rhs we obtain
    \begin{equation}
        \label{eq:ModGreenIII}
        \begin{split}
            &\quad C_R^2 e^{-i \xi_0^t \cdot y} e^{-i \psi(x_0,t,\xi_0)} (-i \partial_y) [S(t,0) e^{i \xi_0 \cdot x} \phi_{x+x_0,\xi+\xi_0}](y+x_0^t) \partial_{\xi} a(x_0^t,t,\xi_0^t) \\
            &= \partial_{\xi} a(x_0^t,t,\xi_0^t) (-i \partial_y) [C_R^2 e^{-i \xi_0^t \cdot y} e^{-i \psi(x_0,t,\xi_0)} S(t,0) \big[ e^{i \xi_0 \cdot x} \phi_{x+x_0,\xi+\xi_0}](y+x_0^t)]
        \end{split}
    \end{equation}
    Taking the above three equations together yields that $G_2$ satisfies the claimed evolution equation satisfied by $G_2$.
\end{proof}

For this function it suffices to prove for that for times $|t| \lesssim R$ we have
\begin{equation}
\label{eq:ModifiedGreenBounds}
\| | (R^{-\frac{1}{2}} y)^{\gamma} \partial_x^{\alpha} \partial_t^{\sigma} \partial_{\xi}^{\beta} \partial_y^{\nu} G_2(t,x,\xi,y)| \big|_{x=0,\xi = 0} \|_2 \leq c_{\gamma,\alpha,\beta,\nu,\sigma} R^{\frac{-|\alpha| + |\beta| - |\nu| - |\sigma|}{2}} C_R^2 \| \phi_{x,\xi} \|_2. 
\end{equation}
The notation $c_{\gamma,\alpha,\beta,\nu,\sigma}$ indicates that for any $N$ such that $|\gamma|+|\alpha|+|\beta|+|\nu|+|\sigma| \leq N$ we can find a constant $c$ such that \eqref{eq:ModifiedGreenBounds} holds uniformly in $R \geq 1$. 
The proof of Proposition \ref{prop:ParametrixConstruction} will be complete, once we have established \eqref{eq:ModifiedGreenBounds}.
\end{proof}

\begin{proof}[Proof~of~\eqref{eq:ModifiedGreenBounds}]
We note that $\phi_{x,\xi}$ is a Schwartz function with derivatives in $x$ and $\xi$ at the origin satisfying the bounds
\begin{equation*}
\| \partial_x^{\alpha} \partial_{\xi}^{\beta} \phi_{x,\xi} \vert_{x=0,\xi=0} \|_{L^2} \lesssim R^{(-|\alpha|+|\beta|)/2} \| \phi_{0,0} \|_2.
\end{equation*}
Here it is important to have centered phase space variables. This handles the derivatives in $x$ and $\xi$. It remains to check derivatives in $y$, $t$, and spatial confinement.

To show \eqref{eq:ModifiedGreenBounds}, we use energy estimates for $L^2$-normalized Schwartz functions
\begin{equation*}
(D_t + a_2^w(y,D)) v = 0, \quad v(0) = v_0,
\end{equation*}
and prove
\begin{equation*}
\| (R^{\frac{1}{2}} \partial_t)^{\gamma} (R^{-\frac{1}{2}} y)^{\alpha} (R^{\frac{1}{2}} \partial_y)^{\beta} v(t) \|_2 \lesssim \sum_{|\alpha'| + |\beta'| \leq |\alpha| + |\beta| + 2 |\gamma|} \| (R^{-\frac{1}{2}} y)^{\alpha'} (R^{\frac{1}{2}} \partial_y)^{\beta'} v(0) \|_2.
\end{equation*}

Recall that $|T| \leq R$.
In the first step we show
\begin{equation}
\label{eq:EnergyArgumentsI}
\| (R^{-\frac{1}{2}} y)^{\alpha} (R^{\frac{1}{2}} \partial_{y})^{\beta} v(t) \|_2 \lesssim \sum_{|\bar{\alpha}| + |\bar{\beta}| \leq |\alpha| + |\beta|} \| (R^{-\frac{1}{2}} y)^{\bar{\alpha}} (R^{\frac{1}{2}} \partial_{y})^{\bar{\beta}} v_0 \|_2.
\end{equation}
Time regularity is considered further below. We prove the estimate by induction on $|\alpha| + |\beta|$. For $|\alpha| + |\beta| = 1$ we shall prove that for 
\begin{equation*}
A(t) = \| R^{-\frac{1}{2}} y v(t) \|_2^2 + \| R^{\frac{1}{2}} \partial_{\eta} v \|_2^2 + \| v(t) \|_2^2 \text{ it holds } \frac{d}{dt} A(t) \lesssim R^{-1} A(t).
\end{equation*}
and the claim follows from Gr\o nwall's inequality.
The above will follow from bounds
\begin{align}
\label{eq:EnergyBoundI}
\| (D_t + a_2^w) (R^{-\frac{1}{2}} y v) \|_2 &\lesssim R^{-1} (\| v(t) \|_2 + \| R^{-\frac{1}{2}} y v(t) \|_2 + \| R^{\frac{1}{2}} \partial_{\eta} v(t) \|_2), \\
\label{eq:EnergyBoundII}
\| (D_t + a_2^w) (R^{\frac{1}{2}} \partial_{\eta} v)(t) \|_2 &\lesssim R^{-1} ( \| v(t) \|_2 + \| R^{-\frac{1}{2}} y v(t) \|_2  + \| R^{\frac{1}{2}} \partial_{\eta} v(t) \|_2).
\end{align}
Indeed, the first bound implies
\begin{equation*}
\begin{split}
\frac{d}{dt} \| R^{-\frac{1}{2}} y v(t) \|_2^2 &= 2 \Re ( \langle R^{-\frac{1}{2}} y v(t) , \big( \frac{d}{dt} + i a_2^w \big) (R^{- \frac{1}{2}} y v(t)) \rangle ) \\
&\lesssim \| R^{-\frac{1}{2}} y v(t) \|_2 (R^{-1} ( \| v(t) \|_2 + \| R^{-\frac{1}{2}} y v(t) \|_2 + \| R^{\frac{1}{2}} \partial_{\eta} v(t) \|_2 ) ) \\
&\lesssim R^{-1} A(t).
\end{split}
\end{equation*}
The first equation follows from self-adjointness of $a_2^w$, the second one from Cauchy-Schwarz inequality and \eqref{eq:EnergyBoundI}. Similarly, the second bound implies an estimate $\partial_t \| R^{\frac{1}{2}} \partial_{\eta} v(t) \|_2^2 \lesssim R^{-1} A(t)$.

To verify \eqref{eq:EnergyBoundI} and \eqref{eq:EnergyBoundII}, we compute the equations satisfied by $R^{-\frac{1}{2}} y v$ and $R^{\frac{1}{2}} \partial_y v$. By taking commutators for the Weyl quantization, like in \cite[p.~238]{KochTataru05} we have
\begin{equation}
\label{eq:EnergyCommutator}
\left\{ \begin{array}{cl}
(D_t + a_{2}^w) (R^{-\frac{1}{2}} y v) &= - i (\partial_{\eta} a_{2})^w(y,D_y) R^{-\frac{1}{2}} v, \\
(D_t + a_{2}^w)(R^{\frac{1}{2}} \partial_y v) &= i R^{\frac{1}{2}} (\partial_y a_{2})^w(y,D_y) v.
\end{array} \right.
\end{equation}

We compute by the fundamental theorem of calculus 
\begin{equation}
\label{eq:SymbolExpansionSecondOrderEta}
\begin{split}
(\partial_{\eta} a_2)(y,\eta) &= a_{\eta}(t,x_0^t + y, \xi_0^t + \eta) - a_{\eta}(t,x_0^t, \xi_0^t) \\
&= \int_0^1 a_{\eta y}(t,x_0^t + sy, \xi_0^t + s \eta) y + a_{\eta \eta}(t,x_0^t + s y, \xi_0^t + s \eta) \cdot \eta ds
\end{split}
\end{equation}
and note that we can equivalently prove bounds for the standard quantization $\| \partial_{\eta} a_2(y,D) R^{-\frac{1}{2}} v \|_2 + \| R^{\frac{1}{2}} \partial_y a_2(y,D) v\|_2$ because the difference to Weyl quantization is estimated by
\begin{equation*}
R^{-\frac{1}{2}} \| (\langle D_y, D_\eta \rangle \partial_{\eta} a_2) v \|_{L^2} 
\lesssim R^{-1} ( \|v(t) \|_2 + \| R^{-\frac{1}{2}} y v(t) \|_2 + \| R^{\frac{1}{2}} \partial_{\eta} v(t) \|_2),
\end{equation*}
and similarly for the second term.

We plug in the expansion in standard quantization and obtain
\begin{equation}
\label{eq:StandardQuantizationReduction}
\begin{split}
(\partial_{\eta} a_2)(y,D_y) R^{-\frac{1}{2}} v &= C_d \int_{\R^d} e^{-i y \eta} \int_0^1 a_{\eta y}(x_0^t + sy,t, \xi_0^t + s \eta) y R^{-\frac{1}{2}} \hat{v}(\eta) ds d\eta \\
&\quad + C_d \int_{\R^d} e^{-i y \eta} \int_0^1 a_{\eta y} (x_0^t + sy,t, \xi_0^t + s \eta) \eta R^{-\frac{1}{2}} \hat{v}(\eta) ds d\eta.
\end{split}
\end{equation}
The first term in \eqref{eq:StandardQuantizationReduction} we write as
\begin{equation*}
\begin{split}
&\quad \int_0^1 \int_{\R^d} e^{-i y \eta} a_{\eta y}(x_0^t + sy,t, \xi_0^t + s \eta) (y R^{-\frac{1}{2}} \hat{v}(\eta)) d\eta ds \\
&= \int_0^1 \big( a_{\eta y}(x_0^t + sy,t, \xi_0^t +sD_y) R^{-\frac{1}{2}} y \big)(y,D_y) v ds.
\end{split}
\end{equation*}
We use symbol composition to write
\begin{equation*}
\begin{split}
&\quad \| ( R^{-\frac{1}{2}} y a_{\eta y}(x_0^t + sy,t,\xi_0^t + s D_y)) v \|_2   \\
&\leq \| a_{\eta y}(x_0^t + s y,t, \xi_0^t + s D_y) \circ (R^{-\frac{1}{2}} y) v \|_2 + R^{-\frac{1}{2}} \mathcal{O}( \| a_{\eta y y }(x_0^t + sy,t,\xi_0^t + s D_y) v \|_2) \\
&\lesssim R^{-1} ( \| R^{-\frac{1}{2}} y v \|_2 + \| v(t) \|_2).
\end{split}
\end{equation*}

The second term in \eqref{eq:StandardQuantizationReduction} we write as
\begin{equation*}
\begin{split}
&\quad C_d \int_{\R^d} e^{-i y \eta} \int_0^1 a_{\eta \eta}(x_0^t + s y,t, \xi_0^t + s \eta) (R^{-\frac{1}{2}} \eta \hat{v}(\eta)) ds d\eta \\
&= i C_d \int_0^1 \int_{\R^d} e^{-i y \eta} a_{\eta \eta}(x_0^t +sy,t, \xi_0^t + s \eta) \big[ \widehat{\partial_y v} R^{-\frac{1}{2}} \big] d\eta dy \\
&= R^{-1} i C_d \int_0^1 \int_{\R^d} e^{-i y \eta} a_{\eta \eta}(x_0^t + sy, t, \xi_0^t+s\eta) \widehat{R^{\frac{1}{2}} \partial_y v}(\eta) d\eta ds.
\end{split}
\end{equation*}
Since $\| a_{\eta \eta}(x_0^t + s y ,t, \xi_0^t + s D_y) \|_{L^2 \to L^2} \lesssim 1$, we obtain
\begin{equation*}
\| C_d \int_{\R^d} e^{-i y \eta} \int_0^1 a_{\eta \eta}(x_0^t + s y,t, \xi_0^t + s \eta) (R^{-\frac{1}{2}} \eta \hat{v}(\eta)) ds d\eta \|_2 \lesssim R^{-1} \| R^{\frac{1}{2}} \partial_y v \|_2.
\end{equation*}
This completes the proof of \eqref{eq:EnergyBoundI}.

We turn to the proof of \eqref{eq:EnergyBoundII}. By the fundamental theorem we have
\begin{equation*}
\begin{split}
\partial_y a_2(y,\eta) &= a_y(x_0^t + y,t, \xi_0^t + \eta) - a_y(x_0^t,t,\xi_0^t) \\
&= \int_0^1 a_{yy}(x_0^t + s y,t, \xi_0^t + s \eta) y + a_{y \eta}(x_0^t+sy,t, \xi_0^t + s \eta) \eta ds.
\end{split}
\end{equation*}
Next, we follow along the above argument to show \eqref{eq:EnergyBoundII} with the obvious changes. We note that it is admissible to change to standard quantization. Next, we write
\begin{equation}
\label{eq:StandardQuantizationReductionII}
\begin{split}
R^{\frac{1}{2}} \partial_y a_2(y,D_y) v &= C_d \int_0^1 \int_{\R^d} e^{i y \cdot \eta} a_{yy}(x_0^t+sy,t,\xi_0^t+s\eta) (R^{\frac{1}{2}} y) \hat{v}(\eta) d\eta ds \\
&\quad + C_d \int_0^1 \int_{\R^d} e^{i y \cdot \eta} a_{y \eta}(x_0^t + s y,t,\xi_0^t + s \eta) (R^{\frac{1}{2}} \eta) \hat{v}(\eta) d\eta ds.
\end{split}
\end{equation}
By symbol composition, as above we find
\begin{equation*}
\begin{split}
\| R^{\frac{1}{2}} y a_{yy}(x_0^t + s y, t, \xi_0^t + s D_y) v \|_2 &\lesssim \| a_{yy}(t,x_0^t + sy,\xi_0^t+s D_y) (R^{\frac{1}{2}} y v) \|_2 \\
&\quad + R^{\frac{1}{2}} \| a_{y \eta \eta}(t,x_0^t+sy,\xi_0^t+sD_y) v \|_2 \\
&\lesssim R^{-1} (\| R^{-\frac{1}{2}} y v \|_2 + \| v \|_2 ).
\end{split}
\end{equation*}
This gives an acceptable estimate for the first term in \eqref{eq:StandardQuantizationReductionII}.

\smallskip

For the second term in \eqref{eq:StandardQuantizationReductionII} we obtain
\begin{equation*}
\begin{split}
&\quad C_d \int_{\R^d} e^{i y \eta} \int_0^1 a_{\eta y}(t,x_0^t+sy,\xi_0^t+s\eta) (R^{\frac{1}{2}} \eta \hat{v}(\eta)) ds d\eta \\
&= i C_d \int_0^1 \int_{\R^d} e^{i y \eta} a_{\eta y}(t,x_0^t+sy,\xi_0^t+s\eta) \widehat{R^{\frac{1}{2}} \partial_y v}(\eta) d\eta ds.
\end{split}
\end{equation*}
The $L^2$-norm of $a_{\eta y}(t,x_0^t+sy,\xi_0^t + s D_y)$ is bounded by $R^{-1}$, which gives the estimate of the above by $R^{-1} \| R^{\frac{1}{2}} \partial_y v \|_{L^2}$. The proof of \eqref{eq:EnergyBoundII} is complete.
\smallskip

This proves \eqref{eq:EnergyArgumentsI} for $|\alpha| + |\beta| \leq 1$. We turn to the induction step: Here we expand $(D_t + a_{2}^w) (R^{-\frac{1}{2}} y)^{\alpha} (R^{\frac{1}{2}} \partial_y)^{\beta} v$ as commutators. We commute $(D_t + a_{2}^w)$ to act on $v$ taking advantage of $(D_t + a_{2}^w)v = 0$.

In the first step by the commutator of $a_{2}^w$ and $y$ we find
\begin{equation*}
\begin{split}
&\quad \| (D_t - a_{2}^w) [(R^{-\frac{1}{2}} y) (R^{-\frac{1}{2}} y)^{\alpha-1} (R^{\frac{1}{2}} \partial_y)^{\beta} v] \|_2 \\ &\lesssim \| (R^{-\frac{1}{2}} y) (D_t + a_{2}^w) (R^{-\frac{1}{2}} y)^{\alpha-1} (R^{\frac{1}{2}} \partial_y)^{\beta} v \|_2 \\
&\quad + \| R^{-\frac{1}{2}} (\partial_{\eta} a_{2})^w (R^{-\frac{1}{2}} y)^{\alpha-1} (R^{-\frac{1}{2}} \partial_y)^{\beta} v \|_2.
\end{split}
\end{equation*}
Here we abuse notation and denote multiindices $\alpha'$ with $|\alpha'| = |\alpha|- 1$ by $\alpha -1 $.

The second term is readily estimated by the same expansion like above:
\begin{equation*}
\begin{split}
&\quad \| R^{-\frac{1}{2}} (\partial_{\eta} a_2)^w (R^{-\frac{1}{2}} y)^{\alpha'} (R^{-\frac{1}{2}} \partial)^{\beta} v \|_2 \\
&\lesssim R^{-1} ( \| (R^{-\frac{1}{2}} y) (R^{-\frac{1}{2}} y)^{\alpha-1} (R^{\frac{1}{2}} \partial_y)^{\beta} v \|_2 + \| (R^{\frac{1}{2}} \partial_y ) (R^{-\frac{1}{2}} y)^{\alpha-1} (R^{\frac{1}{2}} \partial_y)^{\beta} v \|_2 \\
&\quad + \| (R^{-\frac{1}{2}} y)^{\alpha-1} (R^{\frac{1}{2}} \partial_y)^{\beta} v \|_2) \text{ and } |\alpha| = |\alpha'| + |\alpha''|.
\end{split}
\end{equation*}

Next, we shall prove that for $\alpha'$, $\alpha''$ with $|\alpha'| \cdot |\alpha''| > 0$ we find
\begin{equation}
\label{eq:AuxEnergyEst}
\begin{split}
&\quad \| (R^{-\frac{1}{2}} y)^{\alpha'} (D_t + a_{2}^w) (R^{-\frac{1}{2}} y)^{\alpha''} (R^{\frac{1}{2}} \partial_y)^{\beta} v \|_2 \\ &\lesssim \| (R^{-\frac{1}{2}} y)^{\alpha'+1} (D_t + a_{2}^w) (R^{-\frac{1}{2}} y)^{\alpha''-1} (R^{\frac{1}{2}} \partial_y)^{\beta} v \|_2 \\
&\quad + R^{-1} \sum_{\substack{\bar{\alpha},\bar{\beta}, \\ |\bar{\alpha}| + |\bar{\beta}| \leq |\alpha| + |\beta|}} \| (R^{-\frac{1}{2}} y)^{\bar{\alpha}} (R^{\frac{1}{2}} \partial_y)^{\bar{\beta}} v \|_2. 
\end{split}
\end{equation}

To this end we induct on $|\alpha''|$ and plug in the commutator:
\begin{equation*}
[ a_2^w, R^{-\frac{1}{2}} y] = R^{-\frac{1}{2}} (\partial_{\eta} a_2)^w,
\end{equation*}
which reduces us to estimate
\begin{equation*}
\| (R^{-\frac{1}{2}} y)^{\alpha'} R^{-\frac{1}{2}} (\partial_{\eta} a_2)^w (R^{-\frac{1}{2}} y)^{\alpha''-1} (R^{\frac{1}{2}} \partial_y)^{\beta} v \|_2.
\end{equation*}
We would now like to use the above argument, but before that we commute \\ $R^{-\frac{1}{2}} (\partial_{\eta} a_2^w)$ to the end: By another commutator estimate
\begin{equation*}
[R^{-\frac{1}{2}} y, R^{-\frac{1}{2}} \partial_{\eta} a_2^w] = R^{-1} (\partial_{\eta \eta} a_2)^w,
\end{equation*}
which is $R \cdot L^2$-bounded. Further commutators with $R^{-\frac{1}{2}} y$ are better behaved. We can estimate
\begin{equation*}
\begin{split}
&\quad \| (R^{-\frac{1}{2}} y)^{\alpha'} (R^{-\frac{1}{2}} \partial_{\eta} a_2)^w (R^{-\frac{1}{2}} y)^{\alpha''_{-1}} (R^{\frac{1}{2}} \partial_y)^{\beta} v \|_2 \\ &\lesssim \| R^{-\frac{1}{2}} (\partial_{\eta} a_2)^w (R^{-\frac{1}{2}} y)^{\alpha_{-1}} (R^{\frac{1}{2}} \partial_y)^{\beta} v \|_2 + R^{-1} \sum_{\substack{\bar{\alpha},\bar{\beta}, \\ |\bar{\alpha}| + |\bar{\beta}| \leq |\alpha| + |\beta|}} \| (R^{-\frac{1}{2}} y)^{\bar{\alpha}} (R^{\frac{1}{2}} \partial_y)^{\bar{\beta}} v \|_2.
\end{split}
\end{equation*}
The first term can be estimated by the second line by means of the expansion \eqref{eq:SymbolExpansionSecondOrderEta}.

By the above, we can reduce the left hand side \eqref{eq:AuxEnergyEst} to
\begin{equation*}
\| (R^{-\frac{1}{2}} y)^{\alpha} (D_t + a_{2}^w) (R^{\frac{1}{2}} \partial_y)^{\beta} v \|_2 + \text{l.o.t.}
\end{equation*}
The strategy is to commute $(D_t - a_{2}^w)$ to act on $v$, which is annihilated. We need to compute the commutator with $R^{\frac{1}{2}} \partial_y$:
\begin{equation*}
[a_2^w, R^{\frac{1}{2}} \partial_y] = R^{\frac{1}{2}} (\partial_y a_2)^w.
\end{equation*}
We want to use the estimate from above, but this requires to commute $R^{\frac{1}{2}} (\partial_y a_2)^w$ after $(R^{-\frac{1}{2}} y)^{\alpha}$.
We have the commutators
\begin{equation*}
[ (R^{-\frac{1}{2}} y), R^{\frac{1}{2}} (\partial_y a_2)^w] =  (\partial_{y \eta} a_2)^w,
\end{equation*}
which is $R \cdot L^2$-bounded, and moreover
\begin{equation*}
[ R^{\frac{1}{2}} \partial_y, R^{\frac{1}{2}} (\partial_y a_2)^w] = R (\partial_{yy} a_2)^w,
\end{equation*}
which is $R \cdot L^2$-bounded as well, and so are further commutators with $R^{-\frac{1}{2}} y$ and $R^{\frac{1}{2}} \partial_y$. We obtain for $|\beta|= |\beta_1| + |\beta_2|$
\begin{equation*}
\begin{split}
&\quad \| ( R^{-\frac{1}{2}} y)^{\alpha} (R^{\frac{1}{2}} \partial_y)^{\beta_1} (D_t + a_{2}^w) (R^{\frac{1}{2}} \partial_y)^{\beta_2} v \|_2 \\ 
&\lesssim R^{-1} \sum_{\substack{\bar{\alpha},\bar{\beta}, \\ |\bar{\alpha}| + |\bar{\beta}| \leq |\alpha| + |\beta|}} \| (R^{-\frac{1}{2}} y)^{\bar{\alpha}} (R^{\frac{1}{2}} \partial_y)^{\bar{\beta}} v(t) \|_2.
\end{split}
\end{equation*}
This suffices for \eqref{eq:EnergyArgumentsI} by Gr\o nwall's inequality.

Finally, we turn to the time regularity. We will successively use the equation to find the estimate
\begin{equation}
\label{eq:TimeDerivativeTrade}
\| (R^{-\frac{1}{2}} y)^{\alpha} (R^{\frac{1}{2}} \partial_y)^{\beta} (R^{\frac{1}{2}} \partial_t)^{\gamma} v(t) \|_2 \lesssim \sum_{|\alpha'| + |\beta'| \leq |\alpha| + |\beta| + 2 |\gamma|} \| ( R^{-\frac{1}{2}} y)^{\alpha'} (R^{\frac{1}{2}} \partial_y)^{\beta'} v(t) \|_2.
\end{equation}
The right hand side can be handled by the energy arguments from above.
We begin with the case $\gamma =1$, $\alpha = \beta = 0$: Since $v$ solves $(D_t + a_2^w) v = 0$, we find $\| R^{\frac{1}{2}} \partial_t v \|_2 = R^{\frac{1}{2}} \| a_2^w v \|_2$. Like above we can change to standard quantization and by Taylor expansion we have
\begin{equation*}
a_2(t,y,\eta) = \mathcal{O}(y^2 \partial_y^2 a + y \eta \partial^2_{y \eta} a + \eta^2 \partial_{\eta}^2 a).
\end{equation*}
Now the estimate follows from symbol composition
\begin{equation*}
\| a_2(y,D_y) v \|_2 \lesssim R^{-\frac{1}{2}} \sum_{|\alpha| + |\beta| \leq 2} \| (R^{-\frac{1}{2}} y)^{\alpha} (R^{\frac{1}{2}} \partial_y)^{\beta} v(t) \|_2.
\end{equation*}

To evaluate higher orders, we plug in the equation to find
\begin{equation}
\label{eq:HigherOrderSimplification}
\| (R^{-\frac{1}{2}} y)^{\alpha} (R^{\frac{1}{2}} \partial_y)^{\beta} (R^{\frac{1}{2}} \partial_t)^{\gamma} v \|_2 = R^{\frac{1}{2}} \| ((R^{-\frac{1}{2}} y)^{\alpha} (R^{\frac{1}{2}} \partial_y)^{\beta} (R^{\frac{1}{2}} \partial_t)^{\gamma-1} a_2^w v \|_2.
\end{equation}
Now we have to commute $a_2^w$ to the end and need to consider commutators \\ $[R^{\frac{1}{2}} \partial_t, a_2^w] = R^{\frac{1}{2}} (\partial_t a_2)^w$. $\partial_t a_2$ is composed of three terms, which share the same structure:
\begin{equation}
\label{eq:TimeDerivativea2Expansion}
\begin{split}
&\quad \partial_t a_2(t,y,\eta) \\
&= \big( (\partial_t a)(t,x_0^t+y,\xi_0^t + \eta) - (\partial_t a)(t,x_0^t,\xi_0^t) - y \partial^2_{tx} a(t,x_0^t, \xi_0^t) - \eta \partial^2_{t \eta} a(t,x_0^t,\xi_0^t) \big) \\
&\quad + \big( \partial_x a(t,x_0^t+y, \xi_0^t + \eta) - \partial_x a(t,x_0^t, \xi_0^t) \\
&\quad \quad - y \partial^2_{xx} a(t,x_0^t,\xi_0^t) - \eta \partial^2_{x \eta} a(t,x_0^t,\xi_0^t) \big) a_{\eta}(t,x_0^t,\xi_0^t) \\
&\quad + \big( \partial_{\eta} a(t,x_0^t+y,\xi_0^t+\eta) - \partial_{\eta} a(t,x_0^t,\xi_0^t) \\
&\quad \quad - y \partial^2_{x \eta} a(t,x_0^t,\xi_0^t) - \eta \partial^2_{\eta \eta} a(t,x_0^t,\xi_0^t) \big) (- a_x(t,x_0^t,\xi_0^t)).
\end{split}
\end{equation}
Note that for all three terms the symbols vanish of second order in $(y,\eta)$; here it suffices to use the first order Taylor expansion.
By the fundamental theorem we have the identity for the symbols:
\begin{equation*}
\begin{split}
(A) &= \partial_t a(t,x_0^t+y, \xi_0^t + \eta) - \partial_t a(t,x_0^t, \xi_0^t) \\
&= \int_0^1 \partial^2_{tx} a(t,x_0^t+sy,\xi_0^t + s \eta) \cdot y + \partial^2_{t \eta} a(t,x_0^t+sy,\xi_0^t+s \eta) \cdot \eta ds.
\end{split}
\end{equation*}

By changing like above to standard quantization and symbol composition, this gives an estimate:
\begin{equation*}
\| (A)^w R^{\frac{1}{2}} v \|_2 \lesssim R^{-1} ( \| R^{-\frac{1}{2}} y  v \|_{L^2} + \| (R^{\frac{1}{2}} \partial_y ) v \|_{L^2} + \| v \|_{L^2}).
\end{equation*}
The third and fourth term in the first line of the expansion in \eqref{eq:TimeDerivativea2Expansion} can be estimated directly by the regularity of $a$:
\begin{equation*}
 \| \big[ R^{\frac{1}{2}} y \partial^2_{tx} a(t,x_0^t,\xi_0^t) - (R^{\frac{1}{2}} \eta) (\partial^2_{t \xi} a)(t,x_0^t,\xi_0^t) \big]^w v \|_{L^2} \lesssim R^{-1}( \| (R^{-\frac{1}{2}} y) v \|_2 + \| (R^{\frac{1}{2}} \partial_y) v \|_2).
\end{equation*}

The second line in the expansion is estimated like $(A)$. Similarly, for the third line in the expansion we can argue like for the third and fourth term in the first line. For the penultimate line we write by the mean value theorem:
\begin{equation*}
\begin{split}
&\quad \partial_{\eta} a(t,x_0^t+y,\xi_0^t + \eta) - \partial_{\eta} a(t,x_0^t,\xi_0^t) \\
&= \int_0^1 \partial^2_{x \eta} a(t,x_0^t+sy,\xi_0^t + s \eta) \cdot y + \partial^2_{\eta \eta} a(t,x_0^t+sy,\xi_0^t + s \eta) \cdot \eta ds.
\end{split}
\end{equation*}
Then we change to standard quantization and argue like above by symbol composition
\begin{equation*}
\begin{split}
&\quad \| R^{\frac{1}{2}} (\partial_{\eta} a(t,x_0^t + y, \xi_0^t + \eta) - \partial_{\eta} a(t,x_0^t,\xi_0^t) (a_x(t,x_0^t,\xi_0^t)))^w v \|_2 \\
&\lesssim R^{-1} ( \| R^{-\frac{1}{2}} y v \|_2 + \| R^{\frac{1}{2}} \partial_y v \|_2 + \| v \|_2).
\end{split}
\end{equation*} 
The last line in the expansion \eqref{eq:TimeDerivativea2Expansion} can be estimated directly again. 

Coming back to \eqref{eq:HigherOrderSimplification}, the commutator terms are apparently better behaved. We find that after expanding the right hand side
\begin{equation*}
\begin{split}
&\quad R^{\frac{1}{2}} \| ((R^{-\frac{1}{2}} y)^{\alpha} (R^{\frac{1}{2}} \partial_y)^{\beta} (R^{\frac{1}{2}} \partial_t)^{\gamma-1} a_2^w v \|_2 \\
&= R^{\frac{1}{2}} \| a_2^w ((R^{-\frac{1}{2}} y)^{\alpha} (R^{\frac{1}{2}} \partial_y)^{\beta} (R^{\frac{1}{2}} \partial_t)^{\gamma-1}  v \|_2 + \text{l.o.t.}
\end{split}
\end{equation*}
We use the expansion for $a_2^w$ like in case $\gamma = 1$, $\alpha = \beta = 0$ to find
\begin{equation*}
\begin{split}
&\quad R^{\frac{1}{2}} \| a_2^w ((R^{-\frac{1}{2}} y)^{\alpha} (R^{\frac{1}{2}} \partial_y)^{\beta} (R^{\frac{1}{2}} \partial_t)^{\gamma-1}  v \|_2 \\
 &\lesssim \sum_{|\alpha'| + |\beta'| \leq |\alpha| + |\beta| + 2} \| (R^{-\frac{1}{2}} \partial_y)^{\alpha'} (R^{\frac{1}{2}} \partial_y)^{\beta'} (R^{\frac{1}{2}} \partial_t)^{\gamma-1} v \|_2
 \end{split}
\end{equation*}
and can now conclude the reduction \eqref{eq:TimeDerivativeTrade} by induction on $\gamma$. With \eqref{eq:TimeDerivativeTrade} at hand, the claim \eqref{eq:ModifiedGreenBounds} follows from \eqref{eq:EnergyArgumentsI}.
\end{proof}

\subsection{Localization properties of wave packets}
\label{section:Localization}
By the regularity properties of the kernel we can now state the localization properties of the wave packets.
Recall that we let
\begin{equation*}
u_{x_0,\xi_0}(y,t) = \int_{\R^d} K_{x_0,\xi_0}(y,t,\tilde{y},0) u_0(\tilde{y}) d\tilde{y}
\end{equation*}
with
\begin{equation*}
\begin{split}
K_{x_0,\xi_0}(y,t,\tilde{y},0) &= \int_{\R^{2d}} e^{- \frac{\rho}{2}(\tilde{y}-x)^2} e^{-i \xi (\tilde{y}-x)} e^{i \xi^t (y-x^t)} e^{i (\psi(x,t,\xi) - \psi(x,s,\xi))} \psi_{x_0,\xi_0}(x,\xi) \\
&\quad \quad \times G(t,s,x,\xi,y) dx d\xi.
\end{split}
\end{equation*}

\begin{lemma}
The following estimate holds for any $N \geq 1$:
\begin{equation*}
|u_{x_0,\xi_0}(t,y)| \lesssim_N R^{-\frac{d}{4}} (1+R^{-\frac{1}{2}}|x_0^t - y|)^{-N} \big( \int |\psi_{(x_0,\xi_0)}(x,\xi) T_R u_0(x_,\xi)|^2 dx d\xi \big)^{\frac{1}{2}}.
\end{equation*}
\end{lemma}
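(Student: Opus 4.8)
The first step is to put $u_{x_0,\xi_0}$ in a form to which Proposition~\ref{prop:ParametrixConstruction} applies. Carrying out the $\tilde y$-integration in $u_{x_0,\xi_0}(t,y)=\int K_{x_0,\xi_0}(t,y,0,\tilde y)u_0(\tilde y)\,d\tilde y$ by Fubini and using $\int e^{-\frac\rho2(\tilde y-x)^2}e^{i\xi(x-\tilde y)}u_0(\tilde y)\,d\tilde y = C_R^{-1}(T_R u_0)(x,\xi)$ from \eqref{eq:RescaledPhaseSpaceTransform}, together with $\psi(0,x,\xi)=0$, one obtains
\begin{equation*}
u_{x_0,\xi_0}(t,y) = C_R^{-1}\int_{\R^{2d}} e^{i\xi^t(y-x^t)+i\psi(t,x,\xi)}\,\psi_{x_0,\xi_0}(x,\xi)\,G(t,0,x,\xi,y)\,(T_R u_0)(x,\xi)\,dx\,d\xi .
\end{equation*}
Since the two exponential factors are unimodular, this gives $|u_{x_0,\xi_0}(t,y)|\le C_R^{-1}\int \psi_{x_0,\xi_0}(x,\xi)\,|G(t,0,x,\xi,y)|\,|(T_R u_0)(x,\xi)|\,dx\,d\xi$.

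The heart of the argument is a pointwise-in-$y$ bound for $G$ with rapid decay. Using the weighted $L^2_y$ estimates of Proposition~\ref{prop:ParametrixConstruction} with $|\alpha|=|\beta|=|\sigma|=0$ and $\|\phi_{x,\xi}\|_2=(\pi R)^{d/4}$ (so $C_R^2\|\phi_{x,\xi}\|_2\sim C_R^2$), combined with a Sobolev embedding rescaled to balls of radius $R^{1/2}$, namely $\|f\|_{L^\infty(B)}\lesssim R^{-d/4}\sum_{|\nu|\le k}R^{|\nu|/2}\|\partial_y^\nu f\|_{L^2(B)}$ for a fixed $k>d/2$, I would derive that for every $N$ and every $(x,\xi)$ in the support of $\psi_{x_0,\xi_0}$,
\begin{equation*}
|G(t,0,x,\xi,y)| \lesssim_N C_R^{2}\,\big(1+R^{-\tfrac12}|x^t-y|\big)^{-N},\qquad |t|\le R .
\end{equation*}
Indeed, on a ball $B$ of radius $R^{1/2}$ centred at $y_0$ the weight $1+R^{-1/2}|x^t-y|$ is comparable to $1+R^{-1/2}|x^t-y_0|$, so after multiplying through and expanding the weight into monomials $|R^{-1/2}(x^t-y)|^\gamma$ each term is controlled by Proposition~\ref{prop:ParametrixConstruction}, and the powers of $R$ produced by the Sobolev embedding exactly cancel those in the bounds for $\partial_y^\nu G$. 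Next I would replace $x^t$ by $x_0^t$ using the flow-out localization: for $(x,\xi)\in\text{supp}(\psi_{x_0,\xi_0})\subseteq B(x_0,3R^{1/2})\times B(\xi_0,3R^{-1/2})$, which lies in $\overline{\overline{\mathcal{Y}}}$ by Assumption~\ref{ass:RegularitySet}, the bounds \eqref{eq:RegularityOriginalHamiltonFlow} (equivalently the bi-Lipschitz property of Lemma~\ref{lem:GeometryWavepackets}) give $|x^t-x_0^t|\lesssim |x-x_0|\sup|\partial_x x^t|+|\xi-\xi_0|\sup|\partial_\xi x^t|\lesssim R^{1/2}$ for $|t|\le R$, hence $1+R^{-1/2}|x^t-y|\gtrsim 1+R^{-1/2}|x_0^t-y|$ on the cell.

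Finally I would close by Cauchy--Schwarz in $(x,\xi)$. Writing $\psi_{x_0,\xi_0}|G||T_R u_0| = |G|\cdot|\psi_{x_0,\xi_0}T_R u_0|$ and using $|\text{supp}(\psi_{x_0,\xi_0})|\lesssim 1$,
\begin{equation*}
|u_{x_0,\xi_0}(t,y)| \le C_R^{-1}\Big(\int_{\text{supp}\psi_{x_0,\xi_0}}\!\!|G(t,0,x,\xi,y)|^2\,dx\,d\xi\Big)^{1/2}\Big(\int|\psi_{x_0,\xi_0}(x,\xi)(T_R u_0)(x,\xi)|^2\,dx\,d\xi\Big)^{1/2},
\end{equation*}
and by the previous step the first factor is $\lesssim_N C_R^2(1+R^{-1/2}|x_0^t-y|)^{-N}$. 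Since $C_R^{-1}C_R^2=C_R=R^{-d/4}2^{-d/2}\pi^{-d/4}\lesssim R^{-d/4}$, this is exactly the asserted inequality (with $T_r$ the FBI transform at the relevant scale). If the maximal-function form is wanted, one bounds $\int_{\text{cell}}|T_R u_0|^2\lesssim |\text{cell}|\,\mathcal{M}(|T_R u_0|^2)(x_0,\xi_0)$. The only step requiring genuine care is the pointwise-in-$y$ upgrade of the weighted $L^2_y$ bounds on $G$: one must run the Sobolev embedding at precisely the scale $R^{1/2}$ and propagate the weight so that no power of $R$ is lost and the rapid decay in $R^{-1/2}|x^t-y|$ is retained; the representation formula, the unimodularity of the phases, $|\text{supp}\,\psi_{x_0,\xi_0}|\lesssim 1$, and the flow-out localization via \eqref{eq:RegularityOriginalHamiltonFlow} are routine.
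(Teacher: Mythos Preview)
Your proposal is correct and follows essentially the same route as the paper: derive the pointwise decay $|G(t,0,x,\xi,y)|\lesssim_N C_R^2(1+R^{-1/2}|x^t-y|)^{-N}$ from the weighted $L^2_y$ bounds of Proposition~\ref{prop:ParametrixConstruction} via Sobolev embedding at scale $R^{1/2}$, transfer $x^t\to x_0^t$ using the bi-Lipschitz property of the flow on $\mathrm{supp}\,\psi_{x_0,\xi_0}$, and close by Cauchy--Schwarz over the unit-volume phase-space cell. One small bookkeeping slip: $\|\phi_{x,\xi}\|_2=(\pi R)^{d/4}$ gives $C_R^2\|\phi_{x,\xi}\|_2\sim C_R$, not $C_R^2$; after the $R^{-d/4}$ from Sobolev embedding this still yields $|G|\lesssim C_R^2(1+\cdots)^{-N}$, so your final constant $C_R^{-1}\cdot C_R^2=C_R\sim R^{-d/4}$ is unaffected.
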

\begin{proof}
By the regularity properties of $G$ we find the estimate
\begin{equation*}
(1+ R^{-\frac{1}{2}} |y-x^t|)^N |G(t,s,x,\xi,y)| \lesssim_N C_R^2.
\end{equation*}
This follows from Sobolev embedding, the frequency localization of $G$, and its regularity properties shown in Proposition \ref{prop:ParametrixConstruction}. Indeed, the rescaling $G_2'(t',x',\xi',y') = G_2(R^{\frac{1}{2}} t', R^{\frac{1}{2}} x', R^{-\frac{1}{2}} \xi', R^{\frac{1}{2}} y')$ yields centered Schwartz functions $G_2'$ by \eqref{eq:ModifiedGreenBounds}.

This implies
\begin{equation*}
    |S(t,0) \phi_{x,\xi}(y)| \lesssim (1+ R^{-\frac{1}{2}} |x^t - y|)^{-N}.
\end{equation*}

By the localization property of $\psi_{x_0,\xi_0}$ and the bi-Lipschitz property of the Hamiltonian flow Lemma \ref{lem:GeometryWavepackets} we can estimate
\begin{equation*}
    \begin{split}
  |u_{x_0,\xi_0}(y) | &\leq  C_R \int_{\R^{2d}} | S(t,0) \phi_{x,\xi}(y) \psi_{x_0,\xi_0}(x,\xi) T_R u_0(x,\xi) | dx d\xi \\
  &\lesssim C_R \int (1+ R^{-\frac{1}{2}}|x^t-y|)^{-N} \psi_{x_0,\xi_0}(x,\xi) |T_R u_0(x,\xi)| dx d\xi \\
  &\lesssim R^{-\frac{d}{4}} (1+ R^{-\frac{1}{2}} |x_0^t - y|)^{-N} \| \psi_{x_0,\xi_0} T_R u_0 \|_{L^2}.
  \end{split}
\end{equation*}
\end{proof}

In the following let $\chi : \R^d \to \R_{\geq 0}$ denote a radially decreasing bump function with $\chi \equiv 1$ on $B(0,1)$ and $\text{supp}(\chi) \subseteq B(0,2)$. Denote $1 - \chi = \chi_{\geq 1}$. We have the following spatial localization.
\begin{corollary}[Spatial~localization~of~wave~packets]
\label{cor:SpatialLocalization}
Let $\delta > 0$. The following holds:
\begin{equation*}
u_{x_0,\xi_0}(y,t) = \chi(R^{-\frac{1}{2}-\delta}(x_0^t - y)) u_{x_0,\xi_0} + \text{RapDec}(R) \| u_0 \|_2.
\end{equation*}
\end{corollary}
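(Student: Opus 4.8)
The plan is to obtain the statement as an immediate consequence of the pointwise concentration bound in the preceding lemma, interpreting the error as $\text{RapDec}(R)\|u_0\|_2$ in $L^\infty_y$ (and, if desired, in $L^2_y$ as well), uniformly for $|t|\lesssim R$. First I would rewrite the discrepancy as
\[
u_{x_0,\xi_0}(t,y) - \chi\big(R^{-\frac12-\delta}(x_0^t-y)\big)\,u_{x_0,\xi_0}(t,y) = \chi_{\geq 1}\big(R^{-\frac12-\delta}(x_0^t-y)\big)\,u_{x_0,\xi_0}(t,y),
\]
which is supported in $\{\,|x_0^t-y|\geq R^{\frac12+\delta}\,\}$ since $\chi\equiv 1$ on $B(0,1)$; on this set one has $1+R^{-\frac12}|x_0^t-y|\gtrsim R^{\delta}$.

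Next I would invoke the preceding lemma together with the bound $\|\psi_{x_0,\xi_0}T_Ru_0\|_{L^2}\leq \|T_Ru_0\|_{L^2}=\|u_0\|_{L^2}$ (valid because $0\leq\psi_{x_0,\xi_0}\leq 1$ and $T_R$ is an isometry), which gives, for every $N\in\N$ and $|t|\lesssim R$,
\[
|u_{x_0,\xi_0}(t,y)| \lesssim_N R^{-\frac d4}\big(1+R^{-\frac12}|x_0^t-y|\big)^{-N}\|u_0\|_{L^2}.
\]
Restricting to the support of $\chi_{\geq 1}(R^{-\frac12-\delta}(x_0^t-\cdot))$ and using $1+R^{-\frac12}|x_0^t-y|\gtrsim R^{\delta}$ yields
\[
\big|\chi_{\geq 1}\big(R^{-\frac12-\delta}(x_0^t-y)\big)\,u_{x_0,\xi_0}(t,y)\big| \lesssim_N R^{-\frac d4-N\delta}\|u_0\|_{L^2}.
\]
Since $N$ is arbitrary and $\text{RapDec}(R)$ is defined with $N$-dependent constants, the right-hand side is $\text{RapDec}(R)\|u_0\|_{L^2}$ pointwise, uniformly in $t$, which is the claim. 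If the $L^2_y$ version is wanted, one squares and integrates the displayed bound and substitutes $z=R^{-1/2}(y-x_0^t)$, the Jacobian $R^{d/2}$ cancelling the $R^{-d/2}$ from $R^{-d/4}$ squared:
\[
\int_{|x_0^t-y|\geq R^{\frac12+\delta}} |u_{x_0,\xi_0}(t,y)|^2\,dy \lesssim_N \|u_0\|_{L^2}^2\int_{|z|\geq R^{\delta}}(1+|z|)^{-2N}\,dz \lesssim_N R^{-(2N-d)\delta}\|u_0\|_{L^2}^2
\]
for $2N>d$, which is again $\text{RapDec}(R)\|u_0\|_{L^2}$.

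There is no genuine obstacle at this stage: the entire substance — the superpolynomial decay of $|u_{x_0,\xi_0}(t,y)|$ away from the bicharacteristic projection $x_0^t$ — is already contained in the preceding lemma, which itself rests on the regularity and decay estimates for $G$ in Proposition \ref{prop:ParametrixConstruction}. The only points needing (minor) attention are that the implicit constant in that bound is allowed to depend on $N$ (harmless, by the definition of $\text{RapDec}$) and that the estimate is uniform in $t\in[-R,R]$, which is already built into the statement of the preceding lemma.
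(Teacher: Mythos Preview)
Your proposal is correct and matches the paper's approach: the corollary is stated without proof precisely because it is the immediate consequence of the pointwise bound in the preceding lemma that you describe. The only content is multiplying by the cutoff $\chi_{\geq 1}(R^{-\frac12-\delta}(x_0^t-\cdot))$ and cashing in the factor $(1+R^{-\frac12}|x_0^t-y|)^{-N}\lesssim R^{-N\delta}$ on its support, exactly as you have done.
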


We show the following regarding frequency localization:
\begin{lemma}
Let $\hat{u}_{x_0,\xi_0}(\eta,t) = \mathcal{F}_y \big[ u_{x_0,\xi_0}(y,t) \big](\eta )$. The following localization of the spatial Fourier transform holds
\begin{equation}
\label{eq:FrequencyLocalizationWavePacketMaximal}
|\hat{u}_{x_0,\xi_0}(\eta,t)| \lesssim_N R^{\frac{d}{4}} (1 + R^{\frac{1}{2}} | \xi_0^t - \eta |)^{-N} \big( \int |\psi_{x_0,\xi_0}|^2 |T_r u_0(x,\xi)|^2 dx d\xi \big)^{\frac{1}{2}}.
\end{equation}
\end{lemma}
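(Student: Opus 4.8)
The plan is to mirror the proof of the spatial localization (the lemma preceding Corollary~\ref{cor:SpatialLocalization}), exchanging the roles of $y$ and $\eta$: the $R^{1/2}$--scale concentration of an evolved coherent state around $x^t$ is dual to its $R^{-1/2}$--scale frequency concentration around $\xi^t$, and this is exactly what the $y$--regularity of $G$ in Proposition~\ref{prop:ParametrixConstruction} encodes. First I would use the coherent state superposition \eqref{eq:SuperpositionCoherent} together with the partition $\psi_{x_0,\xi_0}$ to write
\[
\hat u_{x_0,\xi_0}(\eta,t) = C_R \int_{\R^{2d}} \psi_{x_0,\xi_0}(x,\xi)\,(T_R u_0)(x,\xi)\,\widehat{[S(t,0)\phi_{x,\xi}]}(\eta)\, dx\, d\xi,
\]
so that the assertion reduces to a pointwise frequency bound on a single evolved packet plus a Cauchy--Schwarz argument in $(x,\xi)$.

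For the single packet, recall from the definition of $G$ that $(S(t,0)\phi_{x,\xi})(y) = C_R^{-2} e^{i\xi^t(y-x^t)} e^{i\psi(t,x,\xi)} G(t,x,\xi,y)$, with $e^{-i\xi^t x^t}$ and $e^{i\psi}$ independent of $y$. Hence $\widehat{[S(t,0)\phi_{x,\xi}]}(\eta) = C_R^{-2} e^{-i\xi^t x^t + i\psi}\,\widehat{G}(t,x,\xi,\eta-\xi^t)$, where $\widehat{G}$ is the Fourier transform in $y$. Using $(i(\eta-\xi^t))^{\alpha}\widehat{G}(\cdot,\eta-\xi^t) = \widehat{\partial_y^{\alpha}G}(\cdot,\eta-\xi^t)$ and $\|\widehat{h}\|_{L^\infty}\le\|h\|_{L^1}$, it suffices to bound $\|\partial_y^N G(t,x,\xi,\cdot)\|_{L^1_y}$. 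Splitting $1 = (1+R^{-1/2}|x^t-y|)^{-M}(1+R^{-1/2}|x^t-y|)^{M}$ with $M>d/2$ and applying Cauchy--Schwarz in $y$, the first factor contributes $R^{d/4}$, while the weighted $L^2$ bound of Proposition~\ref{prop:ParametrixConstruction} (with $\sigma=\alpha=\beta=0$, $|\nu|=N$, $|\gamma|\le M$) controls the second by $R^{-N/2}C_R^2\|\phi_{x,\xi}\|_2$. This gives $|\widehat{[S(t,0)\phi_{x,\xi}]}(\eta)| \lesssim_N R^{d/4}(1+R^{1/2}|\eta-\xi^t|)^{-N}\|\phi_{x,\xi}\|_2$.

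Next I would replace $\xi^t$ by $\xi_0^t$. For $(x,\xi)\in\operatorname{supp}(\psi_{x_0,\xi_0})$ one has $|x-x_0|\lesssim R^{1/2}$ and $|\xi-\xi_0|\lesssim R^{-1/2}$, so the flow regularity estimates \eqref{eq:RegularityOriginalHamiltonFlow} (equivalently the bi-Lipschitz bound of Lemma~\ref{lem:GeometryWavepackets}) yield $R^{1/2}|\xi^t-\xi_0^t|\lesssim 1$ for $|t|\le R$, whence $(1+R^{1/2}|\eta-\xi^t|)^{-N}\lesssim_N (1+R^{1/2}|\eta-\xi_0^t|)^{-N}$. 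Pulling this factor out of the integral, using $C_R\|\phi_{x,\xi}\|_2\sim 1$ and that $\operatorname{supp}(\psi_{x_0,\xi_0})$ has Lebesgue measure $\sim 1$, Cauchy--Schwarz in $(x,\xi)$ bounds $\int \psi_{x_0,\xi_0}|T_R u_0|\,dx\,d\xi \lesssim \big(\int |\psi_{x_0,\xi_0}|^2 |T_R u_0|^2\,dx\,d\xi\big)^{1/2}$, which is exactly the right-hand side claimed.

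The only genuinely non-routine point is the conversion of the weighted $L^2$ regularity of $G$ into a uniform (i.e.\ $L^\infty_\eta$) frequency bound: one must check that the $R^{d/4}$ loss incurred passing from $L^2$ to $L^1$ in $y$ is precisely the $R^{d/4}$ prefactor appearing in the statement, once the normalization identity $C_R\|\phi_{x,\xi}\|_2\sim 1$ is used and the $C_R^{-2}$ from the definition of $G$ cancels the $C_R^2$ in Proposition~\ref{prop:ParametrixConstruction}. Everything else is a transcription of the spatial argument, and all the hard flow and parametrix estimates are already furnished by Section~\ref{section:RegularityFlow} and Proposition~\ref{prop:ParametrixConstruction}.
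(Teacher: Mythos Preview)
Your proposal is correct and follows essentially the same approach as the paper: both reduce to a frequency-decay bound for a single evolved coherent state via the $y$-regularity of $G$ from Proposition~\ref{prop:ParametrixConstruction}, replace $\xi^t$ by $\xi_0^t$ using the bi-Lipschitz property of the flow on the support of $\psi_{x_0,\xi_0}$, and then apply Cauchy--Schwarz over $(x,\xi)$. The only cosmetic difference is that the paper obtains the single-packet bound by splitting into $|\xi_0^t-\eta|\lesssim R^{-1/2}$ and $|\xi_0^t-\eta|\gg R^{-1/2}$ and integrating by parts in $y$ using pointwise bounds on $\partial_y^N G$, whereas you handle both cases at once by passing from $L^2_y$ to $L^1_y$ with a weight; these are equivalent packagings of the same regularity input.
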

\begin{proof}
We write the Fourier transform by Fubini's theorem as
\begin{equation*}
\begin{split}
&\mathcal{F}_y \big[ u_{x_0,\xi_0}(y,t) \big](\eta ) \\
&= C_R \iint \int S(t,0) \phi_{x,\xi}(\tilde{y}) e^{-i \eta \tilde{y}} d\tilde{y} \psi_{x_0,\xi_0}(x,\xi) T_R u_0(x,\xi)  dx d\xi.
\end{split}
\end{equation*}
Below we obtain from the regularity properties of $G$ the estimate
\begin{equation}
\label{eq:FTLocalizationGreen}
    |\widehat{S(t,0) \phi_{x,\xi}}(\eta)| \lesssim_N R^{\frac{d}{2}} (1+ R^{\frac{1}{2}} |\xi^t - \eta|)^{-N}.
\end{equation}
With \eqref{eq:FTLocalizationGreen} at disposal, we can plug this into the preceding display and conclude \eqref{eq:FrequencyLocalizationWavePacketMaximal} like in the previous lemma.

We turn to \eqref{eq:FTLocalizationGreen}, to which end we handle first the case $|\xi^t - \eta | \lesssim R^{-\frac{1}{2}}$.
The estimate is immediate from the pointwise estimate of $G$ and localization to a $R^{\frac{1}{2}}$-ball such that
\begin{equation}
\label{eq:FTCoherentStateI}
\big| \int_{\R^d} dy e^{i ( \xi^t - \eta) y} G(t,s,x,\xi,y) \big| \lesssim R^{\frac{d}{2}} C_R^2 \lesssim 1.
\end{equation}

Next, we turn to the case $|\xi^t - \eta| \gg R^{-\frac{1}{2}}$. In this case we can integrate by parts in $y$. Let $j$ denote the index such that $| \xi^t - \eta| \sim |(\xi^t - \eta)_j|$. We find
\begin{equation}
\label{eq:FTCoherentStateII}
\begin{split}
&\quad \big| \int_{\R^d} e^{i ( \xi^t - \eta) \cdot y} G(t,s,x,\xi,y) dy \big| \\
&= \big| \int_{\R^d} \big[ i( \xi^t - \eta)_j \big]^{-N} \partial_j^N e^{i(\xi^t - \eta) \cdot y} e^{- i \xi^t \cdot x^t} G(t,s,x,\xi,y) dy \big| \\
&= \int_{\R^n} \big| \frac{R^{-\frac{N}{2}}}{\big[ i (\xi^t - \eta)_j \big]^N} e^{i(\xi^t - \eta) \cdot y} e^{-i \xi^t \cdot x^t} (R^{\frac{1}{2}} \partial_{y_j})^N G(t,s,x,\xi,y) \big| dy \\
&\lesssim \frac{R^{-\frac{N}{2}}}{|\xi^t - \eta|^N}.
\end{split}
\end{equation}
In the ultimate step we used that $G$ up to rescaling with $R^{\frac{1}{2}}$ is a normalized Schwartz function as proved in Proposition \ref{prop:ParametrixConstruction}. From \eqref{eq:FTCoherentStateI} and \eqref{eq:FTCoherentStateII} the bounds \eqref{eq:FTLocalizationGreen} are immediate.
\end{proof}

We record the following corollary:
\begin{corollary}
\label{cor:FrequencyLocalization}
Let $\delta > 0$ and $\hat{u}_{x_0,\xi_0}(\eta,t) = \mathcal{F}_y \big[ u_{x_0,\xi_0}(y,t) \big](\eta )$. The following localization of the spatial Fourier transform holds
\begin{equation}
\label{eq:FrequencyLocalizationWavePacket}
\hat{u}_{x_0,\xi_0}(\eta,t) = \chi(R^{\frac{1}{2} - \delta} (\xi_0^t - \eta)) \hat{u}_{x_0,\xi_0}(\eta) + \text{RapDec}(R) \| u_0 \|_2.
\end{equation}
\end{corollary}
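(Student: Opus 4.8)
The plan is to deduce Corollary~\ref{cor:FrequencyLocalization} directly from the pointwise bound \eqref{eq:FrequencyLocalizationWavePacketMaximal} of the preceding lemma, in exact parallel to the way Corollary~\ref{cor:SpatialLocalization} follows from the pointwise estimate on $u_{x_0,\xi_0}(t,y)$, with the roles of the spatial and frequency localizations of the Green function $G$ interchanged (using \eqref{eq:FTLocalizationGreen} in place of the spatial decay bound). The quantity to control is the $L^2_\eta$-norm of $\big(1-\chi(R^{\frac12-\delta}(\xi_0^t-\eta))\big)\hat u_{x_0,\xi_0}(\eta,t)$.

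First I would observe that $1-\chi(R^{\frac12-\delta}(\xi_0^t-\eta))$ is supported in $\{\eta : |\xi_0^t-\eta|\ge R^{-\frac12+\delta}\}$, on which $1+R^{\frac12}|\xi_0^t-\eta|\gtrsim R^{\delta}$. Applying \eqref{eq:FrequencyLocalizationWavePacketMaximal} with $N$ replaced by $2N$ and splitting off a factor $R^{-N\delta}$ from the decay, on this region we get
\begin{equation*}
|\hat u_{x_0,\xi_0}(\eta,t)|\lesssim_N R^{\frac d4}R^{-N\delta}\,(1+R^{\frac12}|\xi_0^t-\eta|)^{-N}\,\Big(\int|\psi_{x_0,\xi_0}|^2\,|T_R u_0|^2\,dx\,d\xi\Big)^{\frac12}.
\end{equation*}
Squaring and integrating in $\eta$, the substitution $\zeta=R^{\frac12}(\xi_0^t-\eta)$ (so $d\zeta=R^{\frac d2}\,d\eta$) turns $R^{\frac d2}\int(1+R^{\frac12}|\xi_0^t-\eta|)^{-2N}\,d\eta$ into $\int(1+|\zeta|)^{-2N}\,d\zeta=O(1)$ for $N>d/2$; hence the $L^2_\eta$-norm of the difference is $\lesssim_N R^{\frac d4}R^{-N\delta}\,\|\psi_{x_0,\xi_0}\,T_R u_0\|_{L^2}$. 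Since $\psi_{x_0,\xi_0}$ is bounded and $T_R$ is an $L^2$-isometry, the last factor is $\le\|u_0\|_{L^2}$, and absorbing the polynomial prefactor $R^{d/4}$ into $R^{-N\delta}$ by choosing $N$ large shows this is $\mathrm{RapDec}(R)\,\|u_0\|_{L^2}$, which is \eqref{eq:FrequencyLocalizationWavePacket}.

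There is no real obstacle here; the only points needing care are (i) that \eqref{eq:FTLocalizationGreen} — i.e.\ the frequency-side pointwise and $R^{\frac12}$-weighted derivative bounds on $G$ from Proposition~\ref{prop:ParametrixConstruction}, together with the bi-Lipschitz geometry of the bicharacteristics (Lemma~\ref{lem:GeometryWavepackets}) — already legitimizes replacing the exact center $\xi^t$ by the lattice value $\xi_0^t$ up to an error of size $R^{-\frac12}$ subsumed in the tail; and (ii) that after the $\eta$-integration the gain $R^{-N\delta}$ must be taken large enough to dominate the prefactor $R^{d/4}$ uniformly in $R$, which is automatic since $N$ is arbitrary.
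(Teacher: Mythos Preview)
Your proposal is correct and is precisely the intended argument: the paper states the corollary without proof, as the direct analogue of Corollary~\ref{cor:SpatialLocalization}, and your deduction from the pointwise bound \eqref{eq:FrequencyLocalizationWavePacketMaximal} via splitting off $R^{-N\delta}$ on the support of $1-\chi$ and integrating in $L^2_\eta$ is exactly what is meant. One minor arithmetic point: after the substitution $\zeta=R^{1/2}(\xi_0^t-\eta)$ the factor $R^{d/2}$ from squaring the prefactor $R^{d/4}$ cancels exactly against the Jacobian $R^{-d/2}$, so the $L^2_\eta$-norm is already $\lesssim_N R^{-N\delta}\|u_0\|_{L^2}$ with no residual $R^{d/4}$ to absorb---but this is harmless either way.
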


Finally, we show the following localization of the time frequencies:
\begin{lemma}
The following estimate holds for the windowed Fourier transform
\begin{equation}
\label{eq:TimeFrequencyLocalization}
|\mathcal{F}_t[ u_{x_0,\xi_0} \chi(R^{-\frac{1}{2}}(t-t_0))](\tau ) | \lesssim_N R^{\frac{1}{2}} (1 + R^{\frac{1}{2}} |a(x^{t_0}_0,t_0, \xi^{t_0}_0) + \tau |)^{-N} R^{-\frac{d}{4}} \| \psi_{x_0,\xi_0} T_R u_0 \|_{L^2(T^* \R^d)}.
\end{equation}
\end{lemma}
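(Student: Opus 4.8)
Estimate \eqref{eq:TimeFrequencyLocalization} is the parametrix analogue of axiom~(3) in Assumption~\ref{ass:WP}, and I would prove it by non-stationary phase in $t$. First insert the kernel representation of Proposition~\ref{prop:ParametrixConstruction}: writing $u_{x_0,\xi_0}(t,y)=\int K_{x_0,\xi_0}(t,y,0,\tilde y)u_0(\tilde y)\,d\tilde y$ and feeding this into $\mathcal F_t[\chi(R^{-1/2}(t-t_0))u_{x_0,\xi_0}(\cdot,y)](\tau)$ produces an oscillatory integral in $(t,x,\xi,\tilde y)$ with phase
\[
\Phi(t)= -t\tau+\xi^t\!\cdot(y-x^t)+\psi(t,x,\xi)-\xi\!\cdot(\tilde y-x)-\psi(0,x,\xi),
\]
where only the first three terms depend on $t$, and amplitude $e^{-\frac\rho2(\tilde y-x)^2}\psi_{x_0,\xi_0}(x,\xi)\,G(t,0,x,\xi,y)\,\chi(R^{-1/2}(t-t_0))\,u_0(\tilde y)$. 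By the spatial localization already established (Corollary~\ref{cor:SpatialLocalization}) one may additionally insert a cutoff restricting $|y-x^t|\lesssim R^{1/2+\delta}$, at the cost of a $\text{RapDec}(R)\|u_0\|_2$ error.

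\textbf{Phase analysis.} Using Hamilton's equations $\dot x^t=\partial_\xi a(x^t,t,\xi^t)$, $\dot\xi^t=-\partial_x a(x^t,t,\xi^t)$ together with the transport equation $\partial_t\psi=-a(x^t,t,\xi^t)+\xi^t\partial_\xi a(x^t,t,\xi^t)$ along the flow, the $\partial_\xi a$-contributions cancel and one gets the clean identity $\partial_t\Phi=-\tau-a(x^t,t,\xi^t)-\partial_x a(x^t,t,\xi^t)\!\cdot(y-x^t)$. By Lemma~\ref{lem:GeometryWavepackets} and $\mathrm{supp}\,\psi_{x_0,\xi_0}\subseteq B(x_0,3R^{1/2})\times B(\xi_0,3R^{-1/2})$ one has $|x^t-x_0^t|\lesssim R^{1/2}$, $|\xi^t-\xi_0^t|\lesssim R^{-1/2}$; combined with $|\partial_x a|\lesssim R^{-1}$, $|\partial_\xi a|\lesssim 1$ (Assumption~\ref{ass:HamiltonianFlowRegularity}) and $\frac{d}{dt}a(x_0^t,t,\xi_0^t)=\partial_t a=O(R^{-1})$ over $|t-t_0|\lesssim R^{1/2}$, this gives $|a(x^t,t,\xi^t)-a(x_0^{t_0},t_0,\xi_0^{t_0})|\lesssim R^{-1/2}$, while $|\partial_x a\cdot(y-x^t)|\lesssim R^{-1}\cdot R^{1/2+\delta}$ on the cutoff region; hence $|\partial_t\Phi+\tau+a(x_0^{t_0},t_0,\xi_0^{t_0})|\lesssim R^{-1/2+\delta}$. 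A parallel but simpler computation, using the flow bounds $|\partial_t^k x^t|\lesssim R^{(1-k)/2}$, $|\partial_t^k\xi^t|\lesssim R^{-(1+k)/2}$ from the Corollary to Lemma~\ref{lem:TimeRegularityHamiltonFlow} (cf.\ \eqref{eq:RegularityOriginalHamiltonFlow}) and the symbol bounds, shows $|\partial_t^k\Phi|\lesssim R^{-1}$ for all $k\ge2$. For the amplitude, $\partial_t$ hitting $G$ costs $R^{-1/2}$ by Proposition~\ref{prop:ParametrixConstruction} and $\partial_t$ hitting $\chi(R^{-1/2}(t-t_0))$ costs $R^{-1/2}$.

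\textbf{Dichotomy and integration by parts.} Put $M=|a(x_0^{t_0},t_0,\xi_0^{t_0})+\tau|$. If $M\le CR^{-1/2+\delta}$, then $(1+R^{1/2}M)^{-N}$ is bounded below by a fixed power of $R$, and \eqref{eq:TimeFrequencyLocalization} follows from integrating the pointwise bound on $u_{x_0,\xi_0}$ established above over the time window of length $\sim R^{1/2}$, together with the estimate $\|\psi_{x_0,\xi_0}T_Ru_0\|_{L^2}\lesssim\mathcal M u_0(x_0)$ already used in the preceding lemmas. If $M\ge2CR^{-1/2+\delta}$, then $|\partial_t\Phi|\gtrsim M$ while $|\partial_t^k\Phi|\lesssim R^{-1}\lesssim|\partial_t\Phi|$ for $k\ge2$, so repeated integration by parts with the operator $(i\partial_t\Phi)^{-1}\partial_t$ is permissible: each step gains a factor $\lesssim R^{-1/2}/M\sim(R^{1/2}M)^{-1}$ from the amplitude derivatives, and the terms where $\partial_t$ falls on $(\partial_t\Phi)^{-1}$ are lower order because $|\partial_t^2\Phi|/|\partial_t\Phi|^2\lesssim R^{-1}/M^2\le(R^{1/2}M)^{-1}$. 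Carrying out enough integrations by parts absorbs the $R^{1/2}$ coming from the length of the $t$-integral and yields the decay $(1+R^{1/2}M)^{-N}$ with prefactor $R^{-1/2}$, matching the normalization in Corollaries~\ref{cor:SpatialLocalization} and~\ref{cor:FrequencyLocalization}.

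\textbf{Main obstacle.} The crux is the phase analysis: verifying that the local time-frequency of the parametrix wave packet equals $-a$ evaluated at the center of the time window up to $O(R^{-1/2+\delta})$, uniformly for $t_0\in[-R,R]$. This hinges on the exact cancellation produced by the transport equation for $\psi$ and on the flow regularity estimates of Section~\ref{section:RegularityFlow}; keeping precise track of the $R$-powers so that the final constant is exactly $R^{-1/2}$ is the only other delicate point. The remaining non-stationary phase bookkeeping is routine.
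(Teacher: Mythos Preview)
Your proposal is correct and follows essentially the same route as the paper: non-stationary phase in $t$ after identifying the effective time-frequency as $-a(x_0^{t_0},t_0,\xi_0^{t_0})$. The only cosmetic difference is that the paper Taylor-expands the bicharacteristic $(x^t,\xi^t)$ around $t_0$ to peel off the main oscillation $e^{-it(a+\tau)}$ explicitly and then integrates by parts against the remainder, whereas you compute $\partial_t\Phi$ directly from Hamilton's equations and the transport equation for $\psi$; your version is arguably cleaner, and it also handles the $(y-x^t)$-weight via the spatial cutoff of Corollary~\ref{cor:SpatialLocalization} where the paper instead absorbs it using the weighted $L^2_y$ bounds on $G$ from Proposition~\ref{prop:ParametrixConstruction}. (Note that the prefactor in the stated inequality should read $R^{1/2}$ rather than $R^{-1/2}$, as the paper's own computation confirms; this does not affect the argument or the subsequent Corollary~\ref{cor:TimeFrequencyLocalization}.)
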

\begin{proof}
We obtain for the windowed Fourier transform by Fubini's theorem:
\begin{equation*}
\begin{split}
&\quad \mathcal{F}_t [ \chi(R^{-\frac{1}{2}}(t-t_0)) u_{x_0,\xi_0}(y,t) ] \\
&= \int_{\R^d} d \tilde{y} u_0(\tilde{y}) \int_{\R^{2d}} dx d\xi e^{-\frac{\rho}{2}(\tilde{y}-x)^2} e^{-i \xi (\tilde{y}-x)} \psi_{x_0,\xi_0}(x,\xi) \\
&\quad \quad \times \int_{\R} dt \chi(R^{-\frac{1}{2}}(t-t_0)) e^{-it \tau} e^{i \xi^t (y-x^t)} e^{i \psi(x,t,\xi)} G(t,s,x,\xi,y).
\end{split}
\end{equation*}

The estimate will again be a consequence of non-stationary phase; in this case we integrate by parts in time. To this end, we linearize the Hamiltonian flow in time. As a consequence of Lemma \ref{lem:HamiltonFlowMixedRegularityII} we have the expansion
\begin{equation*}
\left\{ \begin{array}{cl}
x^t &= x^{t_0} + (t-t_0) \partial_{\xi} a(x^{t_0},t_0,\xi^{t_0}) + G_{x,\xi}(t), \\
\xi^t &= \xi^{t_0} + F_{x,\xi}(t),
\end{array} \right.
\end{equation*}
which satisfies for $|t-t_0| \lesssim R^{\frac{1}{2}}$ the bounds
\begin{equation}
\label{eq:TimeRegularityHamiltonian}
|\partial_t^{\sigma} F_{x,\xi}(t) | \lesssim_{\sigma} R^{\frac{-\sigma-1}{2}}, \quad |\partial_t^{\sigma} G_{x,\xi}(t)| \lesssim_{\sigma} R^{\frac{-\sigma}{2}}.
\end{equation}
By the Taylor expansion we can rewrite the $t$-dependent phase function as
\begin{equation*}
e^{i \xi^t(y-x^t)} = e^{i (\xi^{t_0} + F_{x,\xi}(t))(y-x^t)} = e^{i \xi^{t_0} (y-x^t)} e^{i F_{x,\xi}(t) (y-x^t)}.
\end{equation*}
We record the following estimate, which is a consequence of the time regularity of the Hamilton flow (Lemma \ref{lem:HamiltonFlowMixedRegularityII}) and the Leibniz rule:
\begin{equation}
\label{eq:TimeRegularityPhaseFunctionI}
| \partial_t^{\sigma} ( F_{x,\xi}(t) (y-x^t))| \lesssim_{\sigma} R^{-\frac{\sigma}{2}} (1+R^{-\frac{1}{2}} |y-x^t|)
\end{equation}
provided that $\alpha \geq 0$ and $|t-t_0| \lesssim R^{\frac{1}{2}}$. Indeed, this follows from distributing the time derivatives and \eqref{eq:TimeRegularityHamiltonian}. We continue by writing
\begin{equation*}
\begin{split}
&\quad e^{i \xi^{t_0} (y-x^{t_0}) - i \xi^{t_0} ((t-t_0) \partial_{\xi} a (x^{t_0},\xi^{t_0}) - G_{x,\xi}(t))} \\
&= e^{i \xi^{t_0} (y- x^{t_0})} e^{-i (t-t_0) \xi^{t_0} \partial_{\xi} a(x^{t_0}, \xi^{t_0})} e^{i \xi^{t_0} G_{x,\xi}(t)}.
\end{split}
\end{equation*}
For the last term it is easy to see by \eqref{eq:TimeRegularityHamiltonian} that
\begin{equation}
\label{eq:TimeRegularityPhaseFunctionII}
\big| \partial_t^{\sigma} (e^{i \xi^{t_0} G_{x,\xi}(t)}) \big| \lesssim R^{-\frac{\sigma}{2}} \text{ for } |t-t_0| \lesssim R^{\frac{1}{2}}.
\end{equation}
Secondly, we expand
\begin{equation*}
    \psi(x,t,\xi) = \psi(x,t_0,\xi) + (t-t_0)[-a(x^{t_0},t_0,\xi^{t_0}) + \xi^{t_0} \partial_{\xi} a(x^{t_0},t_0,\xi^{t_0})] + H_{x,\xi}(t)
\end{equation*}
with
\begin{equation*}
    |\partial_t^{\sigma} H_{x,\xi}(t)| \lesssim R^{-\frac{\sigma}{2}} \text{ for } |t-t_0| \lesssim R^{\frac{1}{2}}.
\end{equation*}

We rewrite the time-integral in the second line by the expansion of the Hamiltonian flow as
\begin{equation*}
\begin{split}
&\quad \int_{\R} dt \chi(R^{-\frac{1}{2}}(t-t_0)) e^{-it \tau} e^{i \xi^t (y-x^t)} G(t,s,x,\xi,y) \\
&= e^{i \xi^{t_0} (y-x^{t_0})} e^{i t_0 a(x^{t_0},\xi^{t_0})} \int_{\R} dt \chi(R^{-\frac{1}{2}}(t-t_0)) e^{-i t (a(x^{t_0}, \xi^{t_0}) + \tau)} \\
&\quad \quad \times e^{i \xi^{t_0} G_{x,\xi}(t)} e^{i F_{x,\xi}(t)(y-x^t)} e^{i H_{x,\xi}(t)} G(t,s,x,\xi,y).
\end{split}
\end{equation*}
By Taylor expansion and Lemma \ref{lem:GeometryWavepackets} we see that for $(x,\xi) \in \text{supp}(\psi_{x_0,\xi_0})$ and $|\tau + a(x_0^{t_0},t_0,\xi_0^{t_0})| \lesssim R^{-\frac{1}{2}}$ that
\begin{equation*}
| \tau + a(x^{t_0},t_0,\xi^{t_0})| \lesssim R^{-\frac{1}{2}}.
\end{equation*}
In this case we can use the size of $|G| \lesssim R^{-\frac{d}{2}}$ to estimate
\begin{equation*}
\begin{split}
|\mathcal{F}_t[\chi(R^{-\frac{1}{2}}(t-t_0)) u_{x_0,\xi_0}(t,y)] | &\lesssim R^{\frac{1}{2}} R^{-\frac{d}{2}} \int_{\R^d} d\tilde{y} u_0(\tilde{y}) \int_{\R^{2d}} e^{-\frac{\rho}{2}(\tilde{y}-x)^2} \psi_{x_0,\xi_0}(x,\xi) \\
&\lesssim R^{\frac{1}{2}} R^{-\frac{d}{4}} \| \psi_{x_0,\xi_0} T_R u_0 \|_{L^2(T^* \R^d)}.
\end{split}
\end{equation*}

On the other hand, for $|\tau + a(x_0^{t_0},t_0,\xi_0^{t_0})| \gg R^{-\frac{1}{2}}$  we have by Lemma \ref{lem:GeometryWavepackets} that
\begin{equation*}
| \tau + a(x^{t_0},t_0,\xi^{t_0})| \gg R^{-\frac{1}{2}} \text{ for } (x,\xi) \in \text{supp}(\psi_{x_0,\xi_0}).
\end{equation*}
We suppress the explicit dependence on $a$ in the following to ease notation.
The above allows us to rewrite the $t$-integral by integration by parts as
\begin{equation*}
\begin{split}
&\quad \int_{\R} dt \chi(R^{-\frac{1}{2}}(t-t_0)) \partial_t^{\alpha} \big( \frac{ e^{-i t (a(x^{t_0},\xi^{t_0}) + \tau)}}{[-i (a(x^{t_0},\xi^{t_0}) + \tau)]^{\alpha}} \big) e^{i \xi^{t_0} G_{x,\xi}(t)} e^{i F_{x,\xi}(t)(y-x^t)} G(t,x,\xi,y) \\
&= (-1)^{\alpha} \int_{\R} dt \frac{ e^{-i t (a(x^{t_0},\xi^{t_0}) + \tau)}}{[-i (a(x^{t_0},\xi^{t_0}) + \tau)]^{\alpha}} \\
&\quad \partial_t^{\alpha} \big( \chi(R^{-\frac{1}{2}}(t-t_0)) e^{i \xi^{t_0} G_{x,\xi}(t)} e^{i F_{x,\xi}(t)(y-x^t)} G(t,s,x,\xi,y) \big).
\end{split}
\end{equation*}
By the time regularity of $G$ and \eqref{eq:TimeRegularityPhaseFunctionI} and \eqref{eq:TimeRegularityPhaseFunctionII}, we obtain the pointwise estimate
\begin{equation*}
| \partial_t^{\alpha} \big( \chi(R^{-\frac{1}{2}}(t-t_0)) e^{i \xi^{t_0} G_{x,\xi}(t)} e^{i F_{x,\xi}(t)(y-x^t)} G(t,x,\xi,y) \big) | \lesssim R^{-\frac{|\alpha|}{2}} R^{-\frac{d}{2}} \tilde{\chi}(R^{-\frac{1}{2}}(t-t_0))
\end{equation*}
for $\tilde{\chi}$ a mild enlargement of $\chi$. We can conclude
\begin{equation*}
\begin{split}
&\quad |\mathcal{F}_t[\chi(R^{-\frac{1}{2}}(t-t_0)) u_{x_0,\xi_0}(y,t)](\tau) | \\
 &\lesssim_\alpha R^{\frac{1}{2}} R^{-\frac{d}{2}} \int_{\R^d} d\tilde{y} |u_0(\tilde{y})| \int_{\R^{2d}} e^{-\frac{\rho}{2}(\tilde{y}-x)^2} \psi_{x_0,\xi_0}(x,\xi) (1+ R^{\frac{1}{2}} |a(x^{t_0}_0,\xi^{t_0}_0)|)^{-|\alpha|} \\
&\lesssim_\alpha R^{\frac{1}{2}} (1+ R^{\frac{1}{2}} |a(x^{t_0}_0,\xi^{t_0}_0)|)^{-|\alpha|} R^{-\frac{d}{4}} \| \psi_{x_0,\xi_0} T_R u_0 \|_{L^2(T^* \R^d)}.
\end{split}
\end{equation*}
\end{proof}

\begin{corollary}
\label{cor:TimeFrequencyLocalization}
Let $\delta > 0$. The following estimate of the localized Fourier transform in time $ \tilde{u}_{x_0,\xi_0}(\tau) =  \mathcal{F}_t[ u_{x_0,\xi_0} \chi(R^{-\frac{1}{2}}(t-t_0))](\tau )$ holds:
\begin{equation}
\label{eq:FrequencyLocalizationWavePacketTime}
| \tilde{u}_{x_0,\xi_0}(\tau) | \lesssim \chi(R^{\frac{1}{2} - \delta} (\tau + a(x_0^{t_0},\xi_0^{t_0})) \tilde{u}_{x_0,\xi_0}(\tau) + \text{RapDec}(R) \| u_0 \|_2.
\end{equation}
\end{corollary}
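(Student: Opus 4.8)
The statement is an immediate corollary of the pointwise bound \eqref{eq:TimeFrequencyLocalization} established in the preceding Lemma, in exactly the same way that Corollaries \ref{cor:SpatialLocalization} and \ref{cor:FrequencyLocalization} follow from their respective pointwise estimates. Write $a_0 = a(x_0^{t_0},t_0,\xi_0^{t_0})$ and split
\begin{equation*}
\tilde{u}_{x_0,\xi_0}(\tau) = \chi(R^{\frac{1}{2}-\delta}(\tau + a_0))\, \tilde{u}_{x_0,\xi_0}(\tau) + \big( 1 - \chi(R^{\frac{1}{2}-\delta}(\tau + a_0)) \big)\, \tilde{u}_{x_0,\xi_0}(\tau).
\end{equation*}
The first term is exactly the main term in \eqref{eq:FrequencyLocalizationWavePacketTime}, so it remains to show that the second term is $\text{RapDec}(R)\| u_0 \|_2$.

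On the support of $1 - \chi(R^{\frac{1}{2}-\delta}(\tau + a_0))$ we have $|\tau + a_0| \geq R^{-\frac{1}{2}+\delta}$, hence $R^{\frac{1}{2}}|\tau + a_0| \geq R^{\delta}$. Inserting this into the bound \eqref{eq:TimeFrequencyLocalization} from the Lemma, the decaying factor satisfies $(1 + R^{\frac{1}{2}}|\tau + a_0|)^{-N} \leq 2^N R^{-N\delta}$ for every $N$. Choosing $N$ large enough (depending on the desired polynomial decay rate, and absorbing the harmless prefactor $R^{-\frac{1}{2}}$ and any polynomial losses incurred when passing from the maximal function value $\mathcal{M}u_0(x_0)$ to the $L^2$-normalization $\| \psi_{x_0,\xi_0} T_R u_0 \|_{L^2} \leq \| u_0 \|_2$, which is available since $T_R$ is an isometry) makes this contribution $\text{RapDec}(R)\| u_0\|_2$, as claimed.

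\textbf{Expected main obstacle.} There is essentially no obstacle left at this stage: all the analytic content — the linearization of the Hamiltonian flow in time via Lemma \ref{lem:HamiltonFlowMixedRegularityII}, the estimates \eqref{eq:TimeRegularityPhaseFunctionI}, \eqref{eq:TimeRegularityPhaseFunctionII} on the phase corrections $F_{x,\xi}, G_{x,\xi}$, the pointwise bounds on $G$ from Proposition \ref{prop:ParametrixConstruction}, and the non-stationary phase / integration-by-parts in $t$ — has already been carried out in the proof of \eqref{eq:TimeFrequencyLocalization}. The one point requiring a little care is the matching of scales: the cutoff is at the spatial-frequency width $R^{-\frac{1}{2}+\delta}$ in $\tau + a_0$, whereas the decay in the Lemma is measured by $R^{\frac{1}{2}}|\tau + a_0|$, so rapid decay is obtained precisely because of the margin $\delta > 0$; for $\delta = 0$ one would only recover polynomial decay off the localization, consistent with the remarks following Definition \ref{def:Thickening}.
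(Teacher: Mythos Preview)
Your proposal is correct and matches the paper's approach: the paper states this corollary without proof, as an immediate consequence of the pointwise bound \eqref{eq:TimeFrequencyLocalization} in the preceding lemma, in complete analogy with Corollaries \ref{cor:SpatialLocalization} and \ref{cor:FrequencyLocalization}. Your splitting argument and use of the $R^{\delta}$ margin to upgrade polynomial to rapid decay is exactly the intended mechanism, and your remark that the maximal function value can be controlled by $\|u_0\|_2$ up to polynomial-in-$R$ losses (absorbed by $\text{RapDec}(R)$) is the correct way to close the gap between the lemma's right-hand side and the corollary's.
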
 

Let $\alpha_{(x_0,\xi_0)} = \| \psi_{x_0,\xi_0} T_r u_0 \|_{L^2(T^* \R^d)}$. We obtain normalized wave packets letting $\phi_{(x_0,\xi_0)} = u_{(x_0,\xi_0)} / \alpha_{(x_0,\xi_0)}$ and in conclusion,
\begin{equation*}
    u = \sum_{\substack{(x_0,\xi_0) \in \Lambda_r, \\ \alpha_{(x_0,\xi_0) \neq 0}}} \alpha_{(x_0,\xi_0)} \phi_{(x_0,\xi_0)}
\end{equation*}
satisfies the wave packet axioms from Assumption \ref{ass:WP}. The localization properties were checked above, the property 
\begin{equation*}
    \sum_{(x_0,\xi_0) \in \Lambda_r} |\alpha_{(x_0,\xi_0)}|^2 \lesssim \| u_0 \|_{L^2}^2
\end{equation*}
is a consequence of the isometry of $T_r$.

As a further consequence we obtain for $r_1 \leq r$ the pointwise estimates in phase space:
\begin{equation*}
|T_{r_1} \phi_T (x,\xi)| \lesssim (1 + r^{-\frac{1}{2}} |x_T - x| + r_1^{\frac{1}{2}} |\xi_T - \xi |)^{-N},
\end{equation*}

\begin{proof}[Conclusion~of~the~Proof~of~Theorem~\ref{thm:WavePacketDecompositions}]
    We need to define the pseudo-differential operators $\chi_{\overline{\mathcal{Y}}_r}(x,D)$, for which the wave packet expansion holds. Recall the thickenings of $\mathcal{Y}$ given in Definition \ref{def:Thickening}. We define the symbol $\chi_{\overline{\mathcal{Y}}_r}$ as identically one on $\mathcal{Y}_{r}$, which is the $(R r^{-\frac12 + \delta_0},r^{-\frac12 + \delta_0})$-thickening of $\mathcal{Y}_{\nu^{-2}}$, being supported on the $(C R r^{-\frac12 + \delta_0},Cr^{-\frac12 + \delta_0})/2$-thickening and vanishing off $\overline{\mathcal{Y}}_{r}$, that is the \\ $(C R r^{-\frac12 + \delta_0},C r^{-\frac12 + \delta_0})$-thickening of $\mathcal{Y}_{\nu^{-2}}$. 

    The $L^2$-boundedness is a consequence of scaling: the symbols satisfy the estimates
    \begin{equation*}
        |\partial_x^{\alpha} \partial_{\xi}^{\beta} \chi_r(x,\xi)| \lesssim_{\alpha,\beta} (R r^{-\frac{1}{2}+\delta_0})^{-|\alpha|} r^{|\beta| (\frac{1}{2}-\delta_0)}.
    \end{equation*}
    Consequently, we obtain by change of variables $\xi = r^{-\frac{1}{2}} \xi'$ and $x = r^{\frac{1}{2}} x'$
    \begin{equation*}
        \int \big| \int \chi_r(x,\xi) \hat{f}(\xi) e^{i x \cdot \xi} d \xi \big|^2 dx = r^{-\frac{d}{2}} \int \big| \int \chi( r^{\frac{1}{2}} x', r^{-\frac{1}{2}} \xi') \hat{f}(r^{-\frac{1}{2}} \xi') e^{i x' \cdot \xi'} d\xi' \big|^2 dx'.
    \end{equation*}
    The resulting symbol is in $S_{00}^0$. Indeed, we have
    \begin{equation*}
        | \partial_x^{\alpha} \partial_{\xi}^{\beta} \chi( r^{\frac{1}{2}-\delta_0} x, r^{-\frac{1}{2}+\delta_0} \xi)| \lesssim_{\alpha,\beta} (R r^{-1+\delta_0})^{-|\alpha|} r^{-\delta_0 |\beta|}.
    \end{equation*}
    It follows from  the Calderon--Vaillancourt estimate
    \begin{equation*}
        \int \big| \int \chi( r^{\frac{1}{2}} x', r^{-\frac{1}{2}} \xi') \hat{f}(r^{-\frac{1}{2}} \xi') e^{i x' \cdot \xi'} d\xi' \big|^2 dx' \lesssim \int |\hat{f}(r^{-\frac{1}{2}} \xi')|^2 d\xi',
    \end{equation*}
    which yields the claim.

    We turn to establishing the wave packet decomposition by separating the contribution from $\overline{\mathcal{Y}}_r$ and $\overline{\mathcal{Y}}_r^c$:  
    \begin{equation*}
    \begin{split}
    		&\quad S(t,0) T_r^* T_r (\chi_{\overline{\mathcal{Y}}_r}(x,D) f) \\
    	 &= S(t,0) T_r^* \big( \sum_{(x_0,\xi_0) \in \Lambda_r \cap \overline{\mathcal{Y}}_r} \psi_{x_0,\xi_0} +  \sum_{(x_0,\xi_0) \notin \Lambda_r \cap \overline{\mathcal{Y}}_r^c} \psi_{x_0,\xi_0} \big) T_r (\chi_{\overline{\mathcal{Y}}_r}(x,D) f).
    	\end{split}
    \end{equation*}
    Recall that $\psi_{x_0,\xi_0}$ is a smooth bump function adapted to $B(x_0,r^{\frac{1}{2}}) \times B(\xi_0,r^{-\frac{1}{2}})$.
    
	The first sum gives rise to the wave packet decomposition
	\begin{equation*}
	\begin{split}
		&\quad S(t,0) T_r^* \sum_{(x_0,\xi_0) \in \Lambda_r \cap \overline{\mathcal{Y}}_r} \psi_{x_0,\xi_0} (T_r \chi_{\overline{\mathcal{Y}}_r}(x,D) f) \\
		&= \sum_{(x_0,\xi_0) \in \Lambda_r \cap \overline{\mathcal{Y}}_r} \int (S(t,0) \phi_{x,\xi}) \psi_{x_0,\xi_0} T_r(\chi_{\overline{\mathcal{Y}}_r}(x,D) f)(x,\xi) dx d\xi,
	\end{split}
	\end{equation*}	    
    which has been analyzed in Section \ref{section:Localization}. The veracity of the wave packet axioms in Assumption~\ref{ass:WP} follows from Corollaries \ref{cor:SpatialLocalization}, \ref{cor:FrequencyLocalization}, and \ref{cor:TimeFrequencyLocalization}.
    
\smallskip    
    
    It remains to prove the error estimate for the second term. First, we handle the claimed $L^2$-estimate. In the following we apply the $L^2$-boundedness of the phase space transform and the $L^2$-boundedness of the evolution to find
    \begin{equation*}
    \begin{split}
    		&\; \| S(t,0) T_r^* \sum_{(x_0,\xi_0) \notin \Lambda_r \cap \overline{\mathcal{Y}}_r^c} \psi_{x_0,\xi_0} T_r (\chi_{\overline{\mathcal{Y}}_r}(x,D) f) \|_{L^2(\R^d)} \\
    		&\lesssim \big\| \sum_{(x_0,\xi_0) \notin \Lambda_r \cap \overline{\mathcal{Y}}_r^c} \psi_{x_0,\xi_0} T_r (\chi_{\overline{\mathcal{Y}}_r}(x,D) f) \big\|_{L^2(T^* \R^d)}.
    	\end{split}
    \end{equation*}
   The reason why we can show a favorable estimate is that the support of $\chi_{\overline{\mathcal{Y}}_r}$ and $\text{supp}(\sum_{(x_0,\xi_0) \notin \Lambda_r \cap \overline{\mathcal{Y}}^c} \psi_{(x_0,\xi_0)} )$ are separated by $C R r^{-\frac{1}{2}+\delta_0}$ in space and $C r^{-\frac{1}{2}+\delta_0} $ in frequency, and the FBI transform is applied on a finer scale. This will be used in the following.

\smallskip

    We recall the conjugation of pseudo-differential operators (see \cite[p.~351]{Tataru2000}) with the FBI transform given by
    \begin{equation*}
        T_\lambda f(z) = \lambda^{\frac{3d}{4}} 2^{-\frac{d}{2}} \pi^{-\frac{3d}{4}} \int e^{- \frac{\lambda}{2} (z-y)^2} f(y) dy, \quad z = x-i\xi \in T^* \R^d.
    \end{equation*}
    For a compactly supported symbol let $a_\lambda(x,\xi) = a(x,\xi/\lambda)$ be a rescaling supported at frequencies $\lambda$. By formally conjugating position and momentum operator we obtain the formal asymptotic
    \begin{equation*}
        T_\lambda a_\lambda(x,D) \approx \sum_{\alpha,\beta}  \frac{\partial_x^{\alpha} \partial_{\xi}^{\beta} a(x,\xi)}{\alpha! \beta! (-i \lambda)^{|\alpha|} \lambda^{|\beta|}} (\partial_{\xi} - \lambda \xi)^{\alpha} ( \frac{1}{i} \partial_x - \lambda \xi)^{\beta} T_\lambda.
    \end{equation*}
    For symbols with uniformly bounded $\xi$-derivatives we have error estimates in terms of the $C^s$-norm for conjugation with the truncated series defined by
    \begin{equation*}
        \tilde{a}^s_\lambda = \sum_{|\alpha|+|\beta|<s}  \frac{\partial_x^{\alpha} \partial_{\xi}^{\beta} a(x,\xi)}{\alpha! \beta! (-i \lambda)^{|\alpha|} \lambda^{|\beta|}} (\partial_{\xi} - \lambda \xi)^{\alpha} \big( \frac{1}{i} \partial_x - \lambda \xi \big)^{\beta}.
    \end{equation*}
    It holds for the remainder $R^s_{\lambda,a} = T_\lambda A_\lambda - \tilde{a}^s_\lambda T_\lambda$.
    \begin{theorem}[{\cite[Theorem~1]{Tataru2001}}]
    \label{thm:SymbolConjugation}
        Assume that $a \in C^s_x C^\infty_c$. Then
        \begin{equation*}
            \| R^s_{\lambda,a} \|_{L^2 \to L^2_{\Phi}} \leq c \lambda^{-\frac{s}{2}}.
        \end{equation*}
    \end{theorem}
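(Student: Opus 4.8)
The plan is to derive the estimate from the standard conjugation machinery for the FBI/Bargmann transform, so I only outline the steps. Since the hypotheses on $a$ impose roughness only in the $x$--variable, I would first use a partition of unity in $\xi$ and reduce to $a$ supported in a fixed $\xi$--ball; the frequency integrations below are then over compact sets and lose nothing. Writing $A_\lambda(x,D)$ in the standard quantization
\begin{equation*}
A_\lambda(x,D)f(x) = (2\pi)^{-d}\iint e^{i(x-y)\cdot\eta}\, a(x,\eta/\lambda)\, f(y)\, dy\, d\eta ,
\end{equation*}
and composing on the left with $T_\lambda$, a contour shift turns the composite kernel into one carrying a Gaussian in $y$ concentrated at scale $\lambda^{-1/2}$ about $\operatorname{Re} z$ and, after the $\eta$--integration, a Gaussian in $\eta/\lambda$ concentrated at the same scale about the corresponding frequency. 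This localization is the source of the half--powers of $\lambda$.

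Next I would record the exact conjugation identities: differentiating the Gaussian $e^{-\frac{\lambda}{2}(z-y)^2}$ and integrating by parts expresses $T_\lambda\circ y_j$ and $T_\lambda\circ D_{y_j}$ as $T_\lambda$ composed on the left with the first--order operators built from $\partial_\xi-\lambda\xi$ and $\tfrac1i\partial_x-\lambda\xi$. Iterating these, any symbol polynomial in $\xi$ with $C^\infty$ coefficients in $x$ is conjugated exactly, and Weyl (or standard) composition reproduces the formal series displayed just before the statement. For a general $a$ the mechanism is identical: in the composite kernel one Taylor--expands $a(\,\cdot\,,\,\cdot\,)$ in $x$ (and, harmlessly, in $\xi$) around the Gaussian center up to order $s$. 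Because the Gaussians localize $y-\operatorname{Re}z$ and $\eta/\lambda$ at scale $\lambda^{-1/2}$, a monomial of degree $k$ in this expansion, after the $y$-- and $\eta$--integrations, produces exactly the corresponding term of $\tilde a^s_\lambda$ acting on $T_\lambda f$ together with a relative gain $\lambda^{-k/2}$; summing over $|\alpha|+|\beta|<s$ reconstructs $\tilde a^s_\lambda T_\lambda$. Hence $R^s_{\lambda,a}$ is precisely the operator attached to the Taylor remainder.

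It then remains to bound $R^s_{\lambda,a}\colon L^2\to L^2_\Phi$. The Taylor remainder is a term in which a derivative $\partial_x^\gamma a$ of order $|\gamma|=\lfloor s\rfloor$ is evaluated at an intermediate point and compared with its value at the center, so it is controlled by the H\"older modulus $\|a\|_{C^s}\,|y-\operatorname{Re}z|^{\,s-\lfloor s\rfloor}$ times $(y-\operatorname{Re}z)^\gamma$, with a parallel structure in $\xi$. Substituting into the composite kernel and estimating the resulting operator by a Schur test / Cotlar--Stein almost--orthogonality argument on the Bargmann side --- using the off--diagonal Gaussian decay of the conjugated kernel, $|K_\lambda(z,z')|\lesssim c_N\lambda^{N_0}e^{-c\lambda|\operatorname{Re}(z-z')|^2}(1+\lambda^{1/2}|\cdots|)^{-N}$ --- each power of $|y-\operatorname{Re}z|$ absorbed against its Gaussian contributes $\lambda^{-1/2}$, producing $\lambda^{-\lfloor s\rfloor/2}$ from the $\gamma$--monomial and $\lambda^{-(s-\lfloor s\rfloor)/2}$ from the H\"older factor, i.e.\ $\lambda^{-s/2}\|a\|_{C^s}$ uniformly in $\lambda\ge1$. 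I expect the main obstacle to be the bookkeeping around this last point: one must set up the Taylor expansion in exactly the variables (and around exactly the center) for which the retained terms coincide with the defined truncation $\tilde a^s_\lambda$, and then establish $L^2\to L^2_\Phi$ boundedness of both the truncated operator and the remainder in the \emph{weighted} Bargmann--Fock space rather than plain $L^2$ --- which is where the almost--orthogonality estimate (an adapted Calder\'on--Vaillancourt argument on the image of $T_\lambda$) does the work --- while tracking the half--integer powers of $\lambda$ through the H\"older modulus when $s\notin\Z$.
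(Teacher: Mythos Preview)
The paper does not prove this statement: it is quoted verbatim as \cite[Theorem~1]{Tataru2001} and used as a black box in the conclusion of the proof of Theorem~\ref{thm:WavePacketDecompositions}. There is therefore no proof in the present paper to compare your attempt against.

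That said, your outline is a reasonable sketch of the standard argument and is in the spirit of the original reference: conjugate $A_\lambda$ through $T_\lambda$, Taylor-expand the symbol in $x$ about the Gaussian center to order $\lfloor s\rfloor$, identify the retained terms with $\tilde a^s_\lambda T_\lambda$, and bound the remainder using the $\lambda^{-1/2}$ gain per factor of $|y-\operatorname{Re}z|$ against the Gaussian together with the $C^s$ H\"older modulus for the fractional part. The one point where your sketch is vague is the passage from a pointwise kernel bound to the $L^2\to L^2_\Phi$ operator bound on the Bargmann side; you invoke Schur/Cotlar--Stein with ``off-diagonal Gaussian decay of the conjugated kernel,'' but have not actually written down the kernel of $R^s_{\lambda,a}$ as an operator $L^2\to L^2_\Phi$ (it acts from functions of $y$ to functions of $z=x-i\xi$, not between two copies of the same space), nor verified that the integral remainder form of Taylor's theorem interacts cleanly with the $L^2_\Phi$ weight. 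In the cited paper this is handled by a careful kernel computation rather than an abstract almost-orthogonality argument, so if you were to fill this in you would need to be precise about which variables carry the Gaussian and how the weight $e^{-\Phi}$ is absorbed.
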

    We aim to apply the theorem at scale $\lambda = r^{\frac{1}{2}}$. We can change from the currently used phase space transform $T_r$ to $T_\lambda$ by the change of variables $\xi = r^{-\frac{1}{2}} \xi'$ and $x = r^{\frac{1}{2}} x'$. The $C^s_x$-bounds of $\chi(x',\xi') = \chi_{\overline{\mathcal{Y}}_r}(r^{\frac{1}{2}} x',\xi')$ are decaying in $s$. 

    We turn to the details. First, we suppose that $\chi'(x,\xi) = b(x) c(\xi)$ is of product form by a Fourier series expansion:
    \begin{equation*}
        \chi'(x,\xi) = \frac{1}{r^{\frac{d}{2}}} \sum_{k \in r^{-\frac{1}{2}} \Z^d} e^{i \langle k , \xi \rangle} \hat{\chi}(x,k) \beta(\xi).
    \end{equation*}
    The expansion is clearly rigid as for $|k| \gg 1$ the integration by parts gives rapid decay in $k$. By linearity the below error terms expansions extend to the Fourier series.
    
    After the phase space rescaling from above we find
    \begin{equation*}
        T f = C_r \int e^{i \xi'(x'-y')} e^{-\frac{(x'-y')^2}{2}} (\chi'_{\mathcal{Y}}(y',D_{y'}) f') dy'
    \end{equation*}
    and the derivatives of $\chi'_{\mathcal{Y}'}$ decay as
    \begin{equation}
    \label{eq:}
        \begin{split}
            |\partial_{x'}^{\alpha} \chi_{\mathcal{Y}}'| &\lesssim_{\alpha} (R r^{-1+\delta_0})^{-|\alpha|} \lesssim_{\alpha} r^{-|\alpha| \delta_0}, \\
            |\partial_{\xi'}^{\beta} \chi'_{\mathcal{Y}}| &\lesssim_{\beta} (r^{\delta_0})^{-|\beta|}.
        \end{split}
    \end{equation}

    Note that for a different resolution and corresponding rescaling $r \leq r' \leq R$ the derivatives are even more favorable. We compute like in \cite[p.~355]{Tataru2000} for the normalized phase space transform $\lambda = 1$:
    \begin{equation*}
        T(b'(y') c'(D_y') f') = \tilde{b}^s T(c'(D_y) f') + R_b c'(D_y) f'
    \end{equation*}
    with
    \begin{equation*}
        \tilde{b}^s = \sum_{|\alpha| < s} \frac{\partial_x^{\alpha} b' (\partial_{\xi} - \xi)^{\alpha}}{\alpha!}.
    \end{equation*}
    For the first term we conjugate $c'(D_y)$ to find
    \begin{equation*}
        T(c'(D_y) f') = \sum_{|\beta| < s} \frac{\partial_{\xi}^{\beta} c}{\beta!} (\partial_x - \xi)^{\beta} Tf' + R^s_c f'.      
    \end{equation*}
    Taking the conjugated expressions $\tilde{b}^s$ and $\tilde{c}^s$ together we find an expression, which vanishes on the phase space region we are considering.
    Decay for the mixed term $\tilde{b}^s R^s_c f$ follows like in \cite[Eq.~(2.10)]{Tataru2000}. Consequently, we have
    \begin{equation*}
    \begin{split}
        T(b(y) c(D_y) f) &= \tilde{b}^s T(c(D) f) + R^s_b (c(D) f) \\
        &= \tilde{b}^s \tilde{c}^s Tf + \tilde{b}^s R^s_c + R^s_b (c(D) f).
        \end{split}
    \end{equation*}
    The first expression is local and confined to the support of $a =b \cdot c$ in phase space. Hence, it is not contributing. The second and third term can be estimated like in \cite{Tataru2000} and give contributions of $\text{RapDec}(r) \| f \|_2$
    This finishes the proof.
\end{proof}

\subsection{Wave packet decompositions at different scales and Assumption \ref{ass:Frames}}
\label{subsection:WavePacketDecompositions}

With the basic wave packet decomposition established in the preceding subsection we comment on the frame property for approximate solutions in Assumption \ref{ass:Frames}. Recall that we propose the existence of a family of nested sets $(\overline{\mathcal{Y}}_{r,t})_{r \in [\nu^{-2-\delta_0},R]}$ at times $t \in [-R,R]$:
\begin{equation*}
    \overline{\mathcal{Y}}_{R,t} \subseteq \overline{\mathcal{Y}}_{r_2,t} \subseteq \overline{\mathcal{Y}}_{r_1,t} \subseteq \overline{\mathcal{Y}} \subseteq T^* \R^d
\end{equation*}
with $R \geq r_2 \geq r_1 \geq \nu^{-2-\delta_0}$ and associated PDOs $\chi_{\overline{\mathcal{Y}}_{r,t}}$.

For $t= 0$ we have explained the sets $\mathcal{Y}_r$, $\overline{\mathcal{Y}}_r$ and $\chi_{\overline{\mathcal{Y}}_r}$ in the preceding paragraph. The sets $\mathcal{Y}_r$ are transported along the flow resulting in the sets $\mathcal{Y}_{r,t}$ and we consider again the margins $C(R r^{-\frac{1}{2}+\delta_0},r^{-\frac{1}{2}+\delta_0})$ of $\mathcal{Y}_{r,t}$ resulting in the sets $\overline{\mathcal{Y}}_{r,t}$.

\eqref{eq:ExpansionInitialData} then follows for Hamiltonians $p$ which satisfy Assumption \ref{ass:HamiltonianFlowRegularity} from Theorem \ref{thm:WavePacketDecompositions} by finite speed of propagation to limit the number of tubes to a power of $r$. More precisely, for given $Q_r$ we can further localize $\chi_{\overline{\mathcal{Y}}_{r,t_i}}(x,D) f$ to a spatial cube $Q_{r,x}$ of size $Cr$ such that
\begin{equation*}
    S(t,t_i) \chi_{\overline{\mathcal{Y}}_{r,t_i}} f = S(t,t_i) [\chi_{Q_{r,x}} \chi_{\overline{\mathcal{Y}}_{r,t_i}} f] + g(t)
\end{equation*}
such that
\begin{equation*}
    \| g \|_{L^p_{x,t}(Q_r)} \lesssim \nu^{d \big( \frac12 - \frac1p \big)} \text{RapDec}(r) \|f \|_{L^2}.
\end{equation*}
$\chi_{Q_{r,x}}$ denotes a smooth function, which is identical one on $Q_{r,x}$, satisfies derivatives bounds $|\partial_x^{\alpha} \chi_{Q_{r,x}}| \lesssim_{\alpha} r^{-|\alpha|}$ and is supported on $2 Q_{r,x}$.
For $\chi_{Q_{r,x}} \chi_{\overline{\mathcal{Y}}_{r,t_i}} f$ we carry out the wave packet construction from the previous paragraph, noting that $\chi_{Q_{r,x}}$ does not essentially change the frequency support of $\chi_{\overline{\mathcal{Y}}_{r,t_i}}$ and taking into account the additional spatial localization allows us to choose $\# \T \lesssim r^{100d}$.

The relevance of considering nested sets for different scales $r_1 \leq r_2^{1-\delta}$ is that upon evolving the essential part as $S(t_f,t_i) \chi_{\overline{\mathcal{Y}}_{r_2,t_i}} f= \sum_{T \in \T_{r_2} \subseteq \Lambda_{r_2} \cap \overline{\mathcal{Y}}_{r_2,t_i}} a_T \phi_T + g(t)$ for $|t_i-t_f| \leq r_2$ the final state at $t_f$ can be regarded as
\begin{equation}
    \label{eq:ExpansionDifferentScales}
    \sum_{T \in \T_{r_2} \subseteq \Lambda_{r_2} \cap \overline{\mathcal{Y}}_{r_2,t_i}} a_T \phi_T(t_f) = \chi_{\overline{\mathcal{Y}}_{r_1,t_f}} \sum_{T \in \T_{r_2} \subseteq \Lambda_{r_2} \cap \overline{\mathcal{Y}}_{r_2,t}} a_T \phi_T + \text{RapDec}(r_1) \| \chi_{\overline{\mathcal{Y}}_{r_2,t_i}} f \|_2.
\end{equation}
This will allow us to regard the solution on a smaller space-time scale as microlocalized on a coarser scale. This is required in the proofs of Theorems \ref{thm:GeneralizationBilinearParaboloid} and \ref{thm:GeneralizationBilinearCone} to employ the induction hypothesis.

We turn to the proof of \eqref{eq:WavePacketLocalizationFinal}, that is for $\phi_T$ a normalized wave packet at scale $r_2$, having the localization 
\begin{equation}
\label{eq:WavePacketLocalizationFinalProof}
    \| (1-\chi_{\overline{\mathcal{Y}}_{r_1,t_f}} (x,D)) \phi_T(t) \|_{L^2} \lesssim_{\delta} \text{RapDec}(r_1)
\end{equation}
provided that $(x_T(t),\xi_T(t)) \in \overline{\mathcal{Y}}_{r_2,t_f}$ and $r_1 \leq r_2^{1-\delta}$.
First recall that $\overline{\mathcal{Y}}_{r_1,t_f}$ is the $C( R r_1^{-\frac{1}{2}+\delta_0}, r_1^{-\frac{1}{2}+\delta_0})$-thickening and we have the phase space localization
\begin{equation*}
    |T_{r_1} \phi_T(t,x,\xi)| \lesssim_N (1+ r_2^{-\frac{1}{2}} |x_T(t) - x| + r^{\frac{1}{2}}_1 |\xi_T(t) - \xi|)^{-N}.
\end{equation*}
Since $r_2^{\frac{1}{2}} \ll R r_2^{-\frac{1}{2}+\delta_0} \ll R r_1^{-\frac{1}{2}+\delta_0}$, and $r_2^{-\frac{1}{2}} \ll r_1^{-\frac{1}{2}+\delta_0}$, the essential support of $T_{r_1} \phi_T$ is separated by a margin of $(r_2^{\delta},r_1^{-\delta})$ and the rapid decay \eqref{eq:WavePacketLocalizationFinalProof} follows from conjugating $(1-\chi_{\overline{\mathcal{Y}}_{r_1,t_f}})$ with $T_{r_1}$ to phase space like in the previous paragraph. We omit the details to avoid repetition. 

Finally, we observe that evolving $\overline{\mathcal{Y}}_{r_1,t_i}$ under $p$ yields a thickening of size \\
$C(R r_1^{-\frac{1}{2}+\delta_0},r_1^{-\frac{1}{2}+\delta_0})$ of $\mathcal{Y}_{r_1,t_f}$ by the bi-Lipschitz property of the flow, Lemma \ref{lem:GeometryWavepackets}.
Consequently, for $\phi_T$ a wave packet with $T \in \Lambda_{r_1} \cap \overline{\mathcal{Y}}_{r_1,t_i}$ we have $(x_T(t_f),\xi_T(t_f)) \in \mathcal{Y}_{r_2,t_f}$ and can arrange for the support of $1- \chi_{\overline{\mathcal{Y}}_{r_1,t_f}}$ to be separated by a margin $C(R r_1^{-\frac{1}{2}+\delta_0},r_1^{-\frac{1}{2}+\delta_0})$.

\section{Applications to linear and bilinear estimates}
\label{section:Applications}
In this section we show how the wave packet decomposition can be utilized to prove Strichartz and bilinear estimates. In the Appendix we show in Propositions \ref{prop:DispersiveEstimateSEQ} and \ref{prop:DispersiveEstimateWave} how to use wave packet decompositions and non-degeneracy properties of the Hamiltonian flow to obtain dispersive estimates.

\subsection{Estimates for wave equations with rough coefficients}
We focus on wave equations governed by a time-independent metric with coefficients $g^{ij} \in C^{1,s}(\R^{d})$, $s \in [0,1]$ for definiteness. The ellipticity condition reads
\begin{equation}
\label{eq:EllipticityLinearBilinear}
\exists \lambda, \Lambda > 0: \, \forall (t,x) \in \R^{d+1}: \; \forall \xi \in \R^d: \; \lambda |\xi|^2 \leq g^{ij}(t,x) \xi_i \xi_j \leq \Lambda |\xi|^2.
\end{equation}
The Cauchy problem for the wave equation is given by
\begin{equation}
\label{eq:CauchyProblemRoughWaveEquation}
\left\{ \begin{array}{cl}
\partial_t^2 u &= \partial_i g^{ij} \partial_j u, \quad (t,x) \in \R \times \R^d, \\
u(0) &= u_0, \quad \dot{u}(0) = u_1.
\end{array} \right.
\end{equation}

First, we outline the proof of Strichartz estimates. Here we recover the sharp estimates due to the second author \cite{Tataru2001}. The key argument to descend from $C^{1,1}$-coefficients to lower regularity is as before (\cite{BahouriChemin1999,Tataru2001,Tataru2002}) a paradifferential decomposition for the coefficients of the metric: For $C^{1,1}$-coefficients, and solutions at frequency $N$, a frequency truncation of the metric coefficients at frequency $N^\frac12$
 allows us to obtain a $C^{1,1}$-wave packet decomposition on the unit time scale with an error term that can be treated perturbatively invoking Duhamel's formula. In turn, it is well-known that a wave packet decomposition, which satisfies $C^{1,1}$-regularity bounds, can be used to recover Euclidean Strichartz estimates (e.g. Smith \cite{Smith1998}, Smith--Tataru \cite{SmithTataru2005}, Candy--Herr \cite{CandyHerr2018}).

\smallskip

For rougher coefficients, when attempting the same paradifferential decomposition the contribution from the Duhamel term becomes large. 
We may truncate the coefficients at higher frequencies to ameliorate its contribution.
But when truncating the coefficients at frequencies $N^{\sigma}$, $\sigma > \frac{1}{2}$, we can only obtain a wave packet decomposition on a frequency-dependent time scale, and a loss is incurred when summing over the frequency-dependent time intervals. Our choice of $\sigma$ is dictated by balancing these two contributions.

Since we obtain the sharp results \cite{Tataru2001,SmithTataru2002}, this shows that our regularity requirements on the wave packet decomposition are sharp on the $C^{s}$-scale.

Secondly, we show Theorem \ref{thm:RoughWaveEquationsIntro}, which is concerned with bilinear estimates. The proof uses similar reductions and decompositions like in the linear case: after recovering the wave packet decomposition on a frequency-dependent time scale, we can use the wave packet decomposition and invoke Theorem \ref{thm:GeneralizationBilinearCone}.

\subsubsection{Strichartz estimates}

In the following we recover \cite[Theorem~4]{Tataru2001}; we consider only $d \geq 4$ because the endpoint Strichartz space $L_t^2 L_x^{\frac{2(d-1)}{d-3}}$ becomes admissible and moreover, in order to simplify the exposition, we assume that the coefficients are  time-independent. Sharpness of the result was proved by Smith--Tataru \cite{SmithTataru2002}. For the sake of exposition we consider frequency-localized solutions $u_N$, $N \gg 1$, which are moreover essentially localized in space to $[0,1] \times B_d(0,1)$ and solve
\begin{equation}
\label{eq:FrequencyLocalizedWaveEquation}
    \partial_t^2 u_N = \partial_i g^{ij}_{\ll N} \partial_j u_N + f_N.
\end{equation}
For the reduction of the full equation \eqref{eq:CauchyProblemRoughWaveEquation} to \eqref{eq:FrequencyLocalizedWaveEquation} we refer to \cite[pp.~415--417]{Tataru2001}.

\begin{theorem}[Endpoint~Strichartz~estimates~via~wave~packet~decompositions]
\label{thm:EndpointStrichartzWave}
Let $s \in [0,1]$, and $(g^{ij})_{ij} \subseteq C^{1,s}(\R^{d}) $, which satisfies \eqref{eq:EllipticityLinearBilinear} and $\| g^{ij} \|_{C^{1,s}} \lesssim 1$.
Let $d \geq 4$, $q = \frac{2(d-1)}{d-3}$, $\kappa_0 = \frac{d-1}{2} - \frac{d}{q}$, and $\kappa_1 = \frac{1-s}{2(3+s)}$. The following estimate for solutions to \eqref{eq:FrequencyLocalizedWaveEquation} holds:
\begin{equation}
\label{eq:EndpointStrichartzWave}
N^{1-\kappa_0 - \kappa_1} \| u_N \|_{L_t^2([0,1],L^q_x(\R^d))} \lesssim N \| u_N(0) \|_{L^2} + \| \partial_t u_N(0) \|_{L^2} + \| f_N \|_{L^1_t L^2_x}.
\end{equation}
\end{theorem}
\begin{proof}
By a finite partition and mild dilation we can additionally suppose that $\| g^{ij} \|_{\dot{C}^{1,s}} \ll 1$.
By the energy estimate $\| \nabla_{t,x} u_N \|_{L_t^\infty L_x^2} \lesssim \| \nabla_{t,x} u_N(0) \|_{L^2} + \| f_N \|_{L_t^1 L_x^2}$ it suffices to show
\begin{equation}
\label{eq:FrequencyLocalizedStrichartz}
N^{1-\kappa_0-\kappa_1} \| u_N \|_{L_t^2([0,1];L^q_x)} \lesssim \| \nabla_{t,x} u_N \|_{L^{\infty} L^2} + \| f_N \|_{L^1 L^2}.
\end{equation}

For the proof of \eqref{eq:FrequencyLocalizedStrichartz} we carry out a paradifferential decomposition
\begin{equation*}
    \partial_i g^{ij}_{\ll N} \partial_j u_N = \partial_i g^{ij}_{\ll N^{\sigma}} \partial_j u_N + \partial_i g^{ij}_{N^{\sigma} \lesssim \cdot \ll N } \partial_j u_N
\end{equation*}
and want to treat the second term as part of the Duhamel term.

We shall argue below that truncating the coefficients at $N^{\sigma}$ allows us to use Euclidean Strichartz estimates on a time interval $I$ of length $N^{1-2\sigma}$. We show how the proof is concluded with this information at hand.

Let $L_N = - \partial_i g^{ij}_{\lesssim N^{\sigma}} \partial_j$. 
We have for the error term from truncating the coefficients at frequencies $N^{\sigma}$:
\begin{equation*}
\begin{split}
\big\| \frac{1}{\sqrt{L_N}} \partial_i g^{ij}_{N^{\sigma} \lesssim \cdot \ll N} \partial_j u_N \big\|_{L^2_x} &\lesssim N \| g^{ij}_{\gtrsim N^{\sigma}} \|_{L_x^\infty} \| u_N \|_{L^2_x} \\
&= N^{1-\sigma(1+s)} N^{\sigma(1+s)} \| g^{ij}_{\gtrsim N^{\sigma}} \|_{L_x^\infty} \| u_N \|_{L^2_x} \\
&\lesssim N^{1-\sigma(1+s)} \| g^{ij} \|_{C^{1,s}} \| u_N \|_{L^2_x}.
\end{split}
\end{equation*}
Applying the Euclidean Strichartz estimates and bearing in mind the estimate for the error term, we find
\begin{equation*}
\begin{split}
    N^{1-\kappa_0} \| u_N \|_{L_t^2(I;L_x^q)} &\lesssim \| u_N \|_{L^{\infty}_t(I;L_x^2)} + N^{1-\sigma(1+s)} \| u_N \|_{L_t^1(I;L_x^2)} + \| f \|_{L_I^1 L_x^2} \\
    &\lesssim (1+ N^{2-3 \sigma -\sigma s}) \| u_N \|_{L_I^{\infty} L_x^2} + \| f \|_{L_I^1 L_x^2}.
    \end{split}
\end{equation*}
The last estimate follows from H\"older's inequality on the interval $I$.
The contribution for the two terms is balanced for $\sigma = \frac{2}{3+s}$. The additional loss of carrying out an $\ell^2_I$-sum over the frequency-dependent time intervals incurs the additional loss $N^{\kappa_1}$. To finish the proof, it remains to argue that we have the Euclidean Strichartz estimates on the intervals $I$.

It suffices to estimate the half-wave evolution for an operator with truncated frequencies. It is well-known (see e.g. \cite[Theorem~3.3.1]{Sogge2017}) that
\begin{equation}
\label{eq:ApproximationPseudo}
\sqrt{L_N} f = a^w(x,t;D) f + a_0^w f, \quad a(x;\xi) = (g_{\leq N^{\sigma}}^{ij}(x) \xi_i \xi_j)^{\frac{1}{2}}
\end{equation}
with $a^w_0 : L^2 \to L^2$ uniformly bounded. Then, by a similar Duhamel argument by which we replace the coefficients with their truncated versions it suffices to show Strichartz estimates for $S^{\pm}_N(t)$, which denotes the propagator for $D_t \pm a^w$. 
After rescaling to unit frequencies $|\xi| \sim 1$, the symbol reads
\begin{equation*}
a_N(t,x,\xi) = ( g^{ij}_{\leq N^{\sigma}}(N^{-1} x) \xi_i \xi_j)^{\frac{1}{2}}.
\end{equation*}
We check that the symbol regularity
    \begin{equation*}
        |\partial_z^{\alpha} \partial_{\xi}^{\beta} a(x,t;\xi)| \lesssim_{\alpha,\beta} \begin{cases}
            R^{-|\alpha|}, &\quad 0 \leq |\alpha| \leq 2, \\
            R^{- \frac{(|\alpha|-2)_+}{2}-2}, &\quad |\alpha| \geq 2.
        \end{cases}
    \end{equation*}
for Theorem \ref{thm:WavePacketDecompositions} is satisfied for $R=N^{2(1-\sigma)}$. Hence, we recover the wave packet decomposition by Theorem \ref{thm:WavePacketDecompositions}, the dispersive estimate by Proposition \ref{prop:DispersiveEstimateWave}, and consequently Euclidean Strichartz estimates by the Keel--Tao argument \cite{KeelTao1998} on intervals of length $|I| = N^{1-2 \sigma}$. The proof is complete.
\end{proof}

\subsubsection{Bilinear estimates}

We turn to the proof of Theorem \ref{thm:RoughWaveEquationsIntro}, which is concerned with bilinear estimates for solutions $u_i$ to
\begin{equation*}
\left\{ \begin{array}{cl}
\partial_t^2 u_{1} &= \partial_i g^{ij}_1 \partial_j u_{1} + f_1, \\
\partial_t^2 u_{2} &= \partial_i g^{ij}_2 \partial_j u_{2} + f_2, \quad (t,x) \in [0,1] \times \R^d
\end{array} \right.
\end{equation*}
with $f_i \in L_t^1([0,1];L_x^2)$. We assume that $u_{i}$ satisfy the frequency localizations
\begin{equation*}
\text{supp}(\hat{u}_{i}) \subseteq \{ \xi \in \R^d : |\xi| \sim N, \quad \big| \frac{\xi}{|\xi|} - \xi_{*} \big| \leq \epsilon^* \ll 1\}.
\end{equation*}
Moreover, suppose that $g^{ij}_k \in C^{1,s}$, $k=1,2$, $s \in [0,1]$ and $(g^{ij}_k)_{i,j}$ satisfy \eqref{eq:EllipticityLinearBilinear} and are frequency-truncated at frequencies $\ll N$. Additionally, we suppose that
\begin{equation*}
|g^{ij}_k - \nu_k \delta_{ij} | \leq \epsilon^* \ll 1, \qquad |\nu_1-\nu_2| \sim 1.
\end{equation*}
and recall that $p = p_d = \frac{d+3}{d+1}$.

\begin{proof}[Proof~of~Theorem~\ref{thm:RoughWaveEquationsIntro}]
We adjust the arguments from the previous paragraph.
Like above we use the energy estimate to reduce to
\begin{equation}
\label{eq:ClaimEnergyBilinearReduction}
    N^2 \| u_1 u_2 \|_{L^p_{t,x}([0,1] \times \R^d) } \lesssim_\varepsilon N^{\frac{4}{3+s} \frac{d-1}{d+3} + \varepsilon} \prod_{i=1}^2 \big( \| \nabla_{t,x} u_i \|_{L^\infty L^2} + \| f_i \|_{L^1 L^2} \big).
\end{equation}
We consider equations with truncated coefficients
\begin{equation*}
    \partial^2_t u_k = \partial_i g^{ij}_{k,\leq N^{\sigma}} \partial_j u_k + \partial_i g^{ij}_{k,N^{\sigma} \lesssim \cdot \ll N} \partial_j u_k + f_k
\end{equation*}
with $f_k \in L_t^1 L_x^2$. In the following it will be important that we have for the second term approximately the same frequency localization of $u_k$. Let $L_{N,k} = - \partial_i g_{k, \leq N^{\sigma}}^{ij} \partial_j$.

By the same argument as in the previous theorem, we may replace the half-wave propagators $e^{\pm i t \sqrt{L_{N,k}}} $ with $S_{N,k}^{\pm}(t)$, which is the evolution generated by $D_t \pm a^w$, for $a(x,t;\xi) = \big( g^{ij}_{k,\leq N^{\sigma}} \xi_i \xi_j \big)^{\frac{1}{2}}$. We have already seen above that we obtain a wave packet decomposition on intervals of length $N^{1-2 \sigma}$ (after plugging in the finite speed of propagation). When estimating products
\begin{equation*}
    \| S_{N,1}^{\pm} g_1 S_{N,2}^{\pm} g_2 \|_{L^p_{t,x}(I \times B_d(x,|I|))}
\end{equation*}
by Theorem \ref{thm:GeneralizationBilinearCone} we need to check the transversality assumptions on the flows, with the initial data $g_i \in L^2(\R^d)$ having the frequency support assumptions from the outset. For products $S_{N,1}^+(t) g_1 S^+_{N,2}(t) g_2$ note that
\begin{equation*}
| \nabla_{\xi} p_1(z,\xi^1) - \nabla_{\xi} p_2(z,\xi^2) | = \big| \big( \frac{g^{ij}_{1,\leq N^{\sigma}} \xi^1_j }{(g^{ij}_{1,\leq N^{\sigma}} \xi^1_i \xi^1_j)^{\frac{1}{2}}} - \frac{g^{ij}_{2,\leq N^{\sigma}} \xi^2_j }{(g^{ij}_{2,\leq N^{\sigma}} \xi^2_i \xi^2_j)^{\frac{1}{2}}} \big) \mathbf{e}_i \big|.
\end{equation*}
The error incurred replacing $g^{ij}_k$ with $\nu_k$ is $\mathcal{O}(\epsilon^*)$, and similarly, when replacing $\xi^i$ with $\xi_*$. Consequently,
\begin{equation*}
|\nabla_{\xi} p_1(z,\xi^1) - \nabla_{\xi} p_2(z,\xi^2)| = \big| \frac{\nu_1^{\frac{1}{2}} \xi_*}{|\xi_*|} - \frac{\nu_2^{\frac{1}{2}} \xi_*}{|\xi_*|} \big| + \mathcal{O}(\epsilon^*).
\end{equation*}
We conclude the transversality estimate by noting that
\begin{equation*}
\big| \nu_1^{\frac{1}{2}} - \nu_2^{\frac{1}{2}} \big| \sim |\nu_1 - \nu_2| \sim 1.
\end{equation*}

Next, we check the second transversality condition, which reads in the present context:
\begin{equation*}
\big| \langle \xi_*, \frac{g^{ij}_1 \xi^1_j \mathbf{e}_i}{(g^{ij}_{1,\leq N^{\sigma}} \xi^1_i \xi^1_j)^{\frac{1}{2}}} - \frac{g^{ij}_2 \xi^2_j \mathbf{e}_i}{(g^{ij}_{2,\leq N^{\sigma}} \xi^2_i \xi^2_j)^{\frac{1}{2}}} \rangle \big| \gtrsim 1.
\end{equation*}
Replacing $\xi^i$ with $\xi_*$ and $g^{ij}_{k, \leq N^{\sigma}}$ with $\nu_k \delta_{ij}$ incurs like above a loss of $\mathcal{O}(\varepsilon)$. We find
\begin{equation*}
\big| \langle \xi_*, \nu_1^{\frac{1}{2}}  \frac{\xi_*}{|\xi_*|} - \nu_2^{\frac{1}{2}} \frac{\xi_*}{|\xi_*|} \rangle \big| \gtrsim | \nu_1 - \nu_2 | \gtrsim 1.
\end{equation*}

Upon replacing $p_2 \to - p_2$ we see that the transversality expressions are actually better behaved and we can apply Theorem \ref{thm:GeneralizationBilinearCone} also when estimating the mixed expressions $S^+_{N,1}(t) g_1$, $S^-_{N,2}(t) g_2$.

By the $L^2$-bound for the Duhamel term
\begin{equation*}
    \big\| \frac{1}{\sqrt{L_{k,N}}} \partial_i g^{ij}_{k, N^{\sigma} \lesssim \cdot \ll N} \partial_j u_k(t) \big\|_{L^2_x} \lesssim N^{1-\sigma(1+s)} \| u_k(t) \|_{L^2_x}
\end{equation*}
we recover on time intervals $I$, $|I|=N^{1-2\sigma}$ the Euclidean estimate
\begin{equation*}
\begin{split}
    &\quad N^2 \| u_1 u_2 \|_{L^{p_d}_{t,x}(I \times \R^d)} \\
    &\lesssim_\varepsilon N^{\frac{d-1}{d+3}+\varepsilon} \prod_{i=1}^2 \big( \| \nabla_{t,x} u_i \|_{L^\infty_I L^2_x} + N^{1- \sigma(1+s)} \| u_i \|_{L_I^1 L_x^2} + N \| f_i \|_{L_I^1 L_x^2} \big) \\
    &\lesssim_\varepsilon N^{\frac{d-1}{d+3}+\varepsilon} \prod_{i=1}^2 \big( (1+ N^{1- \sigma(1+s) + 1 - 2\sigma} ) \| u_i \|_{L^{\infty}_I L_x^2} + N \| f_i \|_{L_I^1 L_x^2} \big).
    \end{split}
\end{equation*}
Choosing $\sigma = \frac{2}{3+s}$ we simplify
\begin{equation*}
    N^2 \| u_1 u_2 \|_{L^{p_d}_{t,x}(I \times \R^d)} \lesssim_\varepsilon N^{\frac{d-1}{d+3}+\varepsilon} \prod_{i=1}^2 \big( \| \nabla_{t,x} u_i \|_{L^\infty_I L^2_x} + N \| f_i \|_{L_I^1 L_x^2} \big).
\end{equation*}
It remains to carry out the $\ell^{p_d}_I$-sum over intervals of length $|I| = N^{1-2 \sigma}$, which yields the claimed estimate. The proof is complete.
\end{proof}

\subsection{Estimates for Schr\"odinger equations with rough coefficients}

We consider a solution to the Schr\"odinger equation
\begin{equation}
\label{eq:FrequencyDependentSchroedinger}
i \partial_t u_N + \partial_i a^{ij} \partial_j u_N = f_N \in L^1 L^2
\end{equation}
on the frequency-dependent time interval $T=[0,N^{-1}]$ with $a^{ij} \in C^1_t C_x^{1,s}$, $s \in [0,1]$, and $\text{supp}(\hat{u}_N) \subseteq \{ |\xi| \sim N \}$. 

\subsubsection{Strichartz estimates}
We require non-degeneracy of $a^{ij}$ given by
\begin{equation}
\label{eq:UniformNondegeneracy}
\exists c, C > 0: \forall \xi \in \R^d: \forall (t,x) \in \R^{d+1}: c |\xi| \leq |a^{ij}(t,x) \xi_j| \leq C |\xi|.
\end{equation}
Schr\"odinger equations with operators satisfying this condition can be classified
as either \emph{elliptic}, where the matrix $a^{ij}$ is of definite (positive or negative) 
sign, or  as \emph{ultrahyperbolic}, where it is of indefinite sign. 

\normalsize

Here we recover the semiclassical Strichartz estimates without derivative loss for $s=1$, firstly proved in the elliptic case by Staffilani--Tataru \cite{StaffilaniTataru2002}, see also \cite{Tataru2008}.  Burq--G\'erard--Tzvetkov \cite{BurqGerardTzvetkov2004} provided a semi-classical interpretation and showed that there are compact Riemannian manifolds $(M,g)$, e.g., the sphere, for which the Strichartz estimate for the Laplace--Beltrami operator on a finite time interval
\begin{equation*}
\| P_N e^{it \Delta_g} f \|_{L_t^p([0,1], L^q_x(M))} \lesssim N^{\frac{1}{p}} \| f \|_{L^2(M)}
\end{equation*}
cannot be improved for some sharp Schr\"odinger admissible pairs $(p,q,d)$ which satisfy
\begin{equation*}
\frac{2}{p} + \frac{d}{q} = \frac{d}{2}, \quad (p,q,d) \neq (2,\infty,2).
\end{equation*}
The semiclassical Strichartz estimates are  completely similar in the ultrahyperbolic case, as only non-degeneracy is used in the proof, see the Appendix.

For comparison recall that sharp linear Strichartz estimates on Euclidean space are given by
\begin{equation*}
\| e^{it \Delta} f \|_{L_t^p(\R;L^q_x(\R^d))} \lesssim \| f \|_{L^2}.
\end{equation*}

In the following we illustrate how our wave packet decomposition yields lossless estimates for rough coefficients on frequency-dependent time scales. The argument is similar to the above: we employ a paradifferential decomposition, which provides us with a wave packet decomposition on a smaller time scale than $N^{-1}$. We balance the error from additional decomposition in time and the error from the Duhamel term. After the parabolic rescaling, reducing to unit frequencies, the argument becomes  a reprise of the proof of Theorem \ref{thm:EndpointStrichartzWave}. The assumption $d \geq 3$ is a simplification to work with the endpoint.

\begin{theorem}[Endpoint~Strichartz~estimates~for~rough~coefficients]
\label{thm:LinearStrichartzSEQ}
Let $d \geq 3$, $s \in [0,1]$, and $(a^{ij})_{i,j} \in C^{0,1}_t C^{1,s}_x$ be uniformly non-degenerate \eqref{eq:UniformNondegeneracy}. Let $q = \frac{2d}{d-2}$ and $N \gg 1$. Let $u$ be a solution to
\begin{equation*}
i \partial_t u + \partial_i a^{ij}_{\ll N} \partial_j u = f
\end{equation*}
with $\text{supp}(\hat{u}) \subseteq \{ \xi \in \R^d: |\xi| \sim N \}$. Then the following estimate holds:
\begin{equation}\label{Str-Schrodinger}
N^{-\kappa} \| u \|_{L_t^2([0,N^{-1}],L^q_x)} \lesssim \| u(0) \|_{L^2} + \| f \|_{L^1 L^2}
\end{equation}
with $\kappa = \frac{1-s}{2(3+s)}$.
\end{theorem}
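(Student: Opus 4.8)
The plan is to follow the same structure as the proof of Theorem~\ref{thm:EndpointStrichartzWave}, replacing the wave evolution by the Schr\"odinger evolution and using the parabolic rescaling in place of the isotropic one. First I would carry out the standard reductions: by the energy estimate and Duhamel's formula it suffices to prove \eqref{Str-Schrodinger} for homogeneous solutions, and by a paradifferential decomposition of the coefficients $a^{ij}_{\ll N}$ at an intermediate frequency $N^\sigma$ (with $\sigma>\tfrac12$ to be chosen), we split
\begin{equation*}
\partial_i a^{ij}_{\ll N} \partial_j u = \partial_i a^{ij}_{\leq N^{\sigma}} \partial_j u + \partial_i a^{ij}_{N^{\sigma} \lesssim \cdot \ll N} \partial_j u,
\end{equation*}
treating the second, high-frequency piece perturbatively via Duhamel. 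The key point, as in the wave case, is that truncating the coefficients at frequency $N^\sigma$ makes Assumption~\ref{ass:HamiltonianFlowRegularity} hold for the rescaled symbol on a suitable scale $R$, so that Theorem~\ref{thm:WavePacketDecompositions} furnishes a wave packet decomposition, Proposition~\ref{prop:DispersiveEstimateSEQ} furnishes the dispersive bound, and the Keel--Tao argument then yields Euclidean Strichartz estimates on an interval whose length in the original variables is $N^{-1}$ up to a factor determined by $\sigma$.

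Next I would track the scaling carefully. After reducing to unit frequencies via the parabolic (anisotropic) rescaling $(t,x)\mapsto (N^{2}t, N x)$ dictated by the $2$-homogeneity of the Schr\"odinger symbol $a(t,x,\xi) = a^{ij}_{\leq N^{\sigma}}(t,x)\xi_i\xi_j$, the rescaled symbol
\begin{equation*}
a_N(t,x,\xi) = a^{ij}_{\leq N^{\sigma}}(N^{-2} t, N^{-1} x)\, \xi_i \xi_j
\end{equation*}
has spatial derivatives that are bounded by powers of $N^{\sigma-1}$ and time derivatives bounded by powers of $N^{\sigma-2}$; matching this against the hypothesis $|\partial_z^\alpha\partial_\xi^\beta a|\lesssim R^{-|\alpha|}$ for $|\alpha|\le 2$ and $R^{-(|\alpha|-2)_+/2-2}$ for $|\alpha|\ge 2$ identifies the correct scale $R$ (I expect $R \sim N^{2(1-\sigma)}$, as in the wave case, since the relaxed $C^1$-in-time assumption is harmless after parabolic rescaling). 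On that scale the wave packet decomposition is valid on a time interval of length $R$ in rescaled variables, i.e.\ of length $N^{1-2\sigma}$ in the original time variable. The non-degeneracy hypothesis \eqref{eq:UniformNondegeneracy} is precisely what is needed so that $\partial^2_\xi a_N = 2 a^{ij}_{\leq N^\sigma}(\cdots)$ is uniformly nondegenerate, which is the input for Proposition~\ref{prop:DispersiveEstimateSEQ}; this is where the ellipticity-vs-ultrahyperbolicity dichotomy is irrelevant, since only $|\det|\gtrsim 1$ is used.

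Then I would run the error bookkeeping exactly as in Theorem~\ref{thm:EndpointStrichartzWave}. On a single time interval $I$ of length $N^{1-2\sigma}$ the Euclidean Strichartz estimate with $q = \tfrac{2d}{d-2}$ (the endpoint admissible exponent, $d\ge 3$) gives
\begin{equation*}
\| u \|_{L^2_t(I;L^q_x)} \lesssim \| u \|_{L^\infty_t(I;L^2_x)} + \| \partial_i a^{ij}_{N^\sigma \lesssim \cdot \ll N}\partial_j u \|_{L^1_t(I;L^2_x)} + \| f \|_{L^1_t(I;L^2_x)},
\end{equation*}
and the $L^2$-bound for the Duhamel term is $\| (-\Delta_{a})^{-1/2}\partial_i a^{ij}_{\gtrsim N^\sigma}\partial_j u \|_{L^2} \lesssim N^{1-\sigma(1+s)}\|u\|_{L^2}$ using $\|a^{ij}_{\gtrsim N^\sigma}\|_{L^\infty_x}\lesssim N^{-\sigma(1+s)}$ from $C^{1,s}_x$ regularity (the $C^1_t$-in-time regularity costs nothing after parabolic scaling). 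H\"older on $I$ converts this into a factor $1+N^{1-\sigma(1+s)}\cdot N^{1-2\sigma}$, and balancing the two exponents gives $\sigma = \tfrac{2}{3+s}$, so that the per-interval estimate is lossless. Summing in $\ell^2_I$ over the $\sim N^{2\sigma-1}$ intervals covering $[0,N^{-1}]$ costs $N^{(2\sigma-1)/2}$; I would check that, tracked through the frequency normalization, this is exactly $N^{\kappa_1}$ with $\kappa_1 = \kappa = \tfrac{1-s}{2(3+s)}$ (note $\sigma=\tfrac{2}{3+s}$ gives $2\sigma-1 = \tfrac{1-s}{3+s}$, so $\tfrac{2\sigma-1}{2} = \kappa$), which closes \eqref{Str-Schrodinger}. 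The main obstacle I anticipate is not conceptual but the careful matching of the anisotropic parabolic scaling with the derivative bounds in Assumption~\ref{ass:HamiltonianFlowRegularity} and with the definition of $\kappa$: one must verify that the mixed space-time regularity of $a^{ij}\in C^1_t C^{1,s}_x$ (rather than $C^{1,s}$ in all variables) still produces the required symbol bounds after rescaling, and that the bookkeeping of derivative losses under the Littlewood--Paley reduction at the start --- where a commutator argument replaces $P_N(\partial_i a^{ij}_{\ll N}\partial_j \tilde P_N u)$ by $\partial_i a^{ij}_{\ll N}\partial_j P_N u$ plus lower-order terms --- is consistent. These are the same steps carried out for the wave case, so I would largely refer to the proof of Theorem~\ref{thm:EndpointStrichartzWave} and only spell out the points where the parabolic scaling and the $C^1_t$ assumption enter differently.
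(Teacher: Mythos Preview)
Your approach matches the paper's proof exactly: energy reduction, paradifferential truncation at $N^\sigma$, parabolic rescaling to identify $R=N^{2(1-\sigma)}$, Euclidean Strichartz on the short intervals via Proposition~\ref{prop:DispersiveEstimateSEQ} and Keel--Tao, balancing at $\sigma=\tfrac{2}{3+s}$, and $\ell^2$-summation over the $N^{2\sigma-1}$ subintervals to produce the loss $\kappa$. Two arithmetic slips happen to cancel: in the original time variable the short intervals have length $N^{-2\sigma}$ (the parabolic rescaling divides time by $N^2$, not $N$), and for the first-order Schr\"odinger equation there is no $(-\Delta_a)^{-1/2}$ in the Duhamel term, so the correct perturbative bound is $\|\partial_i a^{ij}_{\gtrsim N^\sigma}\partial_j u\|_{L^2}\lesssim N^{2-\sigma(1+s)}\|u\|_{L^2}$; the product $N^{-2\sigma}\cdot N^{2-\sigma(1+s)}=N^{2-\sigma(3+s)}$ agrees with your balancing and the final $\kappa$.
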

\begin{proof}
We begin with a straightforward energy estimate
\[
\|u\|_{L^\infty L^2} \lesssim \| u(0) \|_{L^2} + \| f \|_{L^1 L^2}
\]
which allows us to reduce \eqref{Str-Schrodinger} to 
\begin{equation}\label{Str-Schrodinger1}
N^{-\kappa} \| u \|_{L_t^2([0,N^{-1}],L^q_x)} \lesssim \| u \|_{L^\infty L^2} + \| f \|_{L^1 L^2}
\end{equation}

We employ a paradifferential decomposition of the coefficients $a_{ij}$ at spatial frequencies $N^{\sigma}$, $\sigma \in [\frac12,1]$ to be chosen depending on $s$, to 
rewrite the equation for $u$ in the form
\begin{equation*}
i \partial_t u + \partial_i a^{ij}_{\ll N, \leq N^{\sigma}} \partial_j u = g:= - \partial_i a^{ij}_{c} \partial_j u + f.
\end{equation*}
Above $a^{ij}_{\ll N, \leq N^{\sigma}}$ indicates that the temporal frequencies are truncated at $\ll N$ and the spatial frequencies at size $N^{\sigma}$, while $a^{ij}_c$ denotes the complement.

By finite speed of propagation on $[0,N^{-1}]$, we can confine $u$ spatially to a unit ball. We carry out the estimate on $[0,N^{-1}] \times B_d(0,1)$ by rescaling 
\[
t \to N^2 t, \qquad x \to Nx, \qquad \xi \to \xi/N, \qquad
u_N(t,x) = N^{-\frac{d}2} u(N^{-1} x,N^{-2} t),
\]
noting that the the Strichartz norms rest unchanged
\begin{equation*}
\| u \|_{L_t^2([0,N^{-1}],L^q_x(B_{d}(0,1))}  =  \| u_N \|_{L_t^2([0,N],L^q_x(B_d(0,N))}.
\end{equation*}
So do the energy norms, therefore the bound \eqref{Str-Schrodinger1} also remains the same.

The functions $u_N$ are now localized at spatial frequency $1$, and the rescaled symbols are given by
\begin{equation*}
a(x,t;\xi)= a_{\ll N, \leq N^{\sigma}}^{ij}(N^{-1} x, N^{-2}t) \xi_i \xi_j.
\end{equation*}
For an initial regularity $a^{ij} \in C^{0,1}_t C^{1,s}_x$ we note that \eqref{eq:RegularityEstimatesHamiltonianFlowAssumption} is satisfied for $R=N^{2-2 \sigma}$. Like above the rescaled time interval $[0,N]$ has to be split into $N^{2\sigma - 1}$ intervals of length $R$. On the small intervals we can use the wave packet decomposition and find Euclidean Strichartz estimates to hold; see Proposition \ref{prop:DispersiveEstimateSEQ} for the dispersive estimate. Correspondingly, the interval $[0,N^{-1}]$ has to be split into intervals $I'$ of length $|I'| = N^{-2\sigma}$ to apply Euclidean Strichartz estimates,
\[
 \| u\|_{L_t^2(I',L^q_x)} \lesssim \|u\|_{L^\infty(I', L^2)}
 + \|g\|_{L^1(I', L^2)}
\]
We can estimate the source term on the short intervals by 
\[
\begin{aligned}
\|g\|_{L^1(I', L^2)} \lesssim & \  |I'| \|a^{ij}_{c}\|_{L^\infty}
\| u \|_{L^\infty(I', L^2)} + \|f\| _{L^1(I', L^2)}
\\
\lesssim  & \  N^{-2\sigma} N^{2-\sigma(1+ s)} \| a^{ij} \|_{C^1_t C_x^{1,s}}\| u \|_{L^\infty L^2} + \|f\| _{L^1(I', L^2)}
\end{aligned}
\]
This leads to
\[
 \| u\|_{L_t^2(I',L^q_x)} \lesssim
 (1+ N^{2-\sigma(3+ s)}) \|u\|_{L^\infty L^2}
 + \|f\|_{L^1(I', L^2)}
\]
The two contributions balanced for $\sigma = \frac{2}{3+s}$, which gives
\[
 \| u\|_{L_t^2(I',L^q_x)} \lesssim
  \|u\|_{L^\infty L^2}
 + \|f\|_{L^1(I', L^2)}
\]
Since the number of intervals $I'$ is $N^{2\sigma -1}$,
summing over all $I' \subset [0,N^{-1}]$ yields 
derivative loss $\kappa = \sigma - \frac12 =  \frac{1-s}{2(3+s)}$ as claimed.

\end{proof}

\subsubsection{Bilinear estimates}

Next, we show a bilinear smoothing estimate on frequency-dependent times for Schr\"odinger equations with rough coefficients. Consider solutions to the system
\begin{equation*}
\left\{ \begin{array}{cl}
i \partial_t u_1 + \partial_i g^{ij}_{1, \ll N} \partial_j u_1 &= f_1, \\
i \partial_t u_2 + \partial_i g^{ij}_{2, \ll K} \partial_j u_2 &= f_2
\end{array} \right.
\end{equation*}
with
\begin{equation*}
\text{supp}(\hat{u}_1) \subseteq B(0,N) \backslash B(0,N/4), \quad \text{supp}(\hat{u}_2) \subseteq B(0,K)
\end{equation*}
and $K \ll N$. 

\begin{theorem}[Bilinear~smoothing~estimate]
\label{thm:BilinearSmoothingEstimate}
Let $d \geq 1$, and $u_i$ like above. We assume that $g^{ij}_k \in C^{0,1}_t C^{1,s}_x$ is uniformly elliptic like in \eqref{eq:EllipticityLinearBilinear}. Then the following estimate holds:
\begin{equation*}
\| u_1 u_2 \|_{L^{\frac{d+3}{d+1}}_{t,x}([0,N^{-1}] \times \R^d)} \lesssim_\varepsilon N^{\kappa_0+\kappa_1 +\varepsilon}  \prod_{i=1}^2 ( \| u_i(0) \|_{L^2} + \| f_i \|_{L^1 L^2})
\end{equation*}
with $\kappa_0 = -\frac{2}{d+3}$ and $\kappa_1 = \frac{(d+1)(1-s)}{(3+s)(d+3)}$.
\end{theorem}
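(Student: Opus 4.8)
\textbf{Proof plan for Theorem~\ref{thm:BilinearSmoothingEstimate}.} The plan is to mimic the structure of the proof of Theorem~\ref{thm:RoughWaveEquationsIntro}, replacing the half-wave evolutions by Schr\"odinger evolutions and the $1$-homogeneous bilinear estimate (Theorem~\ref{thm:GeneralizationBilinearCone}) by the non-degenerate bilinear estimate (Theorem~\ref{thm:GeneralizationBilinearParaboloid}). First I would use the energy estimate for each Schr\"odinger equation, $\|u_i\|_{L^\infty L^2}\lesssim \|u_i(0)\|_{L^2}+\|f_i\|_{L^1L^2}$, to reduce to the bound
\begin{equation*}
\|u_1 u_2\|_{L^{\frac{d+3}{d+1}}_{t,x}([0,N^{-1}]\times\R^d)}\lesssim_\varepsilon N^{\kappa_0+\kappa_1+\varepsilon}\prod_{i=1}^2\big(\|u_i\|_{L^\infty L^2}+\|f_i\|_{L^1L^2}\big).
\end{equation*}
Next, since the first solution is at frequency $\sim N$, I would dyadically decompose the second one, $u_2=\sum_{K'\lesssim K}P_{K'}u_2$, and observe that the transversality of the two flows improves for $K'\ll N$; for the diagonal-type pieces $K'\sim K\ll N$ the quantitative transversality parameter will be $\nu\sim K/N$ (this is where the factor $N^{\kappa_0}=N^{-2/(d+3)}$, i.e.\ $\nu^{-2/(d+3)}$ with the worst case $\nu\sim 1$ after summing the geometric series in $K'$, enters via the $\nu^{-2/(d+3)}$ factor in \eqref{eq:bi-nondeg}).

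The core of the argument is the paradifferential reduction. I would truncate the coefficients $g^{ij}_{k,\ll N}$ at a spatial frequency $N^{\sigma}$, $\sigma\in[\tfrac12,1]$ to be optimized, writing the equation with leading operator $\partial_i g^{ij}_{k,\leq N^{\sigma}}\partial_j$ and the complement $\partial_i g^{ij}_{k,N^{\sigma}\lesssim\cdot\ll N}\partial_j u_k$ moved into a Duhamel source $f_k$. The $L^2$ bound on this Duhamel term is $N^{1-\sigma(1+s)}\|u_k\|_{L^2}$ at frequency $N$ — exactly as in the proof of Theorem~\ref{thm:RoughWaveEquationsIntro}. Then, after the parabolic rescaling $t\to N^2 t$, $x\to Nx$, $\xi\to\xi/N$ (which leaves the $L^{\frac{d+3}{d+1}}_{t,x}$ norm and the $L^2$ norms invariant), the rescaled symbol $a_k(x,t;\xi)=g^{ij}_{k,\leq N^{\sigma}}(N^{-1}x,N^{-2}t)\xi_i\xi_j$ satisfies the regularity estimates \eqref{eq:SymbolRegularityParadifferential} with $R=N^{2(1-\sigma)}$, and is non-degenerate by uniform ellipticity \eqref{eq:EllipticityLinearBilinear}. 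On rescaled time intervals $I$ of length $|I|=R=N^{2(1-\sigma)}$ (equivalently physical intervals of length $N^{-2\sigma}$) I would invoke Theorem~\ref{thm:WavePacketDecompositions} to get wave packet decompositions for both evolutions (verifying Assumptions~\ref{ass:WP} and \ref{ass:Frames} as in Subsection~\ref{subsection:WavePacketDecompositions}) and then Theorem~\ref{thm:GeneralizationBilinearParaboloid} to obtain, on each $I$,
\begin{equation*}
\|u_1 u_2\|_{L^{\frac{d+3}{d+1}}_{t,x}(I\times\R^d)}\lesssim_\varepsilon R^{\varepsilon}\,\nu^{-\frac{2}{d+3}}\prod_{i=1}^2\big(\|u_i\|_{L^\infty_I L^2}+N^{1-\sigma(1+s)}\|u_i\|_{L^1_I L^2}+\|f_i\|_{L^1_I L^2}\big),
\end{equation*}
where one must also check that the transversality conditions \eqref{eq:Transversality}–\eqref{eq:NonDegeneracyTransversality} survive the replacement of $g^{ij}_{k,\leq N^\sigma}$ by $\nu_k\delta_{ij}$ up to $\mathcal O(\varepsilon)$ errors, and that the relevant lower-order frequency-dependent pieces keep essentially the same frequency localization (so they can be carried in $f_i$).

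Finally I would apply H\"older on each short interval to absorb the Duhamel contribution, balancing $N^{1-\sigma(1+s)}\cdot N^{-2\sigma}$ against $1$, which forces $\sigma=\frac{2}{3+s}$, and then carry out the $\ell^{\frac{d+3}{d+1}}$ sum over the $N^{2\sigma-1}$ physical intervals $I'\subset[0,N^{-1}]$ of length $N^{-2\sigma}$; this produces the additional loss $N^{(2\sigma-1)\frac{d+1}{d+3}}$, and substituting $\sigma=\frac{2}{3+s}$ gives $2\sigma-1=\frac{1-s}{3+s}$, hence the loss $N^{\frac{(1-s)(d+1)}{(3+s)(d+3)}}=N^{\kappa_1}$, while the $\nu^{-2/(d+3)}$ factor, summed over the dyadic pieces $K'\lesssim K\ll N$, contributes at most $N^{\kappa_0+\varepsilon}$ with $\kappa_0=-\frac{2}{d+3}$ in the favourable small-$\nu$ regime (and exactly $N^{\kappa_0}$ when $K\sim N$, i.e.\ $\nu\sim1$, is excluded by $K\ll N$, so the net geometric sum is controlled). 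The main obstacle I anticipate is not any single estimate but the bookkeeping: verifying that the frame property Assumption~\ref{ass:Frames} genuinely holds for the Schr\"odinger parametrix with the $\nu$-dependent phase-space localization at the transversality scale $\nu\sim K/N$, and tracking the interplay between this $\nu$, the time-scale lower bound $\nu^{-2-\delta_0}\le R=N^{2(1-\sigma)}$ required by Theorem~\ref{thm:WavePacketDecompositions}, and the dyadic parameter $K$ — one must ensure $N^{2(1-\sigma)}\gtrsim (N/K)^{2+\delta_0}$, which is where the hypothesis $K\ll N$ and the choice of $\sigma$ must be compatible. A secondary technical point is the mixed interactions (replacing $p_2\to-p_2$, as in the wave case) to handle $\bar u_1 u_2$-type products should they arise, though for a pure product $u_1u_2$ this is immediate.
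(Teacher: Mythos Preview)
Your overall architecture matches the paper's proof: energy reduction, paradifferential truncation at spatial frequency $N^\sigma$, wave packet decompositions on short intervals via Theorem~\ref{thm:WavePacketDecompositions}, application of Theorem~\ref{thm:GeneralizationBilinearParaboloid}, the balance $\sigma=\tfrac{2}{3+s}$, and the $\ell^{(d+3)/(d+1)}$ summation over the $N^{2\sigma-1}$ subintervals producing $\kappa_1$. However, your bookkeeping for $\kappa_0$ is wrong in two coupled ways.

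First, the parabolic rescaling $t\to N^2t$, $x\to Nx$ does \emph{not} leave the bilinear $L^{(d+3)/(d+1)}_{t,x}$ norm invariant. A direct change of variables gives
\[
\|u_1u_2\|_{L^{p}([0,N^{-1}]\times\R^d)} = N^{-2/(d+3)}\,\|(u_1)_N(u_2)_N\|_{L^{p}([0,N]\times\R^d)},\qquad p=\tfrac{d+3}{d+1},
\]
and this scaling factor is precisely $N^{\kappa_0}$. Second, the transversality parameter is $\nu\sim 1$, not $\nu\sim K/N$: after rescaling, $\hat u_{1N}$ lives in the unit annulus and $\hat u_{2N}$ in $B(0,K/N)$, so the group velocities $2g^{ij}_k\xi_j$ differ by $\gtrsim 1-K/N\sim 1$ (this is exactly the computation the paper does). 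Since $\nu^{-2/(d+3)}\ge 1$ always, it can never account for a \emph{gain} $N^{-2/(d+3)}$; with $\nu\sim 1$ it simply contributes $O(1)$, and the smoothing $\kappa_0$ comes entirely from the rescaling.

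Consequently your additional dyadic decomposition of $u_2$ into pieces $P_{K'}u_2$ is unnecessary, and the compatibility worry $R\gtrsim\nu^{-2-\delta_0}$ is moot because $\nu\sim 1$ makes it trivial. Once you correct the source of $\kappa_0$ and set $\nu\sim 1$, your argument collapses exactly onto the paper's.
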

\begin{proof}
An energy estimate reduces the claim to
\begin{equation}
    \label{eq:EnergyReductionBilinearStrichartz}
    \| u_1 u_2 \|_{L_{t,x}^{\frac{d+3}{d+3}}([0,N^{-1}] \times \R^d)} \lesssim_\varepsilon N^{\kappa_0 + \kappa_1 + \varepsilon} \prod_{i=1}^2 \big( \| u_i \|_{L_t^{\infty} L_x^2} + \| f_i \|_{L_t^1 L_x^2} \big).
\end{equation}
We use the same paradifferential truncation like before:
\begin{equation*}
i \partial_t u_k + \partial_i g^{ij}_{k, \leq N, \leq N^{\sigma}} \partial_i u_k = - \partial_i g^{ij}_{k, c} \partial_j u_k + f_k
\end{equation*}
and note that we have wave packet decompositions at disposal on times $N^{-2 \sigma}$, which was verified in the proof of Theorem \ref{thm:LinearStrichartzSEQ}. Now we check the transversality assumptions. After rescaling to unit frequencies, we have $\text{supp}(\hat{u}_{1N}) \subseteq B(0,2) \backslash B(0,1/2)$ and $\text{supp}(\hat{u}_{2N}) \subseteq B(0,K/N)$. Regarding the difference of the group velocities we see that
\begin{equation*}
\begin{split}
| \nabla_{\xi} p_1(z,\xi_1) - \nabla_{\xi} p_2(z,\xi_2) | &= |(g^{ij}_1 \xi_{1j} - g^{ij}_2 \xi_{2j}) \mathbf{e}_i | \\
&\geq | |g^{ij}_1 \xi_{1j}| - |g^{ij}_2 \xi_{2j}| | \gtrsim 1- K/N \sim 1.
\end{split}
\end{equation*}
Next, we turn to the second order transversality:
Here we use the ellipticity of $g^{ij}_k$, for $k=1,2$ to estimate
\begin{equation*}
\begin{split}
&\quad (\partial_{\xi_i} p_1(z,\xi_1) - \partial_{\xi_i} p_2(z,\xi_2)) g^{ij}_k (\partial_{\xi_j} p_1(z,\xi_1) - \partial_{\xi_j} p_2(z,\xi_2))  \\
 &\sim | \nabla_{\xi} p_1(z,\xi_1) - \nabla_{\xi} p_2(z,\xi_2)| \sim 1.
\end{split}
\end{equation*}
Consequently, after the above reductions using finite speed of propagation, we can invoke Theorem \ref{thm:GeneralizationBilinearParaboloid} on intervals $I$ of length $N^{-2\sigma}$ and obtain on the time scale $N^{-2 \sigma}$ the Euclidean estimate:
\begin{equation*}
    \| u_1 u_2 \|_{L^{\frac{d+3}{d+1}}_{t,x}(I \times \R^d)} \lesssim_\varepsilon N^{-\frac{2}{d+3}+\varepsilon} \prod_{k=1}^2 \big( \| u_k \|_{L^\infty_I L^2_x} + \| \partial_i g^{ij}_{k,c} \partial_j u_k \|_{L_I^1 L^2_x} + \| f_k \|_{L_I^1 L_x^2} \big).
\end{equation*}
We balance the estimate like above for $\sigma = \frac{2}{3+s}$
\begin{equation*}
    \| u_k \|_{L^\infty_I L^2_x} + \| \partial_i g^{ij}_{k,c} \partial_j u_k \|_{L^1_I L^2_x} \lesssim (1 + N^{1-2 \sigma} N^{1- \sigma(1+s)}) \| u_k \|_{L^\infty_I L^2_x}.
\end{equation*}
The proof of \eqref{eq:EnergyReductionBilinearStrichartz} is concluded by carrying out the $\ell^{\frac{d+3}{d+1}}_I$-summation over intervals of length $|I| = N^{-2 \sigma}$.  This finishes the proof of Theorem \ref{thm:BilinearSmoothingEstimate}.
\end{proof}

\subsection{Self-interactions with small angle}
\label{subsection:SelfInteractions}
In our main bilinear estimates in Theorem~\ref{thm:GeneralizationBilinearParaboloid} and Theorem~\ref{thm:GeneralizationBilinearCone} we consider the small angle interaction of two different flows, 
but with a flatness condition on the two symbols, captured by the parameter $\epsilon_0 \ll \nu$ in \eqref{eq:MixedRegularity}. The main result of this section asserts that we can remove the flatness condition \eqref{eq:MixedRegularity} of the metric in the case of small transversality when we are dealing with self-interactions, i.e., the two interacting flows are the same. 

To describe our strategy, we first remark 
that we can recenter the coordinates and rescale the symbol to obtain self-interactions with unit transversality. However, the rescaling disrupts the time-regularity of the symbol. In order to cope with this, we need to establish versions of Theorems \ref{thm:GeneralizationBilinearParaboloid} and \ref{thm:GeneralizationBilinearCone} for $\nu \sim 1$ with strongly reduced time-regularity.

\subsubsection{Small angle self-interactions for Schr\"odinger symbols}
In the following we consider a symbol
\begin{equation*}
a(x,t,\xi) = g^{ij}(x,t) \xi_i \xi_j 
\end{equation*}
with $g^{ij}$ being uniformly elliptic and satisfying the space-time regularity
\begin{equation}
\label{eq:OriginalSpaceTimeRegularity}
|\partial_z^{\alpha} g^{ij}(x,t)| \lesssim_{\alpha}
R^{-|\alpha| + \frac{(|\alpha|-2)_+}{2}}.
\end{equation}
Denote the evolution governed by $-ia^w$ with $S(t,s)$. We show the following:
\begin{theorem}[Bilinear~self~interactions~for~Schr\"odinger~equations]
\label{thm:BilinearSelfinteractionsSEQ}
Assume the metric $g$ satisfies the regularity property \eqref{eq:OriginalSpaceTimeRegularity}.
Suppose that $\text{supp}(\hat{u}_i) \subseteq B_d(\xi_i^*, \epsilon_0)$ and $|\xi_1^* - \xi_2^*| \sim \nu \gg \epsilon_0 \gtrsim R^{-\frac{1}{2}}$. Then the following estimate  holds for the evolution governed by $-i a^w$:
\begin{equation*}
\| S(t,0) u_1 S(t,0) u_2 \|_{L^{\frac{d+3}{d+1}}_{t,x}(B_{d+1}(0,R))} \lesssim_{\varepsilon} R^{\varepsilon} \nu^{-\frac{2}{d+3}} \| u_1 \|_{L^2} \| u_2 \|_{L^2}.
\end{equation*}
\end{theorem}
A word of clarification here: we are considering frequency supports, which are of diameter that is smaller, but still comparable to the frequency support separation.

The main improvement over Theorem \ref{thm:GeneralizationBilinearParaboloid} in the case of self-interactions is that we can remove the asymptotic flatness of the coefficients (i.e. the $\epsilon_0$ factor in \eqref{eq:MixedRegularity}) as $\epsilon_0 \to 0$. The case $\epsilon_0 \sim R^{-\frac{1}{2}}$ is straightforward by Bernstein's inequality. In the following we suppose that $R^{-\frac{1}{2}} \ll \epsilon_0 \ll 1$. We organize the proof in several steps.

\bigskip

\paragraph{\textbf{(i) Almost orthogonality}}

Let $p = \frac{d+3}{d+1}$ for brevity. Below we treat the case $\nu \leq R^{-\frac{1}{2}+\delta}$ directly. We have the following almost orthogonality by virtue of the bi-Lipschitz property of the Hamiltonian flow:
\begin{lemma}
Let $\delta > 0$, $\nu \geq R^{-\frac{1}{2}+\delta}$ and suppose that $f_i$ has the frequency-support properties from above. The following holds:
\begin{equation*}
\begin{split}
\| S(t,0) f_1 S(t,0) f_2 \|_{L^p_{t,x}(B_{d+1}(0,R))} &\lesssim \big( \sum_{|x_1-x_2| \lesssim R \nu } \| S(t,0) f_{1 x_1} S(t,0) f_{2 x_2} \|_{L^p_{t,x}}^p \big)^{\frac{1}{p}} \\
&\quad + \text{RapDec}(R) \| f_1 \|_2 \| f_2 \|_2.
\end{split}
\end{equation*}
In the above display $f_{i x_i} = \chi_{x_i} f_i$ denotes spatially smoothly localized functions to balls of size $R \nu$.
\end{lemma}
\begin{proof}
For $x \in R \nu \Z^d$ let $\chi_x$ be rapidly decaying off $B(x,R \nu)$ with Fourier support $\text{supp}(\hat{\chi}_x) \subseteq B(0,100 (R \nu)^{-1}) \subseteq B(0,\epsilon_0/100)$ such that
\begin{equation*}
f_i = \sum_{x \in R \nu \Z^d} \chi_x f_i.
\end{equation*}
The wave packet decomposition provided by Theorem \ref{thm:WavePacketDecompositions} yields (up to a rapidly decaying error term):
\begin{equation*}
S(t,0) (\chi_x f_i) = \sum_{(x,\xi) \in (N^{\frac{1}{2}} \Z^d \cap B(x,10 N \nu)) \times B(\xi_i^*, 10 \epsilon_0)} u_{x,\xi}.
\end{equation*}
We will argue that the contribution $S(t,0) (\chi_{x_1} f_1) S(t,0) (\chi_{x_2} f_2)$ is negligible for $|x_1-x_2| \gg R \nu$. To this end, we consider central bicharacteristics $(x_i^t,\xi_i^t)$ with $(x_i^0,\xi_i^0) = (x_i,\xi^*)$ and $B(\xi_i^*,3 \epsilon_0) \subseteq B(\xi^*,\nu)$. For $|x_1-x_2| \gg R \nu$ we can argue that the phase space distance between the bicharacteristics $d_R((x_1^t,\xi^t_1),(x_2^t,\xi_2^t)) = R^{-\frac{1}{2}} |x_1^t - x_2^t| + R^{\frac{1}{2}} |\xi_1^t - \xi_2^t|$ is always dominated by the spatial contribution:
\begin{equation*}
d_R((x_1^t,\xi^t_1),(x_2^t,\xi_2^t)) \sim R^{-\frac{1}{2}} |x_1^t - x_2^t| \gg R^{\frac{1}{2}} \nu \gtrsim R^{\frac{1}{2}+\delta}.
\end{equation*}
Next, we take two wave packets $(y_i^t,\eta_i^t)$, which are $R^{\frac{1}{2}} \nu$-close in phase space to the central bicharacteristics $(x_i^t,\xi_i^t)$. By the bi-Lipchitz property we see that the phase space distance is essentially unchanged: $d_R((y_1^t,\eta_1^t),(y_2^t,\eta_2^t)) \sim d_R((x_1^t,\xi_1^t),(x_2^t,\xi_2^t))$ and the spatial distance is still dominant $\sim R^{\frac{1}{2} + \delta}.$

For this reason, the associated wave packets are not essentially interacting spatially and the almost orthogonal decomposition established.
\end{proof}

Using simpler versions of the above arguments, we treat the case of very small transversality directly:
\begin{lemma}
For any $\epsilon > 0$ there is $\delta > 0$ such that whenever $R^{-\frac{1}{2}} \leq \nu \leq R^{-\frac{1}{2}+\delta}$ and $\text{supp}(\hat{f}_i) \subseteq B(\xi^*,\nu)$, the following estimate holds:
\begin{equation*}
\| S(t,0) f_1 S(t,0) f_2 \|_{L^p_{t,x}} \lesssim_\varepsilon R^{\varepsilon} \nu^{-\frac{2}{d+3}} \prod_{i=1}^2 \| f_i \|_2.
\end{equation*}
\end{lemma}
\begin{proof}
 By H\"older's inequality it will suffice to show 
\begin{equation*}
\| S(t,0) f_i \|_{L^{2p}_{t,x}} \lesssim_\varepsilon R^{\varepsilon} \nu^{-\frac{1}{d+3}} \| f_i \|_2.
\end{equation*}
The wave packet decomposition gives
\begin{equation*}
S(t,0) f_i = \sum_{(x,\xi) \in R^{\frac{1}{2}} \Z^d \times B(\xi^*,R^{-\frac{1}{2}+2 \delta})} u_{x,\xi} 
\end{equation*}
up to a rapidly decaying error term. There are $\mathcal{O}(R^{10 d \delta})$ initial frequencies $\xi \in R^{-\frac{1}{2}} \Z^d \times B(\xi^*,R^{-\frac{1}{2}+2\delta})$.
Adjusting $\varepsilon$, we have reduced to
\begin{equation*}
\big\| \sum_{x \in R^{\frac{1}{2}} \Z^d} u_{x,\xi} \big\|_{L^{2p}_{t,x}} \lesssim_{\varepsilon} R^{\varepsilon} \nu^{-\frac{1}{d+3}} \| f \|_{L^2}.
\end{equation*}
Here we use an almost orthogonal argument from above (actually, a simpler version): At fixed times, it holds for any $\kappa > 0$
\begin{equation}
\label{eq:PointwiseAlmostOrthogonality}
\big\| \sum_x u_{x,\xi} \big\|_{L^{2p}_x} \lesssim_{\kappa} R^{\kappa} \big( \sum_x \|u_{x,\xi} \|_{L^{2p}_x}^{2p} \big)^{\frac{1}{2p}}.
\end{equation}
The reason this estimate holds is again that for bicharacteristics $(x_i^t,\xi_i^t)$ with $ |x_1^0 - x_2^0| \gg R^{\frac{1}{2}+\kappa}, \; \xi_1(0) = \xi_2(0) $, the spatial distance dominates the phase space distance, and we find that  for $|x_1^0 - x_2^0| \gg R^{\frac{1}{2}+\delta}$ there is  essentially no spatial overlap. Integrating \eqref{eq:PointwiseAlmostOrthogonality} in $t$ yields
\begin{equation*}
\big\| \sum_x u_{x,\xi} \big\|_{L^{2p}_{t,x}} \lesssim_{\kappa} R^{\kappa} \big( \sum_x \|u_{x,\xi} \|_{L^{2p}_{t,x}}^{2p} \big)^{\frac{1}{2p}}.
\end{equation*}
Next, $u_{x,\xi}(t)$ is essentially frequency supported in an $R^{-\frac{1}{2}}$ ball. For this reason, by Bernstein's inequality and integration in time, 
\begin{equation*}
\| u_{x,\xi} \|_{L^{2p}_{t,x}} \lesssim R^{\frac{d+1}{2(d+3)}} \big( R^{-\frac{d}{2}} \big)^{\frac{1}{2}-\frac{d+1}{2(d+3)}} \| u_{x,\xi}(0) \|_2.
\end{equation*}
The claim follows from almost orthogonality of the wave packets in $L^2$.
\end{proof}

Consequently, it will suffice to prove an estimate
\begin{equation}
\label{eq:LocalizedExpression}
\| S(t,0) f_{1 z} S(t,0) f_{2 z} \|_{L^p_{t,x}} \lesssim_\varepsilon R^{\varepsilon} \nu^{-\frac{2}{d+3}} \prod_{i=1}^2 \| f_{i z} \|_2
\end{equation}
with $f_{i z}$ being frequency localized like at the outset and spatially localized to a ball of size $R \nu$ centered at $z \in R \nu \Z^d$. Indeed,
\begin{equation*}
\begin{split}
\| S(t,0) f_1 S(t,0) f_2 \|_{L^{p}_{t,x}(B_{d+1}(0,R))} &\lesssim_\varepsilon R^{\varepsilon} \nu^{-\frac{2}{d+3}} \big( \sum_x \| f_{1x} \|_2^p \| f_{2x} \|_2^p \big)^{\frac{1}{p}} \\
&\lesssim_{\varepsilon} R^{\varepsilon} \nu^{-\frac{2}{d+3}} \big( \sum_x \| f_{1x} \|_2^{2p} \big)^{\frac{1}{2p}} \big( \sum_x \| f_{2x} \|_2^{2p} \big)^{\frac{1}{2p}} \\
&\lesssim_{\varepsilon} R^{\varepsilon} \nu^{-\frac{2}{d+3}} \| f_1 \|_2 \| f_2 \|_2.
\end{split}
\end{equation*}
The final estimate follows from the embedding $\ell^2 \hookrightarrow \ell^{2p}$.

For the localized expression in \eqref{eq:LocalizedExpression} we center the coordinates in phase space. For this we note that $S(t,0) f_{iz}$ are localized in the phase space around a central bi-characteristic $(\bar{x}^t,\bar{\xi}^t)$ on the scale $\epsilon_0 R \times \epsilon_0$. To take advantage, we center $S(t,0) f_{iz}$ around the central bicharacteristic with $(\bar{x}(0),\bar{\xi}(0)) = (z,\xi^*)$. In the first step we center $u_i(t,x) = S(t,0) f_{i z}$ in frequencies by considering $v_i(t,x) = e^{-i \bar{\xi}(t) x} u_i(t,x)$, or equivalently, $\hat{v}_i(t,\xi) = \hat{u}_i(t,\xi + \bar{\xi}(t))$.
\begin{lemma}
$v_i$ solves the equation $-i \partial_t v_i + \bar{a}^w(x,D) v_i = 0$ with 
\begin{equation*}
\begin{split}
\bar{a}(x,t,\xi) &= a(x,t,\xi + \bar{\xi}(t)) + x \partial_t \bar{\xi}(t) \\
&= a(x,t,\xi + \bar{\xi}(t)) - x \nabla_x a(\bar{x},t,\bar{\xi}(t)).
\end{split}
\end{equation*}
\end{lemma}
This is the same argument as in \eqref{eq:ModifiedGreenBounds}, for which reason the proof is omitted.
Next, we center in space and consider $w_i(x,t) = v_i(x+\bar{x}(t),t)$.
\begin{lemma}
$w_i$ solves the equation $-i \partial_t w_i + \bar{a}^w(x,D) w_i = 0$ with
\begin{equation*}
\begin{split}
\bar{a}(x,t,\xi) &= a(x,t,\xi + \bar{\xi}(t)) - (x+\bar{x}(t)) \nabla_x a(\bar{x}(t),t,\bar{\xi}(t)) \\
&\quad - \xi \partial_t \bar{x}(t).
\end{split}
\end{equation*}
\end{lemma}
Since the solution $u_i(t)$ was spatially essentially supported in $\bar{x}(t) + B_d(0,5 \epsilon_0 R)$, we have
\begin{equation*}
\| u_1 u_2 \|_{L^p_{t,x}(B_{d+1}(0,R)} = \| w_1 w_2 \|_{L^p_{t,x}([-R,R] \times B_d(0, \epsilon_0 R)},
\end{equation*}
up to rapidly decaying error terms with $w_i$ governed by
\begin{equation*}
\begin{split}
\tilde{a}(x,t,\xi) &= a(x+\bar{x}(t),t,\xi + \bar{\xi}(t)) - x \nabla_x a(\bar{x}(t),t,\bar{\xi}(t)) \\
&\quad - \xi \nabla_{\xi} a(\bar{x}(t),t,\bar{\xi}(t)) - a(\bar{x}(t),t,\bar{\xi}(t)).
\end{split}
\end{equation*}
Note that we have changed the purely time-dependent summand of $\tilde{a}$ by the gauge transformation $w \to e^{i \int_0^t f(s) ds} w$ for suitable $f$.

\bigskip

\paragraph{\textbf{(ii) Rescaling}}

Next, we carry out a parabolic rescaling. Time is rescaled by $t \to \epsilon_0^2 t$ and space by $x \to \epsilon_0 x$. Note that the relevant space-time scale is now $|(x,t)| \lesssim \epsilon_0^2 R$, and the transversality parameter becomes $\nu \sim 1$.

The resulting symbol reads
\begin{equation*}
\begin{split}
a'(x,t,\xi) &= \epsilon_0^{-2} (a(\epsilon^{-1} x+ \bar{x}(\epsilon_0^{-2} t),\epsilon_0^{-2}t,\xi + \bar{\xi}(\epsilon_0^{-2} t) - \epsilon_0^{-1} x \nabla_x a(\bar{x}(\epsilon_0^{-2} t, \epsilon_0^{-2} t, \bar{\xi}(\epsilon_0^{-2} t)) \\
&\quad - \epsilon_0 \xi \nabla_{\xi} a(\bar{x}(\epsilon_0^{-2} t),\epsilon_0^{-2} t, \bar{\xi}(\epsilon_0^{-2} t)) - a(\bar{x}(\epsilon_0^{-2} t),\epsilon_0^{-2} t, \bar{\xi}(\epsilon_0^{-2} t))).
\end{split}
\end{equation*}

Based on the size and regularity estimates for the original symbol $|\partial_x^{\alpha} \partial_{\xi}^{\beta} a| \lesssim_{\alpha,\beta} R^{-|\alpha| + \frac{(|\alpha|-2)_+}{2}}$, we find:
\begin{lemma}
$a'$ satisfies the size and regularity estimates:
\begin{equation}
\label{eq:SizeRegularityRescaled}
|\partial_x^{\alpha} \partial_{\xi}^{\beta} a'| \lesssim_{\alpha,\beta} (\epsilon_0^2 R)^{-|\alpha|+\frac{(|\alpha|-2)_+}{2}}.
\end{equation}
\end{lemma}
\begin{proof}
A straightforward computation gives the separation into powers of $\xi$:
\begin{equation}
\label{eq:DecompositionSymbol}
\begin{split}
a'(x,t,\xi) &= g^{ij}(\epsilon_0^{-1} x + \bar{x}(\epsilon_0^{-2} t), \epsilon_0^{-2} t) \xi_i \xi_j \\
&\quad + 2 \epsilon_0^{-1} (g^{ij}(\epsilon_0^{-1} x+ \bar{x}(\epsilon_0^{-2} t),\epsilon_0^{-2} t) - g^{ij}(\bar{x}(\epsilon_0^{-2} t),\epsilon_0^{-2} t)) \xi_i \bar{\xi}_j(\epsilon_0^{-2} t) \\
&\quad + \epsilon_0^{-2} (g^{ij}(\epsilon_0^{-1} x + \bar{x}(\epsilon_0^{-2} t),\epsilon_0^{-2} t) - \epsilon_0^{-1} x \partial_x g^{ij}(\bar{x}(\epsilon_0^{-2} t, \epsilon_0^{-2} t) \\
&\quad \quad - g^{ij}(\bar{x}(\epsilon_0^{-2} t),\epsilon_0^{-2} t)) \bar{\xi}_i(\epsilon_0^{-2} t) \bar{\xi}_j(\epsilon_0^{-2} t)) \\
&= I + II + III.
\end{split}
\end{equation}
We analyze $I$, $II$, and $III$ separately. The size and regularity estimates for $I$ are straightforward. For $II$ the mean-value theorem yields
\begin{equation*}
\begin{split}
&\quad 2 \epsilon_0^{-1} (g^{ij}(\epsilon_0^{-1} x+ \bar{x}(\epsilon_0^{-2} t),\epsilon_0^{-2} t) - g^{ij}(\bar{x}(\epsilon_0^{-2} t),\epsilon_0^{-2} t)) \xi_i \bar{\xi}_j(\epsilon_0^{-2} t) \\
&= 2 \epsilon_0^{-2} \partial_x g^{ij}(x',\epsilon_0^{-2} t) \xi_i \bar{\xi}_j(\epsilon_0^{-2} t).
\end{split}
\end{equation*}
Now, by size and regularity of $a$ and $|x| \lesssim \epsilon_0^2 R$ we find $|II| \lesssim 1$. The size estimates of $II$ for derivatives in $x$ are straightforward. 

Finally, for $III$ we use Taylor's theorem to find
\begin{equation*}
III = \epsilon_0^{-4} x^2 \partial^2_x g^{ij}(x',\epsilon_0^{-2}t ) \bar{\xi}_i(\epsilon_0^{-2} t) \bar{\xi}_j(\epsilon_0^{-2} t),
\end{equation*}
which makes the estimate $|III| \lesssim 1$ immediate. Next,
\begin{equation*}
\partial_x III = \epsilon_0^{-3} ( \partial_x g^{ij}(\epsilon_0^{-1} x + \bar{x}(\epsilon_0^{-2} t,\epsilon_0^{-2} t) - \partial_x g^{ij}(\bar{x}(\epsilon_0^{-2} t),\epsilon_0^{-2} t)) \bar{\xi}_i(\epsilon_0^{-2} t) \bar{\xi}_j(\epsilon_0^{-2} t),
\end{equation*}
which is estimated again by the mean-value theorem and size estimates of $g$. Higher derivative estimates of $III$ follow more directly from size and regularity of $a$.
\end{proof}

\smallskip

\paragraph{\textbf{(iii) A bilinear estimate for the rescaled evolution}}

The previous lemma provides sufficient spatial regularity for 
the symbol $a'$ relative to the new space-time scale  $\epsilon_0^2 R$. However,
 Theorem \ref{thm:GeneralizationBilinearParaboloid} is not directly applicable because the time-regularity of the symbol is worse here. Instead, we modify the argument of the proof for transversality $\nu \sim 1$.
 There are two steps in the proof of Theorem \ref{thm:GeneralizationBilinearParaboloid} which depend on the time-regularity of the symbol:
\begin{enumerate}
\item \emph{joints} lying on energy shells, i.e., the intersection of four wave packets leading to a localization of the difference of energies, see \eqref{eq:EnergyConservation}. 
\item when checking for multiple intersections between $\Gamma$ and $T_2$ in the proof of Lemma \ref{l:transverse}.
\end{enumerate}

We begin with the analysis of joints. Note that the regularity in $x$ and $\xi$ already gives the sharp localization of wave packets along the Hamiltonian flow
\begin{equation*}
\left\{ \begin{array}{cl}
\dot{x}^t &= \partial_{\xi} a'(x^t,t,\xi^t), \\
\dot{\xi}^t &= - \partial_x a'(x^t,t,\xi^t).
\end{array} \right.
\end{equation*}

Recall that we let
\begin{equation*}
u_{x_0,\xi_0}(y,t) = \int_{\R^d} K_{x_0,\xi_0}(y,t,\tilde{y},0) u_0(\tilde{y}) d\tilde{y}
\end{equation*}
with (like above letting $\rho = (\epsilon_0^2 R)^{-1}$)
\begin{equation*}
\begin{split}
K_{x_0,\xi_0}(y,t,\tilde{y},0) &= \int_{\R^{2d}} e^{- \frac{\rho}{2}(\tilde{y}-x)^2} e^{-i \xi (\tilde{y}-x)} e^{i \xi^t (y-x^t)} e^{i (\psi(x,t,\xi) - \psi(x,0,\xi))} \psi_{x_0,\xi_0}(x,\xi) \\
&\quad \quad \times G(t,0,x,\xi,y) dx d\xi.
\end{split}
\end{equation*}
We find
\begin{equation*}
u_{x_0,\xi_0}(y,t) = \int_{\R^{2d}} e^{i \xi^t (y-x^t)} e^{i \psi(x,t,\xi) } \psi_{x_0,\xi_0}(x,\xi) T_{\epsilon_0^2 R} u_0(x,\xi) G(t,s,x,\xi,y) dx d\xi
\end{equation*}
and after normalization, we can suppose $\| \psi_{x_0,\xi_0} T_{\epsilon_0^2 R} u_0 \|_{L^2(T^* \R^d)} = 1$.
Recall that the bounds on $G$ follow from the regularity properties of the Hamiltonian flow. 
First note that the bi-Lipschitz property from Lemma \ref{lem:GeometryWavepackets} still applies as it does not depend on the time-regularity.
 In particular, the localization in $x$ and $\xi$ follow from the regularity properties of the Hamiltonian flow in $x$ and $\xi$, but $a'$ satisfies the required regularity on the scale $\epsilon_0^2 R$. For this reason, the localization in space and frequency of the wave packets is retained.

It remains to prove an acceptable time-frequency localization of wave packets on time-scales $(\epsilon_0^2 R)^{\frac{1}{2}}$. It will be useful that one time-derivative of $G$ can be expressed in terms of its spatial regularity. We can no longer prove the rapid decay of the time-frequency away from $a'$, since $a'(x^t,t,\xi^t)$ is no longer approximately constant for $\Delta t \sim (\epsilon_0^2 R)^{\frac{1}{2}}$. The problematic term is given by $III$. We define the renormalized Hamiltonian $a''(x,t,\xi) = I + II$.

 We consider the time-regularity for joints which intersect with transversality $\nu \sim 1$ at $t=t_0$ on a time scale $\Delta t \sim (\epsilon_0^2 R)^{\frac{1}{2}}$. Let $\chi$ be a bump function on the unit scale and $\chi_{\mathfrak{q}}(x,t) = \chi((\epsilon_0^2 R)^{-\frac{1}{2}}(t-t_0)) \chi( (\epsilon_0^2 R)^{-1} | x-x_0|^2)$ be a function localizing to a space-time cube of size $(\varepsilon_0^2 R)^{\frac{1}{2}}$ - the scale on which the interaction of four transversely interacting wave packets takes place. A mild enlargement is denoted by $\tilde{\chi}_{\mathfrak{q}}$. We consider the localized quadrilinear expression
\begin{equation}\label{quad}
\iint \chi_{\mathfrak{q}}(y,t) u_{x_1,\xi_1}(y,t) u_{x_3,\xi_3}(y,t) \overline{u}_{x_2,\xi_2}(y,t) \overline{u}_{x_4,\xi_4}(y,t) dy dt.
\end{equation}

For each of the four packets we use the representation above. Since the $G$ factors already are smooth on the appropriate scales, it suffices to analyze the time-regularity of the symmetrized phase function. One integration by parts will already yield acceptable bounds in order to allow us to repeat the argument in the proof of Theorem~\ref{thm:GeneralizationBilinearParaboloid}. Moreover, we use the fact that  $x_j^{t_0}$ coincide modulo $\mathcal{O}((\epsilon_0^2 R)^{\frac{1}{2}})$ errors. 
Let $\phi^*_{x,\xi}(y,t) = C_N (\epsilon_0^2 R)^{-\frac{d}{4}} (1+ (\epsilon_0^2 R)^{-\frac{1}{2}} |x^t - y|)^{-N}$, $N=10d$ denote an amplitude majorization of $u_{x,\xi}(y,t)$. We show the following: 
\begin{proposition}
\label{prop:TimeRegularityJoints}
Suppose that $\epsilon_0 \gg R^{-\frac{1}{2}}$. Let 
\begin{equation*}
\bar{a}^s = a''(x_1^{t_0},t_0,\xi_1^{t_0}) + a''(x_3^{t_0},t_0,\xi_3^{t_0}) - a''(x_2^{t_0},t_0,\xi_2^{t_0}) - a''(x_4^{t_0},t_0,\xi_4^{t_0}).
\end{equation*}
Then it holds for $|\bar{a}^s| \gg (\epsilon_0^2 R)^{-\frac{1}{2}}$:
\begin{equation*}
\begin{split}
&\quad \big| \iint \chi_{\mathfrak{q}}(y,t) u_{x_1,\xi_1}(y,t) u_{x_3,\xi_3}(y,t) \overline{u}_{x_2,\xi_2}(y,t) \overline{u}_{x_4,\xi_4}(y,t) dy dt \big| \\
 &\lesssim \frac{1}{(\epsilon_0^2 R)^{\frac{1}{2}} |\bar{a}^s|} \iint \tilde{\chi}_{\mathfrak{q}}(y,t) \phi^*_{x_1,\xi_1}(y,t) \phi^*_{x_3,\xi_3}(y,t) \phi^*_{x_2,\xi_2}(y,t) \phi^*_{x_4,\xi_4}(y,t) dy dt.
 \end{split}
\end{equation*}
\end{proposition}
\begin{proof}
First, recall the majorization
\begin{equation*}
    |u_{x_i,\xi_i}(y,t)| \lesssim (\epsilon_0^2 R)^{-\frac{d}{4}} (1+ (\epsilon_0^2 R)^{-\frac{1}{2}} |x_i^t - y|)^{-N}
\end{equation*}
as a consequence of the pointwise bound for $G$, which was derived in Section \ref{section:RegularityFlow}:
\begin{equation}
\label{eq:MajoriizationG}
    |G(t,x,\xi,y)| \lesssim (\epsilon_0^2 R)^{-\frac{d}{4}} (1+ (\epsilon_0^2 R)^{-\frac{1}{2}} |x^t - y|)^{-N}.
\end{equation}

Next, we consider the time dependence of the phase function of a single wave packet relative to the center time $t_0$,
\begin{equation}
\label{eq:ExpPhase}
\begin{aligned}
\xi^t \cdot (y-x^t) + \psi(x,t,\xi) = & \  \xi^{t_0} \cdot (y-x^{t_0}) + \psi(x,t_0,\xi)  \\
& + \int_{t_0}^t \!\!\!\! - \partial_x a'(x^s,s,\xi^s) (y-x^s) ds - \int_{t_0}^t \!\!\! a'(x^s,s,\xi^s) ds.
\end{aligned}
\end{equation}
We recast the second integral as
\begin{equation*}
\begin{split}
\int_{t_0}^t a'(x^s,s,\xi^s) ds &= \int_{t_0}^t \frac{d}{ds} (s-t) a'(x^s,s,\xi^s) ds \\
&= (t-t_0) a'(x^{t_0},t_0,\xi^{t_0}) - \int_{t_0}^t (s-t) \frac{d}{ds} a'(x^s,s,\xi^s) ds.
\end{split}
\end{equation*}
We find by the Hamiltonian equations:
\begin{equation*}
\int_{t_0}^t (s-t) \frac{d}{ds} a'(x^s,s,\xi^s) ds = \int_{t_0}^t (s-t) (\partial_t a')(x^s,s,\xi^s) ds.
\end{equation*}
We have obtained the representation of a single wave packet
\begin{equation*}
\begin{split}
    u_{x_i,\xi_i}(y,t) &= \int e^{-i (t-t_0) a'(x^{t_0},t_0,\xi^{t_0}) + i \phi(x,t_0,\xi) -i \int_{t_0}^t \partial_x a'(x^s,s,\xi^s)(y-x^s) ds} \\
    &\quad \times e^{i \int_{t_0}^t (s-t) (\partial_t a')(x^s,s,\xi^s)} \psi_{x_i,\xi_i}(x,\xi) T_{\epsilon_0^2 R} u_0(x,\xi) G(t,x,\xi,y) dx d\xi.
    \end{split}
\end{equation*}
We want to integrate by parts the quadrilinear expression \eqref{quad} with respect to time in order to take advantage of the phase variation.
Recall again by the estimates from Section \ref{section:RegularityFlow} that
the kernels $G(t,x,\xi,y)$ 
and spatially smooth and  localized on the wave packet scales, and that we also control their time derivative
\begin{equation} 
\label{eq:PointwiseBoundG}
    |\partial_t G(t,x,\xi,y)| \lesssim (\epsilon_0^2 R)^{-\frac{1}{2}} (\epsilon_0^2 R)^{-\frac{d}{4}} (1+ (\epsilon_0^2 R)^{-\frac{1}{2}} |x^t - y|)^{-N}.
\end{equation}
Notably, for the first time-derivative we rely on the governing equation, see the proof of \eqref{eq:ModifiedGreenBounds}. So, by recasting the first time-derivative as spatial derivatives, we do not need the time-regularity of the symbol.

For phase space variables $(x_\alpha,\xi_\alpha), \ldots, (x_\delta,\xi_\delta) \in \R^{2d}$ and an observable $O(x,t,\xi)$ we denote
\begin{equation*}
    O^s(\bar{x},t,\bar{\xi}) = O(x_\alpha,t,\xi_\alpha) - O(x_{\beta},t,\xi_{\beta}) + O(x_{\gamma},t,\xi_{\gamma}) - O(x_{\delta},t,\xi_{\delta}).
\end{equation*}
Below we denote the integration variables for assembling the wave packets $u_{x_i,\xi_i}$ with $(x_{\alpha},\xi_{\alpha}), \ldots, (x_{\delta},\xi_{\delta})$, we let $\epsilon \in \mathcal{E} = \{ \alpha, \beta, \gamma, \delta\}$ and define 
\begin{equation*}
    \text{sgn}(\epsilon) = 
    \begin{cases}
        1, &\quad \epsilon \in \{ \alpha, \gamma \}, \\
        -1, &\quad \epsilon \in \{\beta, \delta\}.
    \end{cases}
    \qquad 
        b^{(\epsilon)} = \begin{cases}
        b, &\quad \epsilon \in \{ \alpha, \gamma \}, \\
        \overline{b}, &\quad \epsilon \in \{\beta, \delta \}.
    \end{cases}
\end{equation*}

We recast the quadrilinear expression \eqref{quad} as
\begin{equation*}
    \begin{split}
        &\quad \iint \chi_{\mathfrak{q}}(y,t) u_{x_1,\xi_1}(y,t) \overline{u}_{x_2,\xi_2}(y,t) u_{x_3,\xi_3}(y,t) \overline{u}_{x_4,\xi_4}(y,t) dy dt \\
        &= \iint dt dy \chi((\epsilon_0^2 R)^{-\frac{1}{2}} (t-t_0)) \chi(\epsilon_0^2 R | y- x_0|^2)  \iint \prod_{\epsilon \in \mathcal{E}} d x_{\epsilon} d\xi_{\epsilon} e^{-i(t-t_0)(a')^s(\bar{x}^{t_0},t_0,\bar{\xi}^{t_0}) }  \\
        &\; \times e^{i \phi^s(\bar{x},t_0,\bar{\xi})} \prod_{\epsilon \in \mathcal{E}} e^{i \text{sgn}(\epsilon) \int_{t_0}^t (\partial_x a')(x_{\epsilon},s,\xi_{\epsilon})(y-x_{\epsilon}^s) ds }
        e^{i \int_{t_0}^t (s-t) (\partial_t a')^s(\bar{x}^s,s,\bar{\xi}^s) ds} \\
        &\; \times \prod_{\epsilon \in \mathcal{E}} \big( (T_{\epsilon_0^2 R}u_0)(x_{\epsilon},\xi_{\epsilon}) \psi_{x_i,\xi_i}(x_{\epsilon},\xi_{\epsilon}) G(t,x_{\epsilon},\xi_{\epsilon},y) \big)^{(\epsilon)}.
    \end{split}
\end{equation*}
In the above display we plug in the identity 
\begin{equation*}
\begin{split}
     &\quad e^{-i(t-t_0) (a'(x_{\alpha}^{t_0},t_0,\xi_{\alpha}^{t_0}) - a'(x_{\beta}^{t_0},t_0,\xi_{\beta}^{t_0}) + a'(x_{\gamma}^{t_0},t_0,\xi_{\gamma}^{t_0}) - a'(x_{\delta}^{t_0},t_0,\xi_{\delta}^{t_0}))} \\
     &= \frac{d}{dt} \frac{e^{-i(t-t_0) (a'(x_{\alpha}^{t_0},t_0,\xi_{\alpha}^{t_0}) - a'(x_{\beta}^{t_0},t_0,\xi_{\beta}^{t_0}) + a'(x_{\gamma}^{t_0},t_0,\xi_{\gamma}^{t_0}) - a'(x_{\delta}^{t_0},t_0,\xi_{\delta}^{t_0}))}}{-i(a'(x_{\alpha}^{t_0},t_0,\xi_{\alpha}^{t_0}) - a'(x_{\beta}^{t_0},t_0,\xi_{\beta}^{t_0}) + a'(x_{\gamma}^{t_0},t_0,\xi_{\gamma}^{t_0}) - a'(x_{\delta}^{t_0},t_0,\xi_{\delta}^{t_0}))}
     \end{split}
\end{equation*}
and integrate by parts in time. By the support conditions of $(x_{\epsilon},\xi_{\epsilon})$ and the hypothesis $|\bar{a}^s| \gg (\epsilon_0^2 R)^{-\frac{1}{2}}$, we have
\begin{equation*}
    | a'(x_{\alpha}^{t_0},t_0,\xi_{\alpha}^{t_0}) - a'(x_{\beta}^{t_0},t_0,\xi_{\beta}^{t_0}) + a'(x_{\gamma}^{t_0},t_0,\xi_{\gamma}^{t_0}) - a'(x_{\delta}^{t_0},t_0,\xi_{\delta}^{t_0}) | \sim |\bar{a}^s|.
\end{equation*}
Indeed, here we use the fact   that 
\[
|III(t)| \lesssim \epsilon_0^{-4} R^{-2} \ll \epsilon_0^{-1} R^{-\frac12},
\]
which is easily seen by Taylor expansion.
This makes the integration by parts legitimate due to the minimum size of the denominator.

\smallskip

After integration by parts in time, the derivative can hit
\begin{itemize}
    \item $\chi((\epsilon_0^2 R)^{-\frac{1}{2}}(t-t_0))$, 
    \item $\exp(i \text{sgn}(\epsilon) \int_{t_0}^t (\partial_x a')(x_\epsilon,s,\xi_\epsilon)(y-x_{\epsilon}^s) ds) $, 
    \item $\exp( i \text{sgn}(\epsilon) \int_{t_0}^t (s-t) (\partial_t a')(x_{\epsilon}^s,s,\xi_{\epsilon}^s) ds)$, 
    \item $G(t,x_{\epsilon},\xi_{\epsilon},y)$.
\end{itemize}
Suppose that the derivative hits the first term. This yields a factor of $(\epsilon_0^2 R)^{-\frac{1}{2}}$. By taking absolute values and plugging in \eqref{eq:MajoriizationG}, we can obtain the claimed bound. Similarly, the claimed bound follows when the derivative hits the Green's function, which corresponds to the fourth term. For the second term we note by the localization property of the Green's function
\begin{equation*}
    \big| \frac{d}{dt} \exp(i \text{sgn}(\epsilon) \int_{t_0}^t (\partial_x a')(x^s_{\epsilon},s,\xi_{\epsilon}^s) (y-x_{\epsilon}^s) ds ) \big| \lesssim (\epsilon_0^2 R)^{-\frac{1}{2}}.
\end{equation*}
For handling the third term in the above list, suppose that the following estimate were true:
\begin{equation*}
| \partial_t a'(x^s,s,\xi^s) | \lesssim (\epsilon_0^2 R)^{-1}.
\end{equation*}
Then we could conclude like in the remaining cases. But in general the expression is not good enough by a factor of $\epsilon_0^{-1}$. 
Recall that
\begin{equation*}
\begin{split}
a'(x,t,\xi) = I(t) + II(t) + III(t)
\end{split}
\end{equation*}
with $I$, $II$, $III$ defined in \eqref{eq:DecompositionSymbol}.

The contributions of the first and second term are acceptable as we have an additional $\epsilon_0$ factor to spare, which compensates for the time derivative of $\bar{x}(\epsilon_0^{-2} t)$.
Indeed, for the first term we find
\begin{equation*}
\partial_t I(t) = \partial_t ( g^{ij}(\epsilon_0^{-1} x + \bar{x}(\epsilon_0^{-2} t)) \xi_i \xi_j) = \partial_x g^{ij}(\epsilon_0^{-1} x+ \bar{x}(\epsilon_0^{-2} t)) \xi_i \xi_j \epsilon_0^{-2} \partial_t \bar{x}.
\end{equation*}
We obtain by the size and regularity of $g^{ij}$:
\begin{equation*}
| \partial_t I(t) | \lesssim \epsilon_0^{-2} R^{-1}.
\end{equation*}
Similarly, the second term is acceptable.

The third term is problematic. A direct estimate gives $|\partial_t III| \lesssim \epsilon^{-3} R^{-1}$, which is not sufficient.

\medskip

We observe that, if we could regard the oscillation caused by $III$ to be independent of $(x_j,\xi_j)$, it would cancel in the quadrilinear form describing a joint. This can be expected by the space-time localization of joints and will be made rigid by applying a gauge transform. Let
\begin{equation*}
\alpha(x,t) = \epsilon_0^{-2} ( g^{ij}(\epsilon_0^{-1} x + \bar{x}(\epsilon_0^{-2} t))-  g^{ij}(\bar{x}(\epsilon_0^{-2}t))) \bar{\xi}_i \bar{\xi}_j
\end{equation*}
and consider the gauge transform
\begin{equation*}
v(x,t) = e^{i \int \alpha(x,t) dt} u(x,t).
\end{equation*}
When analyzing the size and regularity properties of the transformed symbol, we emphasize that this is a local gauge transformation on the time scale $|\Delta t| \lesssim \epsilon_0 R^{\frac{1}{2}}$. 

\smallskip

With the original symbol given by
\begin{equation*}
\begin{split}
a' &= g^{ij}(\epsilon_0^{-1} x + \bar{x}(t \epsilon_0^{-2})) \xi_i \xi_j + 2 \epsilon_0^{-1} (g^{ij}(\epsilon_0^{-1} x + \bar{x}(t \epsilon_0^{-2})) - g^{ij}(\bar{x}(t\epsilon_0^{-2}))) \xi_i \bar{\xi}_j \\
&\quad + \epsilon_0^{-2} (g^{ij}(\epsilon_0^{-1} x + \bar{x}(t \epsilon_0^{-2})) - g^{ij}(\bar{x}(t\epsilon_0^{-2}))) \bar{\xi}_i \bar{\xi}_j,
\end{split}
\end{equation*}
after that gauge transformation we arrive at
\begin{equation*}
a^c = (a'(x,\xi) - \alpha(x)) + \int \nabla_x \alpha(x,s) ds \nabla_{\xi} a'(x,\xi) + \big( \big( \int \nabla_x \alpha \big)^2 + \big( \int \partial_x^2 \alpha \big) \big) \partial^2_{\xi} a'.
\end{equation*} 
The time-regularity of the conjugated symbol is acceptable and so is the $\xi$-regularity. Above we have already seen that
\begin{equation*}
|\partial_x^{\beta} \alpha| \lesssim 
\begin{cases}
(\epsilon_0^2 R)^{-|\beta|}, \quad 0 \leq |\beta| \leq 2, \\
(\epsilon_0^2 R)^{-2 - \frac{|\beta|-2}{2}}, \quad 2 \leq |\beta|.
\end{cases}
\end{equation*}
With the time integral over an interval of length $\epsilon_0 R^{\frac{1}{2}}$, the time-integrals of derivatives of $\alpha$ are bounded in modulus.
Consequently, $|\partial_x a^c| \lesssim (\epsilon_0^2 R)^{-1}$. However, when estimating $\partial_x^2 a^c$ the critical terms to consider are $ \big( \int \partial_x^3 \alpha \big) \mathcal{O}(1))$ and $\int \partial_x^4 \alpha$. The latter is acceptable since $|\partial_x^4 \alpha| \lesssim (\epsilon_0^2 R)^{-3}$. For the former we compute using \eqref{eq:OriginalSpaceTimeRegularity} 
\begin{equation*}
\int \big( \partial_x^3 \alpha \big) = \epsilon_0^{-5} \int \partial_x^3 g^{ij} dt \lesssim \epsilon_0^{-3} R^{- \frac{3}{2}},
\end{equation*}
which is acceptable on the time scale $|\Delta t| \lesssim (\epsilon_0^2 R)^{\frac{1}{2}}$. The higher derivatives are likewise acceptable. We carry out the local gauge transform
\begin{equation*}
\iint u_{x_1,\xi_1} u_{x_3,\xi_3} \overline{u}_{x_2,\xi_2} \overline{u}_{x_4,\xi_4} dx dt = \iint v_{x_1,\xi_1} v_{x_3,\xi_3} \overline{v}_{x_2,\xi_2} \overline{v}_{x_4,\xi_4} dx dt.
\end{equation*}
The modified Hamiltonian is given by $a^c$, and we note that by the assumption
$|a''| \gg (\epsilon_0^2 R)^{-\frac{1}{2}}$ and $|\int \nabla_x \alpha ds| \lesssim (\epsilon_0^2 R)^{-\frac{1}{2}}$ we have $a^c = a'' + \mathcal{O}((\epsilon_0^2 R)^{-\frac{1}{2}})$.
Consequently, expanding the phase function like in \eqref{eq:ExpPhase} and integrating by parts the joint phase for $v$, the claim follows.
\end{proof}

The estimate in the above proposition allows us to consider the energy difference function
\begin{equation*}
F^z_{\xi_1,\xi_2'}(\eta) =a''(z,\xi_1)+a''(z,\eta+\xi_{2}'-\xi_{1})-a''(z,\eta) - a''(z,\xi_{2}')
\end{equation*}
for the shifted energy shells
\begin{equation*}
\mathcal{E}^{q,k}_{\xi_1,\xi_2'} = \{ \eta_{T_1}(t_q) : T_1 \in \T_1, \quad |F^{z}_{\xi_1,\xi_2'}(\eta_{T_1}(t_q)) - k (\epsilon_0^2 R)^{-\frac{1}{2}}| \lesssim (\epsilon_0^2 R)^{-\frac{1}{2}} \}.
\end{equation*}
Note that $|k| \lesssim (\epsilon_0^2 R)^{\frac{1}{2}}$ as $|a''| \lesssim 1$.
We carry out the argument for all shifted energy shells separately. Since we have obtained that the contribution is summable in $k \in \Z$ up to a logarithm in $R$, this allows us to carry out the argument like in Section \ref{section:ProofBilinear}.
To be precise, we rewrite \eqref{eq:AuxEstimateEnergyI} as
\begin{equation*}
\begin{split}
    &\quad \sum_{\substack{q \in \mathfrak{q}[\mu_1,\mu_2] \\  q \subseteq S}} \sum_{\substack{T_i,T_j',\\ T_1 \not\sim S, T_1' \not\sim S}} \iint \chi_q^4 \phi_{T_1} \overline{\phi_{T_1'}} \phi_{T_2} \overline{\phi_{T_2'}} dz \\
    &\lesssim \sum_{\substack{q \in \mathfrak{q}[\mu_1,\mu_2] \\  q \subseteq S}} \sum_{|k| \lesssim (\epsilon_0^2 R)^{\frac{1}{2}}} \sum_{\substack{T_i,T_j',\\ T_1 \not\sim S, T_1' \not\sim S, \\ \eta_{T_1'}(t_q) \in \mathcal{E}^{q,k}_{\xi_1,\xi_2'}}} \iint \chi_q^4 \phi_{T_1} \overline{\phi_{T_1'}} \phi_{T_2} \overline{\phi_{T_2'}} dz
\end{split}
\end{equation*}
We split the contributions of $k=0$ and $0 < |k| \lesssim (\epsilon_0^2 R)^{\frac{1}{2}}$. The contribution of $k=0$ can be handled exactly like in Section \ref{section:ProofBilinear}. For the contribution of $|k| \neq 0$ we integrate by parts in time to find by Proposition \ref{prop:TimeRegularityJoints}
\begin{equation*}
   \big| \iint \chi_q^4 \phi_{T_1} \overline{\phi}_{T_1'} \phi_{T_2} \overline{\phi}_{T_2'} dz \big| \lesssim \frac{1}{|k|} \iint \chi_q^4 \big| \phi_{T_1} \overline{\phi}_{T_1'} \phi_{T_2} \overline{\phi}_{T_2'} \big| dz.
\end{equation*}
Then, carrying out the summation like in \eqref{eq:AuxEnergyEst}, we find
\begin{equation*}
\begin{split}
    \sum_{\substack{q \in \mathfrak{q}[\mu_1,\mu_2] \\  q \subseteq S}} \sum_{\substack{T_i,T_j',\\ T_1 \not\sim S, T_1' \not\sim S, \\ \eta_{T_1'}(t_q) \in \mathcal{E}^{q,k}_{\xi_1,\xi_2'}}} \iint \chi_q^4 \phi_{T_1} \overline{\phi_{T_1'}} \phi_{T_2} \overline{\phi_{T_2'}} dz &\lesssim \frac{(\epsilon_0^2 R)^{-\frac{d-1}{2} + c \delta}}{|k|} \sum_q | \T_1^{\not\sim S}(q,\lambda_1,\mu_1,\mu_2)| \\
    &\; \times |\T_2(q)| \, \big( \sup_{\xi_1,\xi_2'} \big| \T_1^{\not\sim S}(\mathcal{E}^{q,k}_{\xi_1,\xi_2'},\lambda_1,\mu_1,\mu_2) \big| \big).
\end{split}
\end{equation*}
With the crucial combinatorial estimate for shifted energy shells at hand,
\begin{equation}
\label{eq:ModCombEst}
|\T_1^{\not\sim S}(\mathcal{E}^{q,k}_{\xi_1,\xi_2'},\lambda_1,\mu_1,\mu_2)| \lesssim (\epsilon_0^2 R)^{c \delta} \nu^{-1} \frac{\# \T_2}{\lambda_1 \mu_2},
\end{equation}
the claim follows like in Section \ref{section:ProofBilinear}. Here we need to exclude multiple intersections between the cone $\Gamma$ and $T_2$, following along the proof of \eqref{eq:CombEst} in Section \ref{section:ProofBilinear}.

Multiple intersections can be ruled out following the argument in the proof of Lemma \ref{l:transverse} and noting that $\nabla_{\xi} a'$ and $\partial^2_{\xi} a'$ are small perturbations of the flat case $\bar{p}(\xi) = |\xi|^2$. Indeed,
\begin{equation*}
\partial_{\xi_j} a'(x,t,\xi)= 2g^{ij}(\epsilon_0^{-1} x + \bar{x}(t \epsilon_0^{-2})) \xi_i + 2 \epsilon_0^{-1} (g^{ij}(\epsilon_0^{-1} x + \bar{x}(t \epsilon_0^{-2})) - \delta^{ij}) \bar{\xi}_j = 2 \xi_j + \mathcal{O}(\epsilon_0).
\end{equation*}
Consequently, we have
\[
|\nabla_{\xi} a'(x,t,\xi_1) - \nabla_{\xi} a'(x,t,\xi_2)| \sim |\xi_1 - \xi_2| \sim 1, 
\]
as well as 
\[
\partial_{\xi}^2 p = 2 g^{ij}(\epsilon_0^{-1} x+  \bar{x}(t \epsilon_0^{-2})) = 2 \delta^{ij} + \mathcal{O}(\epsilon_0^2).
\]
This excludes multiple intersections and finishes the proof of Theorem \ref{thm:BilinearSelfinteractionsSEQ}. \hfill $\Box$

\subsubsection{Small angle self-interactions for $1$-homogeneous symbols}

In this section we consider a small angle self-interaction for flows associated with  $1$-homo\-geneous symbols. The analysis is similar in spirit, though the null direction requires an anisotropic rescaling. We turn to the details. We consider a symbol in the form
\begin{equation*}
a(x,\xi) = \big( g^{ij}(x,t) \xi_i \xi_j \big)^{\frac{1}{2}}
\end{equation*}
with $g^{ij}$ being uniformly elliptic and satisfying size and regularity as in \eqref{eq:OriginalSpaceTimeRegularity}. The evolution governed by $-ia^w$ is denoted by $S(t,s)$, and we use the notation $\xi = (\xi',\xi_d) \in \R^{d-1} \times \R$. Many of the arguments are like in the previous section, so we shall be brief.
\begin{theorem}[Small~angle~self-interactions~for~wave~equations]
\label{thm:BilinearSelfInteractionsWave}
Suppose that 
\begin{equation*}
\text{supp}(\hat{u}_i) \subseteq \{ \xi \in \R^d : \xi_d \in (1/2,3/2), \; \big| \frac{\xi'}{\xi_d} - \xi_i^* \big| \ll \epsilon_0 \} \subseteq \{ \xi \in \R^d : |\xi'| \lesssim \epsilon_0 \}    
\end{equation*}
and moreover $|\xi_1^* - \xi_2^*| \sim \nu \gg \epsilon_0 \gg R^{-\frac{1}{2}}$. Then it holds:
\begin{equation*}
\| S(t,0) u_1 S(t,0) u_2 \|_{L^{\frac{d+3}{d+1}}_{t,x}(B_{d+1}(0,R))} \lesssim_{\varepsilon} R^{\varepsilon} \nu^{-\frac{2}{d+3}} \| u_1 \|_{L^2} \| u_2 \|_{L^2}.
\end{equation*}
\end{theorem}

\textbf{(i) Almost orthogonality.}

Without loss of generality we suppose that, with $\xi' = (\xi_1,\ldots,\xi_{d-1})$, we have  $|\xi'| \lesssim \epsilon_0$, $\xi_d \sim 1$. In the following we suppose that $\nu \geq R^{-\frac{1}{2}+\delta}$, $\delta = \delta(\epsilon)>0$ chosen like in the isotropic case. First, we localize $x$ to $\epsilon_0 R$-balls. Here we use that we can always suppose by the $1$-homogeneity of the symbol that $\xi_{d}(0) = 1$. 
Consequently, $\xi$ can effectively be localized to a $\epsilon_0$-ball, which leads us to an almost orthogonal decomposition:
\begin{equation*}
\| S(t,0) f_1 S(t,0) f_2 \|_{L^p_{t,x}(B_{d+1}(0,R))} \lesssim \big( \sum_{x} \| S(t,0) f_{1x} S(t,0) f_{2x} \|^p_{L^p_{t,x}(B_{d+1}(0,R))} \big)^{\frac{1}{p}}.
\end{equation*}
Here the supports of the functions $f_{ix}$ are again localized to balls of size $\epsilon_0 R$. To accomplish an anisotropic decomposition, which will match the anisotropic rescaling, we analyze the flow of the angular frequencies. Let $\omega(t)$ denote the ration $\xi'(t) / \xi_d(t)$. Here $\bar{\omega}(t) = \frac{\bar{\xi}'(t)}{\bar{\xi}_d(t)}$ denotes the ratio of the $(\xi',\xi_d)$-component of the central bicharacteristic. We have the following lemma:
\begin{lemma}
\label{lem:AnisotropicBiLipschitz}
The evolution equations for $(x^t, \omega^t, \xi_d^t)$ read:
\begin{equation}
\label{eq:ModifiedHamiltonian}
\left\{ \begin{array}{cl}
\dot{x}^t &= \partial_{\xi} a(x^t, \omega^t, 1), \\
\dot{\omega}^t &= - \partial_{x'} a(x^t,\omega^t,1) + \partial_{x_d} a(x^t,\omega^t,1) \omega^t, \\
\partial_t \log(\xi_d^t) &= - \partial_{x_d} a(x^t,\omega^t,1).
\end{array} \right.
\end{equation}
In particular $(x^t,\omega^t)$ are governed by an ODE:
\begin{equation*}
\frac{d}{dt} (x^t, \omega^t) = F(x^t,t,\omega^t)
\end{equation*}
with $F \in C_t C^{0,1}_{x,\omega}$, and we have the bi-Lipschitz dependence: For $(x^0,\omega^0)$, $(\bar{x}^0,\bar{\omega}^0)$ with 
\begin{equation*}
\{ |x^0 - \bar{x}^0| \leq \epsilon_0 R, \quad |\omega^0 - \bar{\omega}^0| \leq \epsilon_0 \},
\end{equation*}
it holds
\begin{equation*}
\{ |x^t - \bar{x}^t| \lesssim \epsilon_0 R, \quad |\omega^t - \bar{\omega}^t| \lesssim \epsilon_0 \}.
\end{equation*}
\end{lemma}
\begin{proof}
Since for $|\xi_d^0| \sim 1$ we still have by Hamiltonian equations $|\xi_d^t| \sim 1$ by $|\dot{\xi}_d^t| \leq c/N$, the expressions are well-defined, and we readily derive from the Hamiltonian equations:
\begin{equation*}
\left\{ \begin{array}{cl}
\dot{x}^t &= \partial_{\xi} p(x^t,\xi^t), \\
\dot{\xi}^t &= - \partial_x p(x^t,\xi^t)
\end{array} \right.
\end{equation*}
and $1$-homogeneity of the flow:
\begin{equation*}
\left\{ \begin{array}{cl}
\dot{x}^t &= \partial_{\xi} p(x^t,\omega^t,1), \\
\frac{d}{dt} \frac{(\xi')^t}{\xi_d^t} &= -\partial_{x'} p(x^t,\omega^t,1) - \frac{(\xi')^t}{(\xi_d^t)^2} (\dot{\xi}_d)^t \\
&= - \partial_{x'} p(x^t,\omega^t,1) + \omega^t \partial_{x_d} p(x^t,\omega^t,1), \\
\frac{d}{dt} \log(\xi_d^t) &= - \partial_{x_d} p(x^t,\omega^t,1).
 \end{array} \right.
\end{equation*}
The bi-Lipschitz property is proved by a bootstrap argument like in Section \ref{section:BilinearRoughEstimates}.
\end{proof}
Consequently, the central angular frequency in the isotropic expression
\begin{equation*}
\| S(t,0) f_{1x} S(t,0) f_{2x} \|_{L^p_{t,x}(B_{d+1}(0,R))}
\end{equation*}
is determined up to order $\epsilon_0$. We use the anisotropic wave packet decomposition provided by Geba--Tataru \cite{GebaTataru2007} to obtain an anisotropic refinement:
\begin{equation}
\label{eq:AnisotropicAlmostOrthogonality}
\begin{split}
&\quad \| S(t,0) f_{1x} S(t,0) f_{2x} \|_{L^p_{t,x}(B_{d+1}(0,R))} \\
 &\lesssim \big( \sum_{z: N \epsilon_0 \times N \epsilon_0^2 - \text{ellipsoid}} \| S(t,0) f_{1z} S(t,0) f_{2z} \|_{L^p_{t,x}(B_{d+1}(0,R))}^p \big)^{\frac{1}{p}}.
 \end{split}
\end{equation}
\begin{proof}[Proof~of~\eqref{eq:AnisotropicAlmostOrthogonality}]
The anisotropic wave packets take into consideration the null direction of the cone and allow for smaller spatial localization into the dual direction. In \cite[Proposition~5.2]{GebaTataru2007} was proved that solutions to wave equations at frequencies $|\xi| \sim N$, which satisfy the spatial localization $N^{-\frac{1}{2}} \times \ldots \times N^{-\frac{1}{2}} \times 1$ (angular $\times$ radial spatial localization) and frequency localization $N^{\frac{1}{2}} \times \ldots \times N^{\frac{1}{2}} \times N$ (angular $\times$ radial frequencies) retain their spatial localization and frequency localization. 
By virtue of the anisotropic wave packet decomposition we can handle the case $\nu \leq R^{-\frac{1}{2}+\delta}$ directly by Bernstein's inequality and essential spatial disjointness of the wave packets.

Moreover, the Hamiltonian flow of a $1$-homogeneous symbol is bi-Lipschitz with respect to Smith's pseudometric \cite{Smith1998} (see also \cite{GebaTataru2007}) on the co-sphere bundle
\begin{equation*}
d((x,\omega),(y,\eta)) \approx | \langle \omega , x - \tilde{x} \rangle | + \min(|x-\tilde{x}|,|x-\tilde{x}|^2) + | \omega - \tilde{\omega}|^2.
\end{equation*}
The bi-Lipschitz continuity holds on the unit time interval, which makes it natural to apply the pseudo-metric to the rescaled quantities $x \to x /N$, $\xi \to N \xi$.

We additionally localize the (rescaled) $e_d$-direction into $ \epsilon_0^2$-intervals, and consider centers of anisotropic wave packets $(x,\omega)$ and $(y,\eta)$ from separated $\epsilon_0^2$-intervals. We see that the pseudo-metric for the rescaled variables satisfies
\begin{equation*}
d((x,\omega),(y,\eta)) \approx | \langle \omega, x- \tilde{x} \rangle| \gg \epsilon_0^2.
\end{equation*}
Consequently, for any $t \in [0,1]$, we have
\begin{equation*}
d((x^t,\omega^t),(y^t,\eta^t)) \gg \epsilon_0^2 \gg N^{-1+\delta}.
\end{equation*}
For this reason, the anisotropic wave packets from separated $\epsilon_0^2$-intervals are not essentially interacting.
Indeed, we compute the projection to $\bar{\omega}^t$ - letting $(\bar{x}^t,\bar{\omega}^t)$ denote the central bicharacteristic of the rescaled $\epsilon_0$-ball $\times$ $\epsilon_0$-sector in phase space. To this end, we note that $|\omega^t - \bar{\omega}^t| \lesssim \epsilon_0$ by the bi-Lipschitz property. Now, the packet $(x^t,\omega^t)$ has length $1$ in $\omega^t$ direction and $N^{-\frac{1}{2}}$ in the orthogonal directions. This shows that the projection of the wave packet to $\bar{\omega}^t$ is contained in an interval of length $N^{-\frac{1}{2}} \epsilon_0$. Similar, for $(y^t,\eta^t)$ and since $|\langle x^t - y^t, \bar{\omega}^t \rangle| \gg \epsilon_0^2 \gg N^{-\frac{1}{2}+\delta} \epsilon_0$, we have established the separation of the wave packets. 
\end{proof}

We turn to estimating 
\begin{equation*}
\| S(t,0) f_{1z} S(t,0) f_{2z} \|_{L^p_{t,x}(B_{d+1}(0,R))}
\end{equation*}
with $f_{iz}$ localized to the same $N\epsilon_0 \times N \epsilon_0^2$-ellipsoid.

We cannot exactly center around a bicharacteristic like in the isotropic case, as the $\xi_d$-frequency is not localized. Instead we center the localized solutions around a bicharacteristic ray in phase space:
\begin{equation*}
\{ (\bar{x}(t), \xi_d \bar{\omega}(t), \xi_d) \, : \, \xi_d \in (1/2,3/2) \}.
\end{equation*}
As a consequence of Lemma \ref{lem:AnisotropicBiLipschitz}, the solutions are localized in frequencies $\{ \xi \in \R^d: \xi_d \sim 1, \; \big| \frac{\xi'}{\xi_d} - \bar{\omega}(t) | \leq \epsilon_0 \}$. For this reason, we center $v_i$ around the ray in frequencies by setting
\begin{equation*}
\hat{v}_i(t,\xi) = \hat{u}_i(t,\xi'+\xi_d \bar{\omega}(t) , \xi_d),
\end{equation*}
or, equivalently, $v_i(x,t) = u_i(x',x_d-x'\cdot \bar{\omega}(t),t)$.
We have the following:
\begin{lemma}
$v_i$ satisfy the equation
\begin{equation*}
-i \partial_t v_i + \tilde{a}^w(x,D) v_i = 0
\end{equation*}
with 
\begin{equation*}
\tilde{a}(x,\xi) = a(x',x_d-x' \cdot \bar{\omega}(t), t,\xi'+\xi_d \bar{\omega}(t),\xi_d) - x' \xi_d \dot{\bar{\omega}}(t).
\end{equation*}
\end{lemma}

Next, we center around the spatial part of the bicharacteristic by considering $w_i(x,t) = v_i(x+\bar{x}(t),t)$. We have the following:
\begin{lemma}
The functions $w_i$ solve the equation $-i \partial_t w_i + \tilde{a}^w(x,D) w_i = 0$ governed by the symbol
\begin{equation*}
\begin{split}
\tilde{a}(x,\xi) &= \xi_d (a(x'+\bar{x}'(t),x_d+\bar{x}_d(t),\frac{\xi'}{\xi_d}+\bar{\omega}(t),1) - (x'+\bar{x}'(t)) \nabla_{x'} a(\bar{x}(t),\bar{\omega}(t),1)) \\
&\, - \xi' \nabla_{\xi'} a(\bar{x}(t),\bar{\omega}(t),1) - \xi_d (a(\bar{x}(t),\bar{\omega}(t),1) - \bar{x}'(t) \nabla_{x'} a(\bar{x}(t),\bar{\omega}(t),1)] \\
&= \xi_d [ a(x'+\bar{x}'(t),x_d+\bar{x}_d(t),\frac{\xi'}{\xi_d}+\bar{\omega}(t),1) - x' \nabla_{x'} a(\bar{x}(t),\bar{\omega}(t),1) \\
&\quad - \frac{\xi'}{\xi_d} \nabla_{\xi'} a(\bar{x}(t),\bar{\omega}(t),1) - a(\bar{x}(t),\bar{\omega}(t),1))].
\end{split}
\end{equation*}
Moreover, $w_i$ is essentially localized to $|x'| \lesssim \epsilon_0 R$, $|x_d| \lesssim \epsilon_0^2 R$.
\end{lemma}
Consequently, we have (up to rapidly decaying error terms)
\begin{equation*}
\| S(t,0) f_{1z } S(t,0) f_{2z} \|_{L^p_{t,x}(B_{d+1}(0,R))} \lesssim \| w_1 \cdot w_2 \|_{L^p_{t,x}([-R,R] \times B_d(0,5 \epsilon_0 R))}.
\end{equation*}

\textbf{(ii) Rescaling.} We carry out the anisotropic rescaling, which is given by
\begin{equation*}
\xi' \to \epsilon_0 \xi', \xi_d \to \xi_d, \quad x' \to \epsilon_0^{-1} x', \; x_d \to x_d, \; t \to \epsilon_0^{-2} t.
\end{equation*}
We abuse notation and let $u_i(x,t) = w_i(\epsilon_0^{-1} x',x_d,\epsilon_0^{-2} t)$. We find the transversality parameter to be normalized upon rescaling: $\nu \sim 1$. We summarize the above in the following:
\begin{proposition}
The estimate 
\begin{equation*}
\| w_1 \cdot w_2 \|_{L^p_{t,x}([-R,R] \times B_d(0,5 \epsilon_0 R)} \lesssim_\varepsilon R^\varepsilon \nu^{-\frac{2}{d+3}} \prod_{i=1}^2 \| f_{i z} \|_{L^2},
\end{equation*}
is implied by
\begin{equation}
\label{eq:RescaledUnitTransversality}
\| u_1 \cdot u_2 \|_{L^p_{t,x}(B_{d+1}(0,\epsilon_0^2 R))} \lesssim_\varepsilon (\epsilon_0^2 R)^{\varepsilon} \prod_{i=1}^2 \| u_i(0) \|_2.
\end{equation}
\end{proposition}
The governing symbol of the rescaled evolution reads
\begin{equation*}
\begin{split}
a'(x,t,\xi) &= \epsilon_0^{-2} \xi_d (a(\epsilon_0^{-1} x' + \bar{x}'(\epsilon_0^{-2} t),x_d + \bar{x}_d(\epsilon_0^{-2} t), \epsilon_0^{-2} t, \frac{\epsilon_0 \xi'}{\xi_d} + \bar{\omega}(\epsilon_0^{-2} t),1) \\
&\; - \epsilon_0^{-1} x' \nabla_{x'} a(\bar{x}(\epsilon_0^{-2} t),\epsilon_0^{-2} t, \bar{\omega}(\epsilon_0^{-2} t),1)  - \frac{\epsilon_0 \xi'}{\xi_d} \nabla_{\xi'} a(\bar{x}(\epsilon_0^{-2} t),\epsilon_0^{-2} t,\bar{\omega}(\epsilon_0^{-2} t),1) \\
&\; - a(\bar{x}(\epsilon_0^{-2} t),\epsilon_0^{-2} t, \bar{\omega}(\epsilon_0^{-2} t),1)).
\end{split}
\end{equation*}
We carry out a Taylor expansion in $\xi'/\xi_d$ of the first term to obtain an expansion reminiscent of the isotropic case:
\begin{equation*}
\begin{split}
&\quad a(\epsilon_0^{-1} x'+\bar{x}'(\epsilon_0^{-2} t),x_d+\bar{x}_d(\epsilon_0^{-2} t), \epsilon_0^{-2} t, \frac{\epsilon_0 \xi'}{\xi_d} + \bar{\omega}(\epsilon_0^{-2} t), 1) \\
&= a(\epsilon_0^{-1} x'+\bar{x}'(\epsilon_0^{-2} t),x_d+\bar{x}_d(\epsilon_0^{-2} t), \epsilon_0^{-2} t, \bar{\omega}(\epsilon_0^{-2} t), 1) \\
&\quad + \frac{\epsilon_0 \xi'}{\xi_d} \nabla_{\xi'} a(\epsilon_0^{-1} x'+\bar{x}'(\epsilon_0^{-2} t),x_d+\bar{x}_d(\epsilon_0^{-2} t), \epsilon_0^{-2} t, \bar{\omega}(\epsilon_0^{-2} t), 1) \\
&\quad + \frac{\epsilon_0^2}{2 \xi_d^2} \langle \xi', \partial^2_{\xi' \xi'} a(\epsilon_0^{-1} x'+\bar{x}'(\epsilon_0^{-2} t),x_d+\bar{x}_d(\epsilon_0^{-2} t), \epsilon_0^{-2} t, \tilde{\xi}', 1) \xi' \rangle
\end{split}
\end{equation*}
and sort the expressions into powers of $\xi'/\xi_d$.
\begin{lemma}
We have the following representation of $a'$:
\begin{equation*}
a'(x,t,\xi) = I + II + III 
\end{equation*}
with
\begin{equation*}
\begin{split}
I &= \frac{1}{2 \xi_d^2} \langle \xi', \partial^2_{\xi' \xi'} a(\epsilon_0^{-1} x'+\bar{x}'(\epsilon_0^{-2} t),x_d+\bar{x}_d(\epsilon_0^{-2} t), \epsilon_0^{-2} t, \tilde{\xi}', 1) \xi' \rangle, \\
II &= \epsilon_0^{-1} \xi' (\nabla_{\xi'} a(\epsilon_0^{-1} x'+\bar{x}'(\epsilon_0^{-2} t),x_d+\bar{x}_d(\epsilon_0^{-2} t),\epsilon_0^{-2} t, \bar{\omega}(\epsilon_0^{-2} t),1) \\
&\quad - \nabla_{\xi'} a(\bar{x}(\epsilon_0^{-2} t),\epsilon_0^{-2} t,\bar{\omega}(\epsilon_0^{-2} t),1)),
\end{split}
\end{equation*}
and
\begin{equation*}
\begin{split}
III &= \epsilon_0^{-2} \xi_d (a(\epsilon_0^{-1} x' + \bar{x}'(\epsilon_0^{-2} t), x_d + \bar{x}_d(\epsilon_0^{-2} t), \epsilon_0^{-2} t, \bar{\omega}(\epsilon_0^{-2} t),1) \\
&\quad - \epsilon_0^{-1} x' \nabla_{x'} a(\bar{x}'(\epsilon_0^{-2} t),\bar{x}_d(\epsilon_0^{-2} t),\epsilon_0^{-2} t,1) - a(\bar{x}(\epsilon_0^{-2} t),\epsilon_0^{-2} t, \bar{\omega}(\epsilon_0^{-2} t),1)) \\
&= \xi_d (V(x',t) + W(x,t)).
\end{split}
\end{equation*}
$a'$ satisfies the size and regularity estimates:
\begin{equation*}
|\partial_x^{\alpha} \partial_{\xi}^{\beta} a'| \lesssim_{\alpha,\beta} (\epsilon_0^2 R)^{-|\alpha| + \frac{(|\alpha|-2)_+}{2}}.
\end{equation*}
\end{lemma}
\begin{proof}
The size and regularity estimates for $I$ are immediate. For $II$ we use the mean-value theorem to write
\begin{equation*}
II = \langle \epsilon_0^{-1} \xi', \partial^2_{x \xi'} a(\tilde{x}',\tilde{x}_d,\epsilon_0^{-2} t, \bar{\omega}(\epsilon_0^{-2} t),1) \cdot (\epsilon_0^{-1} x',x_d)\rangle,
\end{equation*}
from which $|II| \lesssim 1$ is immediate by $|(x',x_d)| \lesssim \epsilon_0^2 R$ and size and regularity of $a$. Bounds for derivatives are easier to verify.

For $III$ we carry out a Taylor expansion in $x_d$:
\begin{equation*}
\begin{split}
&\quad \epsilon_0^{-2} (a(\epsilon_0^{-1} x'+\bar{x}'(\epsilon_0^{-2} t),x_d + \bar{x}_d(\epsilon_0^{-2} t), \epsilon_0^{-2} t,\bar{\omega}(\epsilon_0^{-2} t),1)) \\
&= \epsilon_0^{-2} (a(\epsilon_0^{-1} x' + \bar{x}'(\epsilon_0^{-2} t),\bar{x}_d(\epsilon_0^{-2} t),\epsilon_0^{-2} t, \bar{\omega}(\epsilon_0^{-2} t),1)) \\
&\quad + x_d \partial_{x_d} a(\epsilon_0^{-1} x' + \bar{x}'(\epsilon_0^{-2} t),\tilde{x}_d,\epsilon_0^{-2} t, \bar{\omega}(\epsilon_0^{-2} t),1).
\end{split}
\end{equation*}
The first term, which is purely $x'$-dependent is ameliorated by the second and third defining terms of $III$:
\begin{equation*}
\begin{split}
\xi_d V(x',t) &= \xi_d \epsilon_0^{-2} ( a(\epsilon_0^{-1} x' + \bar{x}'(\epsilon_0^{-2} t),\bar{x}_d(\epsilon_0^{-2} t),\epsilon_0^{-2} t, \bar{\omega}(\epsilon_0^{-2} t),1)) \\
&\quad - \epsilon_0^{-1} x' \nabla_{x'} a(\bar{x}'(\epsilon_0^{-2} t),\bar{x}_d(\epsilon_0^{-2} t),\epsilon_0^{-2} t,1) - a(\bar{x}(\epsilon_0^{-2} t),\epsilon_0^{-2} t, \bar{\omega}(\epsilon_0^{-2} t),1)).
\end{split}
\end{equation*}
It is readily verified by repeated applications of the mean-value theorem that
\begin{equation*}
|\partial_{x'}^{\alpha} V(x',t)| \lesssim_\alpha (\epsilon_0^2 R)^{-|\alpha| + \frac{(|\alpha|-2)_+}{2}}.
\end{equation*}
For the term
\begin{equation*}
W(x,t) = \epsilon_0^{-2} x_d \partial_{x_d} a(\epsilon_0^{-1} x' + \bar{x}'(\epsilon_0^{-2} t), \tilde{x}_d, \epsilon_0^{-2} t, \bar{\omega}(\epsilon_0^{-2} t),1))
\end{equation*}
the size and regularity estimates are easier to verify.
\end{proof}

\paragraph{\textbf{(iii) A bilinear estimate for the rescaled evolution}}

Theorem \ref{thm:GeneralizationBilinearCone} is again not applicable due to the insufficient time regularity of the symbol $a'$.

If we had the time-regularity
\begin{equation*}
|\partial_t a'(x,t,\xi)| \lesssim (\epsilon_0^2 R)^{-1},
\end{equation*}
we could proceed like in the isotropic case.
This would allow us to analyze joints (of isotropic wave packets), which intersect with transversality $\nu \sim 1$ at $t=t_0$ on times $\Delta t \sim (\epsilon_0^2 R)^{\frac{1}{2}}$:
\begin{equation*}
\iint u_{x_1,\xi_1} u_{x_3,\xi_3} \overline{u}_{x_2,\xi_2} \overline{u}_{x_4,\xi_4} dx dt.
\end{equation*}
The problematic term is localized in the expansion
\begin{equation*}
a'(x,t,\xi) = \xi_d V(x',t) + a''(x,t,\xi)
\end{equation*}
to be $\xi_d V(x',t)$. Indeed, the bounds
\begin{equation*}
|\partial_t W| \lesssim (\epsilon_0^2 R)^{-1}, \quad |\partial_t I| + |\partial_t II| \lesssim (\epsilon_0^2 R)^{-1}
\end{equation*}
are straight-forward.

We consider like above the localized quadrilinear expression
\begin{equation*}
 \iint \chi_\mathfrak{q}(y,t) u_{x_1,\xi_1}(y,t) u_{x_3,\xi_3}(y,t) \bar{u}_{x_2,\xi_2}(y,t) \bar{u}_{x_4,\xi_4}(y,t) dy dt 
\end{equation*}
on cubes of size $(\epsilon_0^2 R)^{\frac{1}{2}}$ - the size being determined by the unit transversality. We decompose $a'$ into singular structured and regular part:
\begin{equation*}
a'(x,t,\xi) = \xi_d V(x',t) + a''(x,t,\xi)
\end{equation*}
and consider the renormalized energy difference function:
\begin{equation*}
F^z_{\xi_1,\xi_2'}(\eta) = a''(z,\xi_1) + a''(z,\eta + \xi_2'-\xi_1) - a''(z,\eta) - a''(z,\xi_1').
\end{equation*}
The contribution of $|F^z_{\xi_1,\xi_2'}(\eta)| \lesssim (\epsilon_0^2 R)^{-\frac{1}{2}}$ can be handled exactly like in Section \ref{section:ProofBilinear}. To handle the contribution of large renormalized energy difference, we show the following analog of Proposition \ref{prop:TimeRegularityJoints}:
\begin{proposition}
Suppose that $\epsilon_0 \gg R^{-\frac{1}{2}}$. With the notations from the $1$-homogeneous case introduced above, let
\begin{equation*}
\bar{a}^s = a''(x_1^{t_0},t_0,\xi_1^{t_0}) + a''(x_3^{t_0},t_0,\xi_3^{t_0}) - a''(x_2^{t_0},t_0,\xi_2^{t_0}) - a''(x_4^{t_0},t_0,\xi_4^{t_0}).
\end{equation*}
Then it holds for $|\bar{a}^s| \gg (\epsilon_0^2 R)^{-\frac{1}{2}}$:
\begin{equation*}
\begin{split}
&\quad \big| \iint \chi_{\mathfrak{q}}(y,t) u_{x_1,\xi_1}(y,t) u_{x_3,\xi_3}(y,t) \bar{u}_{x_2,\xi_2}(y,t) \bar{u}_{x_4,\xi_4}(y,t) dy dt \big| \\
 &\lesssim \frac{1}{(\epsilon_0^2 R)^{\frac{1}{2}} |\bar{a}^s|} \iint \tilde{\chi}_{\mathfrak{q}}(y,t) \phi^*_{x_1,\xi_1}(y,t) \phi^*_{x_3,\xi_3}(y,t) \phi^*_{x_2,\xi_2}(y,t) \phi^*_{x_4,\xi_4}(y,t) dy dt.
 \end{split}
\end{equation*}
\end{proposition}
\begin{proof}
Many steps are like in the proof of Proposition \ref{prop:TimeRegularityJoints}: plugging in the expansion of $u_{x_i,\xi_i}$ in terms of Green's functions $G(t,x,y,\xi)$ and oscillatory factors, which can be integrated by parts. We recall that in the proof of Proposition \ref{prop:TimeRegularityJoints} the regular part of the Hamiltonian was recovered after a local gauge transform. Presently, we carry out the change of variables $x_d \to x_d + \int_0^t V(x',s) ds$, which clearly incurs no Jacobian factor. Consider a function $-i \partial_t u + (a')^w(x,D) u = 0$. We ameliorate the singular part by considering $w(x',x_d,t) = u(x',x_d-\int_0^t V(x',s) ds, t)$. We compute
\begin{equation*}
\begin{split}
&\quad \nabla_{x'} w(x',x_d,t)  \\
&= (\nabla_{x'} u)(x',x_d + \int V(x',s) ds, t) + \big( \int \nabla_{x'} V \big) \partial_{x_d} u(x',x_d+\int V(x',s) ds, t) \\
&= (\nabla_{x'} w)(x',x_d , t) + \big( \int \nabla_{x'} V \big) \partial_{x_d} w(x',x_d, t) 
\end{split}
\end{equation*}
which gives the transformation rule:
\begin{equation*}
-i \partial_t w + \tilde{a}''(x,D)^w w = 0, \; \tilde{a}'' = a''(x',x_d-\int_0^t V(x',s),\xi'+ \xi_d \int_0^t \nabla_{x'} V(x',s) ds, \xi_d)
\end{equation*}
up to lower order terms, which do not essentially change the Hamiltonian flow and wave packet decomposition. We record by a simple Taylor expansion:
\begin{equation*}
\tilde{a}''(x,t,\xi) = a''(x,t,\xi) + \mathcal{O}((\epsilon_0^2 R)^{-\frac{1}{2}})
\end{equation*}
and the size and regularity estimates
\begin{equation*}
|\partial_x^{\alpha} \partial_{\xi}^{\beta} \tilde{a}''| \lesssim_{\alpha,\beta} (\epsilon_0^2 R)^{-|\alpha| + \frac{(|\alpha|-2)_+}{2}}
\end{equation*}
by similar considerations like in Proposition \ref{prop:TimeRegularityJoints}, in particular, by the locality of the transformation $\Delta t \lesssim (\epsilon_0^2 R)^{\frac{1}{2}}$.

The time regularity $|\partial_t \tilde{a}''| \lesssim (\epsilon_0^2 R)^{-1}$ is straightforward. Following along the lines of Proposition \ref{prop:TimeRegularityJoints}, the proposition follows.
\end{proof}
At this point we can argue like in the preceding section that we can consider localizations of the energy shell to regions around $k (\epsilon_0^2 R)^{-\frac{1}{2}}$, $k \in \Z$, over which we can sum. Multiple intersections are ruled out observing that
\begin{equation*}
\begin{split}
\nabla_{\xi'} a'(x,t,\xi_1) - \nabla_{\xi'} a'(x,t,\xi_2) &= \nabla_{\xi'} I(x,t,\xi_1) - \nabla_{\xi'} I(x,t,\xi_2) + \mathcal{O}(\epsilon_0) \\
&= |\xi_1'-\xi_2'| + \mathcal{O}(\epsilon_0) = \mathcal{O}(1),
\end{split}
\end{equation*}
and by the argument of Lemma \ref{l:transverse}. The proof is complete. \hfill $\Box$

\appendix

\section{Dispersive properties via wave packet decompositions}

In the Appendix we obtain dispersive properties via wave packet decompositions. To prove the sharp decay for Schr\"odinger equations with uniformly non-degenerate metric, we use the non-degeneracy of the Hamiltonian flow in all directions.
We show the following:
\begin{proposition}
\label{prop:NonDegeneracyFlowSEQ}
Let $p(x,t,\xi) = g^{ij}(x,t) \xi_i \xi_j$ with the metric satisfying the regularity assumption
\begin{equation}
\label{eq:RegularityMetricSEQ}
|\partial_z^{\alpha} g^{ij}(x,t)| \leq 
\epsilon R^{-|\alpha|}, \quad 1 \leq |\alpha| \leq 2, 
\end{equation}
and uniform non-degeneracy
\begin{equation}
\label{eq:NonDegeneracySEQ} 
\exists c, C > 0: c |\xi| \leq |g^{ij} \xi_i| \leq C |\xi|
\end{equation}
with $\epsilon \ll c$ and $\chi$ a bump function adapted to $B_d(0,1)$. Then it holds for two bicharacteristics with $|\xi_i| \leq \frac{1}{2}$ with $x_0^1 = x_0^2 = x_0$ and $\xi_0^i = \xi_i$ for $0 \leq |t| \leq R$ with $\Delta \xi = \xi_1 - \xi_2$:
\begin{equation*}
|x_t^1 - x_t^2| \sim |t| |\Delta \xi|, \quad |\xi_t^1 - \xi_t^2| \sim |\Delta \xi|.
\end{equation*}
\end{proposition}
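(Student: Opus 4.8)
The plan is to compare the two bicharacteristics via the fundamental theorem of calculus and the mean value theorem, exactly as in the proof of Lemma~\ref{lem:GeometryWavepackets}, but now extracting both an upper \emph{and} a lower bound. Write $\delta x_t = x_t^1 - x_t^2$ and $\delta \xi_t = \xi_t^1 - \xi_t^2$, so $\delta x_0 = 0$ and $\delta \xi_0 = \Delta \xi$. Since $p(x,t,\xi) = g^{ij}(x,t)\xi_i\xi_j$ we have $\partial_\xi p = 2 g^{ij}(x,t)\xi_j \mathbf{e}_i$, whose Jacobian in $\xi$ is $2(g^{ij})$, and the nondegeneracy \eqref{eq:NonDegeneracySEQ} says this matrix is uniformly invertible with $|2 g^{ij} w_j| \sim |w|$. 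First I would integrate the $\xi$-equation: $\delta\xi_t = \Delta\xi - \int_0^t (\partial_x p(x_s^1,\xi_s^1) - \partial_x p(x_s^2,\xi_s^2))\,ds$, and estimate the integrand by the mean value theorem together with \eqref{eq:RegularityMetricSEQ}; since every second derivative of $p$ that carries an $x$-derivative is $\mathcal{O}(\epsilon R^{-1})$ (using $|\xi_i|\le \tfrac12$ to control the $\xi$-factors), one gets $|\delta\xi_t - \Delta\xi| \lesssim \epsilon R^{-1}\int_0^t (R^{-1/2}|\delta x_s| \cdot R^{1/2} + |\delta\xi_s|)\,ds$. By the bi-Lipschitz bound of Lemma~\ref{lem:GeometryWavepackets} (applied with $\mathcal{Y}_i$ chosen to contain the two initial points), $d_R$ of the two bicharacteristics stays comparable to $d_R$ at time $0$, i.e. $R^{-1/2}|\delta x_s| + R^{1/2}|\delta\xi_s| \lesssim R^{1/2}|\Delta\xi|$ for $|s|\le R$; feeding this in gives $|\delta\xi_t - \Delta\xi| \lesssim \epsilon |\Delta\xi|$, which since $\epsilon \ll c$ yields $|\delta\xi_t| \sim |\Delta\xi|$ for all $|t|\le R$.

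\textbf{The position estimate.} For the spatial component write $\delta x_t = \int_0^t \big(\partial_\xi p(x_s^1,s,\xi_s^1) - \partial_\xi p(x_s^2,s,\xi_s^2)\big)\,ds$ and split the integrand, via the mean value theorem, into $\partial^2_{x\xi}p \cdot \delta x_s + \partial^2_{\xi\xi}p\cdot \delta\xi_s$. The first term is $\mathcal{O}(\epsilon R^{-1}|\delta x_s|)$ and, using $R^{-1/2}|\delta x_s| \lesssim R^{1/2}|\Delta\xi|$ from the bi-Lipschitz bound, contributes at most $\mathcal{O}(\epsilon |t|\,|\Delta\xi|)$ after integration. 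The second term is the main term: $\partial^2_{\xi\xi}p = 2(g^{ij}(x_s,s))$ evaluated along the flow, so $\partial^2_{\xi\xi}p \cdot \delta\xi_s = 2 g^{ij}(x_s,s)(\delta\xi_s)_j\mathbf{e}_i$, which by \eqref{eq:NonDegeneracySEQ} has magnitude $\sim |\delta\xi_s| \sim |\Delta\xi|$. To get the lower bound on $\int_0^t 2 g^{ij}(x_s,s)(\delta\xi_s)_j\mathbf{e}_i\,ds$ I would show the integrand does not rotate appreciably: both $g^{ij}(x_s,s)$ and $\delta\xi_s$ differ from their values at $s=0$ by $\mathcal{O}(\epsilon)$ relative to size (using \eqref{eq:RegularityMetricSEQ}, the bi-Lipschitz bound, and the $\xi$-estimate just established). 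Hence $\int_0^t 2g^{ij}(x_s,s)(\delta\xi_s)_j\mathbf{e}_i\,ds = 2t\, g^{ij}(x_0,0)(\Delta\xi)_j\mathbf{e}_i + \mathcal{O}(\epsilon|t|\,|\Delta\xi|)$, whose norm is $\sim |t|\,|\Delta\xi|$ since $\epsilon \ll c$. Combining with the $\mathcal{O}(\epsilon|t|\,|\Delta\xi|)$ error from the first term and the definite lower bound on the main term gives $|\delta x_t| \sim |t|\,|\Delta\xi|$.

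\textbf{Main obstacle.} The only delicate point is making the ``the integrand barely rotates'' argument rigorous enough to produce a genuine \emph{lower} bound, rather than just an upper bound: we need that $2g^{ij}(x_s,s)(\delta\xi_s)_j\mathbf{e}_i$ stays within a fixed small angular cone around its $s=0$ value uniformly for $|s|\le t \le R$, so that the time-integral does not suffer cancellation. This is where the interplay $\epsilon \ll c$ is used crucially and where I would invoke Lemma~\ref{lem:GeometryWavepackets} to control $|x_s - x_0|$ (which is $\lesssim R^{1/2}|\Delta\xi|\cdot R^{1/2} \le R|\Delta\xi|$, acceptable against the $R^{-1}$ decay of $\partial_x g$) together with the already-proven $|\delta\xi_s - \Delta\xi| \lesssim \epsilon|\Delta\xi|$. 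Everything else is a routine Gr\o nwall-type bookkeeping of the kind already carried out in Lemma~\ref{lem:GeometryWavepackets}, so I would keep those computations terse and refer back to that lemma.
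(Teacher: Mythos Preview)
Your approach is correct and essentially the same as the paper's: integrate the Hamilton equations, split via the mean value theorem into a small $\partial^2_{x\xi}p$-term and a main $\partial^2_{\xi\xi}p$-term, and use non-degeneracy for the lower bound. The only differences are cosmetic. You quote Lemma~\ref{lem:GeometryWavepackets} to get the crude upper bounds $|\delta x_s|\lesssim R|\Delta\xi|$, $|\delta\xi_s|\lesssim|\Delta\xi|$, whereas the paper runs a self-contained bootstrap on $[0,R]$ to get the sharper $|\delta x_s|\lesssim |s||\Delta\xi|$ directly; either suffices for the error terms. For the lower bound on $|\delta x_t|$ your ``integrand barely rotates'' argument (freeze $g^{ij}$ and $\delta\xi_s$ at $s=0$ modulo $\mathcal O(\epsilon)$) is exactly the right mechanism, and is in fact more carefully stated than the paper's shorthand, which writes the time-integral of the vector $\partial^2_{\xi\xi}p\,(\xi_s^1-\xi_s^2)$ as $|t|$ times a pointwise value via the mean value theorem --- a move that is only literally valid componentwise but is justified precisely by your observation that the integrand is $2g^{ij}(x_0,0)(\Delta\xi)_j\mathbf e_i + \mathcal O(\epsilon|\Delta\xi|)$.

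One small slip: in your ``main obstacle'' paragraph you write $|x_s-x_0|\lesssim R^{1/2}|\Delta\xi|\cdot R^{1/2}$ and attribute this to Lemma~\ref{lem:GeometryWavepackets}. That lemma controls $|\delta x_s|=|x_s^1-x_s^2|$, not the displacement $|x_s^1-x_0|$ of a single trajectory. The bound you actually need there is just $|x_s^1-x_0|\lesssim |s|\leq R$, which follows from $|\dot x_s|=|\partial_\xi p|\lesssim 1$; combined with $|\partial_z g|\lesssim \epsilon R^{-1}$ this gives $|g^{ij}(x_s,s)-g^{ij}(x_0,0)|\lesssim\epsilon$ as required. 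This is harmless for the argument but worth fixing in a write-up.
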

\begin{proof}
In the first step we show upper bounds
\begin{equation*}
|x_t^1 - x_t^2| \lesssim |t| |\Delta \xi|, \quad |\xi_t^1 - \xi_t^2| \lesssim |\Delta \xi|.
\end{equation*}
This will be proved by bootstrap: $|\xi_t^1 - \xi_t^2| \lesssim |\Delta \xi|$ is clearly true by
\begin{equation*}
\xi_t^1 - \xi_t^2 = \Delta \xi - \int_0^t \frac{\partial p}{\partial x}(x_s^1,\xi_s^1) - \frac{\partial p}{\partial x}(x_s^2,\xi_s^2) ds
\end{equation*}
for $t_0 \sim |\Delta \xi|$.
Next, we expand
\begin{equation}
\label{eq:xtExpand}
\begin{split}
x_t^1 - x_t^2 &= \int_0^t \frac{\partial p}{\partial \xi}(x_s^1,\xi_s^1) - \frac{\partial p}{\partial \xi}(x_s^2,\xi_s^2) ds \\
&= \int_0^t \frac{\partial^2 p}{\partial x \partial \xi}(x_s^1-x_s^2) + \frac{\partial^2 p }{(\partial \xi)^2} (\xi_s^1 - \xi_s^2) ds.
\end{split}
\end{equation}
Clearly, by the first identity $|x_t^1 - x_t^2| \leq C |t|$ and by the second,
\begin{equation*}
|x_t^1 - x_t^2| \leq \frac{C_1 \epsilon}{R} |t|^2 + C_2 |t| |\Delta \xi|.
\end{equation*}
Choosing $|t|$ small enough we have for $0 \leq |t| \leq t_0$ the estimates $|x_t^1 - x_t^2| \leq C |t| |\Delta \xi|$, $|\xi_t^1 - \xi_t^2| \leq C |\Delta \xi|$. This can be bootstrapped: Plugging theses estimates into the second identity we find
\begin{equation*}
|x_t^1 - x_t^2| \leq \frac{C \epsilon |t|^2}{2R} |\Delta \xi| + C |t| |\Delta \xi|,
\end{equation*}
and for the frequencies
\begin{equation*}
\begin{split}
|\xi_t^1 - \xi_t^2| &\leq |\Delta \xi| + \int_0^t \big| \frac{\partial^2 p}{(\partial x)^2}(x_s^1 - x_s^2) ds \big| + \big| \frac{\partial^2 p}{\partial x \partial \xi} (\xi_s^1 - \xi_s^2) \big| ds \\
&\leq |\Delta \xi| + \frac{C |t|^2 \epsilon |\Delta \xi|}{R^2} + \frac{C \epsilon |t|}{R} |\Delta \xi|.
\end{split}
\end{equation*}
This allows us to carry on the bootstrap up to $t_0 = R$. For $\epsilon \ll 1$ the bounds can be bootstrapped to $C=2$ and moreover for $\epsilon \ll 1$ the estimate shows that $|\xi_t^1 - \xi_t^2| \sim |\Delta \xi|$.

Next we estimate $|x_t^1 - x_t^2| \sim |t| |\Delta \xi|$. In \eqref{eq:xtExpand} we estimate the first term by
\begin{equation*}
\big| \int_0^t \frac{\partial^2 p}{\partial x \partial \xi} (x_s^1 - x_s^2) ds \big| \leq \frac{|t|^2 \epsilon |\Delta \xi|}{R}
\end{equation*}
and the second term as
\begin{equation*}
\big| \int_0^t \frac{\partial^2 p}{(\partial \xi)^2}(\xi_s^1- \xi_s^2) \big| = |t| \big| \frac{\partial^2 p}{(\partial \xi)^2} (\xi_{t_*}^1 - \xi_{t_*}^2) \big| \geq c |t| |\Delta \xi|.
\end{equation*}
Consequently, for $\epsilon$ small compared to $c$ from the uniform non-degeneracy assumption, we find $|\Delta x_t| \sim |t| |\Delta \xi|$.
\end{proof}

We are ready for the proof of dispersive estimates for non-degenerate Schr\"odinger equations:
\begin{proposition}
\label{prop:DispersiveEstimateSEQ}
Let $p(x,t,\xi) = g^{ij}(x,t) \xi_i \xi_j \chi(\xi)$ with uniformly non-degenerate metric satisfying the estimates
\begin{equation*}
|\partial_z^{\alpha} g^{ij}(x,t)| \leq 
\begin{cases}
\epsilon R^{-|\alpha|}, &\quad 1 \leq |\alpha| \leq 2, \\
C_\alpha R^{-2 - \frac{(|\alpha|-2)_+}{2}}, &\quad |\alpha| \geq 3,
\end{cases}
\end{equation*}
 $\chi$ a bump function localizing to the unit ball, $\mathcal{Y} = B_d(0,R) \times B_d(0,1)$ and $u$ be a solution to
\begin{equation*}
\left\{ \begin{array}{cl}
D_t u + p^w(x,t;D) u &= 0, \quad (t,x) \in \R \times \R^d, \\
u(0) &= \chi_{\mathcal{Y}} f.
\end{array} \right.
\end{equation*}
Above $\chi_{\mathcal{Y}}(x,D)$ is a pseudo-differential operator localizing to $\mathcal{Y}$ on the scales $R \times 1$ and $f$ localized to unit frequencies. Then it holds the dispersive estimate: 
\begin{equation*}
\| u(t) \|_{L^\infty(\R^d)} \lesssim (1+|t|)^{-\frac{d}{2}} \| f \|_{L^1(\R^d)}.
\end{equation*}
\end{proposition}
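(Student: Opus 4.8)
The plan is to obtain the dispersive estimate from the wave packet decomposition of Theorem~\ref{thm:WavePacketDecompositions} together with the non-degeneracy of the Hamiltonian flow established in Proposition~\ref{prop:NonDegeneracyFlowSEQ}. First I would reduce to $|t| \lesssim R$: for $|t|\lesssim 1$ the bound $\| u(t)\|_{L^\infty}\lesssim \|f\|_{L^1}$ is immediate from Bernstein and the unit-frequency localization plus the $L^2$--$L^2$ bound (composed with $L^1\to L^2$ on the unit frequency annulus via Young), so the content is the decay $(1+|t|)^{-d/2}$ for $1\lesssim |t|\lesssim R$. Here I apply Theorem~\ref{thm:WavePacketDecompositions} with $\nu\sim 1$, $r=R$, writing
\begin{equation*}
u(t) = \sum_{T=(x_0,\xi_0)\in\Lambda_R\cap\overline{\mathcal Y}_R} \alpha_T \phi_T(t) + g(t),
\end{equation*}
where $g$ is $\mathrm{RapDec}(R)$ in every $L^p$ and hence harmless (it absorbs into $(1+|t|)^{-d/2}\|f\|_{L^1}$ for $|t|\lesssim R$), and $\alpha_T = \langle \phi_{(x_0,\xi_0)}, \text{(initial state)}\rangle$ with $\sum_T|\alpha_T|^2\lesssim \|f\|_{L^2}^2$. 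Because the amplitudes are obtained by pairing against the FBI transform kernel, one also has the refined bound $|\alpha_T|\lesssim R^{d/4}\|\psi_{(x_0,\xi_0)} T_R f\|_{L^2}$, and more usefully an $L^1$-based control: each $\alpha_T$ is, up to the Gaussian weight, a local average of $f$, so $\sum_{T:\,\xi_0 \text{ fixed block}} |\alpha_T| \lesssim R^{d/4}\|f\|_{L^1}$ after summing the rapidly decaying tails.

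The core of the argument is a fixed-time estimate. Fix $|t|$ with $1\lesssim|t|\lesssim R$ and $y\in\R^d$. By the spatial localization axiom (Assumption~\ref{ass:WP}(1)) and the amplitude bound \eqref{eq:AmplitudeSize}, only wave packets whose core bicharacteristic satisfies $|x_T(t)-y|\lesssim R^{1/2+\delta}$ contribute essentially, and each contributes $|\alpha_T \phi_T(t,y)|\lesssim |\alpha_T| R^{-d/4}$. Thus
\begin{equation*}
|u(t,y)| \lesssim R^{-d/4}\sum_{T:\,|x_T(t)-y|\lesssim R^{1/2+\delta}} |\alpha_T| + \mathrm{RapDec}(R)\|f\|_{L^2}.
\end{equation*}
Now I count the frequency cells compatible with the position constraint. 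Since the initial positions $x_0$ all lie in $B_d(0,R)$ and the initial frequencies in $B_d(0,1)$, Proposition~\ref{prop:NonDegeneracyFlowSEQ} gives $|x_T(t)-x_{T'}(t)|\sim |t|\,|\xi_0-\xi_0'|$ whenever two bicharacteristics start at nearby points; more precisely, for a fixed initial position block of size $R^{1/2}$, the map $\xi_0\mapsto x_T(t)$ is a bi-Lipschitz-type expansion with Jacobian of size $|t|^d$ (the flow-out of the frequency ball is non-degenerate). Hence the number of $R^{-1/2}$-spaced frequency cells whose bicharacteristics land within $R^{1/2+\delta}$ of $y$, for a fixed $R^{1/2}$-position block, is $\lesssim R^{C\delta}(R^{1/2}/(|t|R^{-1/2}))^d = R^{C\delta}(R/|t|)^d$ when $|t|\gtrsim R^{1/2}$ (and just $1$ per block when $|t|\lesssim R^{1/2}$, where the position constraint alone pins the block). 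Combined with the fact that over all position blocks there are $\lesssim R^{d/2}$ blocks and that, restricted to a fixed frequency cell, the $\ell^1$-mass $\sum|\alpha_T|\lesssim R^{d/4}\|f\|_{L^1}$ (by the $L^1$ bound on amplitudes above, summed over the $O(R^{d/2})$ spatial blocks with Cauchy--Schwarz against the square-summability), a careful bookkeeping yields $\sum_{T:\,|x_T(t)-y|\lesssim R^{1/2+\delta}}|\alpha_T|\lesssim R^{C\delta}\,|t|^{-d/2}R^{d/4}\,\|f\|_{L^1}$.

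Putting these together gives $|u(t,y)|\lesssim R^{C\delta}|t|^{-d/2}\|f\|_{L^1}$ for $1\lesssim |t|\lesssim R$, which together with the trivial range $|t|\lesssim 1$ and the $\mathrm{RapDec}$ tail (harmless since $(1+|t|)^{-d/2}\gtrsim R^{-d/2}$) establishes the claim, absorbing the $R^{C\delta}$ loss by noting that one may rerun the argument on dyadic time blocks $|t|\sim 2^k$ with their own scale $R' = 2^k$ (so that the wave packets are as fat as allowed by the elapsed time) and that Assumption~\ref{ass:HamiltonianFlowRegularity} is stable under this rescaling; equivalently one can simply take $\delta\to 0$ in the sharp localization $\delta_0=0$ regime of Theorem~\ref{thm:WavePacketDecompositions}. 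The main obstacle I anticipate is the amplitude counting: one must carefully convert the $\ell^2$-bound $\sum|\alpha_T|^2\lesssim\|f\|_{L^2}^2$ into the correct $\ell^1$-type bound against $\|f\|_{L^1}$ over the relevant tube family without losing powers of $R$, which requires exploiting both the $L^1$ structure of the FBI amplitudes and the fact that the position--frequency cells counted are genuinely disjoint thanks to the non-degenerate spreading in Proposition~\ref{prop:NonDegeneracyFlowSEQ}; the standard way to do this cleanly is to interpolate the fixed-time $L^\infty$ bound with the trivial $L^2$ conservation, i.e.\ prove $\|u(t)\|_{L^\infty}\lesssim |t|^{-d/2}\|f\|_{L^1}$ directly by pairing $u(t)$ against an $L^1$ test function and using the wave packet decomposition on both factors, reducing everything to the overlap count of two transverse (in the sense of Proposition~\ref{prop:NonDegeneracyFlowSEQ}) families of tubes.
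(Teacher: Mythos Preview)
Your overall strategy—wave packet decomposition plus tube counting via Proposition~\ref{prop:NonDegeneracyFlowSEQ}, with $L^1$ control on amplitudes coming from the Gaussian structure of the FBI kernel—is exactly the paper's approach. But there is a genuine scaling gap in the execution.

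You run the decomposition at the fixed scale $r=R$ and then claim that careful bookkeeping yields $\sum_{T:\,|x_T(t)-y|\lesssim R^{1/2+\delta}}|\alpha_T|\lesssim R^{C\delta}|t|^{-d/2}R^{d/4}\|f\|_{L^1}$ for all $1\lesssim|t|\lesssim R$. This is not what the counting actually gives. At scale $R$, for a fixed spatial block $v$ the endpoints $x_{(v,\theta)}(t)$ are separated by only $\sim|t|\,R^{-1/2}$ as $\theta$ runs over the $R^{-1/2}$-lattice; so the number of $\theta$'s landing within $R^{1/2}$ of $y$ is $\sim(R/|t|)^d$, as you say. Combining this with the correct $L^1$ amplitude bound $\sum_v|\alpha_{(v,\theta)}|\lesssim R^{-d/4}\|f\|_{L^1}$ (your exponent $R^{d/4}$ has the wrong sign) gives
\[
|u(t,y)|\lesssim R^{-d/4}\cdot R^{-d/4}(R/|t|)^d\|f\|_{L^1}=R^{d/2}|t|^{-d}\|f\|_{L^1},
\]
which agrees with $|t|^{-d/2}$ only at $|t|=R$ and says nothing for $|t|\lesssim R^{1/2}$.

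The fix you mention at the end—rerun on dyadic blocks $|t|\sim 2^k$ with their own scale $R'=2^k$—is not a cosmetic device to absorb $R^{C\delta}$; it is the essential point, and it is what the paper does from the outset (``at scale $|t|\sim r$''). At scale $r=|t|$ the frequency lattice has spacing $r^{-1/2}$, so neighboring $\theta$'s give endpoints separated by $\sim r\cdot r^{-1/2}=r^{1/2}$, exactly one wave packet width. Then for each fixed $v$ the sum $\sum_\theta(1+r^{-1/2}|x_q-x_{(v,\theta)}(r)|)^{-(d+1)}$ is $O(1)$, and summing the remaining Gaussian weight over $v$ gives $r^{-d/2}\|f\|_{L^1}$ directly. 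With this scale choice the proof is a three-line computation with no $\delta$-loss and no need for interpolation or a $TT^*$ pairing.
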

\begin{remark}
We recover the above regularity of a flow as follows: Consider $g^{ij} \in C^{0,1}_t C^{1,1}_x$ with solutions localized at frequencies $N$, the coefficients being paradifferentially truncated at $\leq N$ in time and $\leq N^{\frac{1}{2}}$ in spatial frequencies on time-scales. On times $T=T(N)=N^{-1}$ we obtain \eqref{eq:RegularityMetricSEQ} after parabolic rescaling $t \to N^2 t$ and $x \to N x$ with $R=N$.
\end{remark}

\begin{proof}
We plug in the wave packet decomposition at scale $|t| \sim r \gg 1$:
\begin{equation*}
u(t) = \sum_{T \in \Lambda_r \cap \overline{\mathcal{Y}}} \chi_T + \text{RapDec}(r) \| f \|_{L^2}.
\end{equation*}
To ease notation, we let again $R=r$. Since we assumed $f$ to be micro-localized to unit frequencies and essentially the $R$-ball we further have $\| f \|_{L^2} \lesssim \| f \|_{L^1}$. It remains to understand $\big\| \sum_{T \in \Lambda_R \cap \overline{\mathcal{Y}}} \chi_T \big\|_{L^\infty(Q_R)}$. To this end, we use the localization properties to $R^{\frac{1}{2}}$-squares and suffices to consider an $R^{\frac{1}{2}}$-square:
\begin{equation*}
\begin{split}
&\quad \big\| \sum_{T \in \Lambda_R \cap \overline{\mathcal{Y}}} \chi_T \big\|_{L^\infty(Q_{R^{\frac{1}{2}}})} \\
&\lesssim \sum_{v \in R^{\frac{1}{2}} \Z^d \cap Q_R} \sum_{\theta \in R^{-\frac{1}{2}} \Z^d \cap B_d(0,1)} R^{-\frac{d}{2}} \int_{\R^d} e^{- \frac{\rho (y-v)^2}{2}} |\chi_{\mathcal{Y}} f| dy \\
&\quad \quad \times (1+ R^{-\frac{1}{2}}(|x_q - x_{\theta,v}(R)|)^{-(d+1)} \\
&\lesssim R^{-\frac{d}{2}} \sum_{v \in R^{\frac{1}{2}} \Z^d} \| \chi_v \chi_{\mathcal{Y}} f \|_{L^1} \sum_{\theta \in R^{-\frac{1}{2}} \Z^d} (1 + R^{-\frac{1}{2}} |x_q - x_{\theta,v}(R)|)^{-(d+1)} \\
&\lesssim R^{-\frac{d}{2}} \| \chi_{\mathcal{Y}} f \|_{L^1}.
\end{split}
\end{equation*}
In the final line we used that for fixed $v \in R^{\frac{1}{2}} \Z^d$ we have $|x_{(\theta,v)}(R) - x_{(\theta',v)}(R)| \sim R|\theta - \theta'|$ by Proposition \ref{prop:NonDegeneracyFlowSEQ}.
\end{proof}
To obtain an analog result for wave equations we show the separation property of the Hamiltonian flow in the angular direction.
\begin{proposition}
\label{prop:NonDegeneracyFlowWave}
Let $p(x,t,\xi) = (g^{ij}(x,t,\xi) \xi_i \xi_j)^{\frac{1}{2}} \chi_1(\xi)$ with uniformly elliptic metric and the coefficients satisfying
\begin{equation*}
|\partial_z^{\alpha} g^{ij}(x,t)| \leq 
\epsilon R^{-|\alpha|}, \quad 1 \leq |\alpha| \leq 2.
\end{equation*}
Then it holds for bicharacteristics with $|\xi_i| \sim 1$ and $x_0 = x_0^1 = x_0^2$ and $\xi_0^i = \xi_i$ for $0 \leq |t| \leq R$:
\begin{equation*}
|x_t^1 - x_t^2| \sim |t| |\angle (\xi_1,\xi_2)|, \quad |\angle(\xi_t^1,\xi_t^2)| \sim |\angle(\xi_1,\xi_2)|.
\end{equation*} 
\end{proposition}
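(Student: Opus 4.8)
\textbf{Proof plan for Proposition~\ref{prop:NonDegeneracyFlowWave}.}
The plan is to follow the same two-step bootstrap structure as in the proof of Proposition~\ref{prop:NonDegeneracyFlowSEQ}, but working with the angular (projective) difference of frequencies rather than the linear difference, since the $1$-homogeneous Hamiltonian governing the half-wave flow sees only angular separation. First I would exploit the scaling observation $(\star)$ from the proof of Lemma~\ref{lem:TubeCountingAuxNondegenerateCone}: since $p$ is $1$-homogeneous in $\xi$ (the cutoff $\chi_1$ is placed where $|\xi|\sim 1$ and is harmless for the flow analysis on the relevant time scale), the spatial projections $x^t$ of the bicharacteristics emanating from a common point $x_0$ depend only on the direction $\xi_i/|\xi_i|$, so I may normalise $|\xi_1|=|\xi_2|=1$ and set $\vartheta = \angle(\xi_1,\xi_2) \sim |\xi_1-\xi_2|$.

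The upper bounds come first, by bootstrap on $|t|\le R$. I would write, just as in \eqref{eq:xtExpand},
\begin{equation*}
x_t^1 - x_t^2 = \int_0^t \Big[ \tfrac{\partial^2 p}{\partial x \partial \xi}(x_s^1-x_s^2) + \tfrac{\partial^2 p}{(\partial\xi)^2}(\xi_s^1-\xi_s^2) \Big]\, ds,
\qquad
\xi_t^1 - \xi_t^2 = (\xi_1-\xi_2) - \int_0^t \Big[ \tfrac{\partial^2 p}{(\partial x)^2}(x_s^1-x_s^2) + \tfrac{\partial^2 p}{\partial x\partial\xi}(\xi_s^1-\xi_s^2)\Big] ds,
\end{equation*}
and use the ellipticity and regularity bounds $|\partial_z^\alpha g^{ij}|\le \epsilon R^{-|\alpha|}$ (so that $|\partial_x\partial_\xi p|, |\partial^2_x p|\lesssim \epsilon R^{-1}$ on $|\xi|\sim 1$, while $|\partial^2_\xi p|\lesssim 1$) to close the bootstrap assumptions $|x_t^1-x_t^2|\le C|t|\vartheta$ and $|\xi_t^1-\xi_t^2|\le C\vartheta$ on a short initial interval and then propagate them up to $|t|\le R$ exactly as in Proposition~\ref{prop:NonDegeneracyFlowSEQ}: the error from the $\partial_x\partial_\xi p$ and $\partial^2_x p$ terms is $O(\epsilon R^{-1}|t|^2 \vartheta)\ll |t|\vartheta$ once $\epsilon\ll 1$. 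Since $|\xi_t^i|$ stays $\sim 1$ (the frequency magnitude is an approximate conserved quantity, up to $O(\epsilon)$ relative error, and in fact for $1$-homogeneous $p$ one can invoke $(\star)$ and the preservation of $|\xi^t|$ directly), the angular difference satisfies $|\angle(\xi_t^1,\xi_t^2)|\sim |\xi_t^1-\xi_t^2| \lesssim \vartheta$.

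For the lower bounds I would again split \eqref{eq:xtExpand}: the $\partial_x\partial_\xi p$ term contributes $O(\epsilon R^{-1}|t|^2\vartheta)$, while for the main term I use the non-degeneracy of $\partial^2_\xi p$ in the angular directions — the key point being that, by $1$-homogeneity, $\partial^2_\xi p(x,\xi)$ annihilates $\xi$ and is elliptic on $\xi^\perp$ with constants controlled by the ellipticity of $g$ (this is the same spectral fact used near \eqref{eq:TransveralityHomAux}, where $\mathrm{R}(A_{\xi^*}) = \langle\xi^*\rangle^\perp$). Since $\xi_s^1-\xi_s^2$ is, to leading order, $\vartheta$ times a unit vector essentially orthogonal to $\xi_1\approx\xi_2$, the mean value theorem gives $|\int_0^t \partial^2_\xi p\,(\xi_s^1-\xi_s^2)\,ds| = |t|\,|\partial^2_\xi p(x_{t_*},\xi_{t_*})(\xi_{t_*}^1-\xi_{t_*}^2)| \gtrsim c|t|\vartheta$, so for $\epsilon$ small compared to the ellipticity constants $|x_t^1-x_t^2|\sim |t|\vartheta$. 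Combined with $|\xi_t^1-\xi_t^2|\sim\vartheta$ and $|\xi_t^i|\sim 1$, this yields $|\angle(\xi_t^1,\xi_t^2)|\sim\vartheta$, which is the claim. The main obstacle is bookkeeping the fact that $\partial^2_\xi p$ is \emph{not} fully non-degenerate (it has $\xi$ in its kernel), so one must carefully verify that the frequency difference $\xi_s^1-\xi_s^2$ stays, up to $O(\epsilon\vartheta+ \vartheta^2)$ errors, in the elliptic subspace $\xi^\perp$ — this is where the $1$-homogeneity and the radial-scaling reduction $(\star)$ do the real work, reducing the wave case to essentially the same estimate as the Schrödinger case but restricted to the sphere.
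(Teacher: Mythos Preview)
The paper does not actually supply a proof of Proposition~\ref{prop:NonDegeneracyFlowWave}; it is stated immediately before Proposition~\ref{prop:DispersiveEstimateWave} and left to the reader as the evident analogue of Proposition~\ref{prop:NonDegeneracyFlowSEQ}. Your plan is precisely that analogue and is correct: normalise $|\xi_i|=1$ via the homogeneity observation $(\star)$, run the same Gr\o nwall/bootstrap as in \eqref{eq:xtExpand} for the upper bounds, and for the lower bound replace full non-degeneracy of $\partial_\xi^2 p$ by angular non-degeneracy on $\xi^\perp$, which is exactly the spectral input used near \eqref{eq:TransveralityHomAux}. Your identification of the one genuine bookkeeping issue --- that $\xi_s^1-\xi_s^2$ must be shown to lie in $\xi^\perp$ up to $O((\epsilon+\vartheta)\vartheta)$ --- is accurate and the verification you sketch (via $|\xi_1|=|\xi_2|=1\Rightarrow (\xi_1-\xi_2)\perp(\xi_1+\xi_2)$, combined with $\xi_s^1-\xi_s^2=(\xi_1-\xi_2)+O(\epsilon\vartheta)$ from the bootstrap) is the right one.

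One minor streamlining worth noting: since $\partial_\xi p$ is $0$-homogeneous in $\xi$, the integrand $\partial_\xi p(x_s^1,\xi_s^1)-\partial_\xi p(x_s^2,\xi_s^2)$ is, by your bootstrap, equal to the fixed vector $\partial_\xi p(x_0,\xi_1)-\partial_\xi p(x_0,\xi_2)$ up to $O(\epsilon\vartheta)$; ellipticity of $g$ makes this fixed vector $\sim\vartheta$ in magnitude, so the lower bound $|x_t^1-x_t^2|\gtrsim |t|\vartheta$ also follows without passing through $\partial_\xi^2 p$. This avoids the kernel issue altogether, but your route via angular non-degeneracy is equally valid and closer in spirit to the paper's combinatorial arguments in Section~\ref{subsection:CombinatorialEstimate}.
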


With this property of the flow at hand, it is not difficult to show the dispersive property.
\begin{proposition}
\label{prop:DispersiveEstimateWave}
Let $p$ be like in Proposition \ref{prop:NonDegeneracyFlowWave} with coefficients
\begin{equation*}
|\partial_z^{\alpha} g^{ij}(x,t)| \leq 
\begin{cases}
\epsilon R^{-|\alpha|}, &\quad 1 \leq |\alpha| \leq 2, \\
C_\alpha R^{-2 - \frac{(|\alpha|-2)_+}{2}}, &\quad |\alpha| \geq 3,
\end{cases}
\end{equation*}
and $\mathcal{Y} = B_d(0,R) \times A_d$, and $u$ be a solution to
\begin{equation*}
\left\{ \begin{array}{cl}
i \partial_t u + p^w(x,t;D) u &= 0, \quad (t,x) \in \R \times \R^d, \\
u(0) &= \chi_{\mathcal{Y}} f.
\end{array} \right.
\end{equation*}
Above $\chi_{\mathcal{Y}}(x,D)$ is a pseudo-differential operator, which localizes to $\mathcal{Y}$ on scales $R \times 1$. Then it holds the dispersive property:
\begin{equation*}
\| u(t) \|_{L^\infty(\R^d)} \lesssim (1+|t|)^{-\frac{d-1}{2}} \| f \|_{L^1(\R^d)}.
\end{equation*}
\end{proposition}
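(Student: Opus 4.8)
The plan is to mirror the proof of Proposition \ref{prop:DispersiveEstimateSEQ}, replacing the full non-degeneracy of the Schr\"odinger flow by the \emph{angular} non-degeneracy supplied by Proposition \ref{prop:NonDegeneracyFlowWave}; the single degenerate (radial) direction, reflecting the flatness of the cone, is exactly what produces the weaker rate $(1+|t|)^{-\frac{d-1}{2}}$.

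First I would dispose of $|t|\lesssim 1$: since $\chi_{\mathcal{Y}}f$ has Fourier support in the bounded annulus $A_d$, the kernel of $S(t,0)$ restricted to those frequencies is $\lesssim 1$, so $\|u(t)\|_{L^\infty}\lesssim\|\chi_{\mathcal{Y}}f\|_{L^1}$, which suffices as $(1+|t|)^{-\frac{d-1}{2}}\sim 1$ there. From now fix $r=|t|\gg 1$ and set $R=r$. Applying Theorem \ref{thm:WavePacketDecompositions} with transversality parameter $\nu\sim 1$ and scale $r$ gives
\[
u(t)=\sum_{T\in\Lambda_r\cap\overline{\mathcal{Y}}_r}\alpha_T\phi_T(t)+g(t),\qquad \|g(t)\|_{L^\infty}\lesssim\text{RapDec}(r)\,\|\chi_{\mathcal{Y}}f\|_{L^2}.
\]
Because $\chi_{\mathcal{Y}}f$ is frequency localized to the bounded-volume set $A_d$ we have $\|\chi_{\mathcal{Y}}f\|_{L^2}\lesssim\|\chi_{\mathcal{Y}}f\|_{L^1}$, so the tail $g$ is negligible, and it remains to bound $\big\|\sum_T\alpha_T\phi_T(t)\big\|_{L^\infty}$.

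By the spatial localization in Section \ref{section:Localization} (Corollary \ref{cor:SpatialLocalization} and the pointwise lemma there) together with the elementary bound $\|\psi_{(x_0,\xi_0)}T_R u_0\|_{L^2}\lesssim R^{-\frac d4}\int e^{-\frac\rho2(y-x_0)^2}|\chi_{\mathcal{Y}}f(y)|\,dy$, each packet obeys, for $y=x_q$ in a fixed $R^{1/2}$-cube $q$,
\[
|\alpha_T\phi_T(t,x_q)|\lesssim R^{-\frac d2}\big(1+R^{-\frac12}|x_T(t)-x_q|\big)^{-(d+1)}\int e^{-\frac\rho2(y-v_T)^2}|\chi_{\mathcal{Y}}f(y)|\,dy,
\]
with $v_T=x_T(0)$, $\theta_T=\xi_T(0)$ the space/frequency centers of $T$. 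Summing over $T=(v,\theta)$, I fix $v$ and sum over the $\sim R^{d/2}$ frequency centers $\theta$ in the $R^{-1/2}$-lattice of $A_d$. Here Proposition \ref{prop:NonDegeneracyFlowWave} enters: for packets with common spatial center $v$, $|x_{v,\theta}(t)-x_{v,\theta'}(t)|\sim|t|\,\angle(\theta,\theta')$. Grouping the $\theta$'s by angular direction, each of the $\sim R^{(d-1)/2}$ angular slabs holds $\sim R^{1/2}$ radial choices whose cores at time $t$ lie within $O(R^{1/2})$ of each other, while representatives $x_{v,\omega}(t)$ of distinct slabs are $\sim R\cdot R^{-1/2}=R^{1/2}$ apart; by the bi-Lipschitz estimate of Proposition \ref{prop:NonDegeneracyFlowWave} the map $\omega\mapsto x_{v,\omega}(t)$ is, up to the factor $|t|$, bi-Lipschitz from $S^{d-1}$ onto its image, so these representatives form an $R^{1/2}$-separated subset of a $(d-1)$-dimensional set. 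Hence
\[
\sum_{\theta}\big(1+R^{-\frac12}|x_{v,\theta}(t)-x_q|\big)^{-(d+1)}\lesssim R^{\frac12}\sum_{\omega}\big(1+R^{-\frac12}|x_{v,\omega}(t)-x_q|\big)^{-(d+1)}\lesssim R^{\frac12},
\]
the last sum converging since in the rescaled variable $R^{-1/2}x$ it runs over a unit-separated $(d-1)$-dimensional lattice and $d+1>d-1$. Finally $\sum_{v\in R^{1/2}\Z^d}e^{-\frac\rho2(y-v)^2}\lesssim 1$, so $\sum_v\int e^{-\frac\rho2(y-v)^2}|\chi_{\mathcal{Y}}f|\,dy\lesssim\|\chi_{\mathcal{Y}}f\|_{L^1}$. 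Combining, $\big\|\sum_T\alpha_T\phi_T(t)\big\|_{L^\infty(q)}\lesssim R^{-\frac d2}\cdot R^{\frac12}\|\chi_{\mathcal{Y}}f\|_{L^1}=R^{-\frac{d-1}{2}}\|\chi_{\mathcal{Y}}f\|_{L^1}$; since $q$ was arbitrary and $R=|t|$, the claim follows.

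The main obstacle is the frequency sum in the last paragraph: one must split the $R^{-1/2}$-lattice of $A_d$ into radial and angular parts and verify, using only the \emph{angular} separation from Proposition \ref{prop:NonDegeneracyFlowWave}, that the $R^{1/2}$-fold radial multiplicity is precisely offset by the $(d-1)$-dimensional (rather than $d$-dimensional) spread of the light sphere. This is exactly the bookkeeping that yields the $|t|^{1/2}$ loss relative to the Schr\"odinger estimate and nothing worse; everything else is a routine adaptation of the proof of Proposition \ref{prop:DispersiveEstimateSEQ}.
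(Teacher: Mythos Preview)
Your proposal is correct and follows essentially the same approach as the paper: plug in the wave packet decomposition at scale $r\sim|t|$, use the pointwise packet bounds to reduce to a double sum over $(v,\theta)$, sum in $v$ against $\|f\|_{L^1}$, and sum in $\theta$ using the angular separation from Proposition~\ref{prop:NonDegeneracyFlowWave}. The only difference is that you spell out the radial/angular bookkeeping in the $\theta$-sum (the $\sim R^{1/2}$ radial multiplicity times the $O(1)$ sum over the $(d-1)$-dimensional angular lattice), whereas the paper simply asserts the resulting bound $\sum_\theta(1+R^{-1/2}|x_q-x_{(\theta,v)}(R)|)^{-(d+1)}\lesssim R^{1/2}$ in one line.
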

\begin{remark}
This corresponds to metric coefficients $g^{ij} \in C^{1,1}_{t,x}$ on a unit time scale and frequency-localized solutions at frequency $N$ after paradifferential decomposition at frequencies $\leq N^{\frac{1}{2}}$ and rescaling $(x,t) \to N(x,t)$.
\end{remark}

\begin{proof}
Like in Proposition \ref{prop:DispersiveEstimateSEQ} we plug in the wave packet decomposition to find
\begin{equation*}
u(t) = \sum_{T \in \Lambda_R \cap \overline{\mathcal{Y}}} \chi_T + \text{RapDec}(R) \| f \|_{L^2}.
\end{equation*}
For the main term from the wave packet decomposition we have
\begin{equation*}
\begin{split}
&\quad \big\| \sum_{T \in \Lambda_R \cap \overline{\mathcal{Y}}} \chi_T \big\|_{L^\infty(Q_{R^{\frac{1}{2}}})} \\
 &\lesssim R^{-\frac{d}{2}} \sum_{v \in R^{\frac{1}{2}} \Z^d} \| f \chi_v \|_{L^1} \sum_{\theta \in R^{-\frac{1}{2}} \Z^d \cap \bar{A}_d} (1 + R^{-\frac{1}{2}} | x_q - x_{(\theta,v)}(R)|)^{-(d+1)} \\
&\lesssim R^{-\frac{d-1}{2}} \| f \|_{L^1}.
\end{split}
\end{equation*}
\end{proof}

\section*{Acknowledgements}

Robert Schippa gratefully acknowledges financial support by the Humboldt foundation (Feo\-dor-Lynen fellowship) and partial support by the NSF grant DMS-2054975.
Daniel Tataru was supported by the NSF grant DMS-2054975, by a Simons Investigator grant from the Simons Foundation, as well as by a Simons fellowship.


\begin{thebibliography}{10}

\bibitem{AiIfrimTataru2024}
Albert Ai, Mihaela Ifrim, and Daniel Tataru.
\newblock The time-like minimal surface equation in {Minkowski} space: low
  regularity solutions.
\newblock {\em Invent. Math.}, 235(3):745--891, 2024.

\bibitem{BahouriChemin1999}
Hajer Bahouri and Jean-Yves Chemin.
\newblock \'equations d'ondes quasilin\'eaires et estimations de {S}trichartz.
\newblock {\em Amer. J. Math.}, 121(6):1337--1377, 1999.

\bibitem{Bejenaru2017}
Ioan Bejenaru.
\newblock Optimal bilinear restriction estimates for general hypersurfaces and
  the role of the shape operator.
\newblock {\em Int. Math. Res. Not. IMRN}, (23):7109--7147, 2017.

\bibitem{BennettCarberyTao2006}
Jonathan Bennett, Anthony Carbery, and Terence Tao.
\newblock On the multilinear restriction and {K}akeya conjectures.
\newblock {\em Acta Math.}, 196(2):261--302, 2006.

\bibitem{Bourgain1991I}
J.~Bourgain.
\newblock Besicovitch type maximal operators and applications to {F}ourier
  analysis.
\newblock {\em Geom. Funct. Anal.}, 1(2):147--187, 1991.

\bibitem{Bourgain1991II}
J.~Bourgain.
\newblock {$L^p$}-estimates for oscillatory integrals in several variables.
\newblock {\em Geom. Funct. Anal.}, 1(4):321--374, 1991.

\bibitem{Bourgain1995}
J.~Bourgain.
\newblock Estimates for cone multipliers.
\newblock In {\em Geometric aspects of functional analysis ({I}srael,
  1992--1994)}, volume~77 of {\em Oper. Theory Adv. Appl.}, pages 41--60.
  Birkh\"auser, Basel, 1995.

\bibitem{BourgainGuth2011}
Jean Bourgain and Larry Guth.
\newblock Bounds on oscillatory integral operators based on multilinear
  estimates.
\newblock {\em Geom. Funct. Anal.}, 21(6):1239--1295, 2011.

\bibitem{BurqGerardTzvetkov2004}
N.~Burq, P.~G\'erard, and N.~Tzvetkov.
\newblock Strichartz inequalities and the nonlinear {S}chr\"odinger equation on
  compact manifolds.
\newblock {\em Amer. J. Math.}, 126(3):569--605, 2004.

\bibitem{BuschenhenkeMuellerVargas2017}
Stefan Buschenhenke, Detlef M\"uller, and Ana Vargas.
\newblock A {F}ourier restriction theorem for a two-dimensional surface of
  finite type.
\newblock {\em Anal. PDE}, 10(4):817--891, 2017.

\bibitem{Candy2019}
Timothy Candy.
\newblock Multi-scale bilinear restriction estimates for general phases.
\newblock {\em Math. Ann.}, 375(1-2):777--843, 2019.

\bibitem{CandyHerr2018III}
Timothy Candy and Sebastian Herr.
\newblock Conditional large initial data scattering results for the
  {D}irac-{K}lein-{G}ordon system.
\newblock {\em Forum Math. Sigma}, 6:Paper No. e9, 55, 2018.

\bibitem{CandyHerr2018II}
Timothy Candy and Sebastian Herr.
\newblock On the division problem for the wave maps equation.
\newblock {\em Ann. PDE}, 4(2):Paper No. 17, 61, 2018.

\bibitem{CandyHerr2018}
Timothy Candy and Sebastian Herr.
\newblock Transference of bilinear restriction estimates to quadratic variation
  norms and the {D}irac-{K}lein-{G}ordon system.
\newblock {\em Anal. PDE}, 11(5):1171--1240, 2018.

\bibitem{Cordoba1979}
A.~C\'ordoba.
\newblock A note on {B}ochner-{R}iesz operators.
\newblock {\em Duke Math. J.}, 46(3):505--511, 1979.

\bibitem{Fefferman1973}
Charles Fefferman.
\newblock A note on spherical summation multipliers.
\newblock {\em Israel J. Math.}, 15:44--52, 1973.

\bibitem{Fefferman1983}
Charles~L. Fefferman.
\newblock The uncertainty principle.
\newblock {\em Bull. Amer. Math. Soc. (N.S.)}, 9(2):129--206, 1983.

\bibitem{GebaTataru2007}
Dan-Andrei Geba and Daniel Tataru.
\newblock A phase space transform adapted to the wave equation.
\newblock {\em Comm. Partial Differential Equations}, 32(7-9):1065--1101, 2007.

\bibitem{GuoWangZhang2024}
Shaoming Guo, Hong Wang, and Ruixiang Zhang.
\newblock A dichotomy for {H}\"ormander-type oscillatory integral operators.
\newblock {\em Invent. Math.}, 238(2):503--584, 2024.

\bibitem{GuthHickmanIliopoulou2019}
Larry Guth, Jonathan Hickman, and Marina Iliopoulou.
\newblock Sharp estimates for oscillatory integral operators via polynomial
  partitioning.
\newblock {\em Acta Math.}, 223(2):251--376, 2019.

\bibitem{KeelTao1998}
Markus Keel and Terence Tao.
\newblock Endpoint {S}trichartz estimates.
\newblock {\em Amer. J. Math.}, 120(5):955--980, 1998.

\bibitem{KlainermanRodnianskiTao2002}
Sergiu Klainerman, Igor Rodnianski, and Terence Tao.
\newblock A physical space approach to wave equation bilinear estimates.
\newblock volume~87, pages 299--336. 2002.
\newblock Dedicated to the memory of Thomas H.\ Wolff.

\bibitem{KochTataru05}
Herbert Koch and Daniel Tataru.
\newblock Dispersive estimates for principally normal pseudodifferential
  operators.
\newblock {\em Comm. Pure Appl. Math.}, 58(2):217--284, 2005.

\bibitem{Lee2006DifferentSignsCurvature}
Sanghyuk Lee.
\newblock Bilinear restriction estimates for surfaces with curvatures of
  different signs.
\newblock {\em Trans. Amer. Math. Soc.}, 358(8):3511--3533, 2006.

\bibitem{Lee2006}
Sanghyuk Lee.
\newblock Linear and bilinear estimates for oscillatory integral operators
  related to restriction to hypersurfaces.
\newblock {\em J. Funct. Anal.}, 241(1):56--98, 2006.

\bibitem{LeeVargas2010}
Sanghyuk Lee and Ana Vargas.
\newblock Restriction estimates for some surfaces with vanishing curvatures.
\newblock {\em J. Funct. Anal.}, 258(9):2884--2909, 2010.

\bibitem{OuWang2022}
Yumeng Ou and Hong Wang.
\newblock A cone restriction estimate using polynomial partitioning.
\newblock {\em J. Eur. Math. Soc. (JEMS)}, 24(10):3557--3595, 2022.

\bibitem{Ralston1982}
James Ralston.
\newblock Gaussian beams and the propagation of singularities.
\newblock Studies in partial differential equations, {MAA} {Stud}. {Math}. 23,
  206-248 (1982)., 1982.

\bibitem{Schippa2024Oscillatory}
Robert Schippa.
\newblock Oscillatory integral operators with homogeneous phase functions.
\newblock {\em J. Anal. Math.}, 154(1):1--67, 2024.

\bibitem{Smith1998}
Hart~F. Smith.
\newblock A parametrix construction for wave equations with {$C^{1,1}$}
  coefficients.
\newblock {\em Ann. Inst. Fourier (Grenoble)}, 48(3):797--835, 1998.

\bibitem{SmithTataru2002}
Hart~F. Smith and Daniel Tataru.
\newblock Sharp counterexamples for {S}trichartz estimates for low regularity
  metrics.
\newblock {\em Math. Res. Lett.}, 9(2-3):199--204, 2002.

\bibitem{SmithTataru2005}
Hart~F. Smith and Daniel Tataru.
\newblock Sharp local well-posedness results for the nonlinear wave equation.
\newblock {\em Ann. of Math. (2)}, 162(1):291--366, 2005.

\bibitem{Sogge2017}
Christopher~D. Sogge.
\newblock {\em Fourier integrals in classical analysis}, volume 210 of {\em
  Cambridge Tracts in Mathematics}.
\newblock Cambridge University Press, Cambridge, second edition, 2017.

\bibitem{StaffilaniTataru2002}
Gigliola Staffilani and Daniel Tataru.
\newblock Strichartz estimates for a {S}chr\"odinger operator with nonsmooth
  coefficients.
\newblock {\em Comm. Partial Differential Equations}, 27(7-8):1337--1372, 2002.

\bibitem{Tao03}
T.~Tao.
\newblock A sharp bilinear restrictions estimate for paraboloids.
\newblock {\em Geom. Funct. Anal.}, 13(6):1359--1384, 2003.

\bibitem{Tao2001}
Terence Tao.
\newblock Endpoint bilinear restriction theorems for the cone, and some sharp
  null form estimates.
\newblock {\em Math. Z.}, 238(2):215--268, 2001.

\bibitem{TaoVargasVega1998}
Terence Tao, Ana Vargas, and Luis Vega.
\newblock A bilinear approach to the restriction and {K}akeya conjectures.
\newblock {\em J. Amer. Math. Soc.}, 11(4):967--1000, 1998.

\bibitem{Tataru2000}
Daniel Tataru.
\newblock Strichartz estimates for operators with nonsmooth coefficients and
  the nonlinear wave equation.
\newblock {\em Amer. J. Math.}, 122(2):349--376, 2000.

\bibitem{Tataru2001}
Daniel Tataru.
\newblock Strichartz estimates for second order hyperbolic operators with
  nonsmooth coefficients. {II}.
\newblock {\em Amer. J. Math.}, 123(3):385--423, 2001.

\bibitem{Tataru2002}
Daniel Tataru.
\newblock Strichartz estimates for second order hyperbolic operators with
  nonsmooth coefficients. {III}.
\newblock {\em J. Amer. Math. Soc.}, 15(2):419--442, 2002.

\bibitem{Tataru03}
Daniel Tataru.
\newblock Null form estimates for second order hyperbolic operators with rough
  coefficients.
\newblock In {\em Harmonic analysis at {M}ount {H}olyoke ({S}outh {H}adley,
  {MA}, 2001)}, volume 320 of {\em Contemp. Math.}, pages 383--409. Amer. Math.
  Soc., Providence, RI, 2003.

\bibitem{T:phase-space}
Daniel Tataru.
\newblock Phase space transforms and microlocal analysis.
\newblock In {\em Phase space analysis of partial differential equations.
  {V}ol. {II}}, Pubbl. Cent. Ric. Mat. Ennio Giorgi, pages 505--524. Scuola
  Norm. Sup., Pisa, 2004.

\bibitem{Tataru2008}
Daniel Tataru.
\newblock Parametrices and dispersive estimates for {S}chr\"odinger operators
  with variable coefficients.
\newblock {\em Amer. J. Math.}, 130(3):571--634, 2008.

\bibitem{Vargas2005}
Ana Vargas.
\newblock Restriction theorems for a surface with negative curvature.
\newblock {\em Math. Z.}, 249(1):97--111, 2005.

\bibitem{Waters2011}
Alden Waters.
\newblock A parametrix construction for the wave equation with low regularity
  coefficients using a frame of {Gaussians}.
\newblock {\em Commun. Math. Sci.}, 9(1):225--254, 2011.

\bibitem{Waters2019}
Alden Waters.
\newblock Low regularity ray tracing for wave equations with {Gaussian} beams.
\newblock {\em Rocky Mt. J. Math.}, 49(3):1005--1027, 2019.

\bibitem{Wisewell2005}
L.~Wisewell.
\newblock Kakeya sets of curves.
\newblock {\em Geom. Funct. Anal.}, 15(6):1319--1362, 2005.

\bibitem{Wolff01}
Thomas Wolff.
\newblock A sharp bilinear cone restriction estimate.
\newblock {\em Ann. of Math. (2)}, 153(3):661--698, 2001.

\end{thebibliography}
\end{document}